\tikzset{circle/.style = {rounded corners,line width=1bp,color=#1}}%
\renewcommand{\tocsection}[3]{%
  \indentlabel{\@ifnotempty{#2}{\bfseries\ignorespaces#1 #2\quad}}\bfseries#3}
\renewcommand{\tocsubsection}[3]{%
  \indentlabel{\@ifnotempty{#2}{\ignorespaces#1 #2\quad}}#3}
\newcommand\@dotsep{4.5}
\def\@tocline#1#2#3#4#5#6#7{\relax
  \ifnum #1>\c@tocdepth % then omit
  \else
    \par \addpenalty\@secpenalty\addvspace{#2}%
    \begingroup \hyphenpenalty\@M
    \@ifempty{#4}{%
      \@tempdima\csname r@tocindent\number#1\endcsname\relax
    }{%
      \@tempdima#4\relax
    }%
    \parindent\z@ \leftskip#3\relax \advance\leftskip\@tempdima\relax
    \rightskip\@pnumwidth plus1em \parfillskip-\@pnumwidth
    #5\leavevmode\hskip-\@tempdima{#6}\nobreak
    \leaders\hbox{$\m@th\mkern \@dotsep mu\hbox{.}\mkern \@dotsep mu$}\hfill
    \nobreak
    \hbox to\@pnumwidth{\@tocpagenum{\ifnum#1=1\bfseries\fi#7}}\par% <-- \bfseries for \section page
    \nobreak
    \endgroup
  \fi}
\renewcommand\csname r@tocindent0\endcsname{0pt}
\def\l@subsection{\@tocline{2}{0pt}{2.5pc}{5pc}{}}
\newcommand{\Addresses}{{% additional braces for segregating \footnotesize
  \bigskip
  \footnotesize
  Camil Muscalu, \textsc{Department of Mathematics, Cornell University,
    Ithaca, New York 14853}\par\nopagebreak
 \textit{E-mail address}: \texttt{camil@math.cornell.edu}\\
 
  Itamar Oliveira, \textsc{Department of Mathematics, Cornell University,
    Ithaca, New York 14853}\par\nopagebreak
    \textit{Current address}: \textsc{School of Mathematics, The Watson Building, University of Birmingham, Edgbaston, Birmingham, B15 2TT, England}\par\nopagebreak
    
 \textit{E-mail address}: \texttt{oliveira.itamar.w@gmail.com}\par\nopagebreak

}}
\theoremstyle{plain}
\newtheorem*{theorem*}{Theorem}
\newtheorem{theorem}{Theorem}[section]
\newtheorem{corollary}[theorem]{Corollary}
\newtheorem{lemma}[theorem]{Lemma}
\newtheorem{claim}[theorem]{Claim}
\newtheorem{proposition}[theorem]{Proposition}
\newtheorem{conjecture}[theorem]{Conjecture}
\theoremstyle{definition}
\newtheorem{definition}[theorem]{Definition}
\newtheorem{remark}[theorem]{Remark}
\def\eqalign#1{\null\,\vcenter{\openup\jot\mathsurround\dimen12
  \ialign{\strut\hfil$\textstyle{##}$&$\textstyle{{}##}$\hfil
      \crcr#1\crcr}}\,}
\begin{document}

\begin{abstract} We propose a new approach to the Fourier restriction conjectures. It is based on a discretization of the Fourier extension operators in terms of quadratically modulated wave packets. Using this new point of view, and by combining natural scalar and mixed norm quantities from appropriate level sets, we prove that all the $L^{2}$-based $k$-linear extension conjectures are true up to the endpoint for every $1 \leq k \leq d+1$ if one of the functions involved is a full tensor. We also introduce the concept of \textit{weak transversality}, under which we show that all conjectured $L^{2}$-based multilinear extension estimates are still true up to the endpoint provided that one of the functions involved has a weaker tensor structure, and we prove that this result is sharp. Under additional tensor hypotheses, we show that one can improve the conjectured threshold of these problems in some cases. In general, the largely unknown multilinear extension theory beyond $L^{2}$ inputs remains open even in the bilinear case; with this new point of view, and still under the previous tensor hypothesis, we obtain the near-restriction target for the $k$-linear extension operator if the inputs are in a certain $L^{p}$ space for $p$ sufficiently large. The proof of this result is adapted to show that the $k$-fold product of linear extension operators (no transversality assumed) also ``maps near restriction" if one input is a tensor. Finally, we exploit the connection between the geometric features behind the results of this paper and the theory of Brascamp-Lieb inequalities, which allows us to verify a special case of a conjecture by Bennett, Bez, Flock and Lee.
\end{abstract}

\title[A new approach to the Fourier extension problem for the paraboloid]{A new approach to the Fourier extension problem for the paraboloid}
\author{Camil Muscalu and Itamar Oliveira}
\date{}

\dedicatory{Dedicated to the memory of Robert S. Strichartz.}
\maketitle

\tableofcontents

\section{Introduction}

Given a compact submanifold $S\subset\mathbb{R}^{d+1}$ and a function $f:\mathbb{R}^{d+1}\mapsto\mathbb{R}$, the \textit{Fourier restriction problem} asks for which pairs $(p,q)$ one has
$$\| \widehat{f}|_S\|_{L^{q}(S)}\lesssim \|f\|_{L^{p}(\mathbb{R}^{d+1})},$$
where $\widehat{f}|_S$ is the restriction of the Fourier transform $\widehat{f}$ to $S$. This problem arises naturally in the study of certain Fourier summability methods and is known to be connected to questions in Geometric Measure Theory and in nonlinear dispersive PDEs. The interaction between curvature and the Fourier transform has been exploited in a variety of contexts since the works of H\"ormander (\cite{Hor}), Fefferman (\cite{Fef2}) and Stein and Wainger (\cite{SW1}) in the study of oscillatory integrals. For a more detailed description of the restriction problem we refer the reader to the classical survey \cite{Tao-notes}. In this paper we work with the equivalent dual formulation of the question above (known as the \textit{Fourier extension problem}), and specialize to the case where $S$ is the compact piece of the paraboloid parametrized by $\Gamma(x)=(x,|x|^{2})\subset\mathbb{R}^{d+1}$ with $x\in[0,1]^{d}$. In this setting, the \textit{Fourier extension operator} is initially defined on $C([0,1]^{d})$ by
\begin{equation}\label{extop}
\mathcal{E}_{d}g(x_{1},\ldots,x_{d},t)=\int_{[0,1]^{d}}g(\xi_{1},\ldots,\xi_{d})e^{-2\pi i(\xi_{1}x_{1}+\ldots \xi_{d}x_{d})}e^{-2\pi i t(\xi_{1}^{2}+\ldots+\xi_{d}^{2})}\mathrm{d}\xi.
\end{equation}

E. Stein proposed the following conjecture (cf. Chapter IX of \cite{Steinbook2}):
\begin{conjecture}\label{restriction} The inequality
\begin{equation}\label{rest1}
\|\mathcal{E}_{d}g\|_{L^{q}(\mathbb{R}^{d+1})}\lesssim_{p,q,d}\|g\|_{L^{p}([0,1]^{d})}
\end{equation}
holds if and only if $q>\frac{2(d+1)}{d}$ and $q\geq \frac{(d+2)}{d}p^{\prime}$.
\end{conjecture}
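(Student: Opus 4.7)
My plan is to attack Conjecture 1.1 via the standard hierarchy of reductions, with a new multilinear input as the key ingredient. First, the necessity of the two conditions is handled by test cases: $q \geq (d+2)p'/d$ follows from the Knapp example (indicator of a $\delta$-cap, whose extension is essentially $\delta^d$ on a dual slab of measure $\delta^{-d-2}$), and $q > 2(d+1)/d$ from randomizing many such caps and applying a Khintchine-type square-function argument. For sufficiency, by duality and interpolation with the trivial $L^1\to L^\infty$ bound and the Stein--Tomas $L^2\to L^{2(d+2)/d}$ estimate, it suffices to prove the bound arbitrarily close to the critical line $q = (d+2)p'/d$ for $p$ strictly smaller than at the Stein--Tomas endpoint.

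Next I would pass from linear to multilinear via a Bourgain--Guth broad/narrow decomposition on a physical ball $B_R$: at each scale, either $k+1$ mutually transversal caps contribute significantly (broad case), or one cap dominates (narrow case). Parabolic rescaling absorbs the narrow case into induction on $R$. The broad case is handled by a $k$-linear extension estimate, which I would prove by decomposing each input via the quadratically modulated wave packets advertised in the abstract, reducing $\| \prod_i \mathcal{E}_d g_i \|_{L^{q/k}}^{q/k}$ to a weighted incidence count of $k$-tuples of ``twisted tubes'' whose tangent directions span a transversal $k$-plane, and closing the count by an induction-on-scales monotonicity argument in the spirit of the heat-flow proofs of multilinear Kakeya.

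The hard step is passing from the $k$-linear estimate back to the linear one with no loss in the exponent. Bourgain--Guth only outputs the full conjectured range if the multilinear bound is essentially sharp at every level one iterates through; even a polynomial loss in the $k$-linear estimate translates into a genuine gap away from the conjectured line. I expect the main obstruction to be producing a $k$-linear extension bound whose dependence on the transversality parameter is essentially sharp --- exactly what the mixed scalar/mixed-norm level-set machinery described in the abstract is designed to do under a tensor hypothesis on one of the inputs. Under such a hypothesis, each tensor factor decouples into lower-dimensional extensions whose wave packets are rigidly aligned, and sharpness can plausibly be forced; removing that tensor hypothesis while keeping the multilinear estimate sharp is, to my knowledge, the genuinely open piece of the puzzle and is precisely the limitation reflected in the partial results announced in the abstract.
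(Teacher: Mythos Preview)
The statement is an open conjecture; the paper does not prove it in general. What the paper establishes is the tensor case (Theorem~1.5): Conjecture~1.1 holds whenever $g=g_1\otimes\cdots\otimes g_d$. Your proposal, by contrast, is a sketch of an attack on the full conjecture via Bourgain--Guth, and you yourself flag the obstruction at the end. So the first thing to note is that your target and the paper's target differ.

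Restricting attention to the tensor case, your route and the paper's route diverge completely. You propose a multilinear-to-linear reduction: broad/narrow decomposition, parabolic rescaling on the narrow part, a sharp $k$-linear estimate on the broad part. The paper does none of this for $k=1$. Instead, the tensor structure is used to factor the scalar products in the discrete model,
\[
\langle g,\varphi_{\overrightarrow{\textbf{n}},m}\rangle=\prod_{j=1}^d\langle g_j,\varphi_{n_j,m}\rangle,
\]
and the dualized form $\Lambda_d$ is controlled by a level-set argument (Section~8): for $d-1$ of the coordinates one uses Bessel's inequality (orthogonality of $\varphi_{n_j,m}$ for fixed $m$), and for the remaining coordinate one uses the one-dimensional $L^4\to L^{4+\varepsilon}$ extension estimate (Proposition~4.2). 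Averaging over which coordinate gets the $L^4$ input and interpolating against the trivial bound on $h$ closes the argument. There is no induction on scales, no broad/narrow, and the multilinear theory ($k\ge 2$) plays no role in the $k=1$ proof. (Indeed, Section~14 records an even shorter observation of Bennett: for tensors, the $d$-dimensional extension bound is equivalent to a one-dimensional mixed-norm bound $\|\mathcal{E}_1 f\|_{L^{dq}_{\xi_2}L^q_{\xi_1}}\lesssim\|f\|_p$, obtainable by direct interpolation.)

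There is also a structural gap in your plan even for the tensor case. In a Bourgain--Guth iteration, the functions appearing in the broad $k$-linear term are pieces of the original $g$ localized to caps produced by the decomposition; you do not get to stipulate that any of them is a full tensor. So a $k$-linear estimate valid only when one input is a tensor (which is what the paper supplies) cannot simply be inserted into the broad step. The paper sidesteps this entirely by never passing through the multilinear-to-linear mechanism for $k=1$: the tensor hypothesis is exploited \emph{directly} to reduce to the known $d=1$ theory.
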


Multilinear variants of Conjecture \ref{restriction} arose naturally from the works \cite{KM1},\cite{KM2} and \cite{KM3} of Klainerman and Machedon on wellposedness of certain PDEs. Given $2\leq k\leq d+1$ compact and connected domains $U_{j}\subset\mathbb{R}^{d}$, $1\leq j\leq k$, define

\begin{equation}
\mathcal{E}_{U_{j}}g(x,t):=\int_{U_{j}}g(\xi)e^{-2\pi i x\cdot\xi}e^{-2\pi it|\xi|^{2}}\mathrm{d}\xi,\quad (x,t)\in\mathbb{R}^{d}\times\mathbb{R}.
\end{equation}
Taking the product of all $k$ such operators associated to a set of \textit{transversal} $U_{j}$ leads to the following conjecture (see Appendix \ref{appendixa}):

\begin{conjecture}[\cite{Benn1}]\label{klinear} If the caps parametrized by $U_{j}$ are transversal, then
$$\left\|\prod_{j=1}^{k}\mathcal{E}_{U_{j}}g_{j}\right\|_{p} \lesssim \prod_{j=1}^{k} \|g_{j}\|_{2}$$
for all $p\geq \frac{2(d+k+1)}{k(d+k-1)}$.
\end{conjecture}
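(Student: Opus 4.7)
The plan is to combine a wave packet decomposition of each $\mathcal{E}_{U_{j}}g_{j}$ with a broad--narrow induction on the spatial scale, using $L^{2}$ near-orthogonality at small scales and the transversality hypothesis at large scales. Fix a large output scale $R$ and tile each $U_{j}$ by caps $\tau$ of side $R^{-1/2}$. On each such cap the paraboloid is approximately flat, so one can expand $g_{j}=\sum_{\tau,\nu}c_{\tau,\nu}^{(j)}\phi_{\tau,\nu}^{(j)}$ in an orthonormal basis of wave packets, where $\nu$ ranges over a dual lattice of spacing $R^{1/2}$ and where each $\phi_{\tau,\nu}^{(j)}$ carries a quadratic phase chosen so that $\mathcal{E}_{U_{j}}\phi_{\tau,\nu}^{(j)}$ is, up to rapid decay, a bump of modulus $\sim 1$ concentrated on a tube $T_{\tau,\nu}$ of dimensions $R^{1/2}\times\cdots\times R^{1/2}\times R$ pointing normal to the paraboloid over $\tau$. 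This reduces the estimate to a combinatorial problem for $k$-tuples of tubes drawn from transversal families.

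For the combinatorial part I would stratify by the dyadic level sets of $\prod_{j}|\mathcal{E}_{U_{j}}g_{j}|$ and, on each output ball $B_{R}$, apply a broad--narrow dichotomy. In the narrow case the product concentrates on a sub-cap of diameter smaller than the relevant threshold; after parabolic rescaling this reduces to the same inequality at a strictly smaller scale $R'\ll R$, which closes by an induction on scales. In the broad case the tubes contributing to a given level set come from $k$ genuinely transversal directions, so the multilinear Kakeya inequality of Bennett, Carbery and Tao (or Guth's refinement) controls
$$
\int_{B_{R}}\prod_{j=1}^{k}\Bigl(\sum_{T\in\mathbb{T}_{j}}\mathbbm{1}_{T}(x)\Bigr)^{1/(k-1)}\mathrm{d}x,
$$
which, combined with the near-orthogonality identity $\|\mathcal{E}_{U_{j}}g_{j}\|_{L^{2}(B_{R})}^{2}\sim R\sum_{\tau,\nu}|c_{\tau,\nu}^{(j)}|^{2}\sim R\|g_{j}\|_{2}^{2}$ and a dyadic sum over the level sets, recovers the exponent $p=\frac{2(d+k+1)}{k(d+k-1)}$ dictated by scaling.

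The main obstacle is the subpolynomial $R^{\varepsilon}$ loss that this strategy unavoidably produces, coming both from the multilinear Kakeya input and from the broad--narrow induction. This already yields the open-range inequality $p>\frac{2(d+k+1)}{k(d+k-1)}$ (essentially the Bennett--Carbery--Tao theorem), but the endpoint demanded in the statement of Conjecture \ref{klinear} lies just outside what the raw argument can reach. Closing this last sliver is where the structural hypotheses emphasized in the abstract should become essential: when one of the $g_{j}$ carries a tensor structure, its wave packet coefficients factorize across coordinate directions, which should allow a mixed-norm refinement of the $L^{2}$ step that replaces near-orthogonality by exact equalities in each tensor factor and removes the $R^{\varepsilon}$. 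Without such a hypothesis, reaching the endpoint in full generality seems to require a genuinely new ingredient, for instance a polynomial partitioning scheme tuned to the sharp line, and is, to my knowledge, still open in this level of generality.
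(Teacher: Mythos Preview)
The statement you are attempting is a \emph{conjecture}, and the paper does not prove it in full generality: as the paper itself records, only the cases $k=2$ (Tao), $k=d$ (Bejenaru) and $k=d+1$ (Bennett--Carbery--Tao) are understood, while the intermediate range $3\leq k\leq d-1$ remains open. What the paper does prove is Theorem~\ref{mainthmpaper}, namely the conjecture up to the endpoint under the additional hypothesis that one $g_{j}$ is a full tensor.

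Your proposal has a concrete gap. You assert that the broad--narrow scheme combined with the $k$-linear Kakeya inequality ``already yields the open-range inequality $p>\frac{2(d+k+1)}{k(d+k-1)}$ (essentially the Bennett--Carbery--Tao theorem)''. This is not correct for $2<k<d+1$: the Bennett--Carbery--Tao argument is specific to $k=d+1$, and for smaller $k$ the combination of $k$-linear Kakeya with $L^{2}$ wave-packet orthogonality does not produce the conjectured exponent. If it did, Conjecture~\ref{klinear} would already be known up to the endpoint for all $k$, contradicting the state of the art summarized in the introduction. Your narrow case is also underspecified: parabolic rescaling of a single sub-cap does not obviously preserve the $k$-transversality structure needed to close the induction.

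Even restricting to the tensor case that the paper actually settles, the route taken there is entirely different from yours. The paper does \emph{not} linearize the quadratic phase into standard tubes, does not run a broad--narrow induction, and does not invoke multilinear Kakeya. Instead it discretizes $\mathcal{E}_{U_{j}}g_{j}$ via scalar products against quadratically modulated wave packets $\varphi_{\overrightarrow{n},m}$, reduces transversality to the notion of \emph{weak transversality} (disjoint one-dimensional projections), and then interpolates at the level of level sets $\mathbb{X}^{\overrightarrow{l},R,t}$ between three elementary building blocks: the one-dimensional bilinear estimate $ME_{2,1}:L^{2}\times L^{2}\to L^{2}$ (which captures transversality direction by direction), mixed-norm Strichartz in the remaining $d-k+1$ variables, and the trivial $L^{1}$ bound coming from $h$. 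The tensor hypothesis enters only through Lemma~\ref{lemma1-3122020}, which converts the scalar level $2^{-l_{1}}$ into a product of one-dimensional mixed-norm levels; it is not used to ``remove an $R^{\varepsilon}$'' from a Kakeya-type loss as you speculate.
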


Roughly, \textit{transversality} means that any choice of one normal vector per cap is a set of linearly independent vectors, as shown below in Figure \ref{fig2}.

\begin{figure}[h]
\centering
\includegraphics[width=5cm]{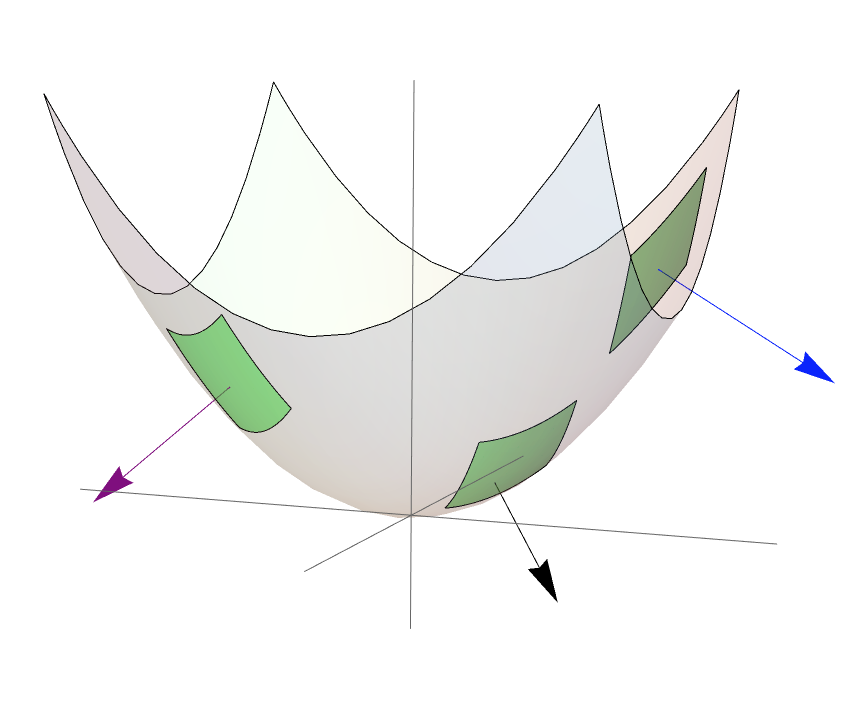}
\caption{A choice of normal vectors to the caps parametrized by $U_{j}$ via $x\mapsto |x|^{2}$.}
\label{fig2}
\end{figure}

\begin{remark} From now on, we shall refer to Conjecture \ref{restriction} as \textit{the case $k=1$}. It was settled only for $d=1$ by Fefferman and Zygmund (\cite{Fef1}, \cite{Zyg}). In higher dimensions we highlight the case $p=2$ solved by Strichartz in \cite{Stri1}, which is equivalent to the Tomas-Stein theorem (\cite{Tomas}) in the restriction setting. Progress beyond these two results was made in many works over the last decades through a diverse set of techniques: localization, bilinear estimates, wave-packet decompositions and more recently polynomial methods. We mention the papers \cite{Bourg1}, \cite{TV1}, \cite{Tao1}, \cite{AVV1}, \cite{Wang}, \cite{Guth1} and \cite{Hick-Rog}. Analogous problems for other manifolds were studied in \cite{Wolff}, \cite{Stri1} and \cite{OW}.
\end{remark}

\begin{remark} In \cite{Guth1}, Guth proved a weaker version of Conjecture \ref{klinear} for all $2\leq k\leq d+1$ and up to the endpoint, which is known as the $k$-broad restriction inequality. This estimate plays a central role in his argument in \cite{Guth1} to improve the range for which Conjecture \ref{restriction} is known.

Only three cases of Conjecture \ref{klinear} are well understood: 
\begin{enumerate}[(i)]
\item Tao settled the case $k=2$ in \cite{Tao1} up to the endpoint inspired by Wolff's work \cite{Wolff} for the cone. Lee obtained the endpoint for $k=2$ in \cite{JL}.
\item Bennett, Carbery and Tao settled the case $k=d+1$ up to the endpoint in \cite{BCT}.
\item Bejenaru settled the case $k=d$ in \cite{Bej} up to the endpoint.
\end{enumerate}

\end{remark}

The goal of this paper is to propose a new approach to these problems based on a natural discretization of the operators in terms of scalar products against quadratically modulated wave-packets. Our main theorem reads as follows:

\begin{theorem}\label{mainthmpaper} Conjectures \ref{restriction} and \ref{klinear} hold up to the endpoint if one (any) of the functions involved is a full tensor\footnote{A function $g$ in $d$ variables is a \textit{full tensor} if it can be written as
$$g(x_{1},\ldots,x_{d})=g_{1}(x_{1})\cdot\ldots\cdot g_{d}(x_{d}). $$
We refer the reader to \cite{Satoru} and \cite{Tanaka} for other results related to the restriction problem involving tensors, and we thank Terence Tao for pointing these papers out to us.}.

\end{theorem}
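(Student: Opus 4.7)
The plan is to begin with the observation that if $g(\xi)=g_{1}(\xi_{1})\cdots g_{d}(\xi_{d})$ is a full tensor, then the extension operator factors cleanly as
$$\mathcal{E}_{d}g(x_{1},\ldots,x_{d},t)=\prod_{j=1}^{d}\mathcal{E}_{1}g_{j}(x_{j},t),$$
i.e. as a product of $d$ one-dimensional extensions that are coupled only through the shared time variable $t$. Raising to the $q$-th power and integrating in $x$ first gives
$$\|\mathcal{E}_{d}g\|_{L^{q}(\mathbb{R}^{d+1})}^{q}=\int_{\mathbb{R}}\prod_{j=1}^{d}\|\mathcal{E}_{1}g_{j}(\cdot,t)\|_{L^{q}(\mathbb{R})}^{q}\,dt,$$
and the task reduces to controlling the right-hand side by $\prod_{j}\|g_{j}\|_{L^{p}}^{q}$. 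At the Tomas--Stein point $p=2$, $q=2(d+2)/d$ one checks immediately that $(Q,R)=(2(d+2),2(d+2)/d)$ is a Strichartz admissible pair for the one-dimensional Schr\"odinger group, and H\"older in $t$ together with the mixed-norm Strichartz estimate for each $\mathcal{E}_{1}g_{j}$ closes the bound. For general $p$ the proposal is to input mixed-norm variants of the Fefferman--Zygmund theorem (which solves the $d=1$ case of Conjecture \ref{restriction}) in place of Strichartz.

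To reach the sharp exponent up to the endpoint for all $(p,q)$ in the conjectured region, I would run a level-set decomposition of the product $\mathcal{E}_{d}g=\prod_{j}\mathcal{E}_{1}g_{j}(x_{j},t)$, organized by the dyadic sizes of the individual factors. On each level set one can play two kinds of bounds against each other: a \emph{scalar} bound $\|\mathcal{E}_{1}g_{j}\|_{L^{q}(\mathbb{R}^{2})}\lesssim \|g_{j}\|_{L^{p}}$ provided by Fefferman--Zygmund, and \emph{mixed-norm} bounds $\|\mathcal{E}_{1}g_{j}\|_{L^{Q}_{t}L^{R}_{x}}\lesssim \|g_{j}\|_{L^{p}}$ which, because each $\mathcal{E}_{1}g_{j}$ depends only on $x_{j}$ and $t$, interact optimally with Fubini in the tensor geometry. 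The quadratic wave-packet discretization advertised in the abstract fits this step naturally, since a tensor input produces a tensor family of wave packets; one replaces each $\mathcal{E}_{1}g_{j}$ by its wave-packet expansion and counts contributions on dyadic level sets by interpolating the scalar and mixed-norm estimates with weights that depend on the target Lebesgue exponent. Endpoint $\varepsilon$-losses are absorbed in the standard way after summing the dyadic levels.

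For the $k$-linear statement, assume without loss of generality that the tensor input is $g_{1}$, so $\mathcal{E}_{U_{1}}g_{1}$ splits into one-dimensional factors while the remaining $k-1$ extensions are treated as genuine $d$-dimensional objects over transversal caps. The plan is then to combine the tensor factorization of the first leg with the already known sharp $L^{2}$-based multilinear estimates (Tao's bilinear bound, Bejenaru's $d$-linear bound, and Bennett--Carbery--Tao for $k=d+1$) on the remaining $k-1$ legs, again organized by a level-set / wave-packet decomposition. Transversality controls the interaction among the non-tensor factors, while the tensor factorization of $g_{1}$ contributes the additional mixed-norm flexibility needed to push the exponent down to the conjectured threshold $p=2(d+k+1)/(k(d+k-1))$. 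The principal obstacle I expect is the \textbf{endpoint balancing} in this last step: the scalar and mixed-norm bounds on each level set must be combined with weights that simultaneously match the exponent dictated by transversality on the non-tensor legs and the exponent dictated by the tensor factorization on the first leg, and verifying that the resulting interpolation closes at the exact conjectured exponent (up to $\varepsilon$) is where the bookkeeping is most delicate. A secondary technical hurdle is controlling logarithmic losses introduced by the wave-packet discretization so that they remain absorbable in the dyadic sum.
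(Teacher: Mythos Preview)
Your plan for the linear case $k=1$ is essentially correct and is in fact recorded in the paper as a remark attributed to Bennett (Section~\ref{finalremarks}): the tensor factorization $\mathcal{E}_{d}g=\prod_{j}\mathcal{E}_{1}g_{j}(x_{j},t)$ reduces matters to a one-dimensional mixed-norm bound $\|\mathcal{E}_{1}f\|_{L^{dq}_{t}L^{q}_{x}}\lesssim\|f\|_{p}$, which follows by interpolating the trivial $L^{\infty}_{t}L^{2}_{x}$ bound with Fefferman--Zygmund. The paper's own proof of the $k=1$ case runs instead through the discretized model and a level-set argument using Bessel plus the one-dimensional $L^{4}\to L^{4+\varepsilon}$ bound, but the two approaches are close in spirit.

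The multilinear plan, however, has a genuine gap. You propose to treat the $k-1$ non-tensor legs by invoking ``the already known sharp $L^{2}$-based multilinear estimates (Tao's bilinear bound, Bejenaru's $d$-linear bound, and Bennett--Carbery--Tao for $k=d+1$)''. But for $3\leq k\leq d-1$ no such estimate is known---Conjecture~\ref{klinear} is open precisely in that range---so there is no black box to feed into your scheme. Even in the cases $k\in\{2,d,d+1\}$ where the $k$-linear bound is known, it is not clear how combining a $(k-1)$-linear estimate on $g_{2},\ldots,g_{k}$ with the tensor structure of $g_{1}$ would produce the sharp $k$-linear exponent; the numerology does not close.

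The paper's route is quite different and uses only \emph{one-dimensional} building blocks. It introduces \emph{weak transversality}: after a harmless decomposition, one can find $k-1$ canonical directions $e_{i_{1}},\ldots,e_{i_{k-1}}$ such that $\pi_{i_{\ell}}(Q_{1})$ and $\pi_{i_{\ell}}(Q_{\ell+1})$ are disjoint intervals. The tensor structure of $g_{1}$ then lets one pair the $i_{\ell}$-th one-dimensional factor of $g_{1}$ with $g_{\ell+1}$ and apply the \emph{one-dimensional} bilinear extension estimate (Proposition~\ref{prop3apr2922}) $k-1$ times; the remaining $d-k+1$ directions are handled by mixed-norm Strichartz (Corollary~\ref{cor1apr2922}). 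The level-set bookkeeping you anticipate is indeed the delicate part, but the inputs are the $d=1$ bilinear bound and Strichartz, not any higher-dimensional multilinear theorem.
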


\begin{remark} The endpoint $(p,q)=(\frac{2(d+1)}{d},\frac{2(d+1)}{d})$ is not included in the range where \eqref{rest1} is supposed to hold, therefore our main theorem implies the case $k=1$ when $g$ is a full tensor.
\end{remark}

\begin{remark}\label{rem1may522} For $2\leq k\leq d+1$, Theorem \ref{mainthmpaper} can be proved if the caps are assumed to be \textit{weakly transversal}, which is defined in Section \ref{twt}. We will prove that transversality implies weak transversality (up to dividing the caps into finitely many pieces), the latter being what is actually exploited in this paper. Under weak transversality, Theorem \ref{mainthmpaper} holds if one (any) of the functions has a weaker tensor structure. This will be made precise in Section \ref{klineartheory}.
\end{remark}

\begin{remark}\label{rem1may722} For $2\leq k\leq d+1$, Theorem \ref{mainthmpaper} is sharp under weak transversality in the following sense: if all functions $g_{1},\ldots,g_{k}$ are generic, it does not hold if the caps are assumed to be weakly transversal. This is explained in Appendix \ref{appendixa}.

\end{remark}

\begin{remark} For $2\leq k\leq d+1$ we do not use the tensor structure explicitly. It is used in an implicit way when comparing the sizes of natural scalar and mixed norm quantities that appear in the proofs.

\end{remark}

\begin{remark} For $2\leq k\leq d$, if all functions involved are full tensors, one has more estimates than those predicted by Conjecture \ref{klinear} assuming extra \textit{degrees of transversality}, as proven in Section \ref{betteresttensors2}.

\end{remark}

It is natural to try to generalize the statement of Conjecture \ref{klinear} for $L^{p}$ inputs rather than just $L^{2}$. A motivation for that is to deeply understand the role played by transversality; as we will see, the farther our inputs are from $L^{2}$, the less impact the configuration of the caps on the paraboloid has in the best possible estimate (with a single exception to be detailed soon). The general statement of the $k$-linear extension conjecture for the paraboloid is (as in \cite{Benn1}):

\begin{conjecture}\label{generalklinearapr422} Let $k\geq 2$ and suppose that $U_{1},\ldots, U_{k}$ parametrize transversal caps of the paraboloid $x\mapsto |x|^{2}$ in $\mathbb{R}^{d+1}$. If $\frac{1}{q}<\frac{d}{2(d+1)}$, $\frac{1}{q}\leq\frac{d+k-1}{d+k+1}\frac{1}{p^{\prime}}$ and $\frac{1}{q}\leq\frac{d-k+1}{d+k+1}\frac{1}{p^{\prime}}+\frac{k-1}{k+d+1}$, then

$$\left\|\prod_{j=1}^{k}\mathcal{E}_{U_{j}}g_{j}\right\|_{L^{\frac{q}{k}}(\mathbb{R}^{d+1})} \lesssim_{p,q} \prod_{j=1}^{k} \|g_{j}\|_{L^{p}(U_{j})}.$$
\end{conjecture}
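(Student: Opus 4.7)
The natural strategy is to obtain the conjectured estimate by multilinear interpolation between three families of bounds placed at the three ``corners'' of the convex region defined by the inequalities in the statement: (i) the $L^{2}$-based $k$-linear extension estimate of Conjecture \ref{klinear}, i.e.\ the corner $(1/p,1/q)=(1/2,\tfrac{d+k-1}{2(d+k+1)})$; (ii) the trivial bound $L^{1}\times\cdots\times L^{1}\to L^{\infty}$ at $(1,0)$; and (iii) a ``large-$p$'' localized $k$-linear bound with $L^{\infty}$ inputs at $(0,d/(d+k+1))$, which is the $p=\infty$ endpoint of the second hyperplane and is essentially equivalent, via parabolic rescaling of the caps, to the Bennett--Carbery--Tao $k$-linear Kakeya estimate restricted to unit-scale tubes. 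A direct computation shows that the convex hull of (i)--(iii) in the $(1/p,1/q)$-plane is exactly the region carved out by the three inequalities in the statement.

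The main ingredient is (i) for arbitrary $L^{2}$ inputs, which is Conjecture \ref{klinear} itself. Theorem \ref{mainthmpaper} of the present paper already supplies this up to the endpoint under the assumption that one of the $g_{j}$ is a full tensor, and under weak transversality the functional hypothesis is weakened further (Remark \ref{rem1may522}). To remove the tensor hypothesis I would decompose each $g_{j}\in L^{2}(U_{j})$ in a tensor-product orthonormal basis $\{e_{\mathbf{n}}(\xi_{1},\ldots,\xi_{d})=e_{n_{1}}(\xi_{1})\cdots e_{n_{d}}(\xi_{d})\}_{\mathbf{n}\in\mathbb{Z}^{d}}$, apply Theorem \ref{mainthmpaper} to each tensor mode, and sum almost-orthogonally on the output side via the wave-packet discretization of $\mathcal{E}_{U_{j}}$ central to this paper, which should distinguish tubes of essentially disjoint phase-space support and convert the $\ell^{2}$-orthogonality of the tensor expansion into a square-function estimate on $\mathbb{R}^{d+1}$.

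Ingredient (ii) is immediate; ingredient (iii) can be imported from $k$-linear Kakeya after a parabolic-scaling argument that converts a unit cap into a $\delta$-cap and aligns the wave-packet tubes of each $\mathcal{E}_{U_{j}}g_{j}$ with Kakeya tubes transverse to the others. Combining (i)--(iii) by multilinear Riesz--Thorin interpolation then covers the full conjectured region up to the endpoints, and a standard $\varepsilon$-removal procedure, exploiting the compact Fourier support of each $\mathcal{E}_{U_{j}}g_{j}$ on the paraboloid, recovers the exact boundary and delivers Conjecture \ref{generalklinearapr422}.

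The principal obstacle is the orthogonal summation in (i). A full tensor is a single term in the tensor-product basis, whereas a generic $g_{j}$ has $\sim N^{d}$ non-negligible tensor modes at frequency scale $N$, and a naive summation would cost $N^{\varepsilon}$ factors that spoil the sharp $L^{p}$ range. The scalar-versus-mixed-norm comparison driving Theorem \ref{mainthmpaper} is deployed globally, so the tensor hypothesis enters only at the top level of the argument; overcoming this requires localizing that comparison in phase space so that each wave packet ``sees'' only a bounded number of tensor modes. Equivalently, one must prove a square-function estimate decomposing a general quadratically modulated wave-packet sum into tensor-structured pieces with essentially disjoint phase-space supports --- a genuine refinement of the paper's techniques, without which the strategy above yields Conjecture \ref{generalklinearapr422} only conditionally on the full $L^{2}$-based $k$-linear estimate for arbitrary inputs.
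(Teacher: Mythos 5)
The statement you are trying to prove is stated in the paper as a \emph{conjecture} (it is the general $k$-linear extension conjecture of \cite{Benn1}); the paper does not prove it, and it remains open. What the paper establishes are partial results under a tensor hypothesis (Theorem \ref{mainthmpaper} and, beyond $L^{2}$ inputs, Theorem \ref{thm1-240123}). Your interpolation skeleton is essentially the same reduction the paper records in the footnote after Conjecture \ref{generalklinearapr422}: the full range would follow from Conjecture \ref{klinear}, the near-endpoint bound \eqref{ineq1-100123} at $p=q=\tfrac{2(d+1)}{d}$ (which is the actual third vertex of the region: intersecting $\tfrac1q=\tfrac{d}{2(d+1)}$ with the third hyperplane gives exactly $p=q=\tfrac{2(d+1)}{d}$), and the trivial $(p,q)=(1,\infty)$ bound. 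But both nontrivial ingredients are themselves open, so your argument is at best a conditional reduction, as you concede in your final paragraph; it is not a proof.

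Two concrete gaps. First, your proposed removal of the tensor hypothesis in ingredient (i) does not work. Expanding $g_{j}$ in a tensor-product orthonormal basis and applying Theorem \ref{mainthmpaper} mode by mode produces a sum over $\sim N^{d}$ modes with no almost-orthogonality on the output side: changing the tensor mode of $g_{j}$ only changes linear modulations of the wave packets, so the outputs $\mathcal{E}_{U_{j}}e_{\mathbf{n}}$ occupy essentially the same spatial region and the multilinear products for different mode tuples are not phase-space disjoint; no square-function estimate of the kind you invoke is available, and a triangle-inequality summation loses unacceptable powers of $N$. Moreover, the paper's mechanism uses the tensor structure in an essential scalar identity (Lemma \ref{lemma1-3122020}, $2^{-l_{1}}\approx \prod_{i}2^{-r_{i,1}}/\|g_{1}\|_{2}^{k-1}$) that simply has no analogue for generic $g_{1}$, so the obstruction is structural, not a summation technicality. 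Second, your corner (iii) at $(1/p,1/q)=(0,\,d/(d+k+1))$ is both unjustified and misplaced: multilinear Kakeya/BCT yields the $L^{2}$-based local estimate with $R^{\varepsilon}$ losses, not an $L^{\infty}$-input bound at that exponent, and for $k<d+1$ that point violates the constraint $\tfrac1q<\tfrac{d}{2(d+1)}$, so it is a strictly stronger assertion than the conjecture itself; the correct third ingredient is \eqref{ineq1-100123}, about which the paper explicitly notes that the theory near $L^{\frac{2(d+1)}{d}}$ inputs is largely unknown (the paper only obtains Theorem \ref{thm1-240123}, with the non-sharp exponent $p(k,d)>\tfrac{2(d+1)}{d}$ and one tensor input). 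A further, smaller, issue is that the target $L^{q/k}$ is quasi-Banach in much of the range, so ``multilinear Riesz--Thorin'' must be replaced by restricted weak-type multilinear interpolation, as the paper does.
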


For $2\leq k<d+1$, to recover the whole range, it is enough\footnote{The full range of estimates follows by interpolation between these two cases and the trivial bound $(p,q)=(1,\infty)$.} to prove Conjecture \ref{klinear} and
\begin{equation}\label{ineq1-100123}
\left\|\prod_{j=1}^{k}\mathcal{E}_{U_{j}}g_{j}\right\|_{L^{\frac{2(d+1)}{kd}+\varepsilon}(\mathbb{R}^{d+1})} \lesssim_{\varepsilon} \prod_{j=1}^{k} \|g_{j}\|_{L^{\frac{2(d+1)}{d}}(U_{j})},
\end{equation}
for all $\varepsilon>0$.

\begin{remark} Observe that \eqref{ineq1-100123} covers the case $(p,q)=\left(\frac{2(d+1)}{d},\frac{2(d+1)}{d}+\varepsilon\right)$ of Conjecture \eqref{generalklinearapr422}. Notice also that this case would follow from the case $(p,q)=\left(\frac{2(d+1)}{d},\frac{2(d+1)}{d}+\varepsilon\right)$ of the \textit{linear} extension conjecture \ref{restriction} and H\"older's inequality. This means that the closer we get to the endpoint extension exponent, the less improvements transversality yield in the multilinear theory. The exception to this is the $k=d+1$ case, for which $L^{2}$ functions give the best possible output for the corresponding multilinear operator (rather than $L^{\frac{2(d+1)}{d}}$). Indeed, when one function is a tensor, the best result in this case are obtained in Section \ref{dlineartheory}.
\end{remark}

By adapting the argument that shows the case $2\leq k\leq d+1$ of Theorem \ref{mainthmpaper}, we are able to prove the following weaker version of \eqref{ineq1-100123}:

\begin{theorem}\label{thm1-240123} Let $2\leq k<d+1$. If $g_{1}$ is a tensor in addition to the hypotheses of Conjecture \ref{generalklinearapr422}, the following estimate holds:
\begin{equation}
\left\|\prod_{j=1}^{k}\mathcal{E}_{U_{j}}g_{j}\right\|_{L^{\frac{2(d+1)}{kd}+\varepsilon}(\mathbb{R}^{d+1})} \lesssim_{\varepsilon} \prod_{j=1}^{k} \|g_{j}\|_{L^{p(k,d)}(U_{j})}
\end{equation}
for all $\varepsilon>0$, where
$$
p(k,d)=
\begin{cases}
\frac{4(d+1)}{d+k+1},\quad\textnormal{if }2\leq k<\frac{d}{2},\\
\frac{4(d+1)}{2d-k+1},\quad\textnormal{if }\frac{d}{2}\leq k<d+1.
\end{cases}
$$
\end{theorem}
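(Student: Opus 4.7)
The plan is to adapt the wave-packet discretization and level-set analysis that proves the $2\le k\le d+1$ case of Theorem \ref{mainthmpaper} to $L^p$ inputs, carefully tracking how the tensor structure of $g_1$ interacts with transversality in the absence of $L^2$-orthogonality. I would fix a dyadic scale $N$, decompose each $U_j$ into cubes of side-length $N^{-1/2}$, and expand each $\mathcal{E}_{U_j}g_j$ into its quadratically modulated wave packets exactly as before. Because $g_1=g_1^{(1)}\otimes\cdots\otimes g_1^{(d)}$ is a tensor, its wave-packet coefficients factor as one-dimensional products, and I would substitute the $L^2$ mixed-norm quantities used for Theorem \ref{mainthmpaper} by their $L^p$ analogues in each of the $d$ coordinates.

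Next I would perform a level-set decomposition of the output,
\[
\Big\|\prod_{j=1}^k \mathcal{E}_{U_j} g_j\Big\|_{L^{\frac{2(d+1)}{kd}+\varepsilon}}^{\frac{2(d+1)}{kd}+\varepsilon}
\simeq \sum_{\lambda} \lambda^{\frac{2(d+1)}{kd}+\varepsilon}\,\bigl|\{|\textstyle\prod_j \mathcal{E}_{U_j}g_j|\sim \lambda\}\bigr|,
\]
and bound each super-level set by combining two ingredients. The first is a scalar count of how many wave packets of $g_j$ have amplitude $\sim\mu_j$, controlled by $\|g_j\|_p^p/\mu_j^p$; for $j=1$ this splits as a product of one-dimensional counts because $g_1$ is a tensor. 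The second is a geometric count coming from transversality that bounds how many tubes associated to different caps can simultaneously contribute at a given point, which is the usual multilinear-Kakeya input behind Conjecture \ref{klinear}.

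The two formulas for $p(k,d)$ should then emerge as the optimization over the amplitudes $\mu_j$ and the level $\lambda$. When $k<d/2$, the $d$-dimensional tensor-enhanced count on $g_1$ saturates first against the transversality estimate on the remaining $k-1$ factors, giving $p(k,d)=\frac{4(d+1)}{d+k+1}$; when $k\ge d/2$, the geometric constraint becomes the tighter one and balancing it against the scalar counts shifts the optimum to $p(k,d)=\frac{4(d+1)}{2d-k+1}$. The transition at $k=d/2$ should appear automatically as the place where these two optimizations coincide, and summing dyadically in $N$ and $\lambda$ absorbs the $\varepsilon$-loss.

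The main obstacle I anticipate is carrying out the level-set optimization cleanly in the $L^p$, $p>2$ setting, where Plancherel can no longer absorb wave-packet overlap losses. The mixed-norm gain from the tensor factorization of $g_1$ must now carry more of the weight, and one has to verify that this gain combines with the transversality of the remaining $g_2,\ldots,g_k$ in a sharp way, that no intermediate choice of amplitudes improves the exponent, and that both branches of $p(k,d)$ are intrinsic to the geometry of overlapping $N$-tubes rather than artifacts of a suboptimal interpolation.
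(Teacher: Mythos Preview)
Your proposal diverges from the paper's actual argument in ways that leave a genuine gap. You describe a scale-$N$ linearized wave-packet decomposition (cubes of side $N^{-1/2}$) and invoke a multilinear-Kakeya tube-overlap count as the transversality input. Neither is used here. The paper's discretization is the unit-scale \emph{quadratic} one from Section~\ref{discret}, and no Kakeya-type geometric estimate appears anywhere: transversality is captured entirely through the one-dimensional bilinear parabola estimate $\|ME_{2,1}(f,g)\|_2\lesssim\|f\|_2\|g\|_2$ of Proposition~\ref{prop3apr2922}, applied coordinate-by-coordinate along the $k-1$ directions supplied by weak transversality. The level sets in the paper are level sets of the scalar products $\langle g_j,\varphi_{\vec n,m}^j\rangle$ and of the mixed quantities $\|\langle g_j,\varphi_{n_i,m}^{i,j}\rangle_{x_i}\|_2$, not super-level sets of the output $\prod_j\mathcal{E}_{U_j}g_j$.

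The crucial missing idea is the actual substitution that drives this theorem: in the proof of Theorem~\ref{mainthmpaper} the Strichartz-type bound on the last $d-k+1$ coordinates (Corollary~\ref{cor1apr2922}) is replaced here by the one-dimensional $L^4\to L^{4+\delta}$ extension bound for the parabola (Proposition~\ref{prop2apr2922}), applied separately in each of those $d-k+1$ directions. This is where the tensor hypothesis on $g_1$ is used essentially, and it produces an \emph{asymmetric} restricted weak-type estimate
\[
|\tilde{\tilde{\Lambda}}_{k,d}(g,h)|\lesssim_\varepsilon |F|^{1-\frac{d}{2(d+1)}+\varepsilon}\,|E_1|^{\frac{2d-k+1}{4k(d+1)}}\prod_{l=2}^{k}|E_l|^{\frac{d+k+1}{4k(d+1)}}.
\]
The two branches of $p(k,d)$ come from nothing more subtle than taking $\min\{\frac{2d-k+1}{4k(d+1)},\frac{d+k+1}{4k(d+1)}\}$; the crossover at $k=d/2$ is where these two exponents coincide. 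There is no optimization over tube amplitudes or a competition between tensor and geometric constraints of the kind you describe. Without identifying these concrete building blocks, your sketch does not constitute a proof, and the multilinear-Kakeya route you suggest would not, as far as is known, yield the near-restriction target $L^{\frac{2(d+1)}{kd}+\varepsilon}$ for $2\le k<d+1$ even with a tensor hypothesis.
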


\begin{remark} Notice that $\frac{2(d+1)}{d}<p(k,d)$, so Theorem \ref{thm1-240123} is not optimal on the space of the input functions. On the other hand, the output $L^{\frac{2(d+1)}{kd}+\varepsilon}$ (for all $\varepsilon>0$) is the best to which one can hope to map the multilinear operator on the left-hand side. The case $k=d+1$ of the theorem above coincides with the case $k=d+1$ of the $L^{2}$-based theory, which is covered in Section \ref{dlineartheory}.
\end{remark}

\begin{remark} Bounds such as the one from Theorem \ref{thm1-240123}, i.e. in which one needs $p$ big enough (and not sharp) to map $L^{p}$ inputs to a fixed $L^{q}$, are common in the linear extension theory. For example, in \cite{Wang} Wang shows that $\mathcal{E}_{2}$ maps $L^{\infty}([-1,1]^{2})$ to $L^{q}(\mathbb{R}^{3})$ for $q>3+\frac{3}{13}$. As mentioned in \cite{Wang}, this implies the (seemingly stronger) bound
$$\|\mathcal{E}_{2}g\|_{L^{q}(\mathbb{R}^{3})}\lesssim_{q} \|g\|_{L^{q}([-1,1]^{2})} $$
for $q>3+\frac{3}{13}$ via the factorization theory of Nikishin and Pisier (see Bourgain's paper \cite{Bourg1}).
\end{remark}

\begin{remark} The multilinear extension theory for inputs near $L^{\frac{2(d+1)}{d}}$ remains largely unknown in general (except for the almost optimal result in the $k=d+1$ case in \cite{BCT}). In fact, it is not fully settled even in the $k=2$, $d>1$ case (whose $L^{2}$-based analogue is known). We refer the reader to the recent paper \cite{COh} for partial results in this direction.
\end{remark}

\begin{remark} As the reader may expect, any function can be taken to be the tensor in the statement of Theorem \ref{thm1-240123}.
\end{remark}

The linear and multilinear theories studied in this paper meet very naturally once more in the context of the techniques we use: the simplest multilinear variant of a linear operator $T$ is given by the product of a certain number of identical copies of it:

$$T_{(k)}(g_{1},\ldots,g_{k}):= \prod_{j=1}^{k}Tg_{j}. $$

Proving that $T$ maps $L^{p}(U)$ to $L^{q}(V)$ is equivalent to proving that $T_{(k)}$ maps $L^{p}(U)$ to $L^{\frac{q}{k}}(V)$, as one can easily check with H\"older's inequality. Multilinearizing $\mathcal{E}_{d}$ without any regard to transversality yields the operator

\begin{equation}\label{kprod-apr2923}
\mathcal{E}_{d,(k)}(g_{1},\ldots,g_{k}):= \prod_{j=1}^{k}\mathcal{E}_{d,(k)}g_{j}.
\end{equation}

Combining the previous observation with the factorization theory of Nikishin and Pisier, Conjecture \ref{restriction} follows from the bound

\begin{equation}\label{boundnotransv-apr2823}
\left\|\prod_{j=1}^{k}\mathcal{E}_{d,(k)}g_{j}\right\|_{L^{\frac{2(d+1)}{kd}+\varepsilon}}\lesssim_{\varepsilon} \prod_{j=1}^{k}\|g_{j}\|_{L^{\infty}([0,1]^{d})}.
\end{equation}

The proof of Theorem \ref{thm1-240123} can be adapted to show the following:

\begin{theorem}\label{thm1-apr2823} Let $2\leq k\leq d+1$. If $g_{1}$ is a tensor, the inequality
\begin{equation}
\left\|\prod_{j=1}^{k}\mathcal{E}_{d,(k)}g_{j}\right\|_{L^{\frac{2(d+1)}{kd}+\varepsilon}(\mathbb{R}^{d+1})} \lesssim_{\varepsilon} \prod_{j=1}^{k} \|g_{j}\|_{L^{4}([0,1]^{d})}
\end{equation}
holds for all $\varepsilon>0$.
\end{theorem}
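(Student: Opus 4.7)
The plan is to follow the proof of Theorem \ref{thm1-240123} essentially verbatim, changing only the step that exploits transversality of the caps. In the absence of transversality, that step will be replaced by an analytic bound made possible by the stronger $L^{4}$ input hypothesis; the tensor hypothesis on $g_{1}$ retains exactly the same organizing role.

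Concretely, I would first expand each $\mathcal{E}_{d}g_{j}$ in the paper's quadratically modulated wave packet basis at a suitable dyadic scale $R$, write the product on the left-hand side as a sum over tuples of wave packets, and pigeonhole by the size of the amplitudes $a_{T}^{(j)}$ into level sets, losing only $(\log R)^{O(1)}$ factors that are absorbed into $\varepsilon$. The tensor structure of $g_{1}$ forces the amplitudes $a_{T}^{(1)}$ to factor across the $d$ coordinate directions, which is the only way the tensor hypothesis is actually used. On a fixed level set, I would then reduce the target $L^{\frac{2(d+1)}{kd}+\varepsilon}$-norm to an estimate on the interaction of $k$-tuples of wave packets, following the mixed-versus-scalar norm bookkeeping of the proof of Theorem \ref{thm1-240123}.

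The interaction estimate is exactly where transversality was used. Transversal caps force tubes from different families to cross in a region of bounded volume, yielding a Bennett--Carbery--Tao-type geometric gain. Without transversality, wave packets from distinct families can be fully aligned and that gain is no longer available. To substitute for it, I would use the fact that $g_{j}\in L^{4}$ yields an $\ell^{4}$-type summability bound on the amplitudes $\{a_{T}^{(j)}\}_{T}$; combined with a Cauchy--Schwarz step in the physical variables, this analytic bound replaces the lost transversal gain with the required strength.

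The main obstacle is purely quantitative: one must verify that the $\ell^{4}$-based substitution closes at exactly the target output $\frac{2(d+1)}{kd}+\varepsilon$, neither weakening the output nor demanding more than $L^{4}$ on the inputs. This is a bookkeeping exercise that tracks the proof of Theorem \ref{thm1-240123} line by line; the exponent $p=4$ emerges as the critical value for which the substitution works, which is why the theorem is stated at $L^{4}$.
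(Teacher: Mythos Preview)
Your high-level strategy is exactly what the paper does: rerun the proof of Theorem~\ref{thm1-240123} and replace the single step that invoked transversality by an $L^{4}$-based substitute. However, your description of \emph{how} transversality enters that proof is off, and this matters for identifying the correct replacement.

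In the paper's framework there is no dyadic spatial scale $R$, no $(\log R)^{O(1)}$ losses, and no tube-incidence geometry of Bennett--Carbery--Tao type. The discretization is the unit-scale model $ME_{k,d}$ from Section~\ref{discret}, and transversality is used \emph{only} through the one-dimensional bilinear bound $\|ME_{2,1}(f,g)\|_{2}\lesssim\|f\|_{2}\|g\|_{2}$ (Proposition~\ref{prop3apr2922}), applied coordinate-wise to control the quantity $\|\mathbbm{1}_{\mathbb{B}_{i,1}^{r_{i,1}}\cap\mathbb{B}_{i,i+1}^{r_{i,i+1}}}\|_{\ell^{1}_{n_{i},m}}$ in the orthogonality-type bounds of Lemma~\ref{lemma1-jan1123}(a). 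The paper's replacement (Lemma~\ref{analog1-apr2823}(a)) is: raise both defining quantities to the power $2+\lambda$, apply Cauchy--Schwarz in $(n_{i},m)$ to separate the two factors, and then bound each factor by the one-dimensional $L^{4}\to L^{4+2\lambda}$ extension estimate (Proposition~\ref{prop2apr2922}). This yields
\[
\|\mathbbm{1}_{\mathbb{B}_{i,1}^{r_{i,1}}\cap\mathbb{B}_{i,i+1}^{r_{i,i+1}}}\|_{\ell^{1}_{n_{i},m}}\lesssim_{\lambda} 2^{(2+\lambda)(r_{i,1}+r_{i,i+1})}\|g_{1,i}\|_{4}^{2+\lambda}\Bigl(\prod_{\ell\neq i}\|g_{1,\ell}\|_{2+\lambda}^{2+\lambda}\Bigr)\|g_{i+1}\|_{4}^{2+\lambda},
\]
and the rest of the interpolation bookkeeping goes through with $\lambda$ chosen small relative to $\varepsilon$. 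So your ``Cauchy--Schwarz plus $\ell^{4}$ summability'' intuition lands in the right place, but the object on which it acts is a two-parameter level set of mixed-norm scalar quantities, not an interaction count for aligned tubes. If you rewrite your sketch in the level-set language of Section~\ref{klineartheory} and pinpoint the bilinear step being replaced, you will have the paper's argument.
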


\begin{remark} Since the inputs $g_{j}$ are compactly supported, Theorem \ref{thm1-apr2823} implies \eqref{boundnotransv-apr2823}.
\end{remark}

\begin{remark} Given that the proof of Theorem \ref{thm1-apr2823} has the $L^{4}-L^{4+\varepsilon}$ bound for $\mathcal{E}_{1}$ as its main building block, it is not surprising that we have a product of $L^{4}$ norms in the right-hand side of the statement above.
\end{remark}

We finish this introduction by highlighting the close connection between our results and the  theory of linear and non-linear Brascamp-Lieb inequalities. The concept of \textit{weak transversality} that we introduce can be characterized in terms of certain Brascamp-Lieb data, and by exploiting the geometric features arising from this fact we are able to verify a special case of a conjecture by Bennett, Bez, Flock and Lee. 

The paper is organized as follows: in Section \ref{discret} we present the linear and multilinear models that we will work with in the proof of Theorem \ref{mainthmpaper}. We also highlight the main differences between the linearized models that are used in most recent approaches and ours. In Section \ref{twt} we define the concepts of transversality and weak transversality, and state in what sense the former implies the latter. Section \ref{descriptionmethod} presents what we refer to as the \textit{building blocks} of our approach. Sections \ref{TSreproven}, \ref{dim1restriction} and \ref{bilinearparabola} establish these building blocks: in Section \ref{TSreproven} we revisit the case $k=1$ and $p=2$ for our model, in Section \ref{dim1restriction} we revisit Zygmund's argument and recover the case $k=1$ for $d=1$, and in Section \ref{bilinearparabola} we deal with the case $k=2$ and $d=1$. In Section \ref{lineartheory} we settle the case $k=1$ of Theorem \ref{mainthmpaper}, and in Section \ref{klineartheory} we show the cases $2\leq k\leq d+1$. Section \ref{dlineartheory} covers the endpoint estimate of the case $k=d+1$. In Section \ref{betteresttensors2} we discuss how one can improve the bounds of Conjecture \ref{klinear} under extra transversality and tensor hypotheses. Theorem \ref{thm1-240123} (our partial result beyond the $L^{2}$-based $k$-linear theory) is presented in Section \ref{beyondL2} along with its ``non-transversal" counterpart Theorem \ref{thm1-apr2823}. In Section \ref{WTBLA} we establish a connection between the classical theory of Brascamp-Lieb inequalities and our results, and give an application of this link to a conjecture made in \cite{BBFL1}. In Section \ref{finalremarks} we make a few additional remarks. Appendix \ref{appendixa} contains examples that show that the range of $p$ in Conjecture \ref{klinear} is sharp, and also that one can not obtain this range in general under a condition that is strictly weaker than transversality. Appendix \ref{appendixb} contains technical results used throughout the paper.

We thank David Beltr\'an, Jonathan Bennett, Emanuel Carneiro, Andr\'es Fernandez, Jonathan Hickman, Victor Lie, Diogo Oliveira e Silva, Keith Rogers, Mateus Sousa, Terence Tao, Joshua Zahl and the anonymous referees for many important remarks, corrections and for pointing out references in the literature.

\section{Discrete models}\label{discret}

A common first step of the earlier works is to \textit{linearize} the contribution of the quadratic phase $x\mapsto |x|^{2}$. One starts by studying $\mathcal{E}_{d}g$ on a ball of radius $R$ (hence $|(x,t)|\leq R$) and splits the domain of $g$ into balls $\theta_{k}$ of radius $R^{-\frac{1}{2}}$. Let us consider $d=1$ here for simplicity. If
$$g_{\theta_{k}}:= g\cdot\varphi_{\theta_{k}}, $$
where $\varphi_{\theta_{k}}$ is a bump adapted to $[kR^{-\frac{1}{2}},(k+1)R^{-\frac{1}{2}}]$, the quadratic exponential 

\begin{equation}
e_{x,t}(\xi) = e^{2\pi ix\xi}e^{2\pi it\xi^{2}}
\end{equation}
behaves in a similar way to a linear exponential $e^{i\# \xi}$ when restricted to this interval. Indeed, the phase-space portrait of $e_{x,t}$ is the (oblique if $t\neq 0$) line
$$u\mapsto x+2tu, $$
as it is explained in more detail in Chapter 1 of \cite{MS2}. When we evaluate this line at the endpoints of the support of $g_{\theta_{k}}$ (taking into account that $|t|\leq R$), we see that the phase-space portrait of

 $$\varphi_{\theta_{k}}\cdot e_{x,t}$$
is a parallelogram that essentially coincides with the rectangle
\begin{equation}\label{intervalfreq}
I\times J = [kR^{-\frac{1}{2}},(k+1)R^{-\frac{1}{2}}]\times [x+2tkR^{-\frac{1}{2}},x+2tkR^{-\frac{1}{2}}+R^{\frac{1}{2}}].
\end{equation}
Observe that $I\times J$ has area $1$. On the other hand, the phase-space portrait of $\varphi_{\theta_{k}}$ is a Heisenberg box of sizes $R^{-\frac{1}{2}}$ and $R^{\frac{1}{2}}$, and the linear modulation
\begin{equation}\label{linearoct242021}
e^{2\pi i\xi(x+2tkR^{-\frac{1}{2}})}
\end{equation}
shifts it in frequency to $J$. The conclusion is that the phase-space portrait of 

$$\varphi_{\theta_{k}}\cdot e^{2\pi i\xi(x+2tkR^{-\frac{1}{2}})} $$
is the Heisenberg box \eqref{intervalfreq}, hence the effect of the quadratic modulation $e_{x,t}$ in this setting is essentially the same as the linear one in \eqref{linearoct242021}.

Using bumps such as $\varphi_{\theta}$ to decompose the domain of $g$ and expanding each $g_{\theta}$ into Fourier series allows us to write

$$g(x)=\sum_{\theta\in R^{-\frac{1}{2}}\mathbb{Z}^{d}\cap [0,1]^{d}}\overbrace{g(x)\varphi_{\theta}(x)}^{g_{\theta}(x)}\widetilde{\varphi}_{\theta}(x)=\sum_{\theta\in R^{-\frac{1}{2}}\mathbb{Z}^{d}\cap [0,1]^{d}}\sum_{\nu\in R^{\frac{1}{2}}\mathbb{Z}^{d}}\overbrace{c_{\nu,\theta}e^{2\pi ix\cdot\nu}\widetilde{\varphi}_{\theta}(x)}^{g_{\theta,\nu}(x)},$$
where $\widetilde{\varphi}_{\theta}$ is $\equiv 1$ on the support of $\varphi_{\theta}$ and decays very fast away from it. Applying $\mathcal{E}_{d}$ and using the previous intuition gives rise to the \textit{wave packet decomposition}
$$\mathcal{E}_{d}g=\sum_{(\theta,\nu)\in R^{-\frac{1}{2}}\mathbb{Z}^{d}\cap [0,1]^{d}\times R^{\frac{1}{2}}\mathbb{Z}^{d}}\mathcal{E}_{d}(g_{\theta,\nu}), $$
where $\mathcal{E}_{d}(g_{\theta,\nu})$ is essentially supported on a tube in $\mathbb{R}^{d+1}$ of size $R^{\frac{1}{2}}\times\ldots\times R^{\frac{1}{2}}\times R$ whose direction is determined by $\theta$ and that is translated by a parameter depending on $\nu$. With this linearized model at hand, one can study the interference between these tubes pointing in different directions (both in the linear and multilinear settings) and take advantage of orthogonality both in space and in frequency. This leads to local estimates of type
$$\|\mathcal{E}_{d}g\|_{L^{q}(B(0,R))}\lesssim_{\varepsilon}R^{\varepsilon}\|f\|_{p},\quad\forall\varepsilon>0 $$
and multilinear analogues of it that are later used to obtain global estimates via $\varepsilon$-removal arguments (as in \cite{Tao2}). The reader is referred to \cite{Guth2} for the details of the decomposition above. This approach has given the current best $L^{p}$ bounds for $\mathcal{E}_{d}$.

In our case, we do not linearize the contribution of the quadratic phase. Instead, we consider a discrete model that keeps the quadratic nature of $\mathcal{E}_{d}$ intact.

\subsection{The linear model ($k=1$)}\label{discretizationlinear}

We consider $d=1$ for simplicity, but the discretization process is analogous for all $d>1$. Recall that the extension operator for the parabola defined for functions supported on $[0,1]$ is given by

\begin{equation}\label{def1-apr2823}
\mathcal{E}_{1}g(x,t)=\int_{0}^{1}g(\xi)e^{-2\pi ix\xi}e^{-2\pi it\xi^{2}}\mathrm{d}\xi.
\end{equation}

We can insert a bump $\varphi$ in the integrand that is equal to $1$ on $[0,1]$ and supported in a small neighborhood of this interval. Tiling $\mathbb{R}^{2}$ with unit squares with vertices in $\mathbb{Z}^{2}$ and rewriting $\mathcal{E}_{1}$,

$$\mathcal{E}_{1}g(x,t)=\sum_{n,m\in\mathbb{Z}}\left[\int g(u)\varphi(u)e^{-2\pi ixu}e^{-2\pi itu^{2}}\mathrm{d}u\right]\chi_{n}(x)\chi_{m}(t), $$
where $\chi_{n}:=\chi_{[n,n+1)}$. For a fixed $(x,t)$, one can write
\begin{equation*}
    \eqalign{
    e^{-2\pi ix\xi} e^{-2\pi it\xi^{2}}\varphi(\xi)&\displaystyle = e^{-2\pi in\xi} e^{-2\pi im\xi^{2}}\cdot e^{-2\pi i(x-n)\xi}e^{-2\pi i(t-m)\xi^{2}}\varphi(\xi) \cr
    &\displaystyle= e^{-2\pi in\xi} e^{-2\pi im\xi^{2}}\cdot\sum_{u\in\mathbb{Z}}\langle e^{-2\pi i(x-n)(\cdot)}e^{-2\pi i(t-m)(\cdot)^{2}},\varphi_{[0,1]}^{u}\rangle\cdot\varphi_{[0,1]}^{u}(\xi) \cr
    &= \displaystyle e^{-2\pi in\xi} e^{-2\pi im\xi^{2}}\cdot\sum_{u\in\mathbb{Z}}C^{n,m,x,t}_{u}\cdot\varphi_{[0,1]}^{u}(\xi) \cr
    }
\end{equation*}
where we expanded $e^{-2\pi i(x-n)\xi} e^{-2\pi i(t-m)\xi^{2}}$ as a Fourier series at scale $1$,
$$C^{n,m,x,t}_{u}:=\langle e^{-2\pi i(x-n)(\cdot)}e^{-2\pi i(t-m)(\cdot)^{2}},\varphi_{[0,1]}^{u}\rangle\,$$
$$\varphi_{[0,1]}^{u}(\xi) := \varphi_{[0,1]}(\xi)\cdot e^{-2\pi i u\cdot\xi}$$
and $\varphi_{[0,1]}$ is a bump adapted to $[0,1]$ (and compactly supported) just like\footnote{We will not distinguish between $\varphi_{[0,1]}$ and $\varphi$ from now on.} $\varphi$. Plugging this in \eqref{def1-apr2823},

\begin{equation*}
\eqalign{
\displaystyle\mathcal{E}_{1}g(x,t)&\displaystyle =\sum_{n,m\in\mathbb{Z}}\left[\int g(\xi)\varphi(\xi)e^{-2\pi ix\xi}e^{-2\pi it\xi^{2}}\mathrm{d}\xi\right]\chi_{n}(x)\chi_{m}(t)\cr
&\displaystyle=\sum_{n,m\in\mathbb{Z}}\left[\int g(\xi)\left(e^{-2\pi in\xi}e^{-2\pi im\xi^{2}}\cdot\sum_{u\in\mathbb{Z}}C^{n,m,x,t}_{u}\cdot\varphi^{u}(\xi)\right)\mathrm{d}\xi\right]\chi_{n}(x)\chi_{m}(t)\cr
&\displaystyle=\sum_{u\in\mathbb{Z}}\sum_{n,m\in\mathbb{Z}}C^{n,m,x,t}_{u}\cdot\left[\int g(\xi)e^{-2\pi in\xi} e^{-2\pi im\xi^{2}}\cdot\varphi^{u}(\xi)\mathrm{d}\xi\right]\chi_{n}(x)\chi_{m}(t).\cr
}
\end{equation*}

For the expression defining $\mathcal{E}_{1}$ to be nonzero, $(n,m)$ must satisfy $|x-n|\leq 1$ and $|t-m|\leq 1$, hence the Fourier coefficients $C_{u}^{n,m,x,t}$ decay like $O(|u|^{-100})$. In addition, the extra factor $\varphi^{u}$ in the integral simply shifts the integrand in frequency, and this does not affect in any way the arguments that follow. In order to obtain the final form of our linear model, let us introduce the following notation: if $\varphi$ is a compactly supported bump (say, in a very small open neighborhood of $[0,1]^{d}$) with $\varphi\equiv 1$ on $[0,1]^{d}$ we set

\begin{equation}\label{2wp}
\varphi_{\overrightarrow{\textit{\textbf{n}}},m}(x):=\varphi(x)e^{2\pi ix\cdot\overrightarrow{\textit{\textbf{n}}}}e^{2\pi i|x|^{2}m}.
\end{equation}

Due to the fast decay of $C_{u}^{n,x}$ and $C_{v}^{m,t}$, it is then enough to bound the $u=v=0$ piece of the sum above, which leads to the discretized model\footnote{There is a slight abuse of notation here: observe that $\widetilde{\chi}_{n}(x)\widetilde{\chi}_{m}(t):=C_{0}^{n,m,x,t}\cdot\chi_{n}(x)\chi_{m}(t)$ is a smooth function supported in $[n,n+1)\times [m,m+1)$, which is all that is needed in the proof. We will continue to call it $\chi_{n}(x)\chi_{m}(t)$ to lighten the notation.}:

$$E_{1}(g)=\sum_{(n,m)\in\mathbb{Z}^{2}}\langle g,\varphi_{n,m}\rangle(\chi_{n}\otimes\chi_{m}).$$

With the appropriate adaptations, one proceeds in the exact same way in dimension $d$ to reduce matters to the study of the following model operator:

\begin{definition}\label{linearmodelfeb21} Let $E_{d}$ be defined on $C([0,1]^{d})$ given by

$$E_{d}(g)=\sum_{\substack{\overrightarrow{\textit{\textbf{n}}}\in\mathbb{Z}^{d} \\ m\in\mathbb{Z}}}\langle g,\varphi_{\overrightarrow{\textit{\textbf{n}}},m}\rangle(\chi_{\overrightarrow{\textit{\textbf{n}}}}\otimes\chi_{m}), $$
where $\chi_{\overrightarrow{\textit{\textbf{n}}}}$ and $\chi_{m}$ are the characteristic functions of the boxes $[n_{1},n_{1}+1)\times\ldots\times[n_{d},n_{d}+1)$ and $[m,m+1)$, respectively.\footnote{Morally speaking, the discrete model and the original operator are ``comparable", but we were not able to prove that rigorously. For that reason we included the proof of known extension estimates for $E_{d}$.}
\end{definition}

The wave packets \eqref{2wp} have a natural phase-space portrait that consist of parallelograms in the phase plane.

\begin{figure}[H]
  \centering
\captionsetup{font=normalsize,skip=1pt,singlelinecheck=on}
  \includegraphics[scale=0.65]{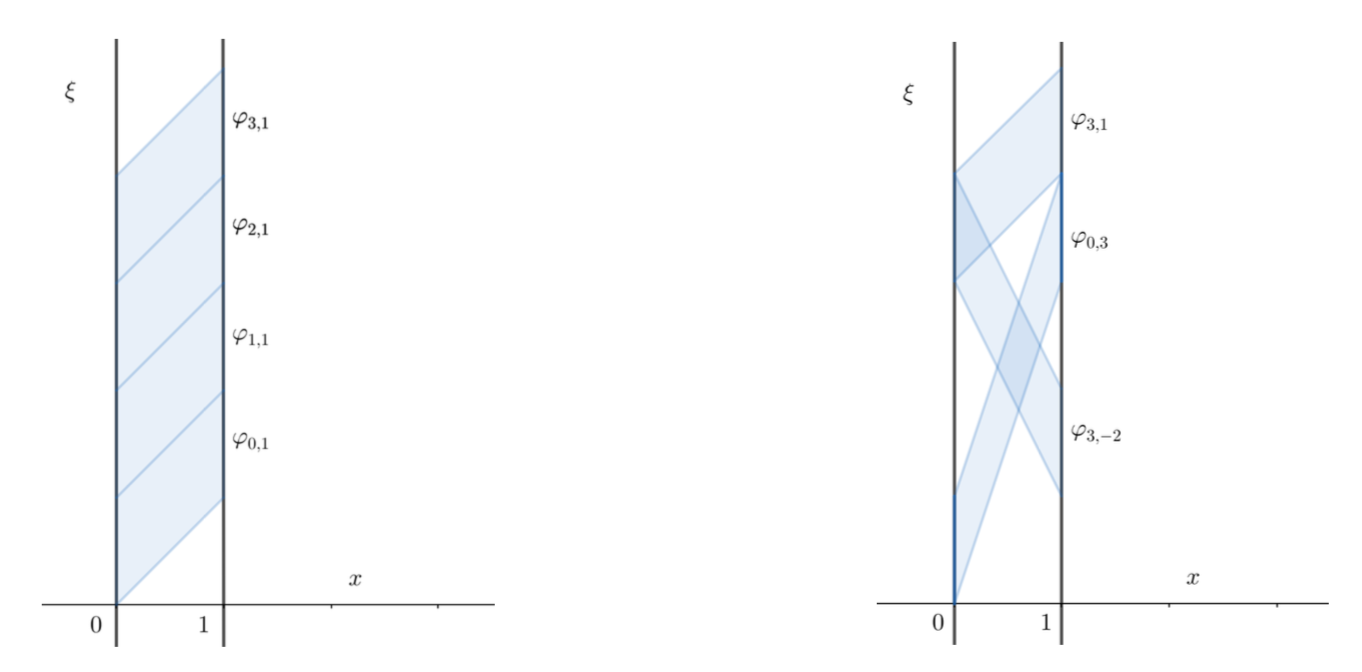}
  \caption{The phase-space portrait of $\varphi_{n,m}$}
\end{figure}

By keeping the quadratic nature of $E_{d}$ intact we take advantage of orthogonality in different ways. For example, for a fixed $m$ the wave packets $\varphi_{n,m}$ are almost orthogonal, as suggested by the fact that the corresponding parallelograms are (almost) disjoint.

\subsection{The multilinear model ($2\leq k\leq d+1$)} We recall the definition of the $k$-linear extension operator:

\begin{definition}\label{defdlinear} For $\mathcal{Q}=\{Q_{1},\ldots,Q_{k}\}$ a transversal set of cubes, the $k$-linear extension operator is given by
\begin{equation}\label{multop-apr2823}
\mathcal{M}\mathcal{E}_{k,d}(g_{1},\ldots,g_{k}):=\prod_{j=1}^{k}\mathcal{E}_{Q_{j}}g_{j},
\end{equation}
where
$$\mathcal{E}_{Q_{j}}g_{j}(x,t)=\int_{Q_{j}}g_{j}(\xi)e^{-2\pi ix\cdot\xi}e^{-2\pi it|\xi|^{2}}\mathrm{d}\xi, \quad (x,t)\in\mathbb{R}^{d}\times\mathbb{R}. $$
\end{definition}

By an analogous argument to the one we showed in Subsection \ref{discretizationlinear}, it is enough to prove the corresponding bounds for the following model operator:

\begin{definition}\label{defklinearmodel} Let $ME_{k,d}$ be defined on $C(Q_{1})\times\ldots\times C(Q_{k})$ by
$$ME_{k,d}(g_{1},\ldots,g_{k}):=\sum_{(\overrightarrow{\textit{\textbf{n}}},m)\in\mathbb{Z}^{d+1}}\prod_{j=1}^{k}\langle g_{j},\varphi^{j}_{\overrightarrow{\textit{\textbf{n}}},m}\rangle(\chi_{\overrightarrow{\textit{\textbf{n}}}}\otimes\chi_{m}).$$

where 
$$\varphi^{j}_{\overrightarrow{\textit{\textbf{n}}},m}=\bigotimes_{l=1}^{d}\varphi^{l,j}_{n_{l},m},\quad \varphi^{l,j}_{n_{l},m}(x_{l})=\varphi^{l,j}(x_{l})e^{2\pi in_{l}x_{l}}e^{2\pi imx_{l}^{2}} $$
and $\varphi^{l,j}(x)$ is $\equiv 1$ on the $l$-coordinate projection of the domain of $g_{j}$ defined above and decays fast away from it.

\end{definition}

\begin{remark} It is clear that the discretization process does not depend on whether the collection $\mathcal{Q}$ is made of transversal cubes or not. In particular, it will be of interest in Subsection \ref{nontransv-apr2823} to study the operator given by the right-hand side of \eqref{multop-apr2823}, but \textit{without} the assumption that the cubes $Q_{j}$ are transversal. The model for such operator is also given by $ME_{k,d}$, but without that hypothesis.
\end{remark}

\section{Transversality versus weak transversality}\label{twt}

We recall the following definition from \cite{Benn1}:

\begin{definition}\label{deftransversality}
Let $2\leq k\leq d+1$ and $c>0$. A $k$-tuple $S_{1},\ldots,S_{k}$ of smooth codimension-one submanifolds of $\mathbb{R}^{d+1}$ is \textit{$c$-transversal} if
$$|v_{1}\wedge\ldots\wedge v_{k}|\geq c $$
for all choices $v_{1},\ldots,v_{k}$ of unit normal vectors to $S_{1},\ldots,S_{k}$, respectively. We say that $S_{1},\ldots,S_{k}$ are \textit{transversal} if they are $c$-transversal for some $c>0$.
\end{definition}

In other words, if the $k$-dimensional volume of the parallelepiped generated by $v_{1},\ldots,v_{k}$ is bounded below by some absolute constant for any choice of normal vectors $v_{j}$, the submanifolds are transversal. From now on, we will say that a collection of $k$ cubes in $\mathbb{R}^{d}$ is \textit{transversal} if the associated caps defined by them on the paraboloid are transversal in the sense of Definition \ref{deftransversality}.

One can assume without loss of generality that the $U_{j}$'s in the statements of Conjecture \ref{klinear} are cubes that parametrize transversal caps on $\mathbb{P}^{d}$ via the map $x\mapsto |x|^{2}$. Even though these conjectures are known to fail in general if one does not assume transversality between the caps (see Appendix \ref{appendixa2}), the theorem that we will prove holds under a weaker condition, since one of the functions is a tensor.

\begin{definition}\label{weaktransv} Let $\mathcal{Q}=\{Q_{1},\ldots, Q_{k}\}$ be a collection of $k$ (open or closed) cubes\footnote{The word \textit{cube} will be used throughout the paper to refer to any rectangular box in $\mathbb{R}^{d}$, regardless of the sizes of its edges, and they always refer to the supports of the input functions of our linear and multilinear operators. In this paper, it will not be relevant whether the sides of a box have the same length or not, therefore this slight abuse of terminology is harmless.} in $\mathbb{R}^{d}$. $\mathcal{Q}$ is said to be \textit{weakly transversal with pivot $Q_{j}$} if for all $1\leq j\leq k$ there is a set of $(k-1)$ distinct directions $\mathscr{E}_{j}=\{e_{i_{1}},\ldots,e_{i_{k-1}}\}$ (depending on $j$) of the canonical basis such that

\begin{equation}\label{eqwtmay722}
    \begin{dcases}
        \overline{\pi_{i_{1}}(Q_{j})} \cap \overline{\pi_{i_{1}}(Q_{1})} =\emptyset, \\
       \qquad \vdots \\
        \overline{\pi_{i_{j-1}}(Q_{j})} \cap \overline{\pi_{i_{j-1}}(Q_{j-1})} =\emptyset, \\
        \overline{\pi_{i_{j}}(Q_{j})} \cap \overline{\pi_{i_{j}}(Q_{j+1})} =\emptyset, \\
        \qquad \vdots \\
        \overline{\pi_{i_{k-1}}(Q_{j})} \cap \overline{\pi_{i_{k-1}}(Q_{k})} =\emptyset, \\
    \end{dcases}
\end{equation}
where $\pi_{l}$ is the projection onto $e_{l}$. We say that $\mathcal{Q}$ is \textit{weakly transversal} if it is weakly transversal with pivot $Q_{j}$ for all $1\leq j\leq k$.\footnote{The estimates that we will prove depend on the separation of the projections in Definition \ref{weaktransv}, just as they depend on the behavior of $c$ from Definition \ref{deftransversality} in the general case for transversal caps.}

\end{definition}

\begin{remark} For each $1\leq j\leq k$, from now on we will refer to a set\footnote{The typeface $\mathscr{E}_{j}$ is being used to distinguish this concept from the previously defined operators $\mathcal{E}_{d}$ and $E_{d}$.} $\mathscr{E}_{j}$ above as \textit{a set of directions associated to $Q_{j}$}. Notice that there could be many of such sets for a single $j$. Also, if $j_{1}\neq j_{2}$, it could be the case that no set of directions associated to $Q_{j_{1}}$ is associated to $Q_{j_{2}}$.  
\end{remark}

Let us give a few examples to distinguish between definitions \ref{deftransversality} and \ref{weaktransv}. Consider the case $d=2$, $k=3$, $Q_{1}=[0,1]^{2}$, $Q_{2}=[2,3]^{2}$, and $Q_{3}=[4,5]^{2}$. The line $y=x$ intersects $Q_{1}$, $Q_{2}$ and $Q_{3}$, then it follows from Definition \ref{deftransversality} that they are not transversal. However, observe that

\[
    \begin{dcases}
        \pi_{1}(Q_{1}) \cap \pi_{1}(Q_{2}) =\emptyset, \\
        \pi_{2}(Q_{1}) \cap \pi_{2}(Q_{3}) =\emptyset, \\
    \end{dcases}
\]
so $\{e_{1},e_{2}\}$ is a set associated to $Q_{1}$ (and similarly one can verify that it is also associated to $Q_{2}$ and $Q_{3}$). This shows that the collection defined by $Q_{1}$, $Q_{2}$ and $Q_{3}$ is weakly transversal.

Consider now the cubes $K_{1}=[0,1]^{2}$, $K_{2}=[4,5]\times [0,1]$ and $K_{3}=[2,3]^{2}$. Not only are they transversal in the sense of Definition \ref{deftransversality}, but also weakly transversal.

\begin{figure}[H]
  \centering
\captionsetup{font=normalsize,skip=1pt,singlelinecheck=on}
  \includegraphics[scale=.55]{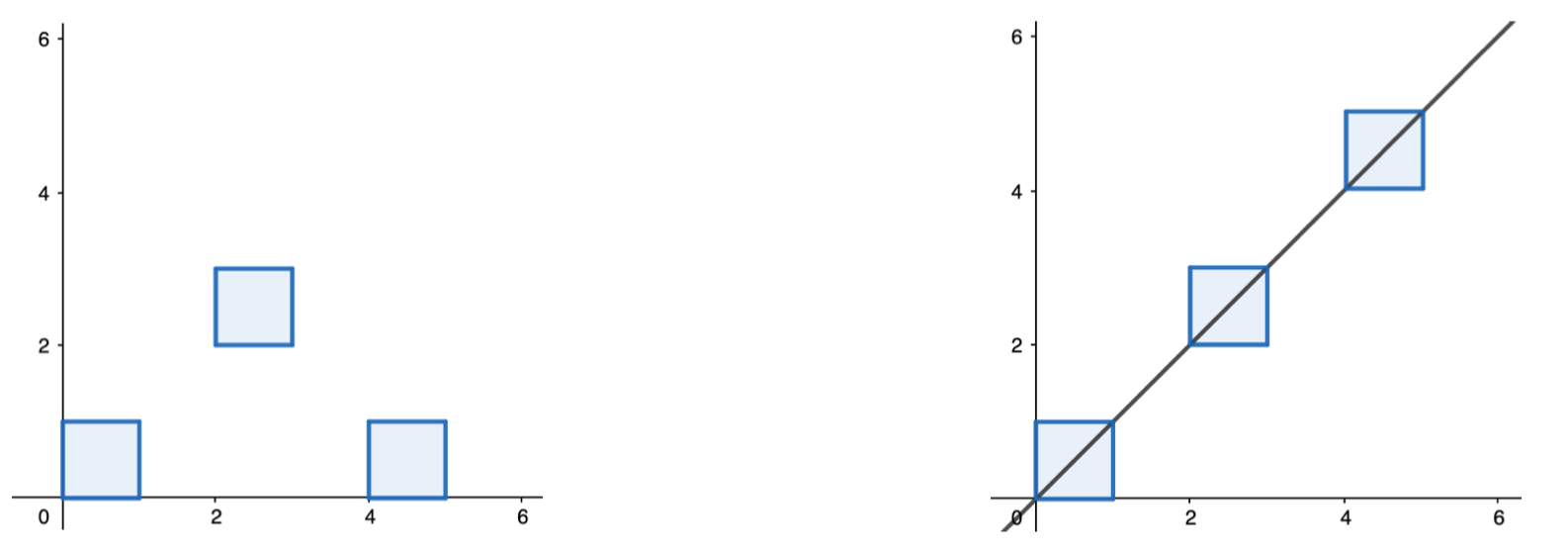}
 \caption{Transversality versus weak transversality}
\end{figure}
This is not by chance: a given transversal collection of $k$ cubes can be ``decomposed" into finitely many collections of $k$ cubes that are \textbf{also} weakly transversal. 

\begin{claim}\label{claim2-081221} Given a collection $\mathcal{Q}=\{Q_{1},\ldots,Q_{k}\}$ of transversal cubes, each $Q_{l}\in\mathcal{Q}$ can be partitioned into $O(1)$ many sub-cubes
$$Q_{l}=\bigcup_{i}Q_{l,i} $$
so that all collections $\widetilde{\mathcal{Q}}$ made of picking one sub-cube $Q_{l,i}$ per $Q_{l}$
$$\widetilde{\mathcal{Q}}=\{\widetilde{Q}_{1},\ldots,\widetilde{Q}_{k}\},\quad \widetilde{Q}_{l}\in\{Q_{l,i}\}_{i},$$
are weakly transversal.

\end{claim}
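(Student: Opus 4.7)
The plan is to partition each $Q_l$ into sub-cubes of a small uniform side length $\delta=\delta(c,k,d)>0$ and to verify weak transversality for every resulting sub-collection via Hall's marriage theorem. Since each $Q_l\subset[0,1]^d$, this yields $O(\delta^{-d})=O_{c,k,d}(1)$ sub-cubes per $Q_l$.

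The first step is to translate the $c$-transversality hypothesis into a lower bound on the volume of positional differences. For any pivot $j$, the antisymmetry of the wedge product gives $|N_1\wedge\cdots\wedge N_k|=\bigl\|N_j\wedge\bigwedge_{l\neq j}(N_l-N_j)\bigr\|$, and since $N_l-N_j=(2(\xi_l-\xi_j),0)$ lies in the horizontal hyperplane while $\|N_j\|\leq\sqrt{4d+1}$, the Hadamard-type inequality $\|u\wedge w\|\leq\|u\|\,\|w\|$ yields
$$c\leq\sqrt{4d+1}\cdot 2^{k-1}\,\mathrm{vol}_{k-1}\bigl(\{\xi_l-\xi_j\}_{l\neq j}\bigr).$$
Hence $\mathrm{vol}_{k-1}(\{\xi_l-\xi_j\}_{l\neq j})\geq c'=c'(c,k,d)>0$ for every pivot $j$ and every $\xi_l\in Q_l$. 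A Gram--Schmidt factorization of these $k-1$ vectors, together with the diameter bound $\|\xi_l-\xi_j\|\leq\sqrt d$, then upgrades this to the subset volume bound $\mathrm{vol}_{|L|}(\{\xi_l-\xi_j:l\in L\})\geq c''=c''(c,k,d)>0$ for every $L\subseteq\{1,\ldots,k\}\setminus\{j\}$.

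Now fix any sub-collection $\widetilde{\mathcal{Q}}=\{\widetilde Q_1,\ldots,\widetilde Q_k\}$ and pivot $j$, and denote by $\xi_l$ the center of $\widetilde Q_l$. Define
$$A_l:=\bigl\{i\in\{1,\ldots,d\}:\overline{\pi_i(\widetilde Q_j)}\cap\overline{\pi_i(\widetilde Q_l)}=\emptyset\bigr\},\quad l\neq j.$$
Weak transversality with pivot $Q_j$ is precisely the statement that the family $(A_l)_{l\neq j}$ admits a system of distinct representatives, so by Hall's marriage theorem it suffices to verify $|\bigcup_{l\in L}A_l|\geq|L|$ for every $L$. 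If this failed for some $L$, then $D:=\bigcup_{l\in L}A_l$ would satisfy $|D|\leq|L|-1$, and for each $i\notin D$ and each $l\in L$ the sub-cubes $\widetilde Q_j$ and $\widetilde Q_l$ would have overlapping $i$-th projections, forcing $|\xi_l^{(i)}-\xi_j^{(i)}|\leq\delta$. Consequently each $\xi_l-\xi_j$ with $l\in L$ lies within distance $\sqrt d\,\delta$ of $V_D:=\mathrm{span}(e_i:i\in D)$, a subspace of dimension strictly less than $|L|$.

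Writing $\xi_l-\xi_j=p_l+r_l$ with $p_l\in V_D$ and $\|r_l\|\leq\sqrt d\,\delta$ and expanding $\bigwedge_{l\in L}(p_l+r_l)$, the all-$p$ term vanishes for dimensional reasons and every remaining term in the multilinear expansion carries at least one factor of norm $O(\delta)$ while the other factors are of norm $O(1)$; this yields $\mathrm{vol}_{|L|}(\{\xi_l-\xi_j:l\in L\})\leq C_{k,d}\,\delta$. Choosing $\delta<c''/C_{k,d}$ contradicts the subset volume bound from the previous step, so Hall's condition holds and weak transversality with pivot $Q_j$ follows for every $j$. The main technical obstacle I anticipate is carrying out the Hadamard and Gram--Schmidt steps cleanly so that the constants $c'$, $c''$, and $C_{k,d}$ truly depend only on $c$, $k$, $d$; once that is in place, the reduction to Hall's theorem is exactly what uniformizes the conclusion across the $O_{c,k,d}(1)$ sub-collections simultaneously.
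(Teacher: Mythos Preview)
Your proof is correct and takes a genuinely different route from the paper's. The paper partitions each $Q_l$ using the endpoints of the projections $\pi_j(Q_1),\ldots,\pi_j(Q_k)$ as grid points (so that any two sub-cube projections are disjoint, equal, or share a single endpoint), then chooses a single test point in each sub-cube and appeals to a linear-algebra lemma: if the $(d+1)\times k$ matrix of normals has rank $k$, then for each column one can find $k-1$ rows witnessing the required coordinate inequalities (proved via Leibniz expansion of a determinant). Your approach instead uses a uniform $\delta$-grid with $\delta$ chosen quantitatively in terms of the transversality constant $c$, recasts weak transversality as a system of distinct representatives, and verifies Hall's condition via a wedge-product volume estimate.

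The trade-offs: the paper's partition is more economical (it uses only the combinatorics of the given projections, so the number of pieces depends only on $k$, not on $c$), whereas your $O(\delta^{-d})$ count blows up as $c\to 0$. On the other hand, your argument is more transparently quantitative and makes explicit how the transversality constant $c$ controls the scale, and the reduction to Hall's theorem is a clean way to handle the matching problem uniformly over all pivots. One small inaccuracy: you assume $Q_l\subset[0,1]^d$, but the paper's cubes (e.g.\ $[4,5]\times[0,1]^{d-1}$) are not contained there; this is harmless since your diameter bounds only require the cubes to sit in some fixed bounded region, and the constants adjust accordingly.
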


\begin{proof} See Claim \ref{claima2-081221} in the appendix.

\end{proof}
As a consequence of Claim \ref{claim2-081221}, to prove the case $2\leq k\leq d+1$ of Theorem \ref{mainthmpaper} it suffices to show it for weakly transversal collections. To simplify the exposition, we will present our results for the cubes

\begin{equation*}
\eqalign{
Q_{1}&=[0,1]^{d}, \cr
Q_{j}&= [2,3]^{j-2}\times [4,5]\times [0,1]^{d-j+1},\quad 2\leq j\leq k. \cr
}
\end{equation*}
The associated directions to $Q_{1}$ are $\{e_{1},\ldots,e_{k-1}\}$, and we will use it as the pivot. Any other weakly transversal collection of cubes can be dealt with in the same way.

\section{Our approach and its building blocks}\label{descriptionmethod}

Notice that the operators $\mathcal{E}_{d}$ and $\mathcal{ME}_{k,d}$ are pointwise bounded by $E_{d}$ and $ME_{k,d}$, respectively, therefore we can not directly conclude any result about the models from the fact that they hold for the original operators. Some of these results will be reproven for the models in this paper, and they will act as \textit{building blocks} in the proof of Theorem \ref{mainthmpaper}, which is presented in Sections \ref{lineartheory} and \ref{klineartheory}. More precisely, Theorem \ref{mainthmpaper} relies on the following:
\vspace{.5cm}

\begin{enumerate}
\item \underline{Mixed norm Strichartz/Tomas-Stein ($k=1$, $p=2$).} In Section \ref{TSreproven} we show the following: 

\begin{proposition}\label{prop1apr2922} For all $p > \frac{2(d+2)}{d}$,
$$\|E_{d}g\|_{p}\lesssim_{p}\|g\|_{2}. $$
\end{proposition}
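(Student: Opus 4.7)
Since $E_{d}g$ is piecewise constant on the unit boxes $\chi_{\overrightarrow{\textit{\textbf{n}}}} \otimes \chi_m$ tiling $\mathbb{R}^{d+1}$, one has
$$\|E_{d}g\|_{L^{p}(\mathbb{R}^{d+1})}^{p}=\sum_{(\overrightarrow{\textit{\textbf{n}}},m)\in\mathbb{Z}^{d+1}}|\langle g,\varphi_{\overrightarrow{\textit{\textbf{n}}},m}\rangle|^{p},$$
so the plan is to control this discrete sum by $\|g\|_{2}^{p}$ via the $TT^{*}$ method of Tomas. By duality the bound is equivalent to $\|E_{d}E_{d}^{*}\|_{L^{p'}\to L^{p}}<\infty$, and a short computation identifies $E_{d}E_{d}^{*}h$ as a piecewise constant function on unit boxes whose value on the box $(\overrightarrow{\textit{\textbf{n}}}',m')$ equals the $\mathbb{Z}^{d+1}$-convolution $(K\ast c(h))(\overrightarrow{\textit{\textbf{n}}}',m')$, where $c_{\overrightarrow{\textit{\textbf{n}}},m}(h)$ is the average of $h$ over the corresponding box and
$$K(\Delta,\tau)=\int\varphi(\xi)^{2}\,e^{-2\pi i(\Delta\cdot\xi+\tau|\xi|^{2})}\,d\xi.$$
Hölder on each unit box gives $\|c(h)\|_{\ell^{p'}(\mathbb{Z}^{d+1})}\lesssim \|h\|_{L^{p'}}$, so it suffices to show that convolution with $K$ maps $\ell^{p'}(\mathbb{Z}^{d+1})\to\ell^{p}(\mathbb{Z}^{d+1})$ in the range $p>\frac{2(d+2)}{d}$.

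I would then dyadically decompose $K=K_{0}+\sum_{j\geq 1}K_{j}$ with $K_{j}$ supported on $|\tau|\sim 2^{j}$, and establish two building-block estimates. The first is the dispersive $\ell^{\infty}$ bound $\|K_{j}\|_{\ell^{\infty}(\mathbb{Z}^{d+1})}\lesssim 2^{-jd/2}$: stationary phase applied to the integral defining $K(\Delta,\tau)$, whose quadratic phase has Hessian $4\pi\tau\,I_{d}$ and critical point $\xi_{0}=-\Delta/(2\tau)$, gives the claimed decay whenever $\xi_{0}\in\operatorname{supp}(\varphi)$, while nonstationary phase integration by parts handles the complementary regime. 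The second is the Plancherel bound $\|K_{j}\ast(\cdot)\|_{\ell^{2}\to\ell^{2}}\lesssim 2^{j}$, obtained by Poisson summation: viewing $K_{j}$ as the sampling on $\mathbb{Z}^{d+1}$ of a smoothly time-truncated continuous kernel, its discrete Fourier transform on $\mathbb{T}^{d+1}$ is the periodization of $\varphi(\eta)^{2}\cdot 2^{j}\widehat{\psi}\bigl(2^{j}(s-|\eta|^{2})\bigr)$, a smoothed version of the surface measure $\varphi^{2}\,d\sigma$ on the paraboloid of height $2^{j}$ and thickness $2^{-j}$, whose periodization remains $O(2^{j})$ thanks to the compactness of $\operatorname{supp}(\varphi)$ and the curvature of the paraboloid.

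Interpolating these two bounds by Riesz--Thorin produces
$$\|K_{j}\ast(\cdot)\|_{\ell^{p'}\to\ell^{p}}\lesssim (2^{-jd/2})^{1-2/p}(2^{j})^{2/p}=2^{j\left(\frac{d+2}{p}-\frac{d}{2}\right)},$$
and summing the resulting geometric series over $j\geq 0$ is possible precisely when $p>\frac{2(d+2)}{d}$; the diagonal piece $K_{0}$ is immediate since $K(\Delta,0)=\widehat{\varphi^{2}}(\Delta)$ decays rapidly. The step I expect to require the most care is the Plancherel estimate for $K_{j}$: it is the discrete incarnation of the classical fact that the Fourier transform of a smoothly truncated paraboloid surface measure is bounded by the reciprocal of its thickness, and one must verify that the compactness of $\operatorname{supp}(\varphi)$ together with the transversality of the paraboloid to the ``time'' direction keep the $\mathbb{T}^{d+1}$-periodization uniformly of size $2^{j}$ rather than allowing it to blow up from overlap.
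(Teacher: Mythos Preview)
Your proof is correct and follows the classical Tomas--Stein $TT^*$ route, which the paper explicitly acknowledges as a valid (and in fact endpoint-capable) alternative but deliberately sets aside. The paper instead dualises to the bilinear form $\Lambda_d(g,h)$, performs a level-set decomposition in both $|\langle g,\varphi_{\overrightarrow{\textit{\textbf{n}}},m}\rangle|$ and $|\langle h,\chi_{\overrightarrow{\textit{\textbf{n}}}}\otimes\chi_m\rangle|$, and bounds the cardinality $\#\mathbb{X}^{l_1,l_2}$ in two ways: an $L^1$-type bound from $h$ and an $L^2$-type bound from $g$ that unfolds a $TT^*$ expression, applies the same stationary-phase decay $|\langle\varphi_{\overrightarrow{\textit{\textbf{n}}},m},\varphi_{\overrightarrow{\textit{\textbf{k}}},\tilde m}\rangle|\lesssim\langle m-\tilde m\rangle^{-d/2}$ you use, interpolates it against Bessel, and closes with discrete Hardy--Littlewood--Sobolev. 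So the analytic ingredients---dispersive kernel decay and orthogonality---are identical; what differs is the interpolation scaffolding. Your dyadic Riesz--Thorin plus geometric summation is more direct and recovers the standard Strichartz argument essentially verbatim; the paper's restricted weak-type level-set framework is chosen precisely because it is the template reused throughout Sections~\ref{lineartheory}--\ref{klineartheory} for the main theorem, where the simultaneous bookkeeping of several scalar and mixed-norm quantities becomes essential. The step you flag as delicate---controlling the periodisation of $|\varphi(\eta)|^2\,2^j\widehat\psi(2^j(s-|\eta|^2))$ on $\mathbb{T}^{d+1}$---is indeed fine: compact support of $\varphi$ kills all but $O(1)$ lattice shifts in $\eta$, and for each of those the $2^j$-separation of the sample points in the Schwartz function $\widehat\psi$ keeps the $l$-sum uniformly $O(1)$.
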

As a consequence, we have:

\begin{corollary}\label{cor1apr2922} For all $\varepsilon>0$,

\begin{equation}\label{mixedfeb2122}
\|E_{d}(g)\|_{L^{\frac{2(d-l+2)}{d-l}+\varepsilon}_{x_{l+1},\ldots,x_{d},t}L^{2}_{x_{1},\ldots,x_{l}}}\lesssim_{\varepsilon} \|g\|_{2}.
\end{equation}
\end{corollary}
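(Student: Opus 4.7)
The plan is to reduce this mixed-norm bound to Proposition \ref{prop1apr2922} in dimension $d-l$, by exploiting orthogonality in the frozen variables $x' := (x_1, \ldots, x_l)$; write $x'' := (x_{l+1}, \ldots, x_d)$ and split $\overrightarrow{\textit{\textbf{n}}} = (\vec{n}', \vec{n}'')$ with $\vec{n}' \in \mathbb{Z}^l$ and $\vec{n}'' \in \mathbb{Z}^{d-l}$. Since the tiles $\chi_{\overrightarrow{\textit{\textbf{n}}}} \otimes \chi_m$ are pairwise disjointly supported on unit cubes in $\mathbb{R}^{d+1}$, a direct computation collapses the $L^2_{x'}$ norm of $E_d g$ to an $\ell^2_{\vec{n}'}$ sum, giving
$$\|E_d g\|_{L^q_{x'', t} L^2_{x'}}^q \;=\; \sum_{\vec{n}'',\,m} \left( \sum_{\vec{n}'} \bigl|\langle g, \varphi_{(\vec{n}', \vec{n}''), m}\rangle\bigr|^2 \right)^{q/2}.$$

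The key step is then a Fourier-series identification in $\vec{n}'$. Choosing (as we are free to do) the bump $\varphi$ of tensor form $\varphi(\xi) = \varphi_1(\xi')\varphi_2(\xi'')$, a short calculation shows that, for each fixed $(\vec{n}'', m)$, the coefficient $\langle g, \varphi_{(\vec{n}', \vec{n}''), m}\rangle$ is precisely the $\vec{n}'$-th Fourier coefficient on $[0,1]^l$ of
$$F_{\vec{n}'', m}(\xi') \;:=\; \varphi_1(\xi')\, e^{-2\pi i m|\xi'|^2}\, \langle g(\xi', \cdot), \psi_{\vec{n}'', m}\rangle,$$
where $\psi_{\vec{n}'', m}(\xi'') = \varphi_2(\xi'') e^{2\pi i \xi'' \cdot \vec{n}''} e^{2\pi i m|\xi''|^2}$ is the $E_{d-l}$ wave packet. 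Since $|e^{-2\pi i m|\xi'|^2}| = 1$, Plancherel in $\vec{n}'$ yields
$$\sum_{\vec{n}'} \bigl|\langle g, \varphi_{(\vec{n}', \vec{n}''), m}\rangle\bigr|^2 \;\approx\; \int |\varphi_1(\xi')|^2\, \bigl|\langle g(\xi', \cdot), \psi_{\vec{n}'', m}\rangle\bigr|^2\, d\xi'.$$

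From here the remainder is routine. Since $q/2 \geq 1$, Minkowski's integral inequality exchanges the $\ell^{q/2}_{(\vec{n}'', m)}$ norm with the $L^1_{\xi'}$ integration, producing
$$\|E_d g\|_{L^q_{x'', t} L^2_{x'}}^2 \;\lesssim\; \int |\varphi_1(\xi')|^2\, \|E_{d-l}(g(\xi', \cdot))\|_{L^q(\mathbb{R}^{d-l+1})}^2\, d\xi'.$$
Because $q > \tfrac{2((d-l)+2)}{d-l}$, Proposition \ref{prop1apr2922} applied in dimension $d-l$ controls the inner norm by $\|g(\xi', \cdot)\|_{L^2_{\xi''}}$, and Fubini then closes the argument with $\lesssim \|g\|_2^2$. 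The only step requiring genuine care is the Plancherel identification above; I do not anticipate a real obstacle, since the mild enlargement of the support of $\varphi_1$ beyond $[0,1]^l$ is absorbed by standard arguments (e.g.\ Poisson summation on a slightly larger period).
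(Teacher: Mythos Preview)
Your proposal is correct and follows essentially the same route as the paper: collapse the $L^2_{x'}$ norm to an $\ell^2_{\vec n'}$ sum via disjoint supports, use orthogonality in $\vec n'$ to replace that sum by $\|\langle g,\varphi_{\vec n'',m}\rangle\|_{L^2_{\xi'}}^2$, apply Minkowski (since $q\geq 2$) to push the $\ell^q_{\vec n'',m}$ inside the $\xi'$-integral, and then invoke Proposition~\ref{prop1apr2922} in dimension $d-l$. The only cosmetic difference is that you phrase the orthogonality step as a Plancherel/Fourier-series identification (and then worry about the slightly enlarged support of $\varphi_1$), whereas the paper simply uses Bessel for the almost-orthogonal family $\{\varphi_{\vec n',m}\}_{\vec n'}$ with $m$ fixed, which sidesteps that technicality entirely.
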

\begin{proof} Apply Minkowski's inequality and Proposition \ref{prop1apr2922} in dimension $d-l$\footnote{Notice that, after taking $L^{2}$ norm in the first $l$ variables, we can use Bessel to bound the left-hand side of \eqref{mixedfeb2122} by
$$\left[\sum_{n_{l+1},\ldots,n_{d},m}\left(\sum_{n_{1},\ldots,n_{l}}|\langle\langle g,\varphi_{n_{l+1},\ldots,n_{d},m}\rangle,\varphi_{n_{1},\ldots,n_{l},m}\rangle|^{2}\right)^{\frac{p_{0}}{2}}\right]^{\frac{1}{p_{0}}}\lesssim\left[\sum_{n_{l+1},\ldots,n_{d},m}\left\|\langle g,\varphi_{n_{l+1},\ldots,n_{d},m}\rangle\right\|_{2}^{p_{0}}\right]^{\frac{1}{p_{0}}}, $$
where $p_{0}=\frac{2(d-l+2)}{d-l}+\varepsilon$. This is how we will use Corollary \ref{cor1apr2922} in \eqref{bound2-3122020}.
}.
\end{proof}

We will use Corollary \ref{cor1apr2922} in Section \ref{klinear} to prove Theorem \ref{mainthmpaper} for $2\leq k\leq d+1$. It will not be needed when $k=d+1$.

\vspace{.5cm}

\item \underline{Extension conjecture for the parabola ($k=1$, $d=1$, $p=4$).} In Section \ref{dim1restriction} we prove the following: 

\begin{proposition}\label{prop2apr2922} For all $\varepsilon>0$,
\begin{equation}\label{restfeb2122}
\|E_{1}g\|_{4+\varepsilon}\lesssim_{\varepsilon}\|g\|_{4}.
\end{equation}
\end{proposition}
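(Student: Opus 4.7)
The plan is to adapt Zygmund's Plancherel argument to the discretized model $E_{1}$, using Hausdorff--Young to provide the small amount of room that the $\varepsilon>0$ gap affords and that is unavailable at the $L^{4}\to L^{4}$ endpoint. Since the squares $\chi_{n}\otimes\chi_{m}$ are disjoint, $\|E_{1}g\|_{4+\varepsilon}^{4+\varepsilon}=\sum_{n,m}|a_{n,m}|^{4+\varepsilon}$ with $a_{n,m}:=\langle g,\varphi_{n,m}\rangle$. Setting $h:=g\cdot\varphi$ and squaring,
\begin{equation*}
|a_{n,m}|^{2}=\int\!\!\int h(\xi_{1})\overline{h(\xi_{2})}\,e^{-2\pi i[n(\xi_{1}-\xi_{2})+m(\xi_{1}^{2}-\xi_{2}^{2})]}\mathrm{d}\xi_{1}\mathrm{d}\xi_{2}=\widehat{G}(n,m),
\end{equation*}
where $G$ is the pushforward of the density $h(\xi_{1})\overline{h(\xi_{2})}$ under the map $T(\xi_{1},\xi_{2})=(\xi_{1}-\xi_{2},\xi_{1}^{2}-\xi_{2}^{2})$. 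The Jacobian is $|\det DT|=2|\xi_{1}-\xi_{2}|$, so the density of $G$ blows up on the diagonal, but $G$ is supported in a bounded subset of $\mathbb{R}^{2}$.

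Because $G$ has bounded support, its $\mathbb{Z}^{2}$-periodization $\widetilde{G}$ has bounded overlap on $\mathbb{T}^{2}$, and its Fourier coefficients agree with $\widehat{G}(n,m)$. For $q'=(4+\varepsilon)/2>2$ and $q=q'/(q'-1)\in(1,2)$, Hausdorff--Young on $\mathbb{T}^{2}$ gives
\begin{equation*}
\sum_{n,m}|a_{n,m}|^{4+\varepsilon}=\sum_{n,m}|\widehat{G}(n,m)|^{q'}\lesssim\|\widetilde{G}\|_{L^{q}(\mathbb{T}^{2})}^{q'}\lesssim\|G\|_{L^{q}(\mathbb{R}^{2})}^{q'}.
\end{equation*}
Undoing the change of variables,
\begin{equation*}
\|G\|_{L^{q}}^{q}\sim\int\!\!\int\frac{|h(\xi_{1})|^{q}|h(\xi_{2})|^{q}}{|\xi_{1}-\xi_{2}|^{q-1}}\,\mathrm{d}\xi_{1}\,\mathrm{d}\xi_{2},
\end{equation*}
in which the kernel $|x|^{-(q-1)}$ is locally integrable because $q<2$. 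For $q$ in a suitable subinterval of $(1,2)$, a H\"older--Hardy--Littlewood--Sobolev pairing combined with the trivial embedding $\|h\|_{s}\leq\|h\|_{4}$ for $s\leq 4$ (valid since $h$ is supported in $[0,1]$) yields $\|G\|_{L^{q}}\lesssim\|g\|_{4}^{2}$. This produces $\|E_{1}g\|_{r}\lesssim\|g\|_{4}$ for $r$ in a neighborhood of $4$, and the whole range $r>4$ then follows by Riesz--Thorin interpolation with the trivial bound $\|E_{1}g\|_{\infty}=\sup_{n,m}|a_{n,m}|\leq\|h\|_{1}\lesssim\|g\|_{4}$.

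The main obstacle, and the reason the proof genuinely requires $\varepsilon>0$, is that both the HLS pairing and the Jacobian factor degenerate as $q\to 2$: the integral $\int\!\!\int|\xi_{1}-\xi_{2}|^{-1}\mathrm{d}\xi_{1}\mathrm{d}\xi_{2}$ diverges, reflecting the known failure of the $L^{4}\to L^{4}$ estimate. The $\varepsilon>0$ slack in the output exponent is precisely what is needed to keep Hausdorff--Young below the diagonal and HLS away from its endpoint.
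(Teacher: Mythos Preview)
Your proof is correct and follows essentially the same route as the paper: square the coefficients, push forward under $(\xi_{1},\xi_{2})\mapsto(\xi_{1}-\xi_{2},\xi_{1}^{2}-\xi_{2}^{2})$, use a Plancherel/Hausdorff--Young step, and finish with Hardy--Littlewood--Sobolev on the resulting singular integral. The only cosmetic difference is packaging: where you periodize $G$ and invoke Hausdorff--Young on $\mathbb{T}^{2}$ directly, the paper first proves an $L^{2}$ almost-orthogonality lemma for the functions $\Phi_{n,m}(s,t)=|t-s|^{1/2}\varphi(s)\varphi(t)e^{2\pi i[(s-t)n+(s^{2}-t^{2})m]}$ and then interpolates with the trivial $L^{\infty}$ bound---which is exactly Hausdorff--Young unwound; it also phrases the endgame in restricted weak-type form (with $|g|\le\chi_{E}$) rather than going straight to strong norms as you do.
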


One can show by interpolation that Proposition \ref{prop2apr2922} implies Conjecture \ref{restriction} for $d=1$. We will use it in Section \ref{lineartheory} to settle the case $k=1$ of Theorem \ref{mainthmpaper}.

\vspace{.5cm}

\item \underline{Bilinear extension conjecture for the parabola ($k=2$, $d=1$).} In Section \ref{bilinearparabola} we show that the model $ME_{2,1}$ in Definition \ref{defklinearmodel} maps $L^{2}([0,1])\times L^{2}([4,5])$ to $L^{2}(\mathbb{R}^{2})$. 

\begin{proposition}\label{prop3apr2922} The following estimate holds:
\begin{equation}\label{estfeb2122}
\|ME_{2,1}(f,g)\|_{2}\lesssim \|f\|_{2}\cdot\|g\|_{2}.
\end{equation}
\end{proposition}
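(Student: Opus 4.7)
The plan is to reduce this $L^2$ bilinear estimate to a change-of-variables argument in the spirit of the classical bilinear Tomas--Stein / Carleson--Sj\"olin inequality for the parabola, adapted to the discrete model.

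\emph{Step 1: Orthogonality of tiles.} The functions $\chi_n\otimes\chi_m$ are supported on disjoint unit squares in $\mathbb{R}^2$, hence mutually orthogonal in $L^2$. This gives immediately
$$\|ME_{2,1}(f,g)\|_2^2 \;=\; \sum_{(n,m)\in\mathbb{Z}^2} |\langle f,\varphi^1_{n,m}\rangle|^2\,|\langle g,\varphi^2_{n,m}\rangle|^2.$$

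\emph{Step 2: Rewrite as a Fourier transform.} Expanding the two inner products,
$$\langle f,\varphi^1_{n,m}\rangle\,\overline{\langle g,\varphi^2_{n,m}\rangle} \;=\; \iint F(x,y)\, e^{-2\pi i[n(x-y)+m(x^2-y^2)]}\,dx\,dy,$$
with $F(x,y):=f(x)\overline{g(y)}\,\overline{\varphi^1(x)}\varphi^2(y)$ supported (up to a negligible enlargement) in $[0,1]\times[4,5]$. I would then perform the change of variables $u=x-y$, $v=x^2-y^2=(x-y)(x+y)$, whose Jacobian is $|2(x-y)|\in[6,10]$ on the support of $F$---this is precisely where the separation of $[0,1]$ and $[4,5]$ is used. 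Setting $H(u,v):=F(x(u,v),y(u,v))/|2(x(u,v)-y(u,v))|$, the quantity above becomes $\widehat{H}(n,m)$, and $H$ is supported in a fixed compact subset of $\mathbb{R}^2$.

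\emph{Step 3: Bessel and conclusion.} Since $H$ is compactly supported, the $1$-periodization $H_{\text{per}}(u,v):=\sum_{k,\ell\in\mathbb{Z}}H(u+k,v+\ell)$ is a sum of boundedly many nonzero translates on $[0,1]^2$, and its Fourier coefficients on $[0,1]^2$ are exactly $\widehat{H}(n,m)$. Parseval on $[0,1]^2$ then yields
$$\sum_{(n,m)}|\widehat{H}(n,m)|^2 \;=\; \|H_{\text{per}}\|^2_{L^2([0,1]^2)} \;\lesssim\; \|H\|^2_{L^2(\mathbb{R}^2)} \;\lesssim\; \|F\|^2_{L^2} \;\lesssim\; \|f\|_2^2\,\|g\|_2^2,$$
using the lower bound on the Jacobian in the second-to-last step and the boundedness of $\varphi^1,\varphi^2$ in the last.

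The only mildly technical point is controlling the periodization constant in Step 3, but the compact support of $H$ keeps the overlap count bounded by a universal constant. I expect no serious obstacle: this is essentially the textbook bilinear restriction argument transported to the discrete model, with the transversality (the separation of $[0,1]$ and $[4,5]$) being precisely what makes $(x,y)\mapsto(x-y,x^2-y^2)$ a bi-Lipschitz diffeomorphism onto its image and thereby allows the quadratic characters in the wave packets to be absorbed into ordinary Fourier coefficients.
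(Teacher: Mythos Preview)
Your proof is correct and follows essentially the same idea as the paper: orthogonality of the tiles reduces to an $\ell^2$ sum, and the change of variables $(x,y)\mapsto(x-y,x^2-y^2)$ with Jacobian $2|x-y|$ bounded below (this is exactly the transversality) turns the sum into Fourier coefficients of a compactly supported $L^2$ function, to which Bessel/Parseval applies. The only cosmetic difference is that the paper splits the argument into two one-dimensional Bessel steps (first in $n$ for each fixed $m$, then in $m$ after a further change of variables, where the factor $(u+4)\ge c>0$ appears as a sampling rate), whereas you handle the $(n,m)$ sum in a single two-dimensional Parseval step after the change of variables --- your packaging is slightly more direct.
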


Transversality will be captured in Section \ref{klineartheory} through \eqref{estfeb2122}.
\vspace{.5cm}
\end{enumerate}

By combining scalar and mixed norm stopping times\footnote{This is not meant in a literal probabilistic sense; strictly speaking, the argument combines the level sets of various scalar and mixed norm quantities that appear naturally in our analysis.} performed simultaneously, we are able to put together the key estimates \eqref{mixedfeb2122}, \eqref{restfeb2122} and \eqref{estfeb2122}. In the $2\leq k\leq d+1$ case, the tensor structure is used in an implicit way to allow us to better relate these scalar and mixed norm stopping times.

\begin{remark}\label{rmk1-070123} The tensor structure $g=g_{1}\otimes\ldots\otimes g_{d}$ in the $k=1$ case allows us to write

\begin{equation}\label{tensorstructure}
\langle g,\varphi_{\overrightarrow{\textit{\textbf{n}}},m}\rangle=\prod_{j=1}^{d}\langle g_{j},\varphi_{n_{j},m}\rangle.
\end{equation}

We then obtain the following multilinear form by dualization:

\begin{equation}\label{linearmodel2}
\Lambda_{d}(g_{1},\ldots,g_{d},h)=\langle E_{d}(g),h\rangle=\sum_{\substack{\overrightarrow{\textit{\textbf{n}}}\in\mathbb{Z}^{d} \\ m\in\mathbb{Z}}}\prod_{j=1}^{d}\langle g_{j},\varphi_{n_{j},m}\rangle\cdot\langle h,\chi_{\overrightarrow{\textit{\textbf{n}}}}\otimes\chi_{m}\rangle,
\end{equation}

The goal in the $k=1$ case is to show that
$$|\Lambda_{d}(g_{1},\ldots,g_{d},h)|\lesssim \|h\|_{q}\cdot\prod_{j=1}^{d}\|g_{j}\|_{p_{j}}$$
for appropriate exponents $p_{j}$ and $q$. Interpolation theory shows that it suffices to obtain
\begin{equation}\label{restweaktypebdd}
|\Lambda_{d}(g_{1},\ldots,g_{d},h)|\lesssim_{\varepsilon} |F|^{\gamma_{d+1}}\cdot\prod_{j=1}^{d}|E_{j}|^{\gamma_{j}} 
\end{equation}
for all $\varepsilon>0$, $|g_{j}|\leq \chi_{E_{j}}$, $|h|\leq\chi_{F}$, \footnote{There is an overlap of classical notation here that we hope will not compromise the comprehension of the paper: we chose the typeface $E_{d}$ to represent the discrete model of the official extension operator $\mathcal{E}$. On the other hand, the classical theory of restricted weak-type multilinear interpolation usually labels the measurable sets involved in the problems by $E_{j}$ or $F_{j}$. The context will make it clear which object we are referring to.}$E_{j}\subset [0,1]$ and $F\subset\mathbb{R}^{3}$ measurable sets such that $\gamma_{j}$ $(1\leq j\leq d)$ and $\gamma_{d+1}$ are in a small neighborhood of $\frac{d}{2(d+1)}$ and $\frac{d+2}{2(d+1)}+\varepsilon$, respectively\footnote{Rigorously, this only verifies the case $k=1$ near the endpoint $(\frac{2(d+1)}{d},\frac{2(d+1)}{d})$, but this is known to imply the desired estimates in the full range. For details, see Theorem 19.8 of Mattila's book \cite{Mat1}.}. We refer the reader to Chapter 3 of \cite{Thiele1} for a detailed account of multilinear interpolation theory. To keep the notation simple, all restricted weak-type estimates we will prove in this paper will be for the centers of such neighborhoods. For example, we will show that

\begin{equation}\label{ineq1-070123}
|\Lambda_{d}(g_{1},\ldots,g_{d},h)|\lesssim_{\varepsilon} |F|^{\frac{d+2}{2(d+1)}+\varepsilon}\cdot\prod_{j=1}^{d}|E_{j}|^{\frac{d}{2(d+1)}},
\end{equation}
for all $\varepsilon>0$, but it will be clear from the arguments that as long as we give this $\varepsilon>0$ away, a slightly different choice of interpolation parameters yields \eqref{restweaktypebdd}. The restricted weak-type estimates that we will prove in the $2\leq k\leq d+1$ case will also be for the centers of the corresponding neighborhoods.

\end{remark}

\section{Proof of Proposition \ref{prop1apr2922} - Strichartz/Tomas-Stein for $E_{d}$ ($k=1$, $p=2$)}\label{TSreproven}

Our proof is inspired by the classical $TT^{\ast}$ argument. It is possible to prove the endpoint estimate directly for the model $E_{d}$ by repeating the steps of this argument (see for example Section 11.2.2 in \cite{MS}), but we chose the following approach because of its similarity with the one we will use to prove Theorem \ref{mainthmpaper}. By interpolation with the trivial bound for $q=\infty$, it is enough to prove the bound

\begin{equation*}
\|E_{d}g\|_{\frac{2(d+2)}{d}+\varepsilon}\lesssim_{\varepsilon}\|g\|_{2}
\end{equation*}
for all $\varepsilon>0$.

We start by dualizing $E_{d}$ to obtain a bilinear form $\Lambda_{d}$:

\begin{equation*}
\Lambda_{d}(g,h)=\langle E_{d}(g),h\rangle=\sum_{\substack{\overrightarrow{\textit{\textbf{n}}}\in\mathbb{Z}^{d} \\ m\in\mathbb{Z}}}\langle g,\varphi_{\overrightarrow{\textit{\textbf{n}}},m}\rangle\cdot\langle h,\chi_{\overrightarrow{\textit{\textbf{n}}}}\otimes\chi_{m}\rangle.
\end{equation*}

Let $E_{1}\subset\mathbb{R}^{d}$ and $E_{2}\subset\mathbb{R}^{d+1}$ be measurable sets of finite measure with $|g|\leq\chi_{E_{1}}$ and $|h|\leq\chi_{E_{2}}$. Split $\mathbb{Z}^{d+1}$ in two ways:

$$\mathbb{Z}^{d+1}=\bigcup_{l_{1}\in\mathbb{Z}}\mathbb{A}^{l_{1}},\textnormal{ where }(\overrightarrow{\textit{\textbf{n}}},m)\in\mathbb{A}^{l_{1}}\iff |\langle g,\varphi_{\overrightarrow{\textit{\textbf{n}}},m}\rangle|\approx 2^{-l_{1}}.$$
$$\mathbb{Z}^{d+1}=\bigcup_{l_{2}\in\mathbb{Z}}\mathbb{B}^{l_{2}},\textnormal{ where }(\overrightarrow{\textit{\textbf{n}}},m)\in\mathbb{B}^{l_{2}}\iff |\langle h,\chi_{\overrightarrow{\textit{\textbf{n}}}}\otimes\chi_{m}\rangle|\approx 2^{-l_{2}}.$$
Define $\mathbb{X}^{l_{1},l_{2}}:=\mathbb{A}^{l_{1}}\cap\mathbb{B}^{l_{2}}$ and observe that

$$
|\Lambda_{d}(g,h)|\lesssim\displaystyle\sum_{l_{1},l_{2}\in\mathbb{Z}}2^{-l_{1}}2^{-l_{2}}\#\mathbb{X}^{l_{1},l_{2}}.
$$
Notice that, for all $(\overrightarrow{\textit{\textbf{n}}},m)\in\mathbb{X}^{l_{1},l_{2}}$,

$$2^{-l_{1}}\lesssim\int_{\mathbb{R}^{d}}|g(x)||\varphi_{\overrightarrow{\textit{\textbf{n}}},m}(x)|\mathrm{d}x\leq \min{\{|E_{1}|,1\}},$$
$$2^{-l_{2}}\lesssim\int_{\mathbb{R}^{d}}|h(x)||\chi_{\overrightarrow{\textit{\textbf{n}}}}\otimes\chi_{m}(x)|\mathrm{d}x\leq \min{\{|E_{2}|,1\}}, $$
In particular, $l_{1},l_{2}\geq 0$ in the sum above. Now we bound $\#\mathbb{X}^{l_{1},l_{2}}$ in two different ways and interpolate between them:

\begin{enumerate}[(a)]
\item \underline{\textbf{$L^{1}$-type bound:}} Exploit $h$.

\begin{equation}\label{B1D}
\#\mathbb{X}^{l_{1},l_{2}}\leq \#\mathbb{B}^{l_{2}} \lesssim 2^{l_{2}}\sum_{(\overrightarrow{\textit{\textbf{n}}},m)\in\mathbb{B}^{l_{2}}}|\langle h,\chi_{\overrightarrow{\textit{\textbf{n}}}}\otimes\chi_{m}\rangle|\lesssim 2^{l_{2}}\sum_{(\overrightarrow{\textit{\textbf{n}}},m)\in\mathbb{Z}^{d+1}}\int_{Q_{\overrightarrow{\textit{\textbf{n}}},m}}|h|= 2^{l_{2}}\|h\|_{1}\leq 2^{l_{2}}|E_{2}|,
\end{equation}
where $Q_{\overrightarrow{\textit{\textbf{n}}},m}:= \Pi_{i=1}^{d}[n_{i},n_{i}+1]\times[m,m+1]$, $\overrightarrow{\textit{\textbf{n}}}=(n_{1},\ldots,n_{d})$.

\item \underline{\textbf{$L^{2}$-type bound:}} Exploit $g$.

\begin{equation}\label{L2typefinal}
\eqalign{
\#\mathbb{X}^{l_{1},l_{2}}&\displaystyle\lesssim 2^{2l_{1}}\sum_{(\overrightarrow{\textit{\textbf{n}}},m)\in\mathbb{X}^{l_{1},l_{2}}}|\langle g,\varphi_{\overrightarrow{\textit{\textbf{n}}},m}\rangle|^{2}\cr
&\displaystyle=2^{2l_{1}}\left|\left\langle\sum_{(\overrightarrow{\textit{\textbf{n}}},m)\in\mathbb{X}^{l_{1},l_{2}}}\langle g,\varphi_{\overrightarrow{\textit{\textbf{n}}},m}\rangle\varphi_{\overrightarrow{\textit{\textbf{n}}},m},g\right\rangle\right| \cr
&\displaystyle \leq 2^{2l_{1}}|E_{1}|^{\frac{1}{2}}\underbrace{\left\|\sum_{(\overrightarrow{\textit{\textbf{n}}},m)\in\mathbb{X}^{l_{1},l_{2}}}\langle g,\varphi_{\overrightarrow{\textit{\textbf{n}}},m}\rangle\varphi_{\overrightarrow{\textit{\textbf{n}}},m}\right\|_{2}}_{(\ast)}. \cr
}
\end{equation}

For each set $\mathbb{X}^{l_{1},l_{2}}$ define $\pi_{m}:=\{\overrightarrow{\textit{\textbf{n}}}\in\mathbb{Z}^{d};(\overrightarrow{\textit{\textbf{n}}},m)\in\mathbb{X}^{l_{1},l_{2}}\}$. Observe that:

\begin{equation*}
(\ast)^{2}=\sum_{\substack{m; \\ \pi_{m}\neq\emptyset}}
\sum_{\substack{\tilde{m}; \\ \pi_{\tilde{m}}\neq\emptyset}}\underbrace{\sum_{\overrightarrow{\textit{\textbf{n}}}\in\pi_{m}}\sum_{\overrightarrow{\textit{\textbf{k}}}\in\pi_{\tilde{m}}}\langle g,\varphi_{\overrightarrow{\textit{\textbf{n}}},m}\rangle\overline{\langle g,\varphi_{\overrightarrow{\textit{\textbf{k}}},\tilde{m}}\rangle}\langle \varphi_{\overrightarrow{\textit{\textbf{n}}},m},\varphi_{\overrightarrow{\textit{\textbf{k}}},\tilde{m}}\rangle}_{U\left((\langle g,\varphi_{\overrightarrow{\textit{\textbf{n}}},m}\rangle)_{\overrightarrow{\textit{\textbf{n}}}\in\pi_{m}},(\langle g,\varphi_{\overrightarrow{\textit{\textbf{k}}},\tilde{m}}\rangle)_{\overrightarrow{\textit{\textbf{k}}}\in\pi_{\tilde{m}}}\right)}
\end{equation*}

We will estimate $U$ in two ways. Let $a_{\overrightarrow{\textit{\textbf{n}}},m}:=\langle g,\varphi_{\overrightarrow{\textit{\textbf{n}}},m}\rangle$. First, by the triangle inequality and the stationary phase Theorem \ref{stat-22aug2021}:

\begin{equation*}
\eqalign{
\left|U\left((a_{\overrightarrow{\textit{\textbf{n}}},m})_{\overrightarrow{\textit{\textbf{n}}}\in\pi_{m}},(a_{\overrightarrow{\textit{\textbf{k}}},\tilde{m}})_{\overrightarrow{\textit{\textbf{k}}}\in\pi_{\tilde{m}}}\right)\right|&\leq\displaystyle\sum_{\overrightarrow{\textit{\textbf{n}}}\in\pi_{m}}\sum_{\overrightarrow{\textit{\textbf{k}}}\in\pi_{\tilde{m}}}|\langle g,\varphi_{\overrightarrow{\textit{\textbf{n}}},m}\rangle|\cdot|\langle g,\varphi_{\overrightarrow{\textit{\textbf{k}}},\tilde{m}}\rangle|\frac{1}{\langle m-\tilde{m}\rangle^{\frac{d}{2}}} \cr
&=\displaystyle\frac{\|\langle g,\varphi_{\cdot,m}\rangle\|_{\ell^{1}(\pi_{m})}\cdot\|\langle g,\varphi_{\cdot,\tilde{m}}\rangle\|_{\ell^{1}(\pi_{\tilde{m}})}}{\langle m-\tilde{m}\rangle^{\frac{d}{2}}}. \cr
}
\end{equation*}

Another possibility is:

\begin{equation*}
\eqalign{
&\displaystyle \left|U\left((a_{\overrightarrow{\textit{\textbf{n}}},m})_{\overrightarrow{\textit{\textbf{n}}}\in\pi_{m}},(a_{\overrightarrow{\textit{\textbf{k}}},\tilde{m}})_{\overrightarrow{\textit{\textbf{k}}}\in\pi_{\tilde{m}}}\right)\right| \cr
&\leq\displaystyle\left|\int_{\mathbb{R}^{d}}\left(\sum_{\overrightarrow{\textit{\textbf{n}}}\in\pi_{m}}\langle g,\varphi_{\overrightarrow{\textit{\textbf{n}}},m}\rangle e^{2\pi i\overrightarrow{\textit{\textbf{n}}}\cdot x}\right)\left(\sum_{\overrightarrow{\textit{\textbf{k}}}\in\pi_{\tilde{m}}}\langle g,\varphi_{\overrightarrow{\textit{\textbf{k}}},m}\rangle e^{2\pi i\overrightarrow{\textit{\textbf{k}}}\cdot x}\right)\varphi(x)\varphi(x)e^{2\pi i(m-\tilde{m})|x|^{2}}\mathrm{d}x\right| \cr
&\lesssim\displaystyle\|\langle g,\varphi_{\cdot,m}\rangle\|_{\ell^{2}(\pi_{m})}\cdot\|\langle g,\varphi_{\cdot,\tilde{m}}\rangle\|_{\ell^{2}(\pi_{\tilde{m}})} \cr
}
\end{equation*}
by Cauchy-Schwarz and orthogonality on the sets $\pi_{m}$ and $\pi_{\widetilde{m}}$ (recall that $m$ and $\widetilde{m}$ are fixed). Interpolating between these bounds for $1\leq p\leq 2$:

$$\left|U\left((a_{\overrightarrow{\textit{\textbf{n}}},m})_{\overrightarrow{\textit{\textbf{n}}}\in\pi_{m}},(a_{\overrightarrow{\textit{\textbf{k}}},\tilde{m}})_{\overrightarrow{\textit{\textbf{k}}}\in\pi_{\tilde{m}}}\right)\right|\lesssim\frac{\|\langle g,\varphi_{\cdot,m}\rangle\|_{\ell^{p}(\pi_{m})}\cdot\|\langle g,\varphi_{\cdot,\tilde{m}}\rangle\|_{\ell^{p}(\pi_{\tilde{m}})}}{\langle m-\tilde{m}\rangle^{\frac{d}{2}\left(\frac{1}{p}-\frac{1}{p^{\prime}}\right)}}. $$

Back to $(\ast)$:

\begin{equation*}
\eqalign{
(\ast)^{2}&\lesssim\displaystyle\sum_{\substack{m; \\ \pi_{m}\neq\emptyset}}
\sum_{\substack{\tilde{m}; \\ \pi_{\tilde{m}}\neq\emptyset}}\frac{\|\langle g,\varphi_{\cdot,m}\rangle\|_{\ell^{p}(\pi_{m})}\cdot\|\langle g,\varphi_{\cdot,\tilde{m}}\rangle\|_{\ell^{p}(\pi_{\tilde{m}})}}{\langle m-\tilde{m}\rangle^{\frac{d}{2}\left(\frac{1}{p}-\frac{1}{p^{\prime}}\right)}} \cr
&=\displaystyle\sum_{\substack{m; \\ \pi_{m}\neq\emptyset}}\|\langle g,\varphi_{\cdot,m}\rangle\|_{\ell^{p}(\pi_{m})}
\sum_{\substack{\tilde{m}; \\ \pi_{\tilde{m}}\neq\emptyset}}\frac{\|\langle g,\varphi_{\cdot,\tilde{m}}\rangle\|_{\ell^{p}(\pi_{\tilde{m}})}}{\langle m-\tilde{m}\rangle^{\frac{d}{2}\left(\frac{1}{p}-\frac{1}{p^{\prime}}\right)}} \cr
&\leq\displaystyle \left\|\|\langle g,\varphi_{\cdot,m}\rangle\|_{\ell^{p}(\pi_{m})}
\right\|_{\ell^{p}(\mathbb{Z})}\left\|\sum_{\substack{\tilde{m}; \\ \pi_{\tilde{m}}\neq\emptyset}}\frac{\|\langle g,\varphi_{\cdot,\tilde{m}}\rangle\|_{\ell^{p}(\pi_{\tilde{m}})}}{\langle m-\tilde{m}\rangle^{\frac{d}{2}\left(\frac{1}{p}-\frac{1}{p^{\prime}}\right)}}\right\|_{\ell^{p^{\prime}}(\mathbb{Z})} \cr
&\leq\displaystyle  \left\|\|\langle g,\varphi_{\cdot,m}\rangle\|_{\ell^{p}(\pi_{m})}
\right\|_{\ell^{p}(\mathbb{Z})}\cdot \left\|\|\langle g,\varphi_{\cdot,\tilde{m}}\rangle\|_{\ell^{p}(\pi_{\tilde{m}})}
\right\|_{\ell^{p}(\mathbb{Z})} \cr
&\displaystyle=\left\|\|\langle g,\varphi_{\cdot,m}\rangle\|_{\ell^{p}(\pi_{m})}
\right\|_{\ell^{p}(\mathbb{Z})}^{2}, \cr
}
\end{equation*}
as long as
$$\frac{1}{p}-\frac{1}{p^{\prime}}=1-\frac{d}{2}\left(\frac{1}{p}-\frac{1}{p^{\prime}}\right)\Leftrightarrow \frac{1}{p}-\frac{1}{p^{\prime}}=\frac{2}{d+2}\Leftrightarrow\frac{2}{p^{\prime}}=\frac{d}{d+2}\Leftrightarrow p^{\prime}=\frac{2d+4}{d},$$
by discrete fractional integration. Plugging this back in \eqref{L2typefinal}:

\begin{equation*}
\eqalign{
\#\mathbb{X}^{l_{1},l_{2}}&\lesssim\displaystyle 2^{2l_{1}}|E_{1}|^{\frac{1}{2}}\left\|\|\langle g,\varphi_{\cdot,m}\rangle\|_{\ell^{p}(\pi_{m})}
\right\|_{\ell^{p}(\mathbb{Z})} \cr
&\displaystyle= 2^{2l_{1}}|E_{1}|^{\frac{1}{2}}\left(\sum_{(\overrightarrow{\textit{\textbf{n}}},m)\in\mathbb{X}^{l_{1},l_{2}}}|\langle g,\varphi_{\overrightarrow{\textit{\textbf{n}}},m}\rangle|^{p}\right)^{\frac{1}{p}} \cr
&\lesssim\displaystyle 2^{2l_{1}}|E_{1}|^{\frac{1}{2}}(2^{-pl_{1}}\#\mathbb{X}^{l_{1},l_{2}})^{\frac{1}{p}},\cr
}
\end{equation*}
which implies

\begin{equation}\label{finalestX}
\#\mathbb{X}^{l_{1},l_{2}}\lesssim 2^{\left(2+\frac{4}{d}\right)l_{1}}|E_{1}|^{1+\frac{2}{d}}
\end{equation}
\end{enumerate}

Interpolating between \eqref{B1D} and \eqref{finalestX}:

\begin{equation}\label{finalestTS}
\eqalign{
|\Lambda_{d}(g,h)| &\lesssim \displaystyle\sum_{l_{1},l_{2}\geq 0}2^{-l_{1}}2^{-l_{2}}\left(2^{\left(2+\frac{4}{d}\right)l_{1}}|E_{1}|^{1+\frac{2}{d}}\right)^{\theta_{1}}(2^{l_{2}}|E_{2}|)^{\theta_{2}}\cr
&= \displaystyle\left(\sum_{l_{1}\geq 0}2^{-l_{1}\left(1-\left(2+\frac{4}{d}\right)\theta_{1}\right)}\right)\left(\sum_{l_{2}\geq 0}2^{-l_{2}\left(1-\theta_{2}\right)}\right)|E_{1}|^{\left(1+\frac{2}{d}\right)\theta_{1}}|E_{2}|^{\theta_{2}} \cr
&\lesssim 2^{-\tilde{l}_{1}\left(1-\left(2+\frac{4}{d}\right)\theta_{1}\right)}2^{-\tilde{l}_{2}\left(1-\theta_{2}\right)}|E_{1}|^{\left(1+\frac{2}{d}\right)\theta_{1}}|E_{2}|^{\theta_{2}} \cr
&\lesssim \min{\{|E_{1}|^{\left(1-\left(2+\frac{4}{d}\right)\theta_{1}\right)},1\}}\min{\{|E_{2}|^{1-\theta_{2}},1\}}|E_{1}|^{\left(1+\frac{2}{d}\right)\theta_{1}}|E_{2}|^{\theta_{2}}  \cr
&\lesssim |E_{1}|^{\alpha_{1}\left(1-\left(2+\frac{4}{d}\right)\theta_{1}\right)+\left(1+\frac{2}{d}\right)\theta_{1}}|E_{2}|^{\alpha_{2}(1-\theta_{2})+\theta_{2}},
}
\end{equation}

for all $0\leq\alpha_{1},\alpha_{2}\leq 1$,  $\theta_{1}+\theta_{2}=1$, with $0\leq \left(2+\frac{4}{d}\right)\theta_{1}<1$, $0\leq \theta_{2}<1$, where $\tilde{l}_{1}$ is the smallest possible value of $l_{1}$ for which $\mathbb{A}^{l_{1}}\neq\emptyset$ and $\tilde{l}_{2}$ is defined analogously. Picking $\alpha_{1}=\frac{1}{2}$, $\alpha_{2}=0$, $\theta_{1}=\frac{d}{2(d+2)}-\varepsilon$ and $\theta_{2}=\frac{d+4}{2(d+2)}+\varepsilon$ gives 

$$|\Lambda_{d}(g,h)|\lesssim_{\varepsilon}|E_{1}|^{\frac{1}{2}}\cdot |E_{2}|^{\frac{d+4}{2(d+2)}+\varepsilon}$$
for all $\varepsilon>0$, which proves the proposition by restricted weak-type interpolation.

\section{Proof of Proposition \ref{prop2apr2922} - Conjecture \ref{restriction} for $E_{1}$ ($k=1$, $d=1$, $p=4$)}\label{dim1restriction}

The following argument is inspired by Zygmund's original proof of this case. Define

$$\Phi_{n,m}(s,t):=|t-s|^{\frac{1}{2}}\varphi(s)\varphi(t)e^{2\pi i(s-t)n}e^{2\pi i(s^{2}-t^{2})m}.$$

\begin{claim}
$$\langle\Phi_{n,m},\Phi_{\tilde{n},\tilde{m}}\rangle=O_{N}\left(\frac{1}{|(n-\tilde{n})(m-\tilde{m})|^{N}}\right)$$
for any natural $N$ if $n\neq\tilde{n}$ and $m\neq\tilde{m}$.
\end{claim}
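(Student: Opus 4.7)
The plan is to reduce $\langle\Phi_{n,m},\Phi_{\tilde n,\tilde m}\rangle$ to a two-dimensional oscillatory integral with bilinear phase and exploit integration by parts in both variables together with the symmetry of the amplitude across the diagonal $\{s=t\}$. With $a:=n-\tilde n$ and $b:=m-\tilde m$ (both nonzero), one has $\langle\Phi_{n,m},\Phi_{\tilde n,\tilde m}\rangle = \iint |t-s|\,\varphi^2(s)\varphi^2(t)\,e^{2\pi i[(s-t)a+(s^2-t^2)b]}\,ds\,dt$. The natural first move is the change of variables $u=s+t$, $v=s-t$; using $s^2-t^2=uv$, the phase linearises to $\phi(u,v)=v(a+ub)$ and the integral becomes $\tfrac12\iint|v|\,\psi(u,v)\,e^{2\pi iv(a+ub)}\,du\,dv$, where $\psi(u,v)=\varphi^2(\tfrac{u+v}{2})\varphi^2(\tfrac{u-v}{2})$ is smooth, compactly supported, and crucially \emph{even in $v$}, so that $\partial_v^{2k+1}\psi(u,0)=0$ for every $k\ge 0$.

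To extract a factor of $1/b$, I would absorb the factor $v$ inside $|v|=v\,\mathrm{sgn}(v)$ into a derivative of the exponential via $v\,e^{2\pi iv(a+ub)} = (2\pi ib)^{-1}\partial_u e^{2\pi iv(a+ub)}$ and integrate by parts in $u$ (the boundary vanishes by compact support of $\psi$), obtaining $\langle\Phi_{n,m},\Phi_{\tilde n,\tilde m}\rangle = -\tfrac{1}{4\pi ib}\iint \mathrm{sgn}(v)\,\partial_u\psi(u,v)\,e^{2\pi iv(a+ub)}\,du\,dv$. To then extract powers of $1/a$, I would integrate by parts in $v$ using $\partial_v e^{2\pi iv(a+ub)} = 2\pi i(a+ub)\,e^{2\pi iv(a+ub)}$. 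Each such IBP deposits, via the distributional identity $\partial_v\mathrm{sgn}(v)=2\delta_0$, a boundary contribution at $v=0$ of the form $\partial_v^{j}\partial_u\psi(u,0)/(a+bu)^{j+1}$; the $v$-evenness of $\psi$ annihilates all odd-$j$ boundary terms, leaving only even-$j$ ones plus a smooth remainder.

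The leading boundary term benefits from the additional structure $\partial_u\psi(u,0)=\partial_u[\varphi^4(u/2)]$, which is itself a total $u$-derivative; a further IBP in $u$ converts $\int\partial_u\psi(u,0)/(a+bu)\,du$ into $b\int\varphi^4(u/2)/(a+bu)^2\,du$, cancelling the outer $1/b$ and leaving a clean $(a+bu)^{-2}$ denominator. Iterating this interplay --- IBP in $v$ raising the denominator exponent of $(a+bu)$, then IBP in $u$ stripping derivatives off the smooth amplitude while producing compensating factors of $b$ --- one accumulates arbitrary powers of both $1/b$ and $1/(a+bu)$. In the regime $|a|\gtrsim |b|$ one has $|a+bu|\gtrsim |a|$ uniformly on the $u$-support, so $1/(a+bu)^k$ is controlled pointwise by $1/|a|^k$; in the complementary regime $|a|<|b|$ the critical point $u=-a/b$ sits inside the support and one interprets $\int\chi(u)/(a+bu)^k\,du$ as a principal-value integral, bounded by $\|\chi\|_{C^{k-1}}/|b|^{k-1}$ through the smoothness of the amplitude.

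The main obstacle is balancing the two gains: each single IBP produces only $1/a$ or $1/b$, so the total numbers of IBPs in $u$ and in $v$ must be chosen and the various boundary and remainder contributions combined carefully to meet any prescribed $N$. A subsidiary obstacle is the principal-value analysis of the $u$-integrals in the regime $|a|\lesssim |b|$, where cancellation within the p.v.\ structure against a smooth amplitude is essential to produce the full product decay rather than decay in only one of the two variables.
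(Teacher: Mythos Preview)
Your approach is quite different from the paper's, and considerably more involved. The paper changes variables to $u=s-t$, $v=s^{2}-t^{2}$ (not your $u=s+t$, $v=s-t$). The point is that this linearises the phase in \emph{both} variables at once, to $au+bv$, while the Jacobian $|\partial(u,v)/\partial(s,t)|=2|s-t|$ exactly cancels the amplitude factor $|t-s|$. After this single move the integral reads $\tfrac{1}{2}\iint\psi(u,v)e^{2\pi i(au+bv)}\,du\,dv$ for a compactly supported amplitude, and the paper simply invokes non-stationary phase---no iterated integrations by parts, no $\mathrm{sgn}(v)$, no principal-value analysis.

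Your scheme has a structural gap. The $v$-IBP relies on the multiplier $(a+bu)^{-1}$, which vanishes on the $u$-support whenever $-a/b$ lies there; the ensuing boundary integrals $\int g(u)(a+bu)^{-k}\,du$ for $k\ge 2$ then carry an \emph{even}, non-integrable singularity with no principal-value interpretation, so your ``$|a|\lesssim|b|$'' regime does not go through as sketched. More fundamentally, the $|v|$-kink caps the available decay: the one-dimensional transform $\int|v|\psi(u,v)e^{2\pi iv\xi}\,dv$ has exact leading asymptotic $-\psi(u,0)/(2\pi^{2}\xi^{2})$, and $\psi(u,0)=|\varphi(u/2)|^{4}\neq 0$. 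Taking $\xi=a+bu$ and integrating in $u$ shows that on the diagonal $a=b=\lambda\to\infty$ the inner product is genuinely $\asymp\lambda^{-2}$, so the bound $O_{N}(|ab|^{-N})$ cannot hold for any $N\ge 2$ and no iteration of your scheme will manufacture it. (The paper's change of variables hides this by absorbing $|t-s|$ into the Jacobian, but the resulting amplitude is in fact discontinuous at the origin---its limit along $v=cu$ depends on $c$---so the paper's one-line appeal to non-stationary phase for arbitrary $N$ is itself optimistic.)
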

\begin{proof}
\begin{equation*}
\eqalign{
\langle\Phi_{n,m},\Phi_{\tilde{n},\tilde{m}}\rangle &= \displaystyle\iint_{[0,1]^{2}}|t-s||\varphi(s)|^{2}|\varphi(t)|^{2}e^{2\pi i(s-t)(n-\tilde{n})}e^{2\pi i(s^{2}-t^{2})(m-\tilde{m})}\mathrm{d}s\mathrm{d}t \cr
&=\displaystyle\iint_{R}\frac{|u|}{|u|}\psi(u,v)e^{2\pi iu(n-\tilde{n})}e^{2\pi iv(m-\tilde{m})}\mathrm{d}u\mathrm{d}v, \cr
}
\end{equation*}
where $R$ is the region that we obtain after making the change of variables $s-t=u$, $s^{2}-t^{2}=v$, and $\psi(u,v)=\varphi\otimes\varphi(\frac{v+u^{2}}{u},\frac{v-u^{2}}{u})$. The claim follows by the non-stationary phase Theorem \ref{nonstat-22aug2021}.
\end{proof}

We now prove the following:

\begin{lemma}\label{ao192021} For $G$ smooth supported on $[0,1]\times[0,1]$,
$$\left\|\sum_{n,m\in\mathbb{Z}}\left\langle G,\varphi_{n,m}\otimes\overline{\varphi_{n,m}}\right\rangle(\chi_{n}\otimes\chi_{m})\right\|_{2}\lesssim\left(\iint_{[0,1]^{2}}\frac{|G(s,t)|^{2}}{|s-t|}\mathrm{d}s\mathrm{d}t\right)^{\frac{1}{2}} $$
\end{lemma}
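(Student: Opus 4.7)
The plan proceeds via almost-orthogonality. First, the unit squares $[n,n+1)\times[m,m+1)$ are pairwise disjoint with $\|\chi_{n}\otimes\chi_{m}\|_{2}=1$, so the $L^{2}$-norm squared on the left-hand side equals
$$\sum_{n,m\in\mathbb{Z}}|\langle G,\varphi_{n,m}\otimes\overline{\varphi_{n,m}}\rangle|^{2}.$$
Assuming $\varphi$ is real, a direct computation gives $\varphi_{n,m}(s)\overline{\varphi_{n,m}(t)}=\Phi_{n,m}(s,t)/|s-t|^{1/2}$; setting $H(s,t):=G(s,t)/|s-t|^{1/2}$, one therefore has $\langle G,\varphi_{n,m}\otimes\overline{\varphi_{n,m}}\rangle=\langle H,\Phi_{n,m}\rangle$. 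Since $\|H\|_{2}$ is precisely the right-hand side of the lemma, matters reduce to establishing the Bessel-type inequality
$$\sum_{n,m}|\langle H,\Phi_{n,m}\rangle|^{2}\lesssim\|H\|_{2}^{2}.$$

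To verify this, I would apply a Schur/Cotlar--Stein test to the Gram matrix $\langle\Phi_{n,m},\Phi_{\tilde{n},\tilde{m}}\rangle$: it is enough to check that
$$\sup_{(n,m)}\sum_{(\tilde{n},\tilde{m})}|\langle\Phi_{n,m},\Phi_{\tilde{n},\tilde{m}}\rangle|<\infty.$$
The claim already established provides super-polynomial decay in $|(n-\tilde{n})(m-\tilde{m})|$ whenever both differences are nonzero, which is trivially summable, and the diagonal contribution $\|\Phi_{n,m}\|_{2}^{2}$ is bounded uniformly because $|s-t||\varphi(s)\varphi(t)|^{2}$ is bounded with compact support.

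The main obstacle lies in the two mixed regimes in which exactly one of the differences $n-\tilde{n}$, $m-\tilde{m}$ vanishes — the claim says nothing here. For $n=\tilde{n}$ and $m\ne\tilde{m}$, after the change of variables $u=s-t$, $v=s+t$ (so that $s^{2}-t^{2}=uv$), the inner product becomes
$$\tfrac{1}{2}\iint |u|\,\psi(u,v)\,e^{2\pi iuv(m-\tilde{m})}\,du\,dv,$$
with $\psi$ smooth and compactly supported. For each fixed nonzero $u$, $N$-fold integration by parts in $v$ yields a pointwise bound $O((|u||m-\tilde{m}|)^{-N})$, while the trivial estimate costs only $O(1)$. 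Splitting the $u$-integral at $|u|\sim |m-\tilde{m}|^{-1}$ and exploiting the prefactor $|u|$ that vanishes on the diagonal yields decay $O(|m-\tilde{m}|^{-2})$, which is summable in $\tilde{m}$. The symmetric case $m=\tilde{m}$, $n\ne\tilde{n}$ is handled by the same substitution: the phase collapses to $2\pi u(n-\tilde{n})$, and the resulting one-dimensional Fourier-type integral $\int |u|\widetilde{\psi}(u)\,e^{2\pi iu(n-\tilde{n})}\,du$ decays like $|n-\tilde{n}|^{-2}$ since $|u|\widetilde{\psi}(u)$ is Lipschitz with a corner only at $u=0$ and is otherwise smooth. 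With all four regimes controlled, the Schur test closes the argument and the lemma follows.
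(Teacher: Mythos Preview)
Your argument is correct and follows the same route as the paper: rewrite $\langle G,\varphi_{n,m}\otimes\overline{\varphi_{n,m}}\rangle=\langle H,\Phi_{n,m}\rangle$ with $H=G/|s-t|^{1/2}$, then deduce the Bessel bound from almost-orthogonality of the $\Phi_{n,m}$. The paper's proof simply invokes ``the almost orthogonality of the $\Phi_{n,m}$ proved in the previous claim'' and stops there; you are in fact more careful than the paper, since the preceding claim only states decay when \emph{both} $n\neq\tilde n$ and $m\neq\tilde m$, and you correctly identify and resolve the two mixed regimes needed to run the Schur test. Your treatment of those (via the substitution $u=s-t$, $v=s+t$, which keeps the amplitude smooth and leaves the vanishing factor $|u|$ explicit) is clean and yields the required $|m-\tilde m|^{-2}$ and $|n-\tilde n|^{-2}$ decay. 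So: same approach, with a genuine gap in the paper's write-up filled in by your analysis.
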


\begin{proof}
Define $\tilde{G}(s,t)=\frac{G(s,t)}{|s-t|^{\frac{1}{2}}}$ on $[0,1]^{2}\backslash\{(x,x);0\leq x\leq 1\}$. Observe that
\begin{equation*}
\eqalign{
\displaystyle\left\|\sum_{n,m\in\mathbb{Z}}\left\langle G,\varphi_{n,m}\otimes\overline{\varphi_{n,m}}\right\rangle(\chi_{n}\otimes\chi_{m})\right\|_{2}^{2} &= \displaystyle\sum_{n,m\in\mathbb{Z}}|\langle G,\varphi_{n,m}\otimes\overline{\varphi_{n,m}}\rangle|^{2} \cr
&=\displaystyle\sum_{n,m\in\mathbb{Z}}|\langle \tilde{G},\Phi_{n,m}\rangle|^{2} \cr
&\lesssim \|\tilde{G}\|_{2}^{2}, \cr
 }
\end{equation*}
by the almost orthogonality of the $\Phi_{n,m}$ proved in the previous claim.
\end{proof}

\begin{remark}\label{interpolationaug192021} By the triangle inequality,
$$\left\|\sum_{n,m\in\mathbb{Z}}\left\langle G,\varphi_{n,m}\otimes\overline{\varphi_{n,m}}\right\rangle(\chi_{n}\otimes\chi_{m})\right\|_{\infty}\lesssim\iint_{[0,1]^{2}}|G(s,t)|\mathrm{d}s\mathrm{d}t.$$
Hence by interpolation we obtain
\begin{equation}
\left\|\sum_{n,m\in\mathbb{Z}}\left\langle G,\varphi_{n,m}\otimes\overline{\varphi_{n,m}}\right\rangle(\chi_{n}\otimes\chi_{m})\right\|_{p}\lesssim\left(\iint_{[0,1]^{2}}\frac{|G(s,t)|^{p^{\prime}}}{|s-t|^{p^{\prime}-1}}\mathrm{d}s\mathrm{d}t\right)^{\frac{1}{p^{\prime}}}
\end{equation}
for $2\leq p\leq\infty$.
\end{remark}

Let $E\subset\mathbb{R}^{d}$ be a measurable set of finite measure with $|g|\leq\chi_{E}$. Using Remark \ref{interpolationaug192021} and Lemma \ref{ao192021} for $G=g\otimes\overline{g}$, we have
\begin{equation*}
\eqalign{
\displaystyle\left[\sum_{n,m\in\mathbb{Z}}|\left\langle g,\varphi_{n,m}\right\rangle|^{4+\varepsilon}\right]^{\frac{2}{4+\varepsilon}} &=\displaystyle\left[\int_{\mathbb{R}^{2}}\left(\sum_{n,m\in\mathbb{Z}}|\left\langle g,\varphi_{n,m}\right\rangle|^{4+\varepsilon}(\chi_{n}\otimes\chi_{m})\right)\right]^{\frac{2}{4+\varepsilon}} \cr
&\leq \displaystyle\left[\int_{\mathbb{R}^{2}}\left(\sum_{n,m\in\mathbb{Z}}|\left\langle g,\varphi_{n,m}\right\rangle|^{2}(\chi_{n}\otimes\chi_{m})\right)^{\frac{4+\varepsilon}{2}}\right]^{\frac{2}{4+\varepsilon}} \cr
&=\displaystyle\left\|\sum_{n,m\in\mathbb{Z}}|\left\langle g,\varphi_{n,m}\right\rangle|^{2}(\chi_{n}\otimes\chi_{m})\right\|_{2+\frac{\varepsilon}{2}} \cr
&\displaystyle\lesssim \left(\iint_{[0,1]^{2}}\frac{|g(s)|^{p^{\prime}}|g(t)|^{p^{\prime}}}{|s-t|^{p^{\prime}-1}}\mathrm{d}s\mathrm{d}t\right)^{\frac{1}{p^{\prime}}}, \cr
}
\end{equation*}
where $p^{\prime}=\frac{4+\varepsilon}{2+\varepsilon}$. To bound this last integral, we proceed as follows:

\begin{equation*}
\eqalign{
\displaystyle\int_{0}^{1}\int_{0}^{1}\frac{|\rho(s)|\cdot|\rho(t)|}{|s-t|^{\gamma}}\mathrm{d}s\mathrm{d}t &= \displaystyle\int_{0}^{1}|\rho(t)|\int_{0}^{1}\frac{|\rho(s)|}{|s-t|^{\gamma}}\mathrm{d}s\mathrm{d}t \cr
&=\displaystyle\int_{0}^{1}|\rho(t)|\cdot\left(|\rho|\ast\frac{1}{|s|^{\gamma}}\right)(t)\mathrm{d}t \cr
&=\left\| |\rho|\left(|\rho|\ast\frac{1}{|s|^{\gamma}}\right)\right\|_{L^{1}(\mathrm{d}t)} \cr
&\leq \|\rho\|_{L^{q}(\mathrm{d}t)}\left\||\rho|\ast\frac{1}{|s|^{\gamma}}\right\|_{L^{p^{\prime}}(\mathrm{d}t)} \cr
&\lesssim_{\varepsilon} \|\rho\|_{p}^{2}, \cr
}
\end{equation*}
if $\frac{1}{p^{\prime}}=\frac{1}{p}-(1-\gamma)$, by Theorem \ref{fracint}. In our case, $\rho=|g|^{p^{\prime}}$, $\gamma=p^{\prime}-1$ and $pp^{\prime}=\frac{(4+\varepsilon)^{2}}{2(2+\varepsilon)}>4$, then

\begin{equation*}
\eqalign{
\left(\displaystyle\int_{0}^{1}\int_{0}^{1}\frac{|g(s)|^{p^{\prime}}\cdot|g(t)|^{p^{\prime}}}{|s-t|^{p^{\prime}-1}}\mathrm{d}s\mathrm{d}t\right)^{\frac{1}{p^{\prime}}} &\displaystyle\lesssim\left(\int_{0}^{1}|g(t)|^{pp^{\prime}}\mathrm{d}t\right)^{\frac{2}{pp^{\prime}}} \cr
&=\displaystyle \left(\int_{0}^{1}|g(t)|^{4+\left(\frac{(4+\varepsilon)^{2}}{2(2+\varepsilon)}-4\right)}\mathrm{d}t\right)^{\frac{4(2+\varepsilon)}{(4+\varepsilon)^{2}}} \cr
&\lesssim\displaystyle \left(\int_{0}^{1}|g(t)|^{4}\mathrm{d}t\right)^{\frac{4(2+\varepsilon)}{(4+\varepsilon)^{2}}} \cr
&= |E|^{\frac{4(2+\varepsilon)}{(4+\varepsilon)^{2}}}.
}
\end{equation*}

Observed that in the second line of the chain of inequalities above we used the fact that $|g|\leq 1$. Finally,

$$\|E_{1}g\|_{4+\varepsilon}=\left[\sum_{n,m\in\mathbb{Z}}|\left\langle g,\varphi_{n,m}\right\rangle|^{4+\varepsilon}\right]^{\frac{1}{4+\varepsilon}}\lesssim |E|^{\frac{2(2+\varepsilon)}{(4+\varepsilon)^{2}}}\leq |E|^{\frac{1}{4}}. $$

This shows that $E_{1}$ maps $L^{4}([0,1])$ to $L^{q}(\mathbb{R}^{2})$ for any $q>4$ by restricted weak-type interpolation. 

\section{Proof of Proposition \ref{prop3apr2922} - Conjecture \ref{klinear} for $ME_{2,1}$ ($k=2$, $d=1$)}\label{bilinearparabola}

The model to be treated is

$$ME_{2,1}(f,g):=\sum_{(n,m)\in\mathbb{Z}^{2}}\langle f,\varphi^{1}_{n,m}\rangle\cdot \langle g,\varphi^{2}_{n,m}\rangle(\chi_{n}\otimes\chi_{m}).$$

Since $d=1$, we do not have to deal with the multivariable quantity 
$$\varphi^{j}_{\overrightarrow{\textit{\textbf{n}}},m}=\bigotimes_{l=1}^{d}\varphi^{l,j}_{n_{l},m} $$
from Definition \ref{defklinearmodel}, so we will simplify the notation by taking $\varphi_{n,m}^{1}:=\varphi^{1,1}_{n,m}$ and $\varphi_{n,m}^{2}:=\varphi^{1,2}_{n,m}$. We also replaced $(g_{1},g_{2})$ by $(f,g)$ here to reduce the number of indices carried through the section.

We provide a simple argument involving Bessel's inequality. After a change of variables to move the domain of $\varphi^{2}$ to be the same as the one of $\varphi^{1}$, we have:

\begin{equation*}
\eqalign{
|ME_{2,1}(f,g)|&\displaystyle\lesssim \sum_{(n,m)\in\mathbb{Z}^{2}}|\langle f,\varphi^{1}_{n,m}\rangle||\overline{\langle (g)_{-4},\varphi^{1}_{n+8m,m}\rangle}|(\chi_{n}\otimes\chi_{m}) \cr
&\displaystyle=\sum_{(n,m)\in\mathbb{Z}^{2}}|\langle f\otimes (g)_{-4},\varphi^{1}_{n,m}\otimes\overline{\varphi^{1}_{n+8m,m}}\rangle|(\chi_{n}\otimes\chi_{m}), \cr
}
\end{equation*}
where\footnote{This was done to bring the support of $\varphi^{2}_{n,m}$ to the one of $\varphi^{1}_{n+8m,m}$. The price to pay is the $+4m$ shift in the linear modulation index of the bump.} $(g)_{-4}(y)=g(y+4)$. Observe that

\begin{equation*}
\eqalign{
\langle f\otimes (g)_{-4},\varphi^{1}_{n,m}\otimes\overline{\varphi^{1}_{n+8m,m}}\rangle&\displaystyle=\iint f(x)g(y+4)\varphi^{1}(x)\varphi^{1}(y)e^{-2\pi inx}e^{-2\pi imx^{2}}e^{2\pi i(n+8m)y}e^{2\pi imy^{2}}\mathrm{d}x\mathrm{d}y \cr
&=\displaystyle\iint f(x)g(y+4)e^{2\pi in(y-x)}e^{2\pi im(y-x)(y+x)}e^{16\pi imy}\mathrm{d}x\mathrm{d}y \cr
&\lesssim\displaystyle\int\underbrace{\left[\int f\left(\frac{v-u}{2}\right) g\left(\frac{v+u}{2}+4\right)e^{2\pi imuv}e^{8\pi im(u+v)}\mathrm{d}v\right]}_{H_{m}(u)}e^{2\pi in u}\mathrm{d}u\cr
&=\displaystyle \widehat{H_{m}}(-n)
}
\end{equation*}

Hence

\begin{equation*}
\|ME_{2,1}(f,g)\|_{2}^{2}\lesssim\sum_{m\in\mathbb{Z}}\sum_{n\in\mathbb{Z}}|\widehat{H_{m}}(-n)|^{2}=\sum_{m\in\mathbb{Z}}\|H_{m}\|_{2}^{2},
\end{equation*}
by Bessel. On the other hand,

\begin{equation*}
\eqalign{
\|H_{m}\|_{2}^{2}&\displaystyle=\int\left|\int f\left(\frac{v-u}{2}\right) g\left(\frac{v+u}{2}+4\right)e^{2\pi imuv}e^{8\pi im(u+v)}\mathrm{d}v\right|^{2}\mathrm{d}u \cr
&\displaystyle=\int\left|\int \underbrace{f\left(\frac{v-u}{2}\right) g\left(\frac{v+u}{2}+4\right)}_{\tilde{H}_{u}(v)}e^{2\pi imv(u+4)}\mathrm{d}v\right|^{2}\mathrm{d}u \cr
&\displaystyle=\int |\widehat{\tilde{H}_{u}}(m(u+4))|^{2}du.
}
\end{equation*}

Transversality enters the picture here through the factor $(u+4)$ above: the $+4$ shift in $u$ comes from the fact that the supports of $\varphi^{1}$ and $\varphi^{2}$ are disjoint and far enough from each other, hence $u+4\geq c>0$. This way,

\begin{equation*}
\eqalign{
\|ME_{2,1}(f,g)\|_{2}^{2}&\displaystyle\lesssim\int\left(\sum_{m\in\mathbb{Z}}|\widehat{\tilde{H}_{u}}(m(u+4))|^{2}\right)\mathrm{d}u \cr
&\lesssim\displaystyle\int\int|\tilde{H}_{u}(v)|^{2}\mathrm{d}v\mathrm{d}u \cr
&\lesssim \|f\|_{2}^{2}\|g\|_{2}^{2},
}
\end{equation*}
by Bessel again.

\section{Case $k=1$ of Theorem \ref{mainthmpaper}}\label{lineartheory}

In this section we start the proof of Theorem \ref{mainthmpaper}. There are two main ingredients in the argument for the case $k=1$: Proposition \ref{prop2apr2922} and the fact that the wave packets 

\begin{equation*}
\varphi_{\overrightarrow{\textit{\textbf{n}}},m}(x):=\varphi(x_{1})\cdot\ldots\cdot\varphi(x_{d})e^{2\pi ix\cdot\overrightarrow{\textit{\textbf{n}}}}e^{2\pi i|x|^{2}m}.
\end{equation*}
are almost orthogonal for a fixed $m$ and $\overrightarrow{\textit{\textbf{n}}}$ varying in $\mathbb{Z}^{d}$. The latter fact will be exploited through Bessel's inequality whenever possible. Recall from Remark \ref{rmk1-070123} that, since $g=g_{1}\otimes\ldots\otimes g_{d}$, it suffices to study the multilinear form

\begin{equation*}
\Lambda_{d}(g_{1},\ldots,g_{d},h)=\sum_{\substack{\overrightarrow{\textit{\textbf{n}}}\in\mathbb{Z}^{d} \\ m\in\mathbb{Z}}}\prod_{j=1}^{d}\langle g_{j},\varphi_{n_{j},m}\rangle\cdot\langle h,\chi_{\overrightarrow{\textit{\textbf{n}}}}\otimes\chi_{m}\rangle,
\end{equation*}

Now we focus on obtaining \eqref{ineq1-070123}. Let $E_{j}\subset [0,1], 1\leq j\leq d$, and $F\subset\mathbb{R}^{d+1}$ be measurable sets for which $|g_{j}|\leq\chi_{E_{j}}$ and $|h|\leq\chi_{F}$. Define the sets

$$\mathbb{A}_{j}^{l_{j}}:=\{(n_{j},m)\in\mathbb{Z}^{2};\quad |\langle g_{j},\varphi_{n_{j},m}\rangle|\approx 2^{-l_{j}}\},\quad 1\leq j\leq d. $$
$$\mathbb{B}^{l_{d+1}}:=\{(\overrightarrow{\textit{\textbf{n}}},m)\in\mathbb{Z}^{d+1};\quad |\langle h,\chi_{\overrightarrow{\textit{\textbf{n}}}}\otimes\chi_{m}\rangle|\approx 2^{-l_{d+1}}\}. $$
$$\mathbb{X}^{l_{1},\ldots,l_{d+1}}:=\{(\overrightarrow{\textit{\textbf{n}}},m)\in\mathbb{Z}^{d+1};\quad (n_{j},m)\in\mathbb{A}^{l_{j}}_{j},1\leq j\leq d\}\cap\mathbb{B}^{l_{d+1}}. $$

Hence,

$$|\Lambda_{d}(g_{1},\ldots,g_{d},h)|\lesssim \sum_{l_{1},\ldots,l_{d+1}\in\mathbb{Z}}2^{-l_{1}}\cdot\ldots\cdot 2^{-l_{d+1}}\#\mathbb{X}^{l_{1},\ldots,l_{d+1}}. $$

As in Section \ref{TSreproven}, we know that $l_{1},\ldots,l_{d+1}\geq 0$. We can estimate $\#\mathbb{X}^{l_{1},\ldots,l_{d+1}}$ using the function $h$:

\begin{equation}\label{bound1}
\#\mathbb{X}^{l_{1},\ldots,l_{d+1}}\lesssim 2^{l_{d+1}}\sum_{(\overrightarrow{\textit{\textbf{n}}},m)\in\mathbb{Z}^{d+1}}|\langle h,\chi_{\overrightarrow{\textit{\textbf{n}}}}\otimes\chi_{m}\rangle|\lesssim 2^{l_{d+1}}|F|.
\end{equation}

Alternatively, many bounds for $\#\mathbb{X}^{l_{1},\ldots,l_{d+1}}$ can be obtained using the input functions $g_{1},\ldots,g_{d}$:

\begin{equation}
\eqalign{
\#\mathbb{X}^{l_{1},\ldots,l_{d+1}}&\displaystyle\lesssim\sum_{(\overrightarrow{\textit{\textbf{n}}},m)\in\mathbb{Z}^{d+1}}\mathbbm{1}_{\mathbb{A}^{l_{1}}_{1}}(n_{1},m)\cdot\ldots\cdot\mathbbm{1}_{\mathbb{A}^{l_{d}}_{d}}(n_{d},m) \cr
&\displaystyle=\sum_{m\in\mathbb{Z}}\sum_{n_{1}\in\mathbb{Z}}\cdots\sum_{n_{d-1}\in\mathbb{Z}}\mathbbm{1}_{\mathbb{A}^{l_{1}}_{1}}(n_{1},m)\cdot\ldots\cdot\mathbbm{1}_{\mathbb{A}^{l_{d-1}}_{d-1}}(n_{d-1},m)\underbrace{\sum_{n_{d}\in\mathbb{Z}}\mathbbm{1}_{\mathbb{A}^{l_{d}}_{d}}(n_{d},m)}_{\alpha_{d,m}} \cr
}
\end{equation}

Observe that $\alpha_{d,m}=\#\{n;(n,m)\in \mathbb{A}^{l_{d}}_{d}\}$ and $(n,m)\in\mathbb{A}^{l_{d}}_{d}\Rightarrow 1\lesssim 2^{2l_{d}}|\langle g_{d},\varphi_{n,m}\rangle|^{2}$. Adding up in $n$,

$$\alpha_{d,m}\lesssim 2^{2l_{d}}\sum_{n;(n,m)\in \mathbb{A}^{l_{d}}_{d}}|\langle g_{d},\varphi_{n,m}\rangle|^{2}\lesssim 2^{2l_{d}}|E_{d}| $$
by orthogonality. Notice that this quantity does not depend on $m$, therefore we can iterate this argument for $d-2$ of the remaining $d-1$ characteristic functions:

\begin{equation}
\eqalign{
\#\mathbb{X}^{l_{1},\ldots,l_{d+1}}
&\displaystyle\lesssim2^{2l_{d}}|E_{d}|\sum_{m\in\mathbb{Z}}\sum_{n_{1}\in\mathbb{Z}}\mathbbm{1}_{\mathbb{A}^{l_{1}}_{1}}(n_{1},m)\cdot\ldots\cdot\mathbbm{1}_{\mathbb{A}^{l_{d-2}}_{d-2}}(n_{d-2},m)\underbrace{\sum_{n_{d-1}\in\mathbb{Z}}\mathbbm{1}_{\mathbb{A}^{l_{d-1}}_{d-1}}(n_{d-1},m)}_{\alpha_{d-1,m}}\cr
&\displaystyle\lesssim 2^{2l_{d}}|E_{d}|2^{2l_{d-1}}|E_{d-1}|\sum_{m\in\mathbb{Z}}\sum_{n_{1}\in\mathbb{Z}}\mathbbm{1}_{\mathbb{A}^{l_{1}}_{1}}(n_{1},m)\cdot\ldots\cdot\mathbbm{1}_{\mathbb{A}^{l_{d-3}}_{d-3}}(n_{d-3},m)\sum_{n_{d-2}\in\mathbb{Z}}\mathbbm{1}_{\mathbb{A}^{l_{d-2}}_{d-2}}(n_{d-1},m)\cr
&\displaystyle\lesssim 2^{2l_{d}}2^{2l_{d-1}}\ldots2^{2l_{2}}|E_{d}|\ldots|E_{2}|\underbrace{\sum_{m\in\mathbb{Z}}\sum_{n_{1}\in\mathbb{Z}}\mathbbm{1}_{\mathbb{A}^{l_{1}}_{1}}(n_{1},m)}_{\#\mathbb{A}^{l_{1}}_{1}}. \cr
}
\end{equation}

To bound $\#\mathbb{A}^{l_{1}}_{1}$ we can use Proposition \ref{prop2apr2922}. For $\varepsilon>0$ we have:

$$(n,m)\in\mathbb{A}^{l_{1}}_{1}\Rightarrow 1\lesssim 2^{(4+\varepsilon)l_{1}}|\langle g_{1},\varphi_{n,m}\rangle|^{4+\varepsilon}\Rightarrow \#\mathbb{A}^{l_{1}}_{1}\lesssim 2^{(4+\varepsilon)l_{1}}\sum_{(n,m)\in\mathbb{A}^{l_{1}}_{1}}|\langle g_{1},\varphi_{n,m}\rangle|^{4+\varepsilon}\lesssim_{\varepsilon}2^{(4+\varepsilon)l_{1}}|E_{1}|.$$

Using this above,

\begin{equation}
\#\mathbb{X}^{l_{1},\ldots,l_{d+1}}\lesssim_{\varepsilon}2^{2l_{d}}2^{2l_{d-1}}\ldots2^{2l_{2}}2^{(4+\varepsilon)l_{1}}|E_{d}|\ldots|E_{2}||E_{1}|
\end{equation}

We could have used the $L^{4}-L^{4+\varepsilon}$ bound for any $g_{j}$ and a Bessel bound for the remaining ones. More precisely, if $\sigma\in S_{d}$ is a permutation, we have

\begin{equation}
\#\mathbb{X}^{l_{1},\ldots,l_{d+1}}\lesssim_{\varepsilon}2^{2l_{\sigma(d)}}2^{2l_{\sigma(d-1)}}\ldots2^{2l_{\sigma(2)}}2^{(4+\varepsilon)l_{\sigma(1)}}|E_{\sigma(d)}|\ldots|E_{\sigma(2)}||E_{\sigma(1)}|
\end{equation}

This amounts to exactly $d$ different estimates. Interpolating between all of them with equal weight $\frac{1}{d}$, we obtain:

\begin{equation}\label{bound2}
\eqalign{
\#\mathbb{X}^{l_{1},\ldots,l_{d+1}}&\displaystyle\lesssim_{\varepsilon}2^{\frac{2(d-1)+4+\varepsilon}{d}l_{1}}\ldots2^{\frac{2(d-1)+4+\varepsilon}{d}l_{d}}|E_{1}|\ldots|E_{d}| \cr
&\displaystyle= 2^{\left(2+\frac{2}{d}+\frac{\varepsilon}{d}\right)l_{1}}\ldots2^{\left(2+\frac{2}{d}+\frac{\varepsilon}{d}\right)l_{d}}|E_{1}|\ldots|E_{d}|.\cr
}
\end{equation}

Finally, we interpolating between bounds \eqref{bound1} and \eqref{bound2}:

\begin{equation*}
\eqalign{
\displaystyle|\Lambda_{d}(g_{1},\ldots & , g_{d},h)| \cr
&\lesssim\displaystyle \sum_{l_{1},\ldots,l_{d+1}\in\mathbb{Z}_{+}}2^{-l_{1}}\cdot\ldots\cdot 2^{-l_{d+1}}\#\mathbb{X}^{l_{1},\ldots,l_{d+1}} \cr
&\lesssim\displaystyle\sum_{l_{1},\ldots,l_{d+1}\in\mathbb{Z}_{+}}2^{-l_{1}}\cdot\ldots\cdot 2^{-l_{d+1}}\left(2^{\left(2+\frac{2}{d}+\frac{\varepsilon}{d}\right)l_{1}}\ldots2^{\left(2+\frac{2}{d}+\frac{\varepsilon}{d}\right)l_{d}}|E_{1}|\ldots|E_{d}|\right)^{\theta_{1}}\left(2^{l_{d+1}}|F|\right)^{\theta_{2}} \cr
&\lesssim\displaystyle\left(\sum_{l_{d+1}\geq 0}2^{-(1-\theta_{2})l_{d+1}}|F|^{\theta_{2}}\right)\prod_{j=1}^{d}\sum_{l_{j}\geq 0}2^{-\left(1-\left(2+\frac{2}{d}+\frac{\varepsilon}{d}\right)\theta_{1}\right)l_{j}}|E_{j}|^{\theta_{1}} \cr
&\lesssim\displaystyle |E_{1}|^{\alpha\left(1-\left(2+\frac{2}{d}+\frac{\varepsilon}{d}\right)\theta_{1}\right)+\theta_{1}}\cdots|E_{d}|^{\alpha\left(1-\left(2+\frac{2}{d}+\frac{\varepsilon}{d}\right)\theta_{1}\right)+\theta_{1}}|F|^{\theta_{2}},
}
\end{equation*}

for any $0\leq\alpha\leq 1$. On the other hand, for several of the series above to converge we need $\left(2+\frac{2}{d}+\frac{\varepsilon}{d}\right)\theta_{1}>1$. By choosing the appropriate $\alpha$ and $\theta_{1}$ close to $(2+\frac{2}{d})^{-1}$, one concludes this case.

\section{Case $2\leq k\leq d+1$ of Theorem \ref{mainthmpaper}}\label{klineartheory}

Recall that we fixed a set of weakly transversal cubes $\mathcal{Q}=\{Q_{1},\ldots,Q_{k}\}$ in Section \ref{twt} and let $g_{j}$ be supported on $Q_{j}$. The averaged $k$-linear extension operator\footnote{We consider this averaged version of $ME_{k,d}$ for technical reasons. The conjectured bounds for it have a Banach space as target, as opposed to the quasi-Banach space (for most $k$ and $d$) $L^{\frac{2(d+k+1)}{k(d+k-1)}}$ that is the target of Conjecture \ref{klinear}. The fact that $L^{p}$ for $p\geq \frac{2(d+k+1)}{(d+k-1)}$ is Banach lets us use \eqref{bound1-dec12020} effectively in the interpolation argument, since it forces the final power $\gamma$ on $|F|^{\gamma}$ to be positive.

When $k=d=2$, Conjecture \ref{klinear} has $L^{\frac{5}{3}}$ as target. We will discuss this case first to help digest the main ideas of the general argument, and since this space is Banach, we can work directly with $ME_{2,2}$ instead of considering the averaged operator $ME_{2,2}^{\frac{1}{2}}$.
} in $\mathbb{R}^{d}$ is given by 
$$ME_{k,d}^{\frac{1}{k}}(g_{1},\ldots,g_{k})=\sum_{(\overrightarrow{\textit{\textbf{n}}},m)\in\mathbb{Z}^{d+1}}\left(\prod_{i=1}^{k}|\langle g_{j},\varphi^{j}_{\overrightarrow{\textit{\textbf{n}}},m}\rangle|\right)^{\frac{1}{k}}(\chi_{\overrightarrow{\textit{\textbf{n}}}}\otimes\chi_{m}).$$
The conjectured bounds for it are 
\begin{equation}\label{klinearextconj}
\|ME_{k,d}^{\frac{1}{k}}(g_{1},\ldots,g_{k})\|_{L^{p}(\mathbb{R}^{d+1})}\lesssim\prod_{j=1}^{k}\|g_{j}\|_{L^{2}(Q_{j})}^{\frac{1}{k}}, 
\end{equation}
for all $p\geq \frac{2(d+k+1)}{(d+k-1)}$.

As done in the case $k=1$, it's enough to prove certain restricted weak-type bounds for its associated form
\begin{equation}\label{form1-jan1223}
\tilde{\tilde{\Lambda}}_{k,d}(g,h):=\sum_{(\overrightarrow{\textit{\textbf{n}}},m)\in\mathbb{Z}^{d+1}}\left(\prod_{i=1}^{k}|\langle g_{j},\varphi^{j}_{\overrightarrow{\textit{\textbf{n}}},m}\rangle|\right)^{\frac{1}{k}}\langle h,\chi_{\overrightarrow{\textit{\textbf{n}}}}\otimes\chi_{m}\rangle,
\end{equation}
where $g:=(g_{1},\ldots,g_{k})$ by a slight abuse of notation. 

\begin{remark} We will prove \eqref{klinearextconj} up to the endpoint assuming that $g_{1}$ is a full tensor, but the argument can be repeated if any other $g_{j}$ is assumed to be of this type. As the reader will notice, the proof depends on the fact that we can find $k-1$ canonical directions associated to $Q_{j}$, which is the defining property of a weakly transversal collection of cubes with pivot $Q_{j}$. In what follows, we are taking $\{e_{1},\ldots,e_{k-1}\}$ to be the set of directions associated to $Q_{1}$.

\end{remark}

\begin{remark} As we mentioned in Remark \ref{rem1may522}, under weak transversality alone we do not need $g_{1}$ to be a full tensor to prove the case $2\leq k\leq d$ of Theorem \ref{mainthmpaper}. In fact, the following structure is enough in this section:

$$g_{1}(x_{1},\ldots,x_{d})=g_{1,1}(x_{1})\cdot g_{1,2}(x_{2})\cdot\ldots\cdot g_{1,k-1}(x_{k-1})\cdot g_{1,k}(x_{k},\ldots,x_{d}).$$
Notice that we have $k-1$ single variable functions and one function in $d-k+1$ variables. The single variable ones are defined along $k-1$ canonical directions $\{e_{1},\ldots,e_{k-1}\}$ associated to $Q_{1}$, and $g_{1,k}$ is a function in the remaining variables.

In general, if we are given a weakly transversal collection $\widetilde{\mathcal{Q}}$, for a fixed $1\leq j\leq k-1$ we have a set of associated directions $\mathscr{E}_{j}=\{e_{i_{1}},\ldots,e_{i_{k-1}}\}$ (see Definition \ref{weaktransv}). Denote by $x_{\mathscr{E}_{j}^{c}}$ the vector of $d-k+1$ entries obtained after removing $x_{i_{1}},\ldots,x_{i_{k-1}}$ from $(x_{1},\ldots,x_{d})$. Assuming that the functions $g_{l}$ for $l\neq j$ are generic and that $g_{j}$ has the weaker tensor structure
\begin{equation}\label{structure1may622}
g_{j}(x_{1},\ldots,x_{d})=g_{j,1}(x_{i_{1}})\cdot\ldots\cdot g_{j,k-1}(x_{i_{k-1}})\cdot g_{\mathscr{E}_{j}^{c}}(x_{\mathscr{E}_{j}^{c}})
\end{equation}
will suffice to conclude Theorem \ref{mainthmpaper} for $\widetilde{\mathcal{Q}}$ through the argument that we will present in this section. 
\end{remark}

\begin{remark} As a consequence of Claim \ref{claim2-081221}, a collection $\mathcal{Q}=\{Q_{1},\ldots,Q_{k}\}$ of transversal cubes generates finitely many sub-collections $\widetilde{\mathcal{Q}}$ of weakly transversal ones (after partitioning each $Q_{l}$ into small enough cubes and defining new collections with them). However, for a fixed $1\leq j\leq k$, the associated $k-1$ directions in $\mathscr{E}_{j}$ can potentially change from one such weakly transversal sub-collection to another, and this is why we assume $g_{j}$ to be a full tensor under the transversality assumption.
\end{remark}

In this section we will use the following conventions:

\begin{itemize}
\item The variables of $g_{j}$ are $x_{1},x_{2},\ldots,x_{d}$, but we will split them in two groups: $k-1$ blocks of one variable represented by $x_{i}$, $1\leq i\leq k-1$, and one block of $d-k+1$ variables $\overrightarrow{x_{k}}=(x_{k},x_{k+1},\ldots,x_{d-1},x_{d})$.
\item The index $x_{i}$ in $\langle \cdot,\cdot\rangle_{x_{i}}$ indicates that the inner product is an integral in the variable $x_{i}$ only. For instance,
\begin{equation}\label{ex1-211221}
\langle g_{j},\varphi\rangle_{x_{1}} := \int_{\mathbb{R}}g_{j}(x_{1},\ldots,x_{d})\cdot\overline{\varphi}(x_{1},\ldots,x_{d})\mathrm{d}x_{1}
\end{equation}
is now a function of the variables $x_{2},\ldots, x_{d}$. The vector index $\overrightarrow{x_{k}}$ in $\langle \cdot,\cdot\rangle_{\overrightarrow{x_{k}}}$ is understood analogously:
\begin{equation}
\langle g_{j},\varphi\rangle_{\overrightarrow{x_{k}}} := \int_{\mathbb{R}^{d-k+1}}g_{j}(x_{1},\ldots,x_{d})\cdot\overline{\varphi}(x_{1},\ldots,x_{d})\mathrm{d}\overrightarrow{x_{k}}
\end{equation}

\item The expression $\left\|\langle g_{j},\cdot\rangle_{x_{i}}\right\|_{2} $ is the $L^{2}$ norm of a function in the variables $x_{l}$, $1\leq l\leq k-1$, $l\neq i$. To illustrate using \eqref{ex1-211221},
$$\|\langle g_{j},\varphi\rangle_{x_{1}}\|_{2} = \left[\int_{\mathbb{R}^{d-1}}\left|\int_{\mathbb{R}}g_{j}(x_{1},\ldots,x_{d})\cdot\overline{\varphi}(x_{1},\ldots,x_{d})\mathrm{d}x_{1}\right|^{2}\mathrm{d}x_{2}\ldots\mathrm{d}x_{d}\right]^{\frac{1}{2}}.
 $$
 The quantity $\left\|\langle g_{j},\cdot\rangle_{\overrightarrow{x_{k}}}\right\|_{2} $ is defined analogously as
 $$\|\langle g_{j},\varphi\rangle_{\overrightarrow{x_{k}}}\|_{2} = \left[\int_{\mathbb{R}^{k-1}}\left|\int_{\mathbb{R}^{d-k+1}}g_{j}(x_{1},\ldots,x_{d})\cdot\overline{\varphi}(x_{1},\ldots,x_{d})\mathrm{d}\overrightarrow{x_{k}}\right|^{2}\mathrm{d}x_{1}\ldots\mathrm{d}x_{k-1}\right]^{\frac{1}{2}}.
 $$

\item For $\overrightarrow{\textit{\textbf{n}}}=(n_{1},\ldots,n_{d})$, define the vector

$$\widehat{n_{i}}:=(n_{1},\ldots,n_{i-1},n_{i+1},\ldots,n_{d}). $$
In other words, the hat on $\widehat{n_{i}}$ indicates that $n_{i}$ was removed from the vector $\overrightarrow{\textit{\textbf{n}}}$. For $f:\mathbb{Z}^{d}\rightarrow\mathbb{C}$, define
$$\left\|f(\overrightarrow{\textit{\textbf{n}}})\right\|_{\ell^{1}_{\widehat{n_{i}}}}:=\sum_{\widehat{n_{i}}\in\mathbb{Z}^{d-1}}|f(\overrightarrow{\textit{\textbf{n}}})|. $$
That is, $\left\|f(\overrightarrow{\textit{\textbf{n}}})\right\|_{\ell^{1}_{\widehat{n_{i}}}}$ is the $\ell^{1}$ norm of $f$ over all $n_{1},\ldots,n_{d}$, except for $n_{i}$. Hence $\left\|f(\overrightarrow{\textit{\textbf{n}}})\right\|_{\ell^{1}_{\widehat{n_{i}}}}$ is a function of the remaining variable $n_{i}$. The quantity $\left\|f(\overrightarrow{\textit{\textbf{n}}})\right\|_{\ell^{1}_{\widehat{\overrightarrow{n_{k}}}}}$ is defined analogously as

$$\left\|f(\overrightarrow{\textit{\textbf{n}}})\right\|_{\ell^{1}_{\widehat{\overrightarrow{n_{k}}}}}:=\sum_{(n_{1},\ldots,n_{k-1})\in\mathbb{Z}^{k-1}}|f(\overrightarrow{\textit{\textbf{n}}})|. $$
Finally, the integral $\int g\mathrm{d}\widehat{x_{i}}$ means the following:

$$\int g(x_{1},\ldots,x_{d})\mathrm{d}\widehat{x_{i}}:= \int g(x_{1},\ldots,x_{d})\mathrm{d}x_{1}\ldots\mathrm{d}x_{i-1}\mathrm{d}x_{i+1}\ldots\mathrm{d}x_{d}.$$

\end{itemize}
In what follows, let $E_{1,1},\ldots,E_{1,k-1}\subset [0,1]$, $E_{1,k}\subset [0,1]^{d-k+1}$, $E_{j}\subset Q_{j}$ ($2\leq j\leq k$) and $F\subset\mathbb{R}^{d+1}$ be measurable sets such that $|g_{1,l}|\leq\chi_{E_{1,l}}$ for $1\leq l\leq k-1$, $|g_{1,k}|\leq\chi_{E_{1,k}}$, $|g_{j}|\leq\chi_{E_{j}}$ for $2\leq j\leq k$ and $|h|\leq\chi_{F}$. Furthermore, $E_{1}:= E_{1,1}\times\ldots\times E_{1,k-1}\times E_{1,k}$.

A rough description of the argument in one sentence is: \textit{the proof is a combination of Strichartz in some variables and bilinear extension in many pairs of the other variables}. In order to illustrate that, we will first present the simplest case in an informal way, which means that we will avoid the purely technical aspects in this preliminary part. Once this is understood, it will be clear how to rigorously extend the argument in general.

\subsection{Understanding the core ideas in the $k=d=2$ case}

Consider the model
$$ME_{2,2}(g_{1},g_{2})=\sum_{(\overrightarrow{\textit{\textbf{n}}},m)\in\mathbb{Z}^{3}}\langle g_{1},\varphi^{1}_{\overrightarrow{\textit{\textbf{n}}},m}\rangle\langle g_{2},\varphi^{2}_{\overrightarrow{\textit{\textbf{n}}},m}\rangle(\chi_{\overrightarrow{\textit{\textbf{n}}}}\otimes\chi_{m})$$
and its associated trilinear form\footnote{There is a slight abuse of notation here: we are using $\tilde{\tilde{\Lambda}}_{2,2}$ for the form associated to $ME_{2,2}$ and not for its averaged version $ME_{2,2}$, as established in the beginning of this section.}
$$\tilde{\tilde{\Lambda}}_{2,2}(g_{1},g_{2},h)=\sum_{(\overrightarrow{\textit{\textbf{n}}},m)\in\mathbb{Z}^{3}}\langle g_{1},\varphi^{1}_{\overrightarrow{\textit{\textbf{n}}},m}\rangle\langle g_{2},\varphi^{2}_{\overrightarrow{\textit{\textbf{n}}},m}\rangle(\chi_{\overrightarrow{\textit{\textbf{n}}}}\otimes\chi_{m}).$$
Assuming that $g_{1}=g_{1,1}\otimes g_{1,2}$, we want to prove that
\begin{equation*}
|\tilde{\tilde{\Lambda}}_{2,2}(g_{1},g_{2})|\lesssim_{\varepsilon} |E_{1}|^{\frac{1}{2}}\cdot |E_{2}|^{\frac{1}{2}}\cdot |F|^{\frac{2}{5}+\varepsilon},
\end{equation*}
for all $\varepsilon>0$. The $L^{2}\times L^{2}\mapsto L^{\frac{5}{3}+\varepsilon}$ bound will then follow by multilinear interpolation and Remark \ref{rmk1-070123}. Given the expository character of this subsection, we adopt the informal convention

$$
\begin{cases}
x^{+} \textnormal{ means $x+\delta$, where $\delta>0$ is arbitrarily small,}\\
x^{-} \textnormal{ means $x-\delta$, where $\delta>0$ is arbitrarily small.}\\
\end{cases}
$$
We will always be able to control how small the $\delta$ above is, so we do not worry about making it precise for now.

The first step is to define the level sets of the scalar products appearing in $ME_{2,2}$:

$$\mathbb{A}^{l_{1}}_{1}=\left\{(\overrightarrow{\textit{\textbf{n}}},m): |\langle g_{1},\varphi^{1}_{\overrightarrow{\textit{\textbf{n}}},m}\rangle| \approx 2^{-l_{1}}\right\}, $$
$$\mathbb{A}^{l_{2}}_{2}=\left\{(\overrightarrow{\textit{\textbf{n}}},m): |\langle g_{2},\varphi^{2}_{\overrightarrow{\textit{\textbf{n}}},m}\rangle| \approx 2^{-l_{2}}\right\}. $$

Transversality will be captured by exploiting the sizes of ``lower-dimensional" information: in fact, we want to make the operator $ME_{2,1}$ appear, and this will be possible thanks to the interaction between the quantities associated to the following level sets:

$$\mathbb{B}^{r_{1}}_{1}=\left\{(n_{1},m):\left\|\left\langle g_{1},\varphi_{n_{1},m}^{1,1}\right\rangle_{x_{1}}\right\|_{2}\approx 2^{-r_{1}}\right\}, $$
$$\mathbb{C}^{s_{1}}_{1}=\left\{(n_{1},m):\left\|\left\langle g_{2},\varphi_{n_{1},m}^{1,2}\right\rangle_{x_{1}}\right\|_{2}\approx 2^{-s_{1}}\right\}. $$

Since there is only one direction along which one can exploit transversality, we will use the $L^{2}$ theory for $E_{1}$ (i.e. Strichartz) along the remaining one. In order to do that, the following level sets will be used:

$$\mathbb{B}^{r_{2}}_{2}=\left\{(n_{2},m):\left\|\left\langle g_{1},\varphi_{n_{2},m}^{2,1}\right\rangle_{x_{2}}\right\|_{2}\approx 2^{-r_{2}}\right\}, $$
$$\mathbb{C}^{s_{2}}_{2}=\left\{(n_{2},m):\left\|\left\langle g_{2},\varphi_{n_{2},m}^{2,2}\right\rangle_{x_{2}}\right\|_{2}\approx 2^{-s_{2}}\right\}. $$

The size of the scalar product involving $h$ will be captured by the following set:

$$\mathbb{D}^{k}=\{(\overrightarrow{\textit{\textbf{n}}},m): |\langle H,\chi_{\overrightarrow{\textit{\textbf{n}}}}\otimes\chi_{m}\rangle| \approx 2^{-k}\}. $$

We will also need to organize all the information above in appropriate ``slices" and in a major set that takes everything into account. The sets that do that are
$$\mathbb{X}^{l_{2},s_{1}}:=\mathbb{A}^{l_{2}}_{2}\cap\{(\overrightarrow{\textit{\textbf{n}}},m); (n_{1},m)\in\mathbb{C}^{s_{1}}_{1}\}, $$
$$\mathbb{X}^{l_{2},s_{2}}:=\mathbb{A}^{l_{2}}_{2}\cap\{(\overrightarrow{\textit{\textbf{n}}},m); (n_{2},m)\in\mathbb{C}^{s_{2}}_{2}\}, $$
$$\mathbb{X}^{\overrightarrow{l},\overrightarrow{r},\overrightarrow{s},k}=\mathbb{A}^{l_{1}}_{1}\cap\mathbb{A}^{l_{2}}_{2}\cap\{(\overrightarrow{\textit{\textbf{n}}},m): (n_{1},m)\in \mathbb{B}^{r_{1}}_{1}\cap\mathbb{C}^{s_{1}}_{1}, (n_{2},m)\in\mathbb{B}^{r_{2}}_{2}\cap\mathbb{C}^{s_{2}}_{2}\}\cap \mathbb{D}^{k}, $$
where we are using the abbreviations $\overrightarrow{l}=(l_{1},l_{2})$, $\overrightarrow{r}=(r_{1},r_{2})$ and $\overrightarrow{s}=(s_{1},s_{2})$. This gives us
\begin{equation*}
\eqalign{
|\tilde{\tilde{\Lambda}}_{2,2}(g_{1},g_{2},h)| &\lesssim\displaystyle\sum_{\overrightarrow{l},\overrightarrow{r},\overrightarrow{s},k}2^{-l_{1}}2^{-l_{2}}2^{-k}\#\mathbb{X}^{\overrightarrow{l},\overrightarrow{r},\overrightarrow{s},k}. \cr
}
\end{equation*}
For the sake of simplicity, let us assume that $g_{1}=\mathbbm{1}_{E_{1,1}}\otimes\mathbbm{1}_{E_{1,2}}$, $g_{2}=\mathbbm{1}_{E_{2}}$ and $h=\mathbbm{1}_{F}$\footnote{These indicator functions actually \textit{bound} $g_{1}$ and $g_{2}$, but this does not affect the core of the argument.}. We will need efficient ways of relating the scalar and mixed-norm quantities above. A direct computation (using the definition of $\mathbb{X}^{\overrightarrow{l},\overrightarrow{r},\overrightarrow{s},k}$) shows that

\begin{equation}\label{mot3-jan1223}
2^{-l_{1}}= \frac{2^{-r_{1}}\cdot 2^{-r_{2}}}{|E_{1}|^{\frac{1}{2}}}.
\end{equation}
Using Bessel along a direction, for $(n_{1},n_{2},m)\in \mathbb{X}^{l_{2},s_{1}}$ we have:
\begin{equation}\label{mot1-jan1223}
\eqalign{
\displaystyle1\approx 2^{2l_{2}}|\langle g_{2},\varphi^{2}_{\overrightarrow{\textit{\textbf{n}}},m}\rangle|^{2}&\displaystyle\Longrightarrow \#\mathbb{X}^{l_{2},s_{1}}_{(n_{1},m)} \approx 2^{2l_{2}}\sum_{n_{2}\in\mathbb{X}^{l_{2},s_{1}}_{(n_{1},m)}}|\langle g_{2},\varphi^{2}_{\overrightarrow{\textit{\textbf{n}}},m}\rangle|^{2} \cr
&\displaystyle\Longrightarrow  \#\mathbb{X}^{l_{2},s_{1}}_{(n_{1},m)} \lesssim 2^{2l_{2}}\left\|\langle g_{2},\varphi^{1,2}_{n_{1},m}\rangle\right\|_{2}^{2}  \cr
&\displaystyle\Longrightarrow 2^{-l_{2}}\lesssim \frac{2^{-s_{1}}}{\left(\#\mathbb{X}^{l_{2},s_{1}}_{(n_{1},m)}\right)^{\frac{1}{2}}} \cr
&\displaystyle\Longrightarrow 2^{-l_{2}}\lesssim \frac{2^{-s_{1}}}{\left\|\mathbbm{1}_{\mathbb{X}^{l_{2},s_{1}}}\right\|_{\ell^{\infty}_{n_{1},m}\ell^{1}_{n_{2}}}^{\frac{1}{2}}},
}
\end{equation}
by taking the supremum in $(n_{1},m)$. Analogously,
\begin{equation}\label{mot2-jan1223}
2^{-l_{2}}\lesssim  \frac{2^{-s_{2}}}{\left\|\mathbbm{1}_{\mathbb{X}^{l_{2},s_{2}}}\right\|_{\ell^{\infty}_{n_{2},m}\ell^{1}_{n_{1}}}^{\frac{1}{2}}}.
\end{equation}
Relations \eqref{mot3-jan1223}, \eqref{mot1-jan1223} and \eqref{mot2-jan1223} play a major role in the proof. The last major piece is a way of bounding $\#\mathbb{X}^{\overrightarrow{l},\overrightarrow{r},\overrightarrow{s},k}$ that allows us to exploit transversality and Strichartz along the right directions, as well as the dual function $h$. We start with the simplest one of them:

\begin{equation}\label{boundX3-jan1223}
\#\mathbb{X}^{\overrightarrow{l},\overrightarrow{r},\overrightarrow{s},k}\lesssim 2^{k}\sum_{(\overrightarrow{\textit{\textbf{n}}},m)\in\mathbb{Z}^{3}}|\langle h,\chi_{\overrightarrow{\textit{\textbf{n}}}}\otimes\chi_{m}\rangle|= 2^{k}|F|.
\end{equation}
By dropping most of the indicator functions in the definition of $\mathbb{X}^{\overrightarrow{l},\overrightarrow{r},\overrightarrow{s},k}$ and using H\"older, we obtain

$$\#\mathbb{X}^{\overrightarrow{l},\overrightarrow{r},\overrightarrow{s},k} \leq \sum_{(\overrightarrow{\textit{\textbf{n}}},m)\in\mathbb{Z}^{3}}\mathbbm{1}_{\mathbb{X}^{l_{2},s_{1}}}(\overrightarrow{\textit{\textbf{n}}},m)\cdot \mathbbm{1}_{\mathbb{B}^{r_{1}}_{1}\cap\mathbb{C}^{s_{1}}_{1}}(n_{1},m)  \leq\|\mathbbm{1}_{\mathbb{X}^{l_{2},s_{1}}}\|_{\ell^{\infty}_{n_{1},m}\ell^{1}_{n_{2}}} \cdot \|\mathbbm{1}_{\mathbb{B}^{r_{1}}_{1}\cap\mathbb{C}^{s_{1}}_{1}}\|_{\ell^{1}_{n_{1},m}}. $$
The second factor of the inequality above will be bounded by the one-dimensional bilinear theory:

\begin{equation*}
\eqalign{
\#\mathbb{B}^{r_{1}}_{1}\cap\mathbb{C}^{s_{1}}_{1}&\displaystyle\lesssim 2^{2r_{1}+2s_{1}}\sum_{n_{1},m\in\mathbb{Z}} \left\|\left\langle g_{1},\varphi_{n_{1},m}^{1,1}\right\rangle_{x_{1}}\right\|_{2}^{2}\cdot \left\|\left\langle g_{2},\varphi_{n_{1},m}^{1,2}\right\rangle_{x_{1}}\right\|_{2}^{2}\cr
&=2^{2r_{1}+2s_{1}}\displaystyle\iint\left(\sum_{n_{1},m\in\mathbb{Z}}\left|\left\langle g_{1},\varphi_{n_{1},m}^{1,1}\right\rangle_{x_{1}}\right|^{2}\cdot\left|\left\langle g_{2},\varphi_{n_{1},m}^{1,2}\right\rangle_{x_{1}}\right|^{2}\right)\mathrm{d}x_{2}\mathrm{d}\widetilde{x}_{2} \cr
&=2^{2r_{1}+2s_{1}}\displaystyle\iint \|g_{1}\|_{L^{2}_{x_{1}}}^{2}\cdot \|g_{2}\|_{L^{2}_{x_{1}}}^{2}\mathrm{d}x_{2}\mathrm{d}\widetilde{x}_{2} \cr
&\leq\displaystyle 2^{2r_{1}+2s_{1}}\|g_{1}\|_{2}^{2}\cdot \|g_{2}\|_{2}^{2},
}
\end{equation*}
by Proposition \ref{prop3apr2922} since the supports of $\varphi^{1,1}$ and $\varphi^{1,2}$ are disjoint (this is equivalent to transversality in dimension one). This gives us
\begin{equation}\label{boundX1-jan1223}
\#\mathbb{X}^{\overrightarrow{l},\overrightarrow{r},\overrightarrow{s},k} \leq \|\mathbbm{1}_{\mathbb{X}^{l_{2},s_{1}}}\|_{\ell^{\infty}_{n_{1},m}\ell^{1}_{n_{2}}} \cdot 2^{2r_{1}+2s_{1}}\cdot |E_{1}|\cdot |E_{2}|.
\end{equation}
Alternatively,

\begin{equation*}
\eqalign{
\displaystyle \#\mathbb{X}^{\overrightarrow{l},\overrightarrow{r},\overrightarrow{s},k} &\displaystyle\leq \sum_{(n_{2},m)\in\mathbb{Z}^{2}} \mathbbm{1}_{\mathbb{B}^{r_{2}}_{2}\cap\mathbb{C}^{s_{2}}_{2}}(n_{2},m)\sum_{n_{1}\in\mathbb{Z}}\mathbbm{1}_{\mathbb{X}^{l_{2},s_{2}}}(\overrightarrow{\textit{\textbf{n}}},m)\cdot \mathbbm{1}_{\mathbb{B}^{r_{1}}_{1}}(n_{1},m) \cr
&\displaystyle\leq \|\mathbbm{1}_{\mathbb{X}^{l_{2},s_{2}}}\|^{\frac{1}{2}}_{\ell^{\infty}_{n_{2},m}\ell^{1}_{n_{1}}}\cdot \|\mathbbm{1}_{\mathbb{B}^{r_{1}}_{1}}\|_{\ell^{\infty}_{m}\ell^{1}_{n_{1}}}^{\frac{1}{2}} \cdot \|\mathbbm{1}_{\mathbb{B}^{r_{2}}_{2}\cap\mathbb{C}^{s_{2}}_{2}}\|_{\ell^{1}_{n_{2},m}}. \cr
}
\end{equation*}
We can treat the last two factors appearing in the right-hand side above as follows: for a fixed $m\in\mathbb{Z}$,

\begin{equation*}
\displaystyle\sum_{n_{1}\in\mathbb{Z}}\mathbbm{1}_{\mathbb{B}^{r_{1}}}(n_{1},m)\displaystyle\lesssim 2^{2r_{1}}\sum_{n_{1}\in\mathbb{Z}} \left\|\left\langle g_{1},\varphi_{n_{1},m}^{1,1}\right\rangle_{x_{1}}\right\|_{2}^{2}
\leq 2^{2r_{1}}\cdot \|g_{1}\|_{2}^{2}
\end{equation*}
by Bessel (recall that the modulated bumps $\varphi_{n_{1},m}^{1,1}$ are almost-orthogonal if $n_{1}$ varies and $m$ is fixed), and then we take the supremum in $m$. As for the other factor, observe that\footnote{Here we are also ignoring the fact that we do not prove the endpoint $L^{2}-L^{6}$ estimate for the model $E_{1}$. It will not compromise this preliminary exposition.}

\begin{equation*}
\eqalign{
\#\mathbb{B}^{r_{2}}_{2}\cap\mathbb{C}^{s_{2}}_{2}&\displaystyle\lesssim 2^{5r_{2}+s_{2}}\sum_{n_{2},m\in\mathbb{Z}}\left\|\left\langle g_{1},\varphi_{n_{2},m}^{2,1}\right\rangle_{x_{2}}\right\|_{2}^{5}\cdot\left\|\left\langle g_{2},\varphi_{n_{2},m}^{2,2}\right\rangle_{x_{2}}\right\|_{2} \cr
&\displaystyle\lesssim 2^{5r_{2}+s_{2}}\left(\sum_{n_{2},m\in\mathbb{Z}}\left\|\left\langle g_{1},\varphi_{n_{2},m}^{2,1}\right\rangle_{x_{2}}\right\|_{2}^{6}\right)^{\frac{5}{6}}\left(\sum_{n_{2},m\in\mathbb{Z}}\left\|\left\langle g_{2},\varphi_{n_{2},m}^{2,2}\right\rangle_{x_{2}}\right\|_{2}^{6}\right)^{\frac{1}{6}} \cr
&\leq\displaystyle 2^{5r_{2}+s_{2}}\|g_{1}\|_{2}^{5}\cdot\|g_{2}\|_{2}\cr
}
\end{equation*}
by Corollary \ref{cor1apr2922}. These last to estimates give the following bound on $ \#\mathbb{X}^{\overrightarrow{l},\overrightarrow{r},\overrightarrow{s},k}$:

\begin{equation}\label{boundX2-jan1223}
 \#\mathbb{X}^{\overrightarrow{l},\overrightarrow{r},\overrightarrow{s},k} \lesssim \|\mathbbm{1}_{\mathbb{X}^{l_{2},s_{2}}}\|^{\frac{1}{2}}_{\ell^{\infty}_{n_{2},m}\ell^{1}_{n_{1}}} \cdot 2^{r_{1}}\cdot |E_{1}|^{\frac{1}{2}} \cdot 2^{5r_{2}+s_{2}}\cdot |E_{1}|^{\frac{5}{2}}\cdot |E_{2}|^{\frac{1}{2}}.
\end{equation}
In what follows, we interpolate between \label{boundX1-jan1223}, \label{boundX2-jan1223} and \label{boundX3-jan1223} with weights $\frac{2}{5}^{-}$, $\frac{1}{5}^{-}$ and $\frac{2}{5}^{+}$, respectively. We also take an appropriate of combination between \eqref{mot1-jan1223} and \eqref{mot2-jan1223}, and use \eqref{mot3-jan1223}:

\begin{equation*}
\eqalign{
|\tilde{\tilde{\Lambda}}_{2,2}(g_{1},g_{2},h)| &``\lesssim"\displaystyle\sum_{\overrightarrow{r},\overrightarrow{s},k}\displaystyle \frac{2^{-r_{1}}\cdot 2^{-r_{2}}}{|E_{1}|^{\frac{1}{2}}} \cdot \frac{2^{-\frac{4}{5}s_{1}}}{\|\mathbbm{1}_{\mathbb{X}^{l_{2},s_{1}}}\|^{\frac{2}{5}}_{\ell_{n_{1},m}^{\infty}\ell^{1}_{n_{2}}}}\cdot \frac{2^{-\frac{1}{5}s_{2}}}{\|\mathbbm{1}_{\mathbb{X}^{l_{s},s_{2}}}\|^{\frac{1}{10}}_{\ell_{n_{2},m}^{\infty}\ell^{1}_{n_{1}}}}\cdot 2^{-k} \cr
&\displaystyle\qquad\qquad\cdot \left(\|\mathbbm{1}_{\mathbb{X}^{l_{2},s_{1}}}\|_{\ell^{\infty}_{n_{1},m}\ell^{1}_{n_{2}}} \cdot 2^{2r_{1}+2s_{1}}\cdot |E_{1}|\cdot |E_{2}|\right)^{\frac{2}{5}^{-}} \cr
&\displaystyle \qquad\qquad\cdot \left(\|\mathbbm{1}_{\mathbb{X}^{l_{2},s_{2}}}\|^{\frac{1}{2}}_{\ell^{\infty}_{n_{2},m}\ell^{1}_{n_{1}}} \cdot 2^{r_{1}}\cdot |E_{1}|^{\frac{1}{2}} \cdot 2^{5r_{2}+s_{2}}\cdot |E_{1}|^{\frac{5}{2}}\cdot |E_{2}|^{\frac{1}{2}}\right)^{\frac{1}{5}-} \cr
&\displaystyle\qquad\qquad \cdot\left(2^{k}|F|\right)^{\frac{2}{5}^{-}} \cr
&\lesssim\displaystyle |E_{1}|^{\frac{1}{2}}\cdot |E_{2}|^{\frac{1}{2}}\cdot |F|^{\frac{2}{5}^{+}},\cr
}
\end{equation*}
which is the estimate that we were looking for\footnote{This bound on $\tilde{\tilde{\Lambda}}_{2,2}$ is of course informal, which is why we wrote $``\lesssim"$. Observe that we also removed the sum in $\overrightarrow{l}$; it contributes with a term that depends on $\varepsilon$ in the formal argument. Later in the text we will see why we can assume $\overrightarrow{r},\overrightarrow{s},k\geq 0$ in the sum above.}.

\subsection{The general argument} Roughly, this is a one-paragraph outline of the proof: we split the sum in \eqref{form1-jan1223} into certain level sets, find good upper bounds for how many points $(\overrightarrow{\textit{\textbf{n}}},m)$ are in each level set using the weak transversality and Strichartz information, and then average all this data appropriately.

First we will prove the bound

\begin{equation}\label{mtl1apr29}
\|ME_{k,d}^{\frac{1}{k}}(g)\|_{L^{\frac{2(d+k+1)}{(d+k-1)}+\varepsilon}(\mathbb{R}^{d+1})}\lesssim_{\varepsilon}\prod_{l=1}^{k}|E_{1,l}|^{\frac{1}{2k}}\cdot \prod_{j=2}^{k}|E_{j}|^{\frac{1}{2k}},
\end{equation}
for every $\varepsilon>0$. As we remarked at the end of Section \ref{descriptionmethod}, this is the restricted weak-type bound that will be proved directly; all the other ones that are necessary for multilinear interpolation can be proved in a similar way, as the reader will notice.

We will define several level sets that encode the sizes of many quantities that will play a role in the proof. We start with the ones involving the scalar products in the multilinear form above.

$$\mathbb{A}_{j}^{l_{j}}:=\{(\overrightarrow{\textit{\textbf{n}}},m)\in\mathbb{Z}^{d+1};\quad |\langle g_{j},\varphi_{\overrightarrow{\textit{\textbf{n}}},m}^{j}\rangle|\approx 2^{-l_{j}}\},\quad 1\leq j\leq k. $$

The sizes of the $\langle g_{j},\varphi_{\overrightarrow{\textit{\textbf{n}}},m}\rangle$ are not the only information that we will need to control. As in the previous subsection, some mixed-norm quantities appear naturally after using Bessel's inequality along certain directions, and we will need to capture these as well:
$$\mathbb{B}_{i,1}^{r_{i,1}}:=\left\{(n_{i},m)\in\mathbb{Z}^{2};\quad \left\|\langle g_{1},\varphi_{n_{i},m}^{i,1}\rangle_{x_{i}}\right\|_{2}\approx 2^{-r_{i,1}}\right\},\quad 1\leq i\leq k-1,$$
$$\mathbb{B}_{i,i+1}^{r_{i,i+1}}:=\left\{(n_{i},m)\in\mathbb{Z}^{2};\quad \left\|\langle g_{i+1},\varphi_{n_{i},m}^{i,i+1}\rangle_{x_{i}}\right\|_{2}\approx 2^{-r_{i,i+1}}\right\}, \quad 1\leq i\leq k-1, $$
$$\mathbb{B}_{k,j}^{r_{k,j}}:=\left\{(\overrightarrow{\textit{\textbf{n}}_{k}},m)\in\mathbb{Z}^{d-k+2};\quad \left\|\langle g_{j},\varphi_{\overrightarrow{\textit{\textbf{n}}_{k}},m}^{k,j}\rangle_{\overrightarrow{x_{k}}}\right\|_{2}\approx 2^{-r_{k,j}}\right\},\quad 1\leq j\leq k. $$

Set $\mathbb{B}_{i,j}^{r_{i,j}}:=\emptyset$ for any other pair $(i,j)$ not included in the above definitions. Observe that $g_{1}$ (the function that has a tensor structure) has $k$ sets $\mathbb{B}$ associated to it: $k-1$ sets $\mathbb{B}^{r_{i,1}}_{i,1}$ and $1$ set $\mathbb{B}^{r_{k,1}}_{k,1}$. The other functions $g_{j}$, $j\neq 1$, have only two: $1$ set $\mathbb{B}^{r_{j-1,j}}_{j-1,j}$ and $1$ set $\mathbb{B}^{r_{k,j}}_{k,j}$ for each $1\leq j \leq k$. The idea behind the sets $\mathbb{B}_{i,1}^{r_{i,1}}$ and $\mathbb{B}_{i,i+1}^{r_{i,i+1}}$ is to isolate the ``piece" of each function that encodes the weak transversality information from the part that captures the Strichartz/Tomas-Stein behavior, which is in the set $\mathbb{B}_{k,j}^{r_{k,j}}$. For each $1\leq i\leq k-1$, we will pair the information of the sets $\mathbb{B}_{i,1}^{r_{i,1}}$ and $\mathbb{B}_{i,i+1}^{r_{i,i+1}}$ and use Proposition \ref{prop3apr2922} to extract the gain yielded by weak transversality. The information contained in the sets $\mathbb{B}_{k,j}^{r_{k,j}}$ will be exploited via Corollary \ref{cor1apr2922}.

The last quantity we have to control is the one arising from the dualizing function $h$:
$$\mathbb{C}^{t}:=\{(\overrightarrow{\textit{\textbf{n}}},m)\in\mathbb{Z}^{d+1};\quad |\langle h,\chi_{\overrightarrow{\textit{\textbf{n}}}}\otimes\chi_{m}\rangle|\approx 2^{-t}\}. $$

In order to prove some crucial bounds, at some point we will have to isolate the previous information for only one of the functions $g_{j}$. This will be done in terms of the following set\footnote{Many of these sets are empty since we set $\mathbb{B}_{i,j}^{r_{i,j}}=\emptyset$ for most $(i,j)$, but only the nonempty ones will appear in the argument.}: 
$$\mathbb{X}^{l_{j};r_{i,j}} = \mathbb{A}_{j}^{l_{j}}\cap \left\{(\overrightarrow{\textit{\textbf{n}}},m)\in\mathbb{Z}^{d+1};\quad (n_{i},m)\in \mathbb{B}_{i,j}^{r_{i,j}}\right\}.$$

In other words, $\mathbb{X}^{l_{j};r_{i,j}}$ contains all the $(n_{1},\ldots,n_{d},m)$ whose corresponding scalar product $\langle g_{j},\varphi_{\overrightarrow{\textit{\textbf{n}}},m}\rangle$ has size about $2^{-l_{j}}$ and with $(n_{i},m)$ being such that $\left\|\langle g_{j},\varphi_{n_{i},m}^{i,j}\rangle_{x_{i}}\right\|_{2}$ has size about $2^{-r_{i,j}}$.

Finally, it will also be important to encode all the previous information into one single set. This will be done with

$$\mathbb{X}^{\overrightarrow{l},R,t}:=\bigcap_{1\leq j\leq k}\mathbb{A}_{j}^{l_{j}}\cap \left\{(\overrightarrow{\textit{\textbf{n}}},m)\in\mathbb{Z}^{d+1};\quad (n_{i},m)\in\bigcap_{j}\mathbb{B}_{i,j}^{r_{i,j}},\quad 1\leq i\leq d\right\}\cap\mathbb{C}^{t}, $$
where we are using the abbreviations $\overrightarrow{l}=(l_{1},\ldots,l_{k})$ and $R:=(r_{i,j})_{i,j}$. Hence we can bound the form $\widetilde{\widetilde{\Lambda}}_{k,d}$ as follows:

\begin{equation}\label{form1-231221}
|\tilde{\tilde{\Lambda}}_{k,d}(g,h)|\lesssim \sum_{\overrightarrow{l},R,t\geq 0}2^{-t}\prod_{j=1}^{k}2^{-\frac{l_{j}}{k}}\#\mathbb{X}^{\overrightarrow{l},R,t}.
\end{equation}
Observe that we are assuming without loss of generality that $l_{j},r_{i,j},t\geq 0$. Indeed,

$$2^{-l_{j}}\lesssim |\langle g_{j},\varphi_{\overrightarrow{\textit{\textbf{n}}},m}^{j}\rangle|\leq \|g_{j}\|_{\infty}\cdot \|\varphi\|_{1}\lesssim 1,$$
so $l_{j}$ is at least as big as a universal integer. The argument for the remaining indices is the same.

The following two lemmas play a crucial role in the argument by relating the scalar and mixed-norm quantities involved in the stopping-time above. Lemma \ref{lemma1-3122020} allows us to do that for the quantities associated to $g_{1}$, the function that has a tensor structure. We remark that this is the only place in the proof where the tensor structure is used.

\begin{lemma}\label{lemma1-3122020} If $\mathbb{X}^{\overrightarrow{l},R,t}\neq\emptyset$, then:
\item $$2^{-l_{1}}\approx\frac{2^{-r_{1,1}}\cdot\ldots\cdot 2^{-r_{k,1}}}{\|g_{1}\|^{k-1}_{2}}, $$
\end{lemma}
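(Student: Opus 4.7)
The plan is to unwind every quantity in the statement using the tensor structure of $g_{1}$ and then compare. Write $g_{1}=g_{1,1}\otimes\cdots\otimes g_{1,k-1}\otimes g_{1,k}$ where the first $k-1$ factors are single-variable functions in $x_{1},\dots,x_{k-1}$ and $g_{1,k}$ depends on $\overrightarrow{x_{k}}$. Since $\varphi^{1}_{\overrightarrow{\textit{\textbf{n}}},m}=\bigotimes_{l=1}^{d}\varphi^{l,1}_{n_{l},m}$ is itself a tensor by Definition~\ref{defklinearmodel}, Fubini immediately splits the defining inner product of $\mathbb{A}^{l_{1}}_{1}$ as
$$
|\langle g_{1},\varphi^{1}_{\overrightarrow{\textit{\textbf{n}}},m}\rangle|
= \prod_{l=1}^{k-1}|\langle g_{1,l},\varphi^{l,1}_{n_{l},m}\rangle|\cdot
|\langle g_{1,k},\varphi^{k,1}_{\overrightarrow{\textit{\textbf{n}}_{k}},m}\rangle|\approx 2^{-l_{1}}.
$$

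Next I would compute the partial inner products that enter the sets $\mathbb{B}^{r_{i,1}}_{i,1}$. For $1\le i\le k-1$, integrating only in $x_{i}$ against the tensor pulls out the factor $\langle g_{1,i},\varphi^{i,1}_{n_{i},m}\rangle$ and leaves the full tensor in the other variables untouched; taking the $L^{2}$ norm of what remains produces $\|g_{1}\|_{2}/\|g_{1,i}\|_{2}$. Hence
$$
2^{-r_{i,1}}\approx |\langle g_{1,i},\varphi^{i,1}_{n_{i},m}\rangle|\cdot\frac{\|g_{1}\|_{2}}{\|g_{1,i}\|_{2}},\qquad 1\le i\le k-1,
$$
and the same calculation for the block $\overrightarrow{x_{k}}$ yields
$$
2^{-r_{k,1}}\approx |\langle g_{1,k},\varphi^{k,1}_{\overrightarrow{\textit{\textbf{n}}_{k}},m}\rangle|\cdot\frac{\|g_{1}\|_{2}}{\|g_{1,k}\|_{2}}.
$$

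Multiplying these $k$ identities, the numerators reassemble into the full scalar product $|\langle g_{1},\varphi^{1}_{\overrightarrow{\textit{\textbf{n}}},m}\rangle|\approx 2^{-l_{1}}$, while the norm prefactors telescope to $\|g_{1}\|_{2}^{k}/\bigl(\prod_{l=1}^{k-1}\|g_{1,l}\|_{2}\cdot\|g_{1,k}\|_{2}\bigr)=\|g_{1}\|_{2}^{k-1}$. Rearranging gives precisely the stated identity $2^{-l_{1}}\approx 2^{-r_{1,1}}\cdots 2^{-r_{k,1}}/\|g_{1}\|_{2}^{k-1}$. I do not anticipate a genuine obstacle: the lemma is essentially a bookkeeping identity that translates a rank-one tensor fact into the language of the level sets. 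Its importance is structural rather than technical — it is the bridge that later lets Proposition~\ref{prop3apr2922} (weak transversality, applied to the indices $r_{i,1}$ with $i<k$) and Corollary~\ref{cor1apr2922} (Strichartz, applied to $r_{k,1}$) be combined coherently with the scalar threshold $2^{-l_{1}}$.
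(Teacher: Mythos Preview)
Your proof is correct and follows essentially the same approach as the paper: both exploit the tensor structure of $g_{1}$ to factor each mixed-norm quantity $\|\langle g_{1},\varphi^{i,1}_{n_{i},m}\rangle_{x_{i}}\|_{2}$ as $|\langle g_{1,i},\varphi^{i,1}_{n_{i},m}\rangle|$ times the $L^{2}$ norm of the remaining tensor factors, then multiply and simplify. The paper writes the remaining norm as $\|g_{1,1}\otimes\cdots\otimes\widehat{g_{1,i}}\otimes\cdots\otimes g_{1,k}\|_{2}$ whereas you write it as $\|g_{1}\|_{2}/\|g_{1,i}\|_{2}$, but these are identical and the telescoping is the same.
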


\begin{proof} Observe that

\begin{equation*}
\eqalign{
 2^{-r_{1,1}}\cdot\ldots\cdot 2^{-r_{k,1}} &\approx\displaystyle \prod_{i=1}^{k}\left\|\langle g_{1},\varphi_{n_{i},m}^{i,1}\rangle_{x_{i}}\right\|_{2} \cr
 &=\displaystyle  \prod_{i=1}^{k}\left\|\langle g_{1,1}\otimes\ldots\otimes g_{1,k},\varphi_{n_{i},m}^{i,1}\rangle_{x_{i}}\right\|_{2} \cr
 &=\displaystyle \prod_{i=1}^{k}|\langle g_{1,i},\varphi_{n_{i},m}^{i,1}\rangle_{x_{i}}|\cdot\|g_{1,1}\otimes\ldots\otimes \widehat{g_{1,i}}\otimes\ldots\otimes g_{1,k}\|_{2} \cr
 &=\displaystyle |\langle g_{1},\varphi_{\overrightarrow{\textit{\textbf{n}}},m}^{1}\rangle|\cdot \|g_{1}\|_{2}^{k-1} \cr
 &\approx\displaystyle 2^{-l_{1}}\cdot \|g_{1}\|_{2}^{k-1},
}
\end{equation*}
and this proves the lemma. 
\end{proof}

Lemma \ref{lemma1-231221} gives us an alternative way of relating the quantities previously defined for the generic functions $g_{2},\ldots,g_{k}$.

\begin{lemma}\label{lemma1-231221} If $\mathbb{X}^{\overrightarrow{l},R,t}\neq\emptyset$, then:

\begin{equation}\label{rel1apr2922}
 2^{-l_{i+1}}\lesssim \frac{2^{-r_{i,i+1}}}{\left\|\mathbbm{1}_{\mathbb{X}^{l_{i+1};r_{i,i+1}}}\right\|_{\ell^{\infty}_{n_{i},m}\ell^{1}_{\widehat{n_{i}}}}^{\frac{1}{2}}},
 \end{equation}
 \begin{equation}\label{rel2apr2922}
 2^{-l_{i+1}}\lesssim \frac{2^{-r_{k,i+1}}}{\left\|\mathbbm{1}_{\mathbb{X}^{l_{i+1};r_{k,i+1}}}\right\|_{\ell^{\infty}_{\overrightarrow{n_{k}},m}\ell^{1}_{\widehat{\overrightarrow{n_{k}}}}}^{\frac{1}{2}}},
 \end{equation}
for all $1\leq i\leq k-1$.
\end{lemma}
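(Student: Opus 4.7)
The plan is to treat both inequalities in parallel, viewing them as the natural higher-dimensional generalizations of the informal estimates \eqref{mot1-jan1223} and \eqref{mot2-jan1223} used in the $k=d=2$ case. In each case, the idea is to fix the block of indices being "singled out" in the outer $\ell^\infty$ norm and apply Bessel's inequality along the remaining directions to bound the $\ell^1$ slice count by a one-dimensional (or $(d{-}k{+}1)$-dimensional) mixed-norm quantity, which is in turn pinned down to $2^{-r}$ on the set $\mathbb{B}$.

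For \eqref{rel1apr2922}, I fix $(n_i,m) \in \mathbb{B}_{i,i+1}^{r_{i,i+1}}$ and consider the slice $S_{(n_i,m)} := \{\widehat{n_i} : (\overrightarrow{\textit{\textbf{n}}},m) \in \mathbb{X}^{l_{i+1};r_{i,i+1}}\}$. Using the tensor-product form of the wave packet,
\[
\langle g_{i+1},\varphi^{i+1}_{\overrightarrow{\textit{\textbf{n}}},m}\rangle \;=\; \left\langle\langle g_{i+1},\varphi^{i,i+1}_{n_i,m}\rangle_{x_i},\;\bigotimes_{l\neq i}\varphi^{l,i+1}_{n_l,m}\right\rangle_{\widehat{x_i}}.
\]
For fixed $m$, the family $\bigl\{\bigotimes_{l\neq i}\varphi^{l,i+1}_{n_l,m}\bigr\}_{\widehat{n_i}\in\mathbb{Z}^{d-1}}$ is almost orthogonal in $L^2(\mathbb{R}^{d-1})$ (being a tensor product of almost-orthogonal families of linear modulations sharing the same quadratic phase), so Bessel's inequality gives
\[
\sum_{\widehat{n_i}\in S_{(n_i,m)}}\bigl|\langle g_{i+1},\varphi^{i+1}_{\overrightarrow{\textit{\textbf{n}}},m}\rangle\bigr|^2 \;\lesssim\; \bigl\|\langle g_{i+1},\varphi^{i,i+1}_{n_i,m}\rangle_{x_i}\bigr\|_2^2 \;\approx\; 2^{-2r_{i,i+1}}.
\]
On the other hand, each summand on the left is $\approx 2^{-2l_{i+1}}$ because the slice lies in $\mathbb{A}_{i+1}^{l_{i+1}}$, giving $\#S_{(n_i,m)}\cdot 2^{-2l_{i+1}}\lesssim 2^{-2r_{i,i+1}}$. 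Taking the supremum over $(n_i,m)$ recognizes the left-hand side as $\|\mathbbm{1}_{\mathbb{X}^{l_{i+1};r_{i,i+1}}}\|_{\ell^\infty_{n_i,m}\ell^1_{\widehat{n_i}}}$, and rearranging yields \eqref{rel1apr2922}.

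For \eqref{rel2apr2922}, the argument is the same after swapping the roles of the "singled-out" variable and the orthogonality block. Now I fix $(\overrightarrow{n_k},m)$ and split the scalar product as
\[
\langle g_{i+1},\varphi^{i+1}_{\overrightarrow{\textit{\textbf{n}}},m}\rangle \;=\; \left\langle\langle g_{i+1},\varphi^{k,i+1}_{\overrightarrow{n_k},m}\rangle_{\overrightarrow{x_k}},\;\bigotimes_{l=1}^{k-1}\varphi^{l,i+1}_{n_l,m}\right\rangle_{(x_1,\dots,x_{k-1})}.
\]
Bessel along the $(n_1,\dots,n_{k-1})$ block (with $m$ and $\overrightarrow{n_k}$ fixed) bounds the sum of squares over the slice by $\|\langle g_{i+1},\varphi^{k,i+1}_{\overrightarrow{n_k},m}\rangle_{\overrightarrow{x_k}}\|_2^2\approx 2^{-2r_{k,i+1}}$, and the identical bookkeeping finishes the proof.

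I don't expect any real obstacle here: the statement is an almost direct consequence of the tensor-product structure of the wave packets and a single application of Bessel's inequality along the variables being summed out. The only care needed is to check that the wave packets are indeed almost orthogonal in the relevant variables for fixed $m$, which follows because the quadratic phase $e^{2\pi i m x_l^2}$ is the same for each $n_l$ and so factors out of the inner products, reducing the question to orthogonality of the linear modulations $\{e^{2\pi i n_l x_l}\varphi^{l,i+1}\}_{n_l}$. Unlike Lemma \ref{lemma1-3122020}, no tensor hypothesis on $g_{i+1}$ is used, which is consistent with the fact that these two bounds are meant to be the "generic function" analogues of the sharper identity available for $g_1$.
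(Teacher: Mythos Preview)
Your proof is correct and follows essentially the same route as the paper's: fix the ``outer'' index block, recognize the remaining inner product as a Fourier-type coefficient of the partial inner product $\langle g_{i+1},\varphi^{i,i+1}_{n_i,m}\rangle_{x_i}$ (respectively $\langle g_{i+1},\varphi^{k,i+1}_{\overrightarrow{n_k},m}\rangle_{\overrightarrow{x_k}}$), apply Bessel along the remaining variables, and take the supremum. The paper writes this out by displaying the integral with the exponentials explicitly (the bumps $\varphi^{l,i+1}$ for $l\neq i$ are $\equiv 1$ on the support of $g_{i+1}$ and so drop out), while you phrase it as almost-orthogonality of the tensor-product wave packets; these are the same observation, and your remark that the common quadratic phase factors out is exactly what makes the two viewpoints coincide.
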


\begin{proof} \eqref{rel1apr2922} is a consequence of orthogonality: for a fixed $(n_{i},m)$, denote 

$$\mathbb{X}^{l_{i+1};r_{i,i+1}}_{(n_{i},m)} :=\{ \widehat{n_{i}};\quad (\overrightarrow{\textit{\textbf{n}}},m)\in \mathbb{X}^{l_{i+1},r_{i,i+1}}\}. $$
This way,
\begin{equation*}
\eqalign{
\#\mathbb{X}^{l_{i+1};r_{i,i+1}}_{(n_{i},m)} &\approx\displaystyle 2^{2l_{i+1}}\sum_{\widehat{n_{i}}\in \mathbb{X}^{l_{i+1};r_{i,i+1}}_{(n_{i},m)}} |\langle g_{i+1},\varphi_{\overrightarrow{\textit{\textbf{n}}},m}^{i+1}\rangle|^{2}\cr
&\displaystyle\displaystyle\leq 2^{2l_{i+1}}\sum_{\widehat{n_{i}}}\left|\int\langle g_{i+1},\varphi_{n_{i},m}^{i,i+1}\rangle_{x_{i}}\cdot e^{-2\pi im(\sum_{j\neq i}x_{j}^{2})}\cdot \prod_{j\neq i}e^{-2\pi in_{j}x_{j}}\cdot\mathrm{d}\widehat{x_{i}}\right|^2 \cr
&\leq\displaystyle 2^{2l_{i+1}}\int \left|\langle g_{i+1},\varphi_{n_{i},m}^{i,i+1}\rangle_{x_{i}}\right|^{2}\mathrm{d}\widehat{x_{i}} \cr
&\approx\displaystyle 2^{2l_{i+1}}\cdot 2^{-2r_{i,i+1}},
}
\end{equation*}
where we used Bessel's inequality from the second to the third line. The lemma follows by taking the supremum in $(n_{i},m)$. \eqref{rel2apr2922} is proven analogously.

\end{proof}

The following corollary gives a convex combination of the relations in Lemma \ref{lemma1-231221} that will be used in the proof.

\begin{corollary}\label{cor1-3122020} For $1\leq i\leq k-1$ we have
$$2^{-l_{i+1}}\lesssim \frac{2^{-\frac{2k}{d+k+1}\cdot r_{i,i+1}}}{\left\|\mathbbm{1}_{\mathbb{X}^{l_{i+1};r_{i,i+1}}}\right\|_{\ell^{\infty}_{n_{i},m}\ell^{1}_{\widehat{n_{i}}}}^{\frac{k}{d+k+1}}}\cdot\frac{2^{-\frac{(d-k+1)}{(d+k+1)}\cdot r_{k,i+1}}}{\left\|\mathbbm{1}_{\mathbb{X}^{l_{i+1};r_{k,i+1}}}\right\|_{\ell^{\infty}_{\overrightarrow{\textit{\textbf{n}}_{k}},m}\ell^{1}_{\widehat{\overrightarrow{n_{k}}}}}^{\frac{(d-k+1)}{2(d+k+1)}}}.$$
\end{corollary}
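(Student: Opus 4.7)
The plan is to obtain the stated bound as a weighted geometric mean of the two inequalities \eqref{rel1apr2922} and \eqref{rel2apr2922} supplied by Lemma \ref{lemma1-231221}. Concretely, I would write $2^{-l_{i+1}} = (2^{-l_{i+1}})^{\alpha}\cdot(2^{-l_{i+1}})^{\beta}$ for suitable exponents $\alpha,\beta\geq 0$ with $\alpha+\beta=1$, apply \eqref{rel1apr2922} to the first factor and \eqref{rel2apr2922} to the second, and then solve for $\alpha$ and $\beta$ so that the resulting exponents match the ones claimed in the corollary.

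Reading off the claimed exponents on $r_{i,i+1}$ and $r_{k,i+1}$, I would set
\[
\alpha=\frac{2k}{d+k+1},\qquad \beta=\frac{d-k+1}{d+k+1}.
\]
A quick check shows $\alpha+\beta=\frac{2k+(d-k+1)}{d+k+1}=\frac{d+k+1}{d+k+1}=1$, as required (and both exponents are nonnegative in the range $2\leq k\leq d+1$, since then $d-k+1\geq 0$). Moreover the induced powers on the two $\ell^{\infty}\ell^{1}$ norms are $\alpha/2=\frac{k}{d+k+1}$ and $\beta/2=\frac{d-k+1}{2(d+k+1)}$, which exactly match the expressions in the statement.

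Combining everything, Lemma \ref{lemma1-231221} gives
\[
2^{-l_{i+1}} \;=\; \bigl(2^{-l_{i+1}}\bigr)^{\alpha}\bigl(2^{-l_{i+1}}\bigr)^{\beta} \;\lesssim\; \frac{2^{-\alpha r_{i,i+1}}}{\left\|\mathbbm{1}_{\mathbb{X}^{l_{i+1};r_{i,i+1}}}\right\|_{\ell^{\infty}_{n_{i},m}\ell^{1}_{\widehat{n_{i}}}}^{\alpha/2}}\cdot\frac{2^{-\beta r_{k,i+1}}}{\left\|\mathbbm{1}_{\mathbb{X}^{l_{i+1};r_{k,i+1}}}\right\|_{\ell^{\infty}_{\overrightarrow{\textit{\textbf{n}}_{k}},m}\ell^{1}_{\widehat{\overrightarrow{n_{k}}}}}^{\beta/2}},
\]
which is precisely the claimed inequality once the values of $\alpha$ and $\beta$ are substituted. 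There is no substantive obstacle here: the corollary is a bookkeeping step that picks out the particular convex combination of the two estimates of Lemma \ref{lemma1-231221} that will be needed later when these bounds are multiplied against the $\#\mathbb{X}^{\overrightarrow{l},R,t}$ estimates to produce the correct scaling in the final interpolation argument. The only thing worth double-checking is that the chosen $\alpha,\beta$ are exactly the weights that will later make the exponents of $|F|$, $|E_{1,l}|$ and $|E_{j}|$ in \eqref{mtl1apr29} line up, which is a consistency check to be performed when these bounds are plugged into \eqref{form1-231221} rather than a difficulty in the present proof.
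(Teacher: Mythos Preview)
Your proposal is correct and is exactly the paper's own proof: interpolate between the two bounds of Lemma~\ref{lemma1-231221} with weights $\frac{2k}{d+k+1}$ and $\frac{d-k+1}{d+k+1}$, respectively. There is nothing to add.
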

\begin{proof} Interpolate between the bounds of Lemma \ref{lemma1-231221} with weights $\frac{2k}{d+k+1}$ and $\frac{d-k+1}{d+k+1}$, respectively.
\end{proof}

We now concentrate on estimating the right-hand side of \eqref{form1-231221} by finding good bounds for $\#\mathbb{X}^{\overrightarrow{l},R,t}$. The following bound follows immediately from the disjointness of the supports of $\chi_{\overrightarrow{\textit{\textbf{n}}}}\otimes\chi_{m}$:

\begin{equation}\label{bound1-dec12020}
\#\mathbb{X}^{\overrightarrow{l},R,t}\lesssim\sum_{(\overrightarrow{\textit{\textbf{n}}},m)\in\mathbb{Z}^{d+1}}|\langle h,\chi_{\overrightarrow{\textit{\textbf{n}}}}\otimes\chi_{m}\rangle|\lesssim 2^{t}|F|.
\end{equation}
By definition of the set $\mathbb{X}^{\overrightarrow{l},R,t}$:

\begin{equation}\label{ineqX-231221}
\eqalign{
\#\mathbb{X}^{\overrightarrow{l},R,t}&\displaystyle\leq\sum_{(\overrightarrow{\textit{\textbf{n}}},m)\in\mathbb{Z}^{d+1}}\prod_{j=1}^{k}\mathbbm{1}_{\mathbb{A}^{l_{j}}_{j}}(\overrightarrow{\textit{\textbf{n}}},m)\cdot \prod_{\substack{i,j \\ \mathbb{B}_{i,j}^{r_{i,j}}\neq\emptyset}}\mathbbm{1}_{\mathbb{B}_{i,j}^{r_{i,j}}}(n_{i},m).\cr
}
\end{equation}
We will manipulate \eqref{ineqX-231221} in $k$ different ways: $k-1$ of them will exploit orthogonality (through the one-dimensional bilinear theory after combining the sets $\mathbb{B}_{i,1}^{r_{i,1}}$ and $\mathbb{B}_{i,i+1}^{r_{i,i+1}}$, $1\leq i\leq k-1$) and the last one will reflect Strichartz/Tomas-Stein in an appropriate dimension. The following lemma gives us estimates for the cardinality of $\mathbb{X}^{\overrightarrow{l},R,t}$ based on the sizes of some of its slices along canonical directions\footnote{The reader may associate this idea to certain discrete Loomis-Whitney or Brascamp-Lieb inequalities. While reducing matters to lower dimensional theory is at the core of our paper, we do not yet have a genuine ``Brascamp-Lieb way" of bounding $\#\mathbb{X}^{\overrightarrow{l},R,t}$ for which our methods work. For instance, no ``slice" of $\mathbb{X}^{\overrightarrow{l},R,t}$ given by fixing a few (or all) $n_{j}$ and summing over $m$ appears in our estimates, which breaks the Loomis-Whitney symmetry.}.

\begin{lemma}\label{lemma1-21aug2021} The bounds above imply
\begin{enumerate}[(a)]
\item The \textit{orthogonality-type bounds}\footnote{Weak transversality enters the picture here.}:
\begin{equation}\label{bound3-3122020}
\#\mathbb{X}^{\overrightarrow{l},R,t}\leq \left\|\mathbbm{1}_{\mathbb{X}^{l_{i+1};r_{i,i+1}}}\right\|_{\ell^{\infty}_{n_{i},m}\ell^{1}_{\widehat{n_{i}}}}\cdot 2^{2r_{i,1}+2r_{i,i+1}}\cdot \|g_{1}\|_{2}^{2}\cdot \|g_{i+1}\|_{2}^{2},\qquad 1\leq i\leq k-1.
\end{equation}
\item The \textit{Strichartz-type bound}:
\begin{equation}\label{bound4-3122020}
\#\mathbb{X}^{\overrightarrow{l},R,t}\leq \prod_{j=2}^{k}\left\|\mathbbm{1}_{\mathbb{X}^{l_{j};r_{k,j}}}\right\|_{\ell^{\infty}_{\overrightarrow{\textit{\textbf{n}}_{k}},m}\ell^{1}_{\widehat{\overrightarrow{n_{k}}}}}^{\frac{1}{k}}\cdot 2^{\frac{2}{k}\sum_{i=1}^{k-1}r_{i,1}}\cdot \|g_{1}\|_{2}^{\frac{2(k-1)}{k}}\cdot 2^{\alpha\cdot r_{k,1}+\sum_{l=2}^{k}\beta\cdot r_{k,l}}\cdot \|g_{1}\|_{2}^{\alpha}\cdot \prod_{l=2}^{k}\|g_{l}\|^{\beta}_{2},
\end{equation}
where
$$\alpha := \frac{2(d+k+1)}{k(d-k+1)}+\delta\cdot\frac{(d+k+1)}{k(d-k+3)}, \quad \beta := \frac{2}{k}+\tilde{\delta}\cdot\frac{(d-k+1)}{k(d-k+3)}, $$
with $\delta,\tilde{\delta}> 0$ being arbitrarily small parameters to be chosen later\footnote{One should think of $\delta$ and $\tilde{\delta}$ as being ``morally zero". They will be chosen as a function of the initially given $\varepsilon>0$, and the only reason we introduce them is to make the appropriate up to the endpoint Strichartz exponent appear in \eqref{bound2-3122020}. The main terms of $\alpha$ and $\beta$ are also chosen with that in mind.}.
\end{enumerate}

\end{lemma}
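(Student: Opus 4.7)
The proof splits according to the two parts, with Proposition \ref{prop3apr2922} as the only genuine analytic input for (a) and Corollary \ref{cor1apr2922} for (b); everything else is a manipulation of indicator functions via H\"older, Bessel, and Fubini. For (a), I would fix $1\leq i\leq k-1$, drop all indicators in \eqref{ineqX-231221} except $\mathbbm{1}_{\mathbb{B}_{i,1}^{r_{i,1}}}(n_i,m)$ and $\mathbbm{1}_{\mathbb{X}^{l_{i+1};r_{i,i+1}}}(\overrightarrow{\textit{\textbf{n}}},m)$, and factor the latter as $\mathbbm{1}_{\mathbb{A}_{i+1}^{l_{i+1}}}(\overrightarrow{\textit{\textbf{n}}},m)\cdot\mathbbm{1}_{\mathbb{B}_{i,i+1}^{r_{i,i+1}}}(n_i,m)$ directly from the definition of $\mathbb{X}^{l_{i+1};r_{i,i+1}}$. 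Summing over $\widehat{n_i}$ first extracts the mixed norm $\|\mathbbm{1}_{\mathbb{X}^{l_{i+1};r_{i,i+1}}}\|_{\ell^\infty_{n_i,m}\ell^1_{\widehat{n_i}}}$ and leaves $\#(\mathbb{B}_{i,1}^{r_{i,1}}\cap\mathbb{B}_{i,i+1}^{r_{i,i+1}})$. Chebyshev bounds this cardinality by $2^{2r_{i,1}+2r_{i,i+1}}\sum_{n_i,m}\|\langle g_1,\varphi_{n_i,m}^{i,1}\rangle_{x_i}\|_2^2\|\langle g_{i+1},\varphi_{n_i,m}^{i,i+1}\rangle_{x_i}\|_2^2$, and after writing each $L^2$ norm as an integral in $\widehat{x_i}$ and applying Fubini, this equals $2^{2r_{i,1}+2r_{i,i+1}}\iint\|ME_{2,1}(g_1(\cdot,\widehat{x_i}),g_{i+1}(\cdot,\widehat{x_i}'))\|_2^2\,\mathrm{d}\widehat{x_i}\,\mathrm{d}\widehat{x_i}'\lesssim 2^{2r_{i,1}+2r_{i,i+1}}\|g_1\|_2^2\|g_{i+1}\|_2^2$ by Proposition \ref{prop3apr2922}. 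The hypothesis of that proposition is satisfied because the supports of $\varphi^{i,1}$ and $\varphi^{i,i+1}$ are disjoint and separated in the $x_i$ direction, which is precisely the weak transversality of $Q_1$ and $Q_{i+1}$ along $e_i$.

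For (b), I would retain $\mathbbm{1}_{\mathbb{B}_{i,1}^{r_{i,1}}}(n_i,m)$ for $1\leq i\leq k-1$, $\mathbbm{1}_{\mathbb{B}_{k,l}^{r_{k,l}}}(\overrightarrow{n_k},m)$ for $1\leq l\leq k$, and $\mathbbm{1}_{\mathbb{X}^{l_j;r_{k,j}}}(\overrightarrow{\textit{\textbf{n}}},m)$ for $2\leq j\leq k$, and split $\#\mathbb{X}^{\overrightarrow{l},R,t}$ as an outer sum over $(\overrightarrow{n_k},m)$ of an inner sum over $(n_1,\ldots,n_{k-1})$. The inner sum contains exactly $k$ non-negative factors: the $k-1$ indicators $\mathbbm{1}_{\mathbb{X}^{l_j;r_{k,j}}}$ (for $j=2,\ldots,k$) and the combined factor $\prod_{i=1}^{k-1}\mathbbm{1}_{\mathbb{B}_{i,1}^{r_{i,1}}}$. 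H\"older's inequality with all exponents equal to $k$ (so that $k\cdot\frac{1}{k}=1$) produces, pointwise in $(\overrightarrow{n_k},m)$, the bound $\prod_{j=2}^k\bigl(\sum_{\widehat{\overrightarrow{n_k}}}\mathbbm{1}_{\mathbb{X}^{l_j;r_{k,j}}}\bigr)^{1/k}\cdot\bigl(\prod_{i=1}^{k-1}\alpha_{i,m}\bigr)^{1/k}$, where $\alpha_{i,m}:=\#\{n_i:(n_i,m)\in\mathbb{B}_{i,1}^{r_{i,1}}\}$. Taking the supremum in $(\overrightarrow{n_k},m)$ of the first set of factors yields $\prod_{j=2}^k\|\mathbbm{1}_{\mathbb{X}^{l_j;r_{k,j}}}\|_{\ell^\infty_{\overrightarrow{n_k},m}\ell^1_{\widehat{\overrightarrow{n_k}}}}^{1/k}$, while Bessel's inequality for the almost-orthogonal family $\{\varphi_{n_i,m}^{i,1}\}_{n_i}$ at fixed $m$ gives $\max_m\alpha_{i,m}\lesssim 2^{2r_{i,1}}\|g_1\|_2^2$, producing the factor $2^{(2/k)\sum_i r_{i,1}}\|g_1\|_2^{2(k-1)/k}$. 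What remains of the outer sum is $\#\bigl(\mathbb{B}_{k,1}^{r_{k,1}}\cap\prod_{l=2}^k\mathbb{B}_{k,l}^{r_{k,l}}\bigr)$.

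To control this last count I would apply H\"older in the $(\overrightarrow{n_k},m)$ variable with exponents $p/\alpha$ on $g_1$ and $p/\beta$ on each $g_l$ with $l\geq 2$, so that the reciprocals sum to $1$ iff $\alpha+(k-1)\beta=p$; a direct algebraic check shows that, to leading order, this sum equals the Strichartz endpoint $\frac{2(d-k+3)}{d-k+1}$ exactly, and the small parameters $\delta,\widetilde{\delta}$ are tuned precisely so that $p$ sits just above this endpoint, where Corollary \ref{cor1apr2922} (applied with $l=k-1$ in dimension $d$ to each of $g_1,\ldots,g_k$ in turn) is valid. This produces $\|g_1\|_2^\alpha\prod_{l\geq 2}\|g_l\|_2^\beta$ together with the factor $2^{\alpha r_{k,1}+\sum_{l\geq 2}\beta r_{k,l}}$, and combining with the previous paragraph closes (b). The main obstacle to watch for is that the H\"older step genuinely produces the asymmetric $\ell^\infty_{\overrightarrow{n_k},m}\ell^1_{\widehat{\overrightarrow{n_k}}}$ mixed norms rather than the much larger full $\ell^1$ cardinalities; this succeeds precisely because the $(\overrightarrow{n_k},m)$-supremum can be extracted pointwise before the outer sum over those variables is performed, and the rest of the exponent bookkeeping is essentially forced by matching the stated bound.
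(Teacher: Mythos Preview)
Your proposal is correct and follows essentially the same approach as the paper's proof. Both parts proceed identically: for (a) you drop to the two relevant indicators, extract the mixed norm by summing in $\widehat{n_i}$ first, and bound $\#(\mathbb{B}_{i,1}^{r_{i,1}}\cap\mathbb{B}_{i,i+1}^{r_{i,i+1}})$ via Chebyshev, Fubini, and Proposition~\ref{prop3apr2922}; for (b) you split into outer/inner sums, apply H\"older with $k$ equal exponents on the inner sum, use Bessel on the $\mathbb{B}_{i,1}^{r_{i,1}}$ factors, and then H\"older again on the remaining $\bigcap_l \mathbb{B}_{k,l}^{r_{k,l}}$ count with exponents chosen so that Corollary~\ref{cor1apr2922} applies---the paper writes these exponents explicitly as $p_{k,1}=\tfrac{k(d-k+3)}{d+k+1}$ and $p_{k,l}=\tfrac{k(d-k+3)}{d-k+1}$, which are precisely your $p/\alpha$ and $p/\beta$ to leading order.
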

\begin{proof} For each $1\leq i\leq k-1$ we bound most of the indicator functions in \eqref{ineqX-231221} by $1$ and obtain

\begin{equation}\label{bound1-dec22020}
\eqalign{
\#\mathbb{X}^{\overrightarrow{l},R,t}&\displaystyle\leq\sum_{(\overrightarrow{\textit{\textbf{n}}},m)\in\mathbb{Z}^{d+1}}\mathbbm{1}_{\mathbb{A}^{l_{i+1}}_{i+1}}(\overrightarrow{\textit{\textbf{n}}},m)\cdot \mathbbm{1}_{\mathbb{B}_{i,1}^{r_{i,1}}}(n_{i},m) \cdot \mathbbm{1}_{\mathbb{B}_{i,i+1}^{r_{i,i+1}}}(n_{i},m)\cr
&\displaystyle= \sum_{(\overrightarrow{\textit{\textbf{n}}},m)\in\mathbb{Z}^{d+1}}\mathbbm{1}_{\mathbb{X}^{l_{i+1};r_{i,i+1}}}(\overrightarrow{\textit{\textbf{n}}},m)\cdot \mathbbm{1}_{\mathbb{B}_{i,1}^{r_{i,1}}\cap \mathbb{B}_{i,i+1}^{r_{i,i+1}}}(n_{i},m) \cr
&\displaystyle=\sum_{n_{i},m}\mathbbm{1}_{\mathbb{B}_{i,1}^{r_{i,1}}\cap \mathbb{B}_{i,i+1}^{r_{i,i+1}}}(n_{i},m)\sum_{\widehat{n_{i}}}\mathbbm{1}_{\mathbb{X}^{l_{i+1};r_{i,i+1}}}(\overrightarrow{\textit{\textbf{n}}},m) \cr
&\displaystyle\leq \left\|\mathbbm{1}_{\mathbb{X}^{l_{i+1};r_{i,i+1}}}\right\|_{\ell^{\infty}_{n_{i},m}\ell^{1}_{\widehat{n_{i}}}}\cdot \left\|\mathbbm{1}_{\mathbb{B}_{i,1}^{r_{i,1}}\cap \mathbb{B}_{i,i+1}^{r_{i,i+1}}}\right\|_{\ell^{1}_{n_{i},m}}. \cr
}
\end{equation}
\textbf{Transversality is exploited now:} the cube $Q_{1}$ with $\{e_{1},\ldots,e_{k-1}\}$ as associated set of directions satisfies \eqref{eqwtmay722}, which allows us to apply Proposition \ref{prop3apr2922} for each $1\leq i\leq k-1$ since weak transversality is equivalent to transversality in dimension $d=1$. By definition of the sets $\mathbb{B}_{i,1}^{r_{i,1}}$ and $\mathbb{B}_{i,i+1}^{r_{i,i+1}}$, Fubini and Proposition \ref{prop3apr2922} we have:

\begin{equation*}
\eqalign{
\left\|\mathbbm{1}_{\mathbb{B}_{i,1}^{r_{i,1}}\cap \mathbb{B}_{i,i+1}^{r_{i,i+1}}}\right\|_{\ell^{1}_{n_{i},m}} &\displaystyle\lesssim 2^{2r_{i,1}+2r_{i,i+1}}\sum_{(n_{i},m)\in\mathbb{B}_{i,1}^{r_{i,1}}\cap \mathbb{B}_{i,i+1}^{r_{i,i+1}}} \left\|\langle g_{1},\varphi_{n_{i},m}^{i,1}\rangle_{x_{i}}\right\|_{2}^{2} \cdot \left\|\langle g_{i+1},\varphi_{n_{i},m}^{i,i+1}\rangle_{y_{i}}\right\|_{2}^{2} \cr
&\displaystyle\leq 2^{2r_{i,1}+2r_{i,i+1}}\int\int \left(\sum_{(n_{i},m)\in\mathbb{Z}^{2}}\left|\langle g_{1},\varphi_{n_{i},m}^{i,1}\rangle_{x_{i}}\right|^{2}\cdot \left|\langle g_{i+1},\varphi_{n_{i},m}^{i,i+1}\rangle_{y_{i}}\right|^{2}\right)\mathrm{d}\widehat{x_{i}}\mathrm{d}\widehat{y_{i}} \cr
&\displaystyle \leq 2^{2r_{i,1}+2r_{i,i+1}}\int\int \|g_{1}\|_{L^{2}_{x_{i}}}^{2}\cdot \|g_{i+1}\|_{L^{2}_{y_{i}}}^{2}\mathrm{d}\widehat{x_{i}}\mathrm{d}\widehat{y_{i}} \cr
&\displaystyle= 2^{2r_{i,1}+2r_{i,i+1}}\cdot \|g_{1}\|_{2}^{2}\cdot \|g_{i+1}\|_{2}^{2}
}
\end{equation*}
Using this in \eqref{bound1-dec22020} gives \textit{(a)}. As for \textit{(b)}, bound $\#\mathbb{X}^{\overrightarrow{l},R,t}$ as follows:
\begin{equation}\label{bound2-dec22020}
\eqalign{
\#\mathbb{X}^{\overrightarrow{l},R,t}&\displaystyle = \sum_{(\overrightarrow{\textit{\textbf{n}}},m)\in\mathbb{Z}^{d+1}}\mathbbm{1}_{\mathbb{X}^{\overrightarrow{l},R,t}}(\overrightarrow{\textit{\textbf{n}}},m)\cr
&\displaystyle\leq\sum_{(\overrightarrow{\textit{\textbf{n}}},m)\in\mathbb{Z}^{d+1}}\prod_{j=2}^{k}\mathbbm{1}_{\mathbb{X}^{l_{j};r_{k,j}}}(\overrightarrow{\textit{\textbf{n}}},m)\prod_{i=1}^{k-1}\mathbbm{1}_{\mathbb{B}_{i,1}^{r_{i,1}}}(n_{i},m)\cdot\prod_{l=1}^{k}\mathbbm{1}_{\mathbb{B}_{k,l}^{r_{k,l}}}(\overrightarrow{\textit{\textbf{n}}_{k}},m) \cr
&\displaystyle=\sum_{\overrightarrow{\textit{\textbf{n}}_{k}},m}\prod_{l=1}^{k}\mathbbm{1}_{\mathbb{B}_{k,l}^{r_{k,l}}}(\overrightarrow{\textit{\textbf{n}}_{k}},m)\sum_{n_{1},\ldots,n_{k-1}}\prod_{j=2}^{k}\mathbbm{1}_{\mathbb{X}^{l_{j};r_{k,j}}}(\overrightarrow{\textit{\textbf{n}}},m)\prod_{i=1}^{k-1}\mathbbm{1}_{\mathbb{B}_{i,1}^{r_{i,1}}}(n_{i},m) \cr
&\displaystyle\leq \sum_{\overrightarrow{\textit{\textbf{n}}_{k}},m}\prod_{l=1}^{k}\mathbbm{1}_{\mathbb{B}_{k,l}^{r_{k,l}}}(\overrightarrow{\textit{\textbf{n}}_{k}},m)\prod_{j=2}^{k}\left\|\mathbbm{1}_{\mathbb{X}^{l_{j};r_{k,j}}}(\overrightarrow{\textit{\textbf{n}}},m)\right\|_{\ell^{1}_{\widehat{\overrightarrow{n_{k}}}}}^{\frac{1}{k}}\cdot \left\|\prod_{i=1}^{k-1}\mathbbm{1}_{\mathbb{B}_{i,1}^{r_{i,1}}}(n_{i},m)\right\|_{\ell^{1}_{\widehat{\overrightarrow{n_{k}}}}}^{\frac{1}{k}} \cr
&\displaystyle\leq \prod_{j=2}^{k}\left\|\mathbbm{1}_{\mathbb{X}^{l_{j};r_{k,j}}}\right\|_{\ell^{\infty}_{\overrightarrow{\textit{\textbf{n}}_{k}},m}\ell^{1}_{\widehat{\overrightarrow{n_{k}}}}}^{\frac{1}{k}}\cdot \prod_{i=1}^{k-1}\left\|\mathbbm{1}_{\mathbb{B}_{i,1}^{r_{i,1}}}\right\|_{\ell^{\infty}_{m}\ell^{1}_{n_{i}}}^{\frac{1}{k}} \cdot\left\|\prod_{l=1}^{k}\mathbbm{1}_{\mathbb{B}_{k,l}^{r_{k,l}}}\right\|_{\ell^{1}_{\overrightarrow{\textit{\textbf{n}}_{k}},m}},\cr
}
\end{equation}
where we used H\"older's inequality from the third to fourth line. Next, notice that

\begin{equation}\label{bound1-3122020}
\eqalign{
\left\|\mathbbm{1}_{\mathbb{B}_{i,1}^{r_{i,1}}}\right\|_{\ell^{\infty}_{m}\ell^{1}_{n_{i}}} &\lesssim\displaystyle\sup_{m} 2^{2r_{i,1}}\sum_{n_{i}}\left\|\langle g_{1},\varphi_{n_{i},m}^{i,1}\rangle_{x_{i}}\right\|_{2}^{2}\cr
&\displaystyle=\sup_{m}2^{2r_{i,1}}\int\sum_{n_{i}}\left|\langle g_{1},\varphi_{n_{i},m}^{i,1}\rangle_{x_{i}}\right|^{2}\mathrm{d}\widehat{x_{i}} \cr
&\displaystyle\lesssim 2^{2r_{i,1}}\cdot \|g_{1}\|_{2}^{2} \cr
}
\end{equation}
by orthogonality. Now let

$$p_{k,1}:=\frac{k(d-k+3)}{(d+k+1)}, $$
$$p_{k,l}:=\frac{k(d-k+3)}{(d-k+1)},\quad\forall\quad 2\leq l\leq k $$
and notice that
$$\sum_{l=1}^{k}\frac{1}{p_{k,l}}=1. $$
This way, by definition of $\mathbb{B}_{k,l}^{r_{k,l}}$ and by H\"older's inequality with these $p_{k,l}$ we have

\begin{equation}\label{bound2-3122020}
\eqalign{
&\displaystyle\left\|\prod_{l=1}^{k}\mathbbm{1}_{\mathbb{B}_{k,l}^{r_{k,l}}}\right\|_{\ell^{1}_{\overrightarrow{\textit{\textbf{n}}_{k}},m}} \cr
&\quad\quad\lesssim\displaystyle 2^{\alpha\cdot r_{k,1}+\sum_{l=2}^{k}\beta\cdot r_{k,l}}\sum_{(\overrightarrow{\textit{\textbf{n}}_{k}},m)}\left\|\langle g_{1},\varphi_{\overrightarrow{\textit{\textbf{n}}_{k}},m}^{k,1}\rangle_{\overrightarrow{x_{k}}}\right\|_{2}^{\alpha}\cdot\prod_{l=2}^{k}\left\|\langle g_{l},\varphi_{\overrightarrow{\textit{\textbf{n}}_{k}},m}^{k,l}\rangle_{\overrightarrow{x_{k}}}\right\|_{2}^{\beta} \cr
&\quad\quad\leq\displaystyle 2^{\alpha\cdot r_{k,1}+\sum_{l=2}^{k}\beta\cdot r_{k,l}}\left(\sum_{(\overrightarrow{\textit{\textbf{n}}_{k}},m)}\left\|\langle g_{1},\varphi_{\overrightarrow{\textit{\textbf{n}}_{k}},m}^{k,1}\rangle_{\overrightarrow{x_{k}}}\right\|_{2}^{\alpha\cdot p_{k,1}}\right)^{\frac{1}{p_{k,1}}}\cdot\prod_{l=2}^{k}\left(\sum_{(\overrightarrow{\textit{\textbf{n}}_{k}},m)}\left\|\langle g_{l},\varphi_{\overrightarrow{\textit{\textbf{n}}_{k}},m}^{k,l}\rangle_{\overrightarrow{x_{k}}}\right\|_{2}^{\beta\cdot p_{k,l}} \right)^{\frac{1}{p_{k,l}}} \cr
&\quad\quad=\displaystyle 2^{\alpha\cdot r_{k,1}+\sum_{l=2}^{k}\beta\cdot r_{k,l}}\left(\sum_{(\overrightarrow{\textit{\textbf{n}}_{k}},m)}\left\|\langle g_{1},\varphi_{\overrightarrow{\textit{\textbf{n}}_{k}},m}^{k,1}\rangle_{\overrightarrow{x_{k}}}\right\|_{2}^{\frac{2(d-k+3)}{(d-k+1)}+\delta}\right)^{\frac{1}{p_{k,1}}}\cdot\prod_{l=2}^{k}\left(\sum_{(\overrightarrow{\textit{\textbf{n}}_{k}},m)}\left\|\langle g_{l},\varphi_{\overrightarrow{\textit{\textbf{n}}_{k}},m}^{k,l}\rangle_{\overrightarrow{x_{k}}}\right\|_{2}^{\frac{2(d-k+3)}{(d-k+1)}+\tilde{\delta}} \right)^{\frac{1}{p_{k,l}}} \cr
&\quad\quad\leq\displaystyle 2^{\alpha\cdot r_{k,1}+\sum_{l=2}^{k}\beta\cdot r_{k,l}}\cdot \|g_{1}\|_{2}^{\alpha}\cdot \prod_{l=2}^{k}\|g_{l}\|^{\beta}_{2},
}
\end{equation}
by the up to the endpoint mixed-norm Strichartz bound in Corollary \ref{cor1apr2922}\footnote{See the footnote related to Corollary \ref{cor1apr2922}.}. Using \eqref{bound1-3122020} and \eqref{bound2-3122020} in \eqref{bound2-dec22020} yields \textit{(b)}.
\end{proof}

Given $\varepsilon>0$ small\footnote{Perhaps it is helpful for the reader to think of $\varepsilon$, $\delta$ and $\tilde{\delta}$ as equal to zero to focus on the important parts of the proof. The presence of these parameters here is a mere technicality, except of course for the fact that $\varepsilon>0$ makes us lose the endpoint in this case.}, we interpolate between $k+1$ bounds for $\#\mathbb{X}^{\overrightarrow{l},R,t}$ with the following weights\footnote{Observe that $\sum_{l=1}^{k+1}\theta_{l}=1$. These weights are chosen so that the correct powers of the measures $|E_{j}|$ and $|F|$ appear in \eqref{eq1may622}.}:

$$
\begin{cases}
\theta_{l}=\frac{1}{d+k+1}-\frac{\varepsilon}{k},\quad 1\leq l\leq k-1\quad\textnormal{ for \eqref{bound3-3122020}},\\
\theta_{k}=\frac{(d-k+1)}{2(d+k+1)}-\frac{\varepsilon}{k}\quad\textnormal{ for \eqref{bound4-3122020}},\\
\theta_{k+1}= \left[1-\frac{(d+k-1)}{2(d+k+1)}\right]+\varepsilon\quad\textnormal{ for \eqref{bound1-dec12020}},
\end{cases}
$$
which leads to

\begin{equation*}
\eqalign{
|&\displaystyle\tilde{\tilde{\Lambda}}_{k,d}(g,h)| \cr
&\lesssim\displaystyle \sum_{\overrightarrow{l},R,t\geq 0} 2^{-t}\displaystyle\times\prod_{j=1}^{k}2^{-\frac{l_{j}}{k}}\cr
&\displaystyle\times\prod_{l=1}^{k-1}\left(\left\|\mathbbm{1}_{\mathbb{X}^{l_{l+1};r_{l,l+1}}}\right\|_{\ell^{\infty}_{n_{l},m}\ell^{1}_{\widehat{n_{l}}}}\cdot 2^{2r_{l,1}+2r_{l,l+1}}\cdot\|g_{1}\|_{2}^{2}\cdot \|g_{l+1}\|_{2}^{2}\right)^{\frac{1}{d+k+1}-\frac{\varepsilon}{k}} \cr
&\displaystyle\times\left(\prod_{j=2}^{k}\left\|\mathbbm{1}_{\mathbb{X}^{l_{j};r_{k,j}}}\right\|_{\ell^{\infty}_{\overrightarrow{\textit{\textbf{n}}_{k}},m}\ell^{1}_{\widehat{\overrightarrow{n_{k}}}}}^{\frac{1}{k}}\cdot 2^{\frac{2}{k}\sum_{i=1}^{k-1}r_{i,1}}\cdot \|g_{1}\|_{2}^{\frac{2(k-1)}{k}}\cdot 2^{\alpha\cdot r_{k,1}+\sum_{l=2}^{k}\beta\cdot r_{k,l}}\cdot \|g_{1}\|_{2}^{\alpha}\cdot \prod_{l=2}^{k}\|g_{l}\|^{\beta}_{2}\right)^{\frac{(d-k+1)}{2(d+k+1)}-\frac{\varepsilon}{k}}\cr
&\displaystyle\times\left(2^{t}|F|\right)^{\left[1-\frac{(d+k-1)}{2(d+k+1)}\right]+\varepsilon}, \cr
}
\end{equation*}

Using Lemma \ref{lemma1-3122020} and Corollary \ref{cor1-3122020} to bound the $2^{-l_{j}}$ in the form $\tilde{\tilde{\Lambda}}_{k,d}$ yields:

\begin{equation*}
\eqalign{
|&\displaystyle \tilde{\tilde{\Lambda}}_{k,d}(g,h)| \cr
&\displaystyle\lesssim\displaystyle \sum_{\overrightarrow{l},R,t\geq 0} 2^{-t}\displaystyle\times 2^{-\frac{\varepsilon}{k^{2}}l_{1}}\times\left(\frac{1}{\|g_{1}\|_{2}^{k-1}}\prod_{j=1}^{k}2^{-r_{j,1}}\right)^{\frac{1}{k}-\frac{\varepsilon}{k^{2}}} \cr
&\displaystyle\times \prod_{i=1}^{k-1}2^{-\frac{\varepsilon}{k^{2}}l_{i+1}}\times\prod_{i=1}^{k-1}\left[\frac{2^{-\frac{2}{d+k+1}\cdot r_{i,i+1}}}{\left\|\mathbbm{1}_{\mathbb{X}^{l_{i+1};r_{i,i+1}}}\right\|_{\ell^{\infty}_{n_{i},m}\ell^{1}_{\widehat{n_{i}}}}^{\frac{1}{d+k+1}}}\cdot\frac{2^{-\frac{(d-k+1)}{k(d+k+1)}\cdot r_{k,i+1}}}{\left\|\mathbbm{1}_{\mathbb{X}^{l_{i+1};r_{k,i+1}}}\right\|_{\ell^{\infty}_{\overrightarrow{\textit{\textbf{n}}_{k}},m}\ell^{1}_{\widehat{\overrightarrow{n_{k}}}}}^{\frac{(d-k+1)}{2k(d+k+1)}}}\right]^{1-\frac{\varepsilon}{k}} \cr
&\displaystyle\times\prod_{l=1}^{k-1}\left(\left\|\mathbbm{1}_{\mathbb{X}^{l_{l+1};r_{l,l+1}}}\right\|_{\ell^{\infty}_{n_{l},m}\ell^{1}_{\widehat{n_{l}}}}\cdot 2^{2r_{l,1}+2r_{l,l+1}}\cdot\|g_{1}\|_{2}^{2}\cdot \|g_{l+1}\|_{2}^{2}\right)^{\frac{1}{d+k+1}-\frac{\varepsilon}{k}} \cr
&\displaystyle\times\left(\prod_{j=2}^{k}\left\|\mathbbm{1}_{\mathbb{X}^{l_{j};r_{k,j}}}\right\|_{\ell^{\infty}_{\overrightarrow{\textit{\textbf{n}}_{k}},m}\ell^{1}_{\widehat{\overrightarrow{n_{k}}}}}^{\frac{1}{k}}\cdot 2^{\frac{2}{k}\sum_{i=1}^{k-1}r_{i,1}}\cdot \|g_{1}\|_{2}^{\frac{2(k-1)}{k}}\cdot 2^{\alpha\cdot r_{k,1}+\sum_{l=2}^{k}\beta\cdot r_{k,l}}\cdot \|g_{1}\|_{2}^{\alpha}\cdot \prod_{l=2}^{k}\|g_{l}\|^{\beta}_{2}\right)^{\frac{(d-k+1)}{2(d+k+1)}-\frac{\varepsilon}{k}}\cr
&\displaystyle\times\left(2^{t}|F|\right)^{\left[1-\frac{(d+k-1)}{2(d+k+1)}\right]+\varepsilon}, \cr
}
\end{equation*}

Developing the expression above,

\begin{equation*}
\eqalign{
|\tilde{\tilde{\Lambda}}_{k,d}(g,h)|\lesssim\displaystyle &\displaystyle\sum_{\overrightarrow{l},R,t\geq 0} 2^{-t}\displaystyle\times 2^{-\frac{\varepsilon}{k^{2}}l_{1}}\times\left(\prod_{j=1}^{k}2^{-r_{j,1}}\right)^{\frac{1}{k}-\frac{\varepsilon}{k^{2}}}\cdot \|g_{1}\|_{2}^{\frac{(k-1)}{k^{2}}\varepsilon-\frac{(k-1)}{k}}  \cr
&\displaystyle\times \prod_{i=1}^{k-1}2^{-\frac{\varepsilon}{k^{2}}l_{i+1}}\times\prod_{i=1}^{k-1}\left[2^{-\frac{2}{d+k+1}\cdot r_{i,i+1}}\cdot 2^{-\frac{(d-k+1)}{k(d+k+1)}\cdot r_{k,i+1}}\right]^{1-\frac{\varepsilon}{k}} \cr
&\displaystyle \times \prod_{i=1}^{k-1}\left[\left\|\mathbbm{1}_{\mathbb{X}^{l_{i+1};r_{i,i+1}}}\right\|_{\ell^{\infty}_{n_{i},m}\ell^{1}_{\widehat{n_{i}}}}^{\frac{1}{d+k+1}\cdot \left(\frac{\varepsilon}{k}-1\right)}\cdot \left\|\mathbbm{1}_{\mathbb{X}^{l_{i+1};r_{k,i+1}}}\right\|_{\ell^{\infty}_{\overrightarrow{\textit{\textbf{n}}_{k}},m}\ell^{1}_{\widehat{\overrightarrow{n_{k}}}}}^{\frac{(d-k+1)}{2k(d+k+1)}\cdot\left(\frac{\varepsilon}{k}-1\right)}\right]\cr
&\displaystyle\times\left[\prod_{l=1}^{k-1}\left\|\mathbbm{1}_{\mathbb{X}^{l_{l+1};r_{l,l+1}}}\right\|_{\ell^{\infty}_{n_{l},m}\ell^{1}_{\widehat{n_{l}}}}^{\frac{1}{d+k+1}-\frac{\varepsilon}{k}} \right]\times\left[\prod_{l=1}^{k-1}\left(2^{r_{l,1}+r_{l,l+1}}\right)^{\frac{2}{d+k+1}-\frac{2\varepsilon}{k}}\right] \cr
&\times\displaystyle \|g_{1}\|_{2}^{\frac{2(k-1)}{d+k+1}-\frac{2(k-1)\varepsilon}{k}}\cdot \prod_{l=1}^{k-1}\|g_{l+1}\|_{2}^{\frac{2}{d+k+1}-\frac{2\varepsilon}{k}} \cr
&\displaystyle\times\prod_{j=2}^{k}\left\|\mathbbm{1}_{\mathbb{X}^{l_{j};r_{k,j}}}\right\|_{\ell^{\infty}_{\overrightarrow{\textit{\textbf{n}}_{k}},m}\ell^{1}_{\widehat{\overrightarrow{n_{k}}}}}^{\frac{1}{k}\cdot \left(\frac{(d-k+1)}{2(d+k+1)}-\frac{\varepsilon}{k}\right)}\cdot\left(2^{\frac{2}{k}\sum_{i=1}^{k-1}r_{i,1}}\cdot  2^{\alpha\cdot r_{k,1}+\sum_{l=2}^{k}\beta\cdot r_{k,l}}\right)^{\frac{(d-k+1)}{2(d+k+1)}-\frac{\varepsilon}{k}}\cr
&\displaystyle\times \|g_{1}\|_{2}^{\left(\frac{2(k-1)}{k}+\alpha\right)\cdot \left(\frac{(d-k+1)}{2(d+k+1)}-\frac{\varepsilon}{k}\right)}\cdot \prod_{l=2}^{k}\|g_{l}\|^{\beta\cdot\left(\frac{(d-k+1)}{2(d+k+1)}-\frac{\varepsilon}{k}\right)}_{2}\cr
&\displaystyle\times\left(2^{t}|F|\right)^{\left[1-\frac{(d+k-1)}{2(d+k+1)}\right]+\varepsilon}. \cr
}
\end{equation*}
At this point we set the values of $\delta$ and $\tilde{\delta}$ (as functions of $\varepsilon$) to be such that\footnote{We emphasize that these particular choices are just for computational convenience, and we have not developed the expressions because this is exactly how we use them to simplify the previous calculations.}
\begin{equation*}
\eqalign{
\displaystyle\delta\cdot\left[\frac{(d-k+1)}{k(d-k+3)}-\frac{(d+k+1)\varepsilon}{k^{2}(d-k+3)}\right]&\displaystyle=\frac{1}{2}\left[-\frac{\varepsilon}{k^{2}}+\frac{2(d+k+1)\varepsilon}{k^{2}(d-k+1)}\right] \cr
\displaystyle\tilde{\delta}\cdot\left[\frac{(d-k+1)^{2}}{2k(d+k+1)(d-k+3)}-\frac{(d-k+1)\varepsilon}{k^{2}(d-k+3)}\right]&\displaystyle = \frac{1}{2}\left[\frac{2\varepsilon}{k^{2}}-\frac{(d-k+1)\varepsilon}{k^{2}(d+k+1)}\right].\cr
}
\end{equation*}
Simplifying (and using the expressions that define $\alpha$ and $\beta$ in Lemma \ref{lemma1-21aug2021}),

\begin{equation*}
\eqalign{
|\tilde{\tilde{\Lambda}}_{k,d}(g,h)|\lesssim\displaystyle &\displaystyle\displaystyle\left[\sum_{l_{1}\geq 0} 2^{-\frac{\varepsilon}{k^{2}}l_{1}}\right]\times\left[\prod_{j=1}^{k-1}\left(\sum_{r_{j,1}\geq 0}2^{-\left(\frac{2\varepsilon}{k}+\frac{\varepsilon}{k^{2}}\right)r_{j,1}}\right)\right]\cdot \left[\sum_{r_{k,1}\geq 0}2^{-r_{k,1}\left(-\frac{\varepsilon}{2k^{2}}+\frac{(d+k+1)}{(d-k+1)}\frac{\varepsilon}{k^{2}}\right)}\right]  \cr
&\displaystyle\times \left[\prod_{i=1}^{k-1}\left(\sum_{l_{i+1}\geq 0}2^{-\frac{\varepsilon}{k^{2}}l_{i+1}}\right)\right]\times \left[\sum_{t\geq 0}2^{-t\left(\frac{(d+k-1)}{2(d+k+1)}-\varepsilon\right)}\right]\cr
&\displaystyle \times  \left[\prod_{i=1}^{k-1}\left(\sum_{r_{i,i+1}\geq 0}2^{-\frac{2\varepsilon}{k}\left(1-\frac{1}{d+k+1}\right) r_{i,i+1}}\right)\right]\times\left[\prod_{i=1}^{k-1}\left(\sum_{r_{k,i+1}\geq 0}2^{-\frac{\varepsilon}{k^{2}}\left(1-\frac{(d-k+1)}{2(d+k+1)}\right)r_{k,i+1}}\right)\right]  \cr
&\displaystyle \times \prod_{i=1}^{k-1}\left[\sup_{l_{i+1},r_{i,i+1}}\left\|\mathbbm{1}_{\mathbb{X}^{l_{i+1};r_{i,i+1}}}\right\|_{\ell^{\infty}_{n_{i},m}\ell^{1}_{\widehat{n_{i}}}}^{-\frac{\varepsilon}{k}\left(1-\frac{1}{d+k+1}\right)}\cdot \sup_{l_{i+1},r_{k,i+1}}\left\|\mathbbm{1}_{\mathbb{X}^{l_{i+1};r_{k,i+1}}}\right\|_{\ell^{\infty}_{\overrightarrow{\textit{\textbf{n}}_{k}},m}\ell^{1}_{\widehat{\overrightarrow{n_{k}}}}}^{-\frac{\varepsilon}{k^{2}}\left(1-\frac{(d-k+1)}{2(d+k+1)}\right)}\right]\cr
&\displaystyle\times \|g_{1}\|_{2}^{\frac{1}{k}-\frac{4\varepsilon}{k(d-k+1)}+\frac{\varepsilon}{k}-\frac{\varepsilon}{k^{2}}-2\varepsilon+\frac{1}{2}\left(-\frac{\varepsilon}{k^{2}}+\frac{2(d+k+1)}{(d-k+1)}\frac{\varepsilon}{k^{2}}\right)}\cr
&\displaystyle\times \prod_{l=2}^{k}\|g_{l}\|^{\frac{1}{k}-\frac{2\varepsilon}{k}+\frac{\varepsilon}{k^{2}}\left(\frac{(d-k+1)}{2(d+k+1)}-1\right)}_{2} \cr
&\displaystyle\times |F|^{\left[1-\frac{(d+k-1)}{2(d+k+1)}\right]+\varepsilon}. \cr
}
\end{equation*}
Observe that

$$\sum_{l_{1}\geq 0}2^{-\frac{\varepsilon}{k^{2}}l_{1}}\lesssim_{\varepsilon} 2^{-\frac{\varepsilon}{k^{2}}\widetilde{l_{1}}}, $$
where $\widetilde{l_{1}}$ is the smallest index $l_{1}$ such that $\mathbb{X}^{\overrightarrow{l},R,t}\neq\emptyset$. Hence there exists some $(\overrightarrow{\textit{\textbf{k}}},\widetilde{m})$ such that

$$2^{-\widetilde{l_{1}}}\approx |\langle g_{1},\varphi_{\overrightarrow{\textit{\textbf{k}}},\widetilde{m}}^{1}\rangle|\leq |E_{1}|,$$
therefore

$$\sum_{l_{1}\geq 0}2^{-\frac{\varepsilon}{k^{2}}l_{1}}\lesssim_{\varepsilon} |E_{1}|^{\frac{\varepsilon}{k^{2}}}.$$
Notice also that

$$\sum_{r_{j,1}\geq 0}2^{-\left(\frac{2\varepsilon}{k}+\frac{\varepsilon}{k^{2}}\right)r_{j,1}}\lesssim_{\varepsilon} 2^{-\left(\frac{2\varepsilon}{k}+\frac{\varepsilon}{k^{2}}\right)\widetilde{r_{j,1}}}, $$
where $\widetilde{r_{j,1}}$ is defined analogously. We can then find $(n_{j},m)$ such that

$$2^{-r_{j,1}}\lesssim \left\|\langle g_{1},\varphi_{n_{j},m}^{j,1}\rangle_{x_{j}}\right\|_{2}\leq |E_{1}|^{\frac{1}{2}},$$
therefore
$$\sum_{r_{j,1}\geq 0}2^{-\left(\frac{2\varepsilon}{k}+\frac{\varepsilon}{k^{2}}\right)r_{j,1}}\lesssim_{\varepsilon} |E_{1}|^{\frac{\varepsilon}{k}+\frac{\varepsilon}{2k^{2}}}. $$
We can estimate all other sums in the bound above analogously. Observe that since the cardinalities appearing in

\begin{equation}\label{intmar822}
\prod_{i=1}^{k-1}\left[\sup_{l_{i+1},r_{i,i+1}}\left\|\mathbbm{1}_{\mathbb{X}^{l_{i+1};r_{i,i+1}}}\right\|_{\ell^{\infty}_{n_{i},m}\ell^{1}_{\widehat{n_{i}}}}^{-\frac{\varepsilon}{k}\left(1-\frac{1}{d+k+1}\right)}\cdot \sup_{l_{i+1},r_{k,i+1}}\left\|\mathbbm{1}_{\mathbb{X}^{l_{i+1};r_{k,i+1}}}\right\|_{\ell^{\infty}_{\overrightarrow{\textit{\textbf{n}}_{k}},m}\ell^{1}_{\widehat{\overrightarrow{n_{k}}}}}^{-\frac{\varepsilon}{k^{2}}\left(1-\frac{(d-k+1)}{2(d+k+1)}\right)}\right]
\end{equation}
are integers, the whole expression \eqref{intmar822} is $O(1)$. Using these observations and the fact that $|E_{j}|<1$ gives us
\begin{equation}\label{eq1may622}
|\tilde{\tilde{\Lambda}}_{k,d}(g,h)|\lesssim_{\varepsilon}|F|^{1-\frac{(d+k-1)}{2(d+k+1)}+\varepsilon}\cdot\prod_{j=1}^{k}|E_{j}|^{\frac{1}{2k}}.
\end{equation}
To simplify our notation, set $g:=(g_{1,1},g_{1,2},\ldots,g_{1,k-1},g_{1,k},g_{2},\ldots,g_{k})$. To rigorously use multilinear interpolation theory, one can run the argument above for the following averaged multilinearized version of $ME_{k,d}$:

$$\widetilde{ME}^{\frac{1}{k}}_{k,d}(g):=\sum_{(\overrightarrow{\textit{\textbf{n}}},m)\in\mathbb{Z}^{d+1}}\left(\prod_{l=1}^{k-1}|\langle g_{1,l},\varphi^{l,1}_{n_{l},m}\rangle|\right)^{\frac{1}{k}}\cdot|\langle g_{1,k},\varphi^{k,1}_{\overrightarrow{\textit{\textbf{n}}_{k}},m}\rangle|^{\frac{1}{k}}\cdot\left(\prod_{j=2}^{k}|\langle g_{j},\varphi^{j}_{\overrightarrow{\textit{\textbf{n}}},m}\rangle|\right)^{\frac{1}{k}}(\chi_{\overrightarrow{\textit{\textbf{n}}}}\otimes\chi_{m}),$$
with associated dual form\footnote{There is a slight difference between the forms $\tilde{\tilde{\Lambda}}_{k,d}$ and $\tilde{\Lambda}_{k,d}$: the latter is $2(k-1)$-linear, whereas the former is $k$-linear. We can not apply multilinear interpolation theory with inequality \eqref{eq1may622} directly, because all we proved is that it holds when $g_{1}$ is a tensor. In order to correctly place our estimates in the context of multilinear interpolation, we need to consider a form that has the appropriate level of multilinearity, which is $\tilde{\Lambda}_{k,d}$.}

$$\tilde{\Lambda}_{k,d}(g,h):=\sum_{(\overrightarrow{\textit{\textbf{n}}},m)\in\mathbb{Z}^{d+1}}\left(\prod_{l=1}^{k-1}|\langle g_{1,l},\varphi^{l,1}_{n_{l},m}\rangle|\right)^{\frac{1}{k}}\cdot|\langle g_{1,k},\varphi^{k,1}_{\overrightarrow{\textit{\textbf{n}}_{k}},m}\rangle|^{\frac{1}{k}}\left(\prod_{j=2}^{k}|\langle g_{j},\varphi^{j}_{\overrightarrow{\textit{\textbf{n}}},m}\rangle|\right)^{\frac{1}{k}}\langle h,\chi_{\overrightarrow{\textit{\textbf{n}}}}\otimes\chi_{m}\rangle.$$
Hence \eqref{eq1may622} gives us 
\begin{equation}\label{mtl1may622}
\|\widetilde{ME}_{k,d}^{\frac{1}{k}}(g)\|_{L^{\frac{2(d+k+1)}{(d+k-1)}+\varepsilon}(\mathbb{R}^{d+1})}\lesssim_{\varepsilon}\prod_{l=1}^{k}|E_{1,l}|^{\frac{1}{2k}}\cdot \prod_{j=2}^{k}|E_{j}|^{\frac{1}{2k}},
\end{equation}
which is \eqref{mtl1apr29} for $\widetilde{ME}_{k,d}$. Finally, observe that

\begin{equation}\label{conta1apr2922}
\eqalign{
\displaystyle\|\widetilde{ME}_{k,d}(g)\|_{L^{\frac{2(d+k+1)}{k(d+k-1)}+\varepsilon}(\mathbb{R}^{d+1})}&\displaystyle\leq\overbrace{\|\widetilde{ME}_{k,d}(g)^{\frac{1}{k}}\|_{L^{\frac{2(d+k+1)}{(d+k-1)}+k\varepsilon}(\mathbb{R}^{d+1})}\cdot\ldots\cdot \|\widetilde{ME}_{k,d}(g)^{\frac{1}{k}}\|_{L^{\frac{2(d+k+1)}{(d+k-1)}+k\varepsilon}(\mathbb{R}^{d+1})}}^{\textnormal{$k$ times}} \cr
&\displaystyle\lesssim \left[\prod_{l=1}^{k}|E_{1,l}|^{\frac{1}{2k}}\cdot \prod_{j=2}^{k}|E_{j}|^{\frac{1}{2k}}\right]^{k} \cr
&=\displaystyle \prod_{l=1}^{k}|E_{1,l}|^{\frac{1}{2}}\cdot \prod_{j=2}^{k}|E_{j}|^{\frac{1}{2}}, \cr
}
\end{equation}
which finishes the proof of the case $2\leq k\leq d+1$ by restricted weak-type interpolation.

\section{The endpoint estimate of the case $k=d+1$ of Theorem \ref{mainthmpaper}}\label{dlineartheory}

Let $g_{1}:Q_{1}\rightarrow\mathbb{R}$, $g_{j}:Q_{j}\rightarrow\mathbb{R}$ for $2\leq j\leq d+1$ be continuous functions. Recall that the multilinear model for $k=d+1$ is given in Section \ref{discret} by:

$$ME_{d+1,d}(g_{1},\ldots,g_{d+1}):=\sum_{(\overrightarrow{\textit{\textbf{n}}},m)\in\mathbb{Z}^{d+1}}\prod_{j=1}^{d+1}\langle g_{j},\varphi^{j}_{\overrightarrow{\textit{\textbf{n}}},m}\rangle(\chi_{\overrightarrow{\textit{\textbf{n}}}}\otimes\chi_{m}),$$
where

$$\varphi^{j}_{\overrightarrow{\textit{\textbf{n}}},m}=\bigotimes_{l=1}^{d}\varphi^{l,j}_{n_{l},m},\quad \varphi^{l,j}_{n_{l},m}(x_{l})=\varphi^{l,j}(x_{l})e^{2\pi in_{l}x_{l}}e^{2\pi imx^{2}_{l}} $$
and $\varphi^{l,j}(x)$ was defined in Section \ref{discret}. From now on, we will assume without loss of generality that $g_{1}$ is the full tensor. To simplify our notation, set $g:=(g_{1,1},\ldots,g_{1,d}, g_{2},\ldots,g_{d+1})$. Define

$$\widetilde{ME}_{d+1,d}(g):=\sum_{(\overrightarrow{\textit{\textbf{n}}},m)\in\mathbb{Z}^{d+1}}\prod_{l=1}^{d}\langle g_{1,l},\varphi^{l,1}_{n_{l},m}\rangle\prod_{j=2}^{d+1}\langle g_{j},\varphi^{j}_{\overrightarrow{\textit{\textbf{n}}},m}\rangle(\chi_{\overrightarrow{\textit{\textbf{n}}}}\otimes\chi_{m}).$$

We will show that $\widetilde{ME}_{d+1,d}$ maps 

$$\underbrace{L^{2}([0,1])\times\ldots\times L^{2}([0,1])\times L^{2}(Q_{2})\times\ldots\times L^{2}(Q_{d+1})}_{\textnormal{$2d$ times}}$$
to $L^{\frac{2}{d}}$, which implies the endpoint estimate of the case $k=d+1$ in Theorem \ref{mainthmpaper}.

\begin{proof}[Endpoint estimate of the case $k=d+1$] Notice that we have $d$ factors in the first product and $d$ factors in the second. We will pair them in the following way:

$$\widetilde{ME}_{d+1,d}(g):=\sum_{(\overrightarrow{\textit{\textbf{n}}},m)\in\mathbb{Z}^{d+1}}\prod_{j=2}^{d+1}\langle g_{j},\varphi^{j}_{\overrightarrow{\textit{\textbf{n}}},m}\rangle\cdot \langle g_{1,j-1},\varphi^{1,j-1}_{n_{j-1},m}\rangle(\chi_{\overrightarrow{\textit{\textbf{n}}}}\otimes\chi_{m}) $$

Now observe that
\begin{equation}\label{bound1-jan1523}
\eqalign{
\|\widetilde{ME}_{d+1,d}(g)\|_{\frac{2}{d}}^{\frac{2}{d}}&=\displaystyle\sum_{(\overrightarrow{\textit{\textbf{n}}},m)\in\mathbb{Z}^{d+1}}\prod_{j=2}^{d+1}|\langle g_{j}\otimes \overline{g_{1,j-1}},\varphi^{j}_{\overrightarrow{\textit{\textbf{n}}},m}\otimes\overline{\varphi^{j-1,1}_{n_{j-1},m}}\rangle|^{\frac{2}{d}} \cr
&\leq\displaystyle \prod_{j=2}^{d+1}\left(\sum_{(\overrightarrow{\textit{\textbf{n}}},m)\in\mathbb{Z}^{d+1}}|\langle g_{j}\otimes \overline{g_{1,j-1}},\varphi^{j}_{\overrightarrow{\textit{\textbf{n}}},m}\otimes\overline{\varphi^{j-1,1}_{n_{j-1},m}}\rangle|^{2}\right)^{\frac{1}{d}} \cr
}
\end{equation}
Let us analyze the $j=2$ scalar product inside the parentheses (the others are dealt with in a similar way):

\begin{equation*}
\eqalign{
\displaystyle \langle g_{j}\otimes \overline{g_{1,1}}&\displaystyle ,\varphi^{2}_{\overrightarrow{\textit{\textbf{n}}},m}\otimes\overline{\varphi^{1,1}_{n_{1},m}}\rangle \cr
&\displaystyle= \int_{\mathbb{R}^{d-1}}\langle g_{2,1}\otimes\overline{g_{1,1}},\varphi_{n_{1},m}^{1,2}\otimes\overline{\varphi^{1,1}_{n_{1},m}} \rangle\left(\prod_{u\geq 2}\varphi^{u,2}(x_{u})\right)e^{-2\pi im\left(\sum_{l\geq 2}x_{l}^{2}\right)}e^{-2\pi i\left(\sum_{l\geq 2}n_{l}x_{l}\right)}\widehat{\mathrm{d}x_{1}}\cr
&\displaystyle= \widehat{H_{n_{1},m}}(n_{2},\ldots,n_{d}), \cr
}
\end{equation*}
where
$$H_{n_{1},m}(x_{2},\ldots,x_{d}):= \langle g_{2,1}\otimes\overline{g_{1,1}},\varphi_{n_{1},m}^{1,2}\otimes\overline{\varphi^{1,1}_{n_{1},m}} \rangle\left(\prod_{u\geq 2}\varphi^{u,2}(x_{u})\right)e^{-2\pi im\left(\sum_{l\geq 2}x_{l}^{2}\right)}. $$
We can then use Plancherel if we sum over $n_{2},\ldots,n_{d}$ first:

\begin{equation*}
\eqalign{
\displaystyle\sum_{(\overrightarrow{\textit{\textbf{n}}},m)\in\mathbb{Z}^{d+1}}|\langle g_{j}\otimes \overline{g_{1,j-1}},&\displaystyle\varphi^{j}_{\overrightarrow{\textit{\textbf{n}}},m}\otimes\overline{\varphi^{j-1,1}_{n_{j-1},m}}\rangle|^{2} \cr
&\displaystyle = \sum_{n_{1},m}\sum_{n_{2},\ldots,n_{d}}\left|\widehat{H_{n_{1},m}}(n_{2},\ldots,n_{d})\right|^{2}\cr
&\displaystyle = \sum_{n_{1},m}\|H_{n_{1},m}\|_{2}^{2}\cr
&\displaystyle = \int_{\mathbb{R}^{d-1}}\left(\prod_{u\geq 2}\varphi^{u,2}(x_{u})\right)\left(\sum_{n_{1},m}|\langle g_{2}\otimes\overline{g_{1,1}},\varphi_{n_{1},m}^{1,2}\otimes\overline{\varphi^{1,1}_{n_{1},m}} \rangle|^{2}\right)\widehat{\mathrm{d}x_{1}} \cr
}
\end{equation*}
By our initial choice of cubes, $\textnormal{supp}(\varphi_{n_{1},m}^{1,1})\cap\textnormal{supp}(\varphi_{n_{1},m}^{1,2})=\emptyset$, so the sum in $(n_{1},m)$  is actually $M_{2,1}$ (we are freezing $d-1$ variables of $g_{2}$ in this sum). Our results from Section \ref{bilinearparabola} imply
\begin{equation*}
\sum_{(\overrightarrow{\textit{\textbf{n}}},m)\in\mathbb{Z}^{d+1}}|\langle g_{j}\otimes \overline{g_{1,j-1}},\varphi^{j}_{\overrightarrow{\textit{\textbf{n}}},m}\otimes\overline{\varphi^{j-1,1}_{n_{j-1},m}}\rangle|^{2} = \|g_{2}\otimes\overline{g_{1,1}}\|_{2}^{2}.
\end{equation*}
Arguing like that for all $2\leq j\leq d+1$, \eqref{bound1-jan1523} gives us
\begin{equation*}
\eqalign{
\|\widetilde{ME}_{d+1,d}(g)\|_{\frac{2}{d}}^{\frac{2}{d}}&\displaystyle\leq\prod_{j=2}^{d+1}\|g_{2}\otimes\overline{g_{1,j-1}}\|_{2}^{\frac{2}{d}} \cr
&\displaystyle =\prod_{j=1}^{d+1}\|g_{j}\|_{2}^{\frac{2}{d}}
}
\end{equation*}
and the result follows.
\end{proof}

\section{Improved $k$-linear bounds for tensors}\label{betteresttensors2}

In this section we investigate the following question: \textit{can one obtain better bounds than those of Conjecture \ref{klinear} if one is restricted to the class of tensors?}\footnote{Extension estimates beyond the conjectured range have been verified by Oliveira e Silva and Mandel in \cite{OM} for a certain class of functions when the underlying submanifold is $\mathbb{S}^{d-1}$. \cite{Shao} also contains results of this kind for the paraboloid.} The answer depends on the concept of \textit{degree of transversality}. The extra information that the input functions are supported on cubes that have disjoint projections along many directions leads to new transversality conditions, and we can take advantage of it in the full tensor case. This is the content of Theorem \ref{improvedklinearthm2}.

Let $\{e_{j}\}_{1\leq j\leq d}$ be the canonical basis of $\mathbb{R}^{d}$. If $Q\subset\mathbb{R}^{d}$ is a cube, $\pi_{j}(Q)$ represents the projection of $Q$ along the $e_{j}$ direction.

\begin{definition}\label{def1-3jan2022} Let $\{Q_{1},\ldots,Q_{k}\}$ be a collection of $k$ closed unit cubes in $\mathbb{R}^{d}$ with vertices in $\mathbb{Z}^{d}$. We associate to this collection its \textit{transversality vector}
$$\tau=(\tau_{1},\ldots,\tau_{d}), $$
where
$\tau_{j}=1$ if there are at least two distinct intervals among the projections $\pi_{j}(Q_{l}), 1\leq l\leq k$, and $\tau_{j}=0$ otherwise. The \textit{total degree of transversality} of the collection $\{Q_{1},\ldots,Q_{k}\}$ is
$$|\tau|:=\sum_{1\leq l \leq d}\tau_{l}. $$
\end{definition}

The $k$-linear extension model for a set of cubes $\{Q_{l}\}_{1\leq l\leq k}$ as in Definition \ref{def1-3jan2022} is initially given on $C(Q_{1})\times\ldots\times C(Q_{k})$ by

\begin{equation}\label{betterestMKD2}
ME_{k,d}^{Q_{1},\ldots,Q_{k}}(g_{1},\ldots,g_{k}):=\sum_{(\overrightarrow{\textit{\textbf{n}}},m)\in\mathbb{Z}^{d+1}}\prod_{j=1}^{k}\langle g_{j},\varphi^{j}_{\overrightarrow{\textit{\textbf{n}}},m}\rangle(\chi_{\overrightarrow{\textit{\textbf{n}}}}\otimes\chi_{m}).
\end{equation}
where the bumps $\varphi^{j}_{\overrightarrow{\textit{\textbf{n}}},m}$ are analogous to the ones in Section \ref{klineartheory}, but now adapted to the cubes $Q_{k}$.

From now on we will assume that $g_{j}$ is a full tensor $g_{j}^{1}\otimes\ldots\otimes g_{j}^{d}$ for $1\leq j\leq k$ and that the transversality vector of the collection $\{Q_{1},\ldots,Q_{k}\}$ is $\tau$. To simplify the notation, we will replace the superscripts $Q_{j}$ in \eqref{betterestMKD2} with $\tau$ and denote

$$g:=(g_{1}^{1},\ldots,g_{1}^{d},\ldots,g_{j}^{1},\ldots,g_{j}^{d},\ldots,g_{k}^{1},\ldots,g_{k}^{d}). $$
We are then led to consider
\begin{equation}\label{def1-31dec2021}
ME_{k,d}^{\tau}(g):=\sum_{(\overrightarrow{\textit{\textbf{n}}},m)\in\mathbb{Z}^{d+1}}\prod_{j=1}^{k}\prod_{l=1}^{d}\langle g_{j}^{l},\varphi^{l,j}_{n_{l},m}\rangle(\chi_{\overrightarrow{\textit{\textbf{n}}}}\otimes\chi_{m}),
\end{equation}
where
$$\varphi^{l,j}_{n_{l},m}(x)=\varphi^{l,j}(x)e^{2\pi in_{l}x}e^{2\pi imx^{2}}, \quad\textnormal{supp}(\varphi^{l,j})\subset\pi_{l}(Q_{j}).$$

As it was the case in Section \ref{klineartheory}, we will deal first with an averaged version of $ME_{k,d}^{\tau}$ for technical reasons. Define

\begin{equation}\label{definition1-1jan2022}
\widetilde{ME}_{k,d}^{\tau}(g):=\sum_{(\overrightarrow{\textit{\textbf{n}}},m)\in\mathbb{Z}^{d+1}}\prod_{j=1}^{k}\prod_{l=1}^{d}|\langle g_{j}^{l},\varphi^{l,j}_{n_{l},m}\rangle|^{\frac{1}{k}}(\chi_{\overrightarrow{\textit{\textbf{n}}}}\otimes\chi_{m}),
\end{equation}
and consider its dual form

$$\widetilde{\Lambda}^{\tau}_{k,d}(g,h):=\sum_{(\overrightarrow{\textit{\textbf{n}}},m)\in\mathbb{Z}^{d+1}}\prod_{j=1}^{k}\prod_{l=1}^{d}|\langle g_{j}^{l},\varphi^{l,j}_{n_{l},m}\rangle|^{\frac{1}{k}} \cdot\langle h,\chi_{\overrightarrow{\textit{\textbf{n}}}}\otimes\chi_{m}\rangle.$$

Let $E_{j,l}$, $1\leq j\leq k$ and $1\leq l\leq d$, be measurable sets such that $|g_{j}^{l}|\leq\chi_{E_{j,l}}$. Let $F\subset\mathbb{R}^{d+1}$ be a measurable set such that $|h|\leq \chi_{F}$. Under these conditions we have the following result:

\begin{theorem}\label{improvedklinearthm2} $ME^{\tau}_{k,d}$ satisfies
$$\|ME^{\tau}_{k,d}(g)\|_{L^{p}(\mathbb{R}^{d+1})}\lesssim_{p}\prod_{j=1}^{k}\prod_{l=1}^{d}\|g_{j}^{l}\|_{2} $$
for all $p>p_{\tau}:=\frac{2(d+|\tau|+2)}{k(d+|\tau|)}$.
\end{theorem}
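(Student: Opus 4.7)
The plan is to adapt the multiple stopping-time argument of Section \ref{klineartheory} so as to exploit the transversality vector $\tau$ instead of the weak transversality hypothesis. Since every $g_j$ is a full tensor, the scalar product appearing in $\widetilde{ME}_{k,d}^{\tau}$ factors as $\langle g_j, \varphi_{\vec n, m}^{j} \rangle = \prod_{l=1}^{d}\langle g_j^l, \varphi_{n_l, m}^{l,j}\rangle$, which lets us treat each coordinate direction by a one-dimensional estimate: the $|\tau|$ directions with $\tau_l = 1$ will contribute via the one-dimensional bilinear extension estimate of Proposition \ref{prop3apr2922}, while the remaining $d - |\tau|$ directions with $\tau_l = 0$ will contribute via the one-dimensional Strichartz estimate of Proposition \ref{prop1apr2922}. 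As in Section \ref{klineartheory}, by restricted weak-type multilinear interpolation together with the telescoping identity \eqref{conta1apr2922}, it suffices to prove
\begin{equation*}
|\widetilde{\Lambda}^{\tau}_{k,d}(g, h)| \lesssim_{\epsilon}\; |F|^{\,1 - \frac{k(d+|\tau|)}{2(d+|\tau|+2)}+\epsilon} \prod_{j, l} |E_{j,l}|^{\frac{1}{2k}},\qquad\forall\,\epsilon > 0,
\end{equation*}
for the dualized $(kd+1)$-linear form $\widetilde{\Lambda}^{\tau}_{k,d}(g,h) = \sum_{(\vec n, m)}\prod_{j,l}|\langle g_j^l, \varphi_{n_l,m}^{l,j}\rangle|^{1/k}\langle h, \chi_{\vec n}\otimes \chi_m\rangle$.

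Next I would introduce the scalar level sets $\mathbb{A}_{j,l}^{l_{j,l}} = \{(n, m) : |\langle g_j^l, \varphi_{n, m}^{l, j}\rangle| \approx 2^{-l_{j,l}}\}$ and $\mathbb{C}^{t} = \{(\vec n, m) : |\langle h, \chi_{\vec n}\otimes \chi_m\rangle| \approx 2^{-t}\}$, along with the master level set $\mathbb{X}^{L,t}$ intersecting all of them simultaneously, so that $|\widetilde{\Lambda}^{\tau}_{k,d}(g, h)| \lesssim \sum_{L, t\geq 0} 2^{-t}\prod_{j,l} 2^{-l_{j,l}/k}\#\mathbb{X}^{L,t}$. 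Two families of bounds for $\#\mathbb{X}^{L,t}$ would then be produced. First, the trivial one, $\#\mathbb{X}^{L,t} \lesssim 2^{t}|F|$, from the disjointness of the $\chi_{\vec n}\otimes \chi_m$. Second, a refined tensor-type bound obtained by slicing over $m$: for each direction $l$ with $\tau_l = 1$, I would select two indices $j(l) \neq j'(l)$ such that $\pi_l(Q_{j(l)})\cap \pi_l(Q_{j'(l)}) = \emptyset$, retain only those two membership conditions, and apply (after summing in $m$) Proposition \ref{prop3apr2922} to obtain
$\sum_m \#\{n_l : (n_l, m) \in \mathbb{A}_{j(l),l}^{l_{j(l),l}}\cap \mathbb{A}_{j'(l),l}^{l_{j'(l),l}}\}\lesssim 2^{2(l_{j(l),l}+ l_{j'(l),l})}\|g_{j(l)}^l\|_2^2 \|g_{j'(l)}^l\|_2^2$; for each direction $l$ with $\tau_l = 0$, I would use instead Proposition \ref{prop1apr2922} in dimension one, which gives $\sum_{n,m}|\langle g_j^l, \varphi_{n,m}^{l,j}\rangle|^{6+\epsilon}\lesssim_{\epsilon} \|g_j^l\|_2^{6+\epsilon}$, combined with Bessel slices for the other indices. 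Packaged as in Lemma \ref{lemma1-21aug2021}, this yields an estimate for $\#\mathbb{X}^{L,t}$ involving $|\tau|$ bilinear factors (of exponent type $2+2$) and $d - |\tau|$ Strichartz factors (of exponent type $6+\epsilon$).

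The argument concludes by interpolating the trivial bound with the refined bound, choosing the weights so that the resulting power of $|F|$ equals $1 - k(d+|\tau|)/(2(d+|\tau|+2)) + \epsilon$ and each $|E_{j,l}|$ appears with power $1/(2k)$; the convergence of the geometric sums over $L$ and $t$ is enforced by giving away $\epsilon > 0$. The role of the full tensor hypothesis mirrors that of Lemma \ref{lemma1-3122020}: it relates the size $2^{-l_j}$ of the scalar product against the full $d$-dimensional wave packet to the product of the sizes of its one-dimensional slices, which is what enables the direction-by-direction analysis in the first place.

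The main obstacle will be, as in Section \ref{klineartheory}, the bookkeeping of the interpolation - specifically, calibrating the weights so that the bilinear contributions from the $|\tau|$ transversal directions and the Strichartz contributions from the $d - |\tau|$ non-transversal directions combine to produce exactly the exponent $p_\tau = \frac{2(d+|\tau|+2)}{k(d+|\tau|)}$. Two sanity checks give confidence that the target exponent is correctly identified: when $|\tau| = k-1$, $p_\tau$ reduces to $\frac{2(d+k+1)}{k(d+k-1)}$, recovering the exponent in Conjecture \ref{klinear}; when $|\tau| = 0$, $p_\tau$ reduces to $\frac{2(d+2)}{kd}$, the exponent obtained by taking a $k$-fold product of the Strichartz bound without any transversal gain.
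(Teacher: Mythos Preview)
Your proposal is correct and follows essentially the same route as the paper's proof: one-dimensional level sets $\mathbb{A}_{j,l}$ made possible by the full tensor hypothesis, the trivial $h$-bound, and directional bounds using Proposition~\ref{prop3apr2922} in directions with $\tau_l=1$ and Proposition~\ref{prop1apr2922} in directions with $\tau_l=0$, together with Bessel in the remaining coordinates. The only implementation difference is that the paper, instead of fixing one transversal pair $(j(l),j'(l))$ per direction, applies H\"older over \emph{all} pairs $(j_1,j_2)$ with generic weights $\alpha_{l,j_1,j_2},\beta_{l,j_1,j_2}$, producing $d$ bounds (one for each coordinate left for last) which are interpolated with weights $\theta_l$; it then solves the resulting linear system explicitly to obtain $X=p_\tau$ and symmetric exponents $\#_{j,l}$ on every $|E_{j,l}|$ --- this systematic H\"older over pairs is what makes the exponent symmetry automatic, whereas your single-pair choice would require compensating asymmetry in the Bessel weights $\gamma$.
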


\begin{proof} It is enough to prove that

$$\|\widetilde{ME}^{\tau}_{k,d}(g)\|_{L^{p}(\mathbb{R}^{d+1})}\lesssim_{p} \prod_{j=1}^{k}\prod_{l=1}^{d}|E_{j,l}|^{\frac{1}{2k}},$$
holds for every
$$p>\frac{2(d+|\tau|+2)}{(d+|\tau|)}. $$

Define the level sets

$$\mathbb{A}_{j,l}^{r_{j,l}}:=\{(n_{l},m)\in\mathbb{Z}^{2};\quad |\langle g_{j}^{l},\varphi^{l,j}_{n_{l},m}\rangle|\approx 2^{-r_{j,l}}\}, $$
$$\mathbb{B}^{t}:=\{(\overrightarrow{\textit{\textbf{n}}},m)\in\mathbb{Z}^{d+1};\quad |\langle h,\chi_{\overrightarrow{\textit{\textbf{n}}}}\otimes\chi_{m}\rangle|\approx 2^{-t}\}. $$

Set $R:=(r_{i,j})_{i,j}$ and

$$\mathbb{X}^{R,t}:=\left\{(\overrightarrow{\textit{\textbf{n}}},m)\in\mathbb{Z}^{d+1};\quad (n_{l},m)\in\bigcap_{j=1}^{k}\mathbb{A}_{j,l}^{r_{j,l}},\quad 1\leq l\leq d\right\}\cap\mathbb{B}^{t}. $$

We then have

$$|\widetilde{\Lambda}^{\tau}_{k,d}(g,h)|\lesssim \sum_{R,t\geq 0}2^{-t}\cdot\prod_{j=1}^{k}\prod_{l=1}^{d}2^{-\frac{r_{j,l}}{k}}\cdot\#\mathbb{X}^{R,t}. $$

As in the previous section, we can assume without loss of generality that $r_{j,l},t\geq 0$. We can estimate $\#\mathbb{X}^{R,t}$ using the function $h$:

\begin{equation}\label{boundH-30dec2021}
\#\mathbb{X}^{R,t}\lesssim 2^{t}\sum_{(\overrightarrow{\textit{\textbf{n}}},m)\in\mathbb{Z}^{d+1}}|\langle h,\chi_{\overrightarrow{\textit{\textbf{n}}}}\otimes\chi_{m}\rangle|\lesssim 2^{t}|F|.
\end{equation}

Alternatively, by the definition of $\mathbb{X}^{R,t}$:

\begin{equation}\label{chain1-31dec2021}
\#\mathbb{X}^{R,t}\displaystyle\leq\sum_{(\overrightarrow{\textit{\textbf{n}}},m)\in\mathbb{Z}^{d+1}}\prod_{j=1}^{k}\prod_{l=1}^{d}\mathbbm{1}_{\mathbb{A}^{r_{j,l}}_{j,l}}(n_{l},m)
\end{equation}

There are many ways to estimate the right-hand side above. We will obtain $d$ different bounds for it, each one arising from summing in a different order. Fix $1\leq l\leq d$ and leave the sum over $(n_{l},m)$ for last:

\begin{equation}\label{chain2-31dec2021}
\eqalign{
\#\mathbb{X}^{R,t}&\displaystyle=\sum_{(n_{l},m)\in\mathbb{Z}^{2}}\left[\prod_{j=1}^{k}\mathbbm{1}_{\mathbb{A}^{r_{j,l}}_{j,l}}(n_{l},m)\right]\cdot\prod_{\substack{\widetilde{l}=1 \\ \widetilde{l}\neq l}}^{d}\left[\sum_{n_{\widetilde{l}}}\prod_{\widetilde{j}=1}^{k}\mathbbm{1}_{\mathbb{A}^{r_{\widetilde{j},\widetilde{l}}}_{\widetilde{j},\widetilde{l}}}(n_{\widetilde{l}},m)\right]\cr
&\leq\displaystyle\sum_{(n_{l},m)\in\mathbb{Z}^{2}}\left[\prod_{j=1}^{k}\mathbbm{1}_{\mathbb{A}^{r_{j,l}}_{j,l}}(n_{l},m)\right]\cdot\prod_{\substack{\widetilde{l}=1 \\ \widetilde{l}\neq l}}^{d}\prod_{\widetilde{j}=1}^{k}\left[\sum_{n_{\widetilde{l}}}\mathbbm{1}_{\mathbb{A}^{r_{\widetilde{j},\widetilde{l}}}_{\widetilde{j},\widetilde{l}}}(n_{\widetilde{l}},m)\right]^{\gamma_{l,\widetilde{j},\widetilde{l}}},\cr
}
\end{equation}
where we used H\"older's inequality in the last line and $\gamma_{l,\widetilde{j},\widetilde{l}}$ are generic parameters such that
\begin{equation}\label{chain3-31dec2021}
\sum_{\widetilde{j}=1}^{k}\gamma_{l,\widetilde{j},\widetilde{l}}=1
\end{equation}
for all $1\leq l,\widetilde{l}\leq d$ with $l\neq \widetilde{l}$ fixed. Let us briefly explain the labels in these parameters that we just introduced:

\[ \gamma_{l,\tilde{j},\tilde{l}}\longrightarrow\begin{cases} 
      \textnormal{ $l$ indicates that the last variables to be summed are $(n_{l},m)$, } &  \\
     \textnormal{$\widetilde{j}$ corresponds to the $\widetilde{j}$-th function $g_{\widetilde{j}}$,} \\
      \textnormal{$\widetilde{l}\neq l$ corresponds to the $\widetilde{l}$-th variable $n_{\widetilde{l}}$.} & 
   \end{cases}
\]

We will not make any specific choice for the $\gamma_{l,\widetilde{j},\widetilde{l}}$ since condition \eqref{chain3-31dec2021} will suffice. Now observe that for a fixed $m\in\mathbb{Z}$ we have:

\begin{equation}\label{chain4-31dec2021}
\displaystyle \sum_{n_{\widetilde{l}}}\mathbbm{1}_{\mathbb{A}^{r_{\widetilde{j},\widetilde{l}}}_{\widetilde{j},\widetilde{l}}}(n_{\widetilde{l}},m) \leq 2^{2r_{\widetilde{j},\widetilde{l}}}\sum_{n_{\widetilde{l}}}|\langle g_{\widetilde{j}}^{\widetilde{l}},\varphi^{\widetilde{l},\widetilde{j}}_{n_{\widetilde{l}},m}\rangle|^{2} \leq 2^{2r_{\widetilde{j},\widetilde{l}}}\cdot |E_{\widetilde{j},\widetilde{l}}|
\end{equation}
by Bessel's inequality. Using \eqref{chain4-31dec2021} back in \eqref{chain2-31dec2021}:

\begin{equation}\label{chain5-31dec2021}
\eqalign{
\#\mathbb{X}^{R,t}&\displaystyle\leq\prod_{\substack{\widetilde{l}=1 \\ \widetilde{l}\neq l}}^{d}\prod_{\widetilde{j}=1}^{k}2^{2\gamma_{l,\widetilde{j},\widetilde{l}}r_{\widetilde{j},\widetilde{l}}}\cdot |E_{\widetilde{j},\widetilde{l}}|^{\gamma_{l,\widetilde{j},\widetilde{l}}}\cdot\displaystyle\sum_{(n_{l},m)\in\mathbb{Z}^{2}}\left[\prod_{j=1}^{k}\mathbbm{1}_{\mathbb{A}^{r_{j,l}}_{j,l}}(n_{l},m)\right],\cr
&\displaystyle= \prod_{\substack{\widetilde{l}=1 \\ \widetilde{l}\neq l}}^{d}\prod_{\widetilde{j}=1}^{k}2^{2\gamma_{l,\widetilde{j},\widetilde{l}}r_{\widetilde{j},\widetilde{l}}}\cdot |E_{\widetilde{j},\widetilde{l}}|^{\gamma_{l,\widetilde{j},\widetilde{l}}}\cdot\sum_{(n_{l},m)\in\mathbb{Z}^{2}}\left[\prod_{\substack{(j_{1},j_{2}) \\ j_{1}\neq j_{2}}}\mathbbm{1}_{\mathbb{A}^{r_{j_{1},l}}_{j_{1},l}}(n_{l},m)\cdot \mathbbm{1}_{\mathbb{A}^{r_{j_{2},l}}_{j_{2},l}}(n_{l},m)\right].\cr
}
\end{equation}

We simply used the fact that $\mathbbm{1}^{2}=\mathbbm{1}$ in the last line above. Our goal is to pair the scalar products in \eqref{def1-31dec2021} corresponding to the functions $g_{j_{1}}^{l}$ and $g_{j_{2}}^{l}$. There are two kinds of such pairs:

\begin{enumerate}[(a)]
\item A pair $(j_{1},j_{2})$ with $j_{1}\neq j_{2}$ is \textit{$l$-transversal} if $\textnormal{supp}(\varphi^{l,j_{1}})\cap\textnormal{supp}(\varphi^{l,j_{2}})=\emptyset$.
\item A pair $(j_{1},j_{2})$ with $j_{1}\neq j_{2}$ is \textit{non-$l$-transversal} along the direction $e_{l}$ if $\textnormal{supp}(\varphi^{l,j_{1}})\cap\textnormal{supp}(\varphi^{l,j_{2}})\neq\emptyset$.
\end{enumerate}

Thus we have by H\"older's inequality for generic parameters $\alpha_{l,j_{1},j_{2}}$ and $\beta_{l,j_{1},j_{2}}$:
\begin{equation}\label{chain6-31dec2021}
\eqalign{
\#\mathbb{X}^{R,t}&\leq\displaystyle \prod_{\substack{\widetilde{l}=1 \\ \widetilde{l}\neq l}}^{d}\prod_{\widetilde{j}=1}^{k}2^{2\gamma_{l,\widetilde{j},\widetilde{l}}r_{\widetilde{j},\widetilde{l}}}\cdot |E_{\widetilde{j},\widetilde{l}}|^{\gamma_{l,\widetilde{j},\widetilde{l}}} \cdot \prod_{\substack{(j_{1},j_{2}) \\ \textnormal{$l$-transversal, } j_{1}\neq j_{2}}}\left(\sum_{(n_{l},m)\in\mathbb{Z}^{2}}\mathbbm{1}_{\mathbb{A}^{r_{j_{1},l}}_{j_{1},l}}(n_{l},m)\cdot \mathbbm{1}_{\mathbb{A}^{r_{j_{2},l}}_{j_{2},l}}(n_{l},m)\right)^{\alpha_{l,j_{1},j_{2}}} \cr
&\qquad\qquad\qquad\qquad\quad\qquad\displaystyle \times \prod_{\substack{(j_{1},j_{2}) \\ \textnormal{non-$l$-transversal, } j_{1}\neq j_{2}}}\left(\sum_{(n_{l},m)\in\mathbb{Z}^{2}}\mathbbm{1}_{\mathbb{A}^{r_{j_{1},l}}_{j_{1},l}}(n_{l},m)\cdot \mathbbm{1}_{\mathbb{A}^{r_{j_{2},l}}_{j_{2},l}}(n_{l},m)\right)^{\beta_{l,j_{1},j_{2}}}
}
\end{equation}

Define
\begin{equation*}
\eqalign{
\alpha_{l,j_{1},j_{2}} &= 0,\quad\textnormal{if $(j_{1},j_{2})$ is non-$l$-transversal,}\cr
\beta_{l,j_{1},j_{2}} &= 0,\quad\textnormal{if $(j_{1},j_{2})$ is $l$-transversal.}\cr
}
\end{equation*}

Hence H\"older's condition is
\begin{equation}\label{chain9-31dec2021}
\sum_{\substack{(j_{1},j_{2}) \\ 1\leq j_{1},j_{2}\leq k \\ j_{1}\neq j_{2}}}\alpha_{l,j_{1},j_{2}}+\beta_{l,j_{1},j_{2}}=2,
\end{equation}
since we are counting each $\alpha_{l,j_{1},j_{2}}$ and $\beta_{l,j_{1},j_{2}}$ twice, for all $1\leq l\leq d$. The labels in the parameters $\alpha$ and $\beta$ track the following information:

\[ \textnormal{$\alpha_{l,j_{1},j_{2}}$ and $\beta_{l,j_{1},j_{2}}$}\longrightarrow\begin{cases} 
      \textnormal{ $l$ indicates that we are summing over $(n_{l},m)$, } &  \\
     \textnormal{$j_{1}$ and $j_{2}$ correspond to two distinct functions $g_{j_{1}}$ and $g_{j_{2}}$.} \\
   \end{cases}
\]

We can then use Proposition \ref{prop3apr2922} for the transversal pairs and a combination of one-dimensional Strichartz/Tomas-Stein with H\"older for the non-transversal ones: 

\begin{equation}\label{chain7-31dec2021}
\eqalign{
\#\mathbb{X}^{R,t}&\leq\displaystyle \prod_{\substack{\widetilde{l}=1 \\ \widetilde{l}\neq l}}^{d}\prod_{\widetilde{j}=1}^{k}2^{2\gamma_{l,\widetilde{j},\widetilde{l}}r_{\widetilde{j},\widetilde{l}}}\cdot |E_{\widetilde{j},\widetilde{l}}|^{\gamma_{l,\widetilde{j},\widetilde{l}}} \cdot \prod_{\substack{(j_{1},j_{2}) \\ \textnormal{$l$-transversal, } j_{1}\neq j_{2}}}\left(2^{2\alpha_{l,j_{1},j_{2}}(r_{j_{1},l}+r_{j_{2},l})}\cdot |E_{j_{1},l}|^{\alpha_{l,j_{1},j_{2}}}\cdot|E_{j_{2},l}|^{\alpha_{l,j_{1},j_{2}}}\right) \cr
&\qquad\qquad\qquad\qquad\quad\qquad\displaystyle \times \prod_{\substack{(j_{1},j_{2}) \\ \textnormal{non-$l$-transversal, } j_{1}\neq j_{2}}}\left(2^{3\beta_{l,j_{1},j_{2}}(r_{j_{1},l}+r_{j_{2},l})}\cdot |E_{j_{1},l}|^{\frac{3}{2}\beta_{l,j_{1},j_{2}}}\cdot|E_{j_{2},l}|^{\frac{3}{2}\beta_{l,j_{1},j_{2}}}\right).
}
\end{equation}

As mentioned earlier in this section, we have $d$ estimates like \eqref{chain7-31dec2021}. We will interpolate between them with weights $\theta_{l}$:

$$\#\mathbb{X}^{R,t}=\prod_{l=1}^{d}(\#\mathbb{X}^{R,t})^{\theta_{l}} $$
with
\begin{equation}\label{chain8-31dec2021}
\sum_{l=1}^{d}\theta_{l}=1.
\end{equation}

This yields
\begin{equation}
\#\mathbb{X}^{R,t}\lesssim \prod_{j=1}^{k}\prod_{l=1}^{d}2^{\#_{j,l}\cdot r_{j,l}}\cdot |E_{j,l}|^{\frac{\#_{j,l}}{2}},
\end{equation}
where
\begin{equation*}
\#_{j,l}=\left[\sum_{j_{1}\neq j}(2\alpha_{l,j,j_{1}}+3\beta_{l,j,j_{1}})\right]\cdot\theta_{l} + \sum_{\widetilde{l}\neq l}2\gamma_{\widetilde{l},j,l}\cdot\theta_{\widetilde{l}}.
\end{equation*}

In order to prove an estimate like $L^{2}\times\ldots\times L^{2}\mapsto L^{p}$, we will need all these coefficients $\#_{j,l}$ to be equal. Let us call them all $X$ for now and sum over $j$:

\begin{equation*}
\sum_{j=1}^{k}X=\left[\sum_{j=1}^{k}\sum_{j_{1}\neq j}(2\alpha_{l,j,j_{1}}+3\beta_{l,j,j_{1}})\right]\cdot\theta_{l}+\sum_{\widetilde{l}\neq l}2\left[\sum_{j=1}^{k}\gamma_{\widetilde{l},j,l}\right]\cdot\theta_{\widetilde{l}}
\end{equation*}

By \eqref{chain3-31dec2021} and \eqref{chain9-31dec2021}:

\begin{equation}\label{chain1-1jan2022}
X=\frac{1}{k}\left[6-\sum_{j=1}^{k}\sum_{j_{1}\neq j}\alpha_{l,j,j_{1}}\right]\cdot\theta_{l}+\sum_{\widetilde{l}\neq l}\frac{2}{k}\cdot\theta_{\widetilde{l}}
\end{equation}
for all $1\leq l\leq d$. Together with \eqref{chain8-31dec2021}, \eqref{chain1-1jan2022} gives us a linear system of $d$ equations in the $d$ variables $\theta_{1},\ldots,\theta_{d}$. The solution is

\begin{equation}\label{chain2-1jan2022}
\theta_{l}=\displaystyle\left[\sum_{\widetilde{l}=1}^{d}\frac{4-\sum_{j=1}^{k}\sum_{j_{1}\neq j}\alpha_{l,j,j_{1}}}{4-\sum_{j=1}^{k}\sum_{j_{1}\neq j}\alpha_{\widetilde{l},j,j_{1}}}\right]^{-1}.
\end{equation}

Plugging \eqref{chain2-1jan2022} back in \eqref{chain1-1jan2022} gives us

\begin{equation}\label{chain3-1jan2022}
X=\frac{2}{k}\left[1+\left(\sum_{\widetilde{l}=1}\displaystyle\frac{1}{\left[4-\sum_{j=1}^{k}\sum_{j_{1}\neq j}\alpha_{\widetilde{l},j,j_{1}}\right]}\right)^{-1}\right].
\end{equation}

To minimize $X$ we must maximize

$$\sum_{j=1}^{k}\sum_{j_{1}\neq j}\alpha_{\widetilde{l},j,j_{1}}. $$

This is achieved by choosing $\beta_{l,j_{1},j_{2}}=0$ for all $(j_{1},j_{2})$ if there is at least one $l$-transversal pair $(j_{1},j_{2})$. In other words, choose

$$\beta_{l,j_{1},j_{2}}=0\qquad\textnormal{for all $(j_{1},j_{2})$ if $\tau_{l}=1$.} $$

Hence by \eqref{chain9-31dec2021}:

\[\sum_{j=1}^{k}\sum_{j_{1}\neq j}\alpha_{\widetilde{l},j,j_{1}}= \begin{cases} 
     2 &\textnormal{ if $\tau_{\widetilde{l}}=1$,}  \\
     0& \textnormal{ if $\tau_{\widetilde{l}}=0$.} \\
   \end{cases}
\]

This choice of parameters gives us

$$X=\frac{2(d+|\tau|+2)}{k(d+|\tau|)},$$
which implies the following estimate for $\#\mathbb{X}^{R,t}$:

\begin{equation}\label{chain4-1jan2022}
\#\mathbb{X}^{R,t}\lesssim \prod_{j=1}^{k}\prod_{l=1}^{d}2^{X\cdot r_{j,l}}\cdot |E_{j,l}|^{\frac{X}{2}},
\end{equation}

Finally, we interpolate between \eqref{chain4-1jan2022} with weight $\frac{1}{k\cdot X}-\varepsilon$ and \eqref{boundH-30dec2021} with weight $(1-\frac{1}{k\cdot X})+\varepsilon$ to bound the form $\Lambda^{\tau}_{k,d}$:

\begin{equation*}
\eqalign{
|\Lambda^{\tau}_{k,d}(g,h)|&\displaystyle \lesssim \sum_{R,t\geq 0}2^{-t}\cdot\prod_{j=1}^{k}\prod_{l=1}^{d}2^{-\frac{r_{j,l}}{k}} \cr
&\times\displaystyle  \left[\prod_{j=1}^{k}\prod_{l=1}^{d}2^{X\cdot r_{j,l}}\cdot |E_{j,l}|^{\frac{X}{2}}\right]^{\frac{1}{k\cdot X}-\varepsilon}\cdot \left[2^{t}|F|\right]^{\left(1-\frac{1}{k\cdot X}\right)+\varepsilon} \cr
}
\end{equation*}
Developing the right-hand side:

\begin{equation*}
\eqalign{
|\Lambda^{\tau}_{k,d}(g,h)|&\displaystyle \lesssim \left(\sum_{t\geq 0}2^{-\left(\frac{1}{k\cdot X}-\varepsilon\right)t}\right)\prod_{j=1}^{k}\prod_{l=1}^{d}\left(\sum_{r_{j,l}\geq 0}2^{-\varepsilon X\cdot r_{j,l}}\right) \cr
&\times\displaystyle  \left[\prod_{j=1}^{k}\prod_{l=1}^{d} |E_{j,l}|^{\frac{1}{2k}-\frac{\varepsilon X}{2}}\right]\cdot |F|^{\left(1-\frac{1}{k\cdot X}\right)+\varepsilon} \cr
}
\end{equation*}
As in the previous section, these series are summable. We have

$$\sum_{r_{j,l}\geq 0}2^{-\varepsilon X\cdot r_{j,l}}\lesssim_{\varepsilon} |E_{j,l}|^{\varepsilon X}. $$
For the series in $t$ we can just bound it by an absolute constant depending on $\varepsilon$. This leads to

\begin{equation*}
\eqalign{
|\Lambda^{\tau}_{k,d}(g,h)|&\displaystyle\lesssim_{\varepsilon} \left[\prod_{j=1}^{k}\prod_{l=1}^{d} |E_{j,l}|^{\frac{1}{2k}+\frac{\varepsilon X}{2}}\right]\cdot |F|^{\left(1-\frac{1}{k\cdot X}\right)+\varepsilon} \cr
&\displaystyle\lesssim \left[\prod_{j=1}^{k}\prod_{l=1}^{d} |E_{j,l}|^{\frac{1}{2k}}\right]\cdot |F|^{\left(1-\frac{1}{k\cdot X}\right)+\varepsilon}, \cr
}
\end{equation*}
since $|E_{j,l}|\leq 1$, which finishes the proof by multilinear interpolation.

\end{proof}

\begin{remark} If $\tau_{l}=0$ for $1\leq l\leq d$, then

$$p_{\tau}=\frac{2(d+2)}{kd}, $$
which could have been proven in general with H\"older and Strichartz/Tomas-Stein. This is because there is no transversality to exploit, therefore the best bounds we can hope for in the multilinear setting come from the linear one.
\end{remark}

\begin{remark} If there are exactly $k-1$ indices $l$ such that $\tau_{l}=1$, then

$$p_{\tau}=\frac{2(d+k+1)}{k(d+k-1)}, $$
which is consistent with Theorem \ref{mainthmpaper}.
\end{remark}

\begin{remark} Finally, if one has more than $k-1$ indices $l$ such that $\tau_{l}=1$, then
$$p_{\tau}<\frac{2(d+k+1)}{k(d+k-1)}, $$
which clearly illustrates the point of this section. The extreme case is when $\tau_{l}=1$ for $1\leq l\leq d$, which gives
$$p_{\tau}=\frac{2(d+1)}{kd}.$$
This can be seen as an improvement upon the linear extension conjecture itself in the following sense: if we take the product of $k$ extensions $E_{U_{j}}g_{j}$, $1\leq j\leq k$, and combine the linear extension conjecture with H\"older's inequality, we obtain an operator that maps $L^{\frac{2(d+1)}{d}}\times\ldots\times L^{\frac{2(d+1)}{d}}$ to $L^{\frac{2(d+1)}{kd}+\varepsilon}$. On the other hand, if we are in a situation in which we have as much transversality as possible and all $g_{j}$ are full tensors, we obtain $L^{2}\times\ldots\times L^{2}$ to $L^{\frac{2(d+1)}{kd}+\varepsilon}$.

\end{remark}

\section{Beyond the $L^{2}$-based $k$-linear theory with and without transversality}\label{beyondL2}

Given a collection $\mathcal{Q}=\{Q_{1},\ldots,Q_{k}\}$ of cubes, the purpose of this section is to investigate \textit{near-restriction} $k$-linear estimates associated to $\mathcal{Q}$. In other words, we study bounds of the form

\begin{equation}
\left\|\prod_{j=1}^{k}\mathcal{E}_{Q_{j}}g_{j}\right\|_{L^{\frac{2(d+1)}{kd}+\varepsilon}(\mathbb{R}^{d+1})} \lesssim_{\varepsilon} \prod_{j=1}^{k} \|g_{j}\|_{L^{p}(Q_{j})}
\end{equation}
for all $\varepsilon>0$ and for some $p>1$. There are two cases of interest here:

\begin{itemize}
\item $\mathcal{Q}$ is a collection of transversal cubes.
\item All cubes in $Q$ are the same.
\end{itemize}

It will be clear that all cases in between these two can be studied in the same framework that we now present.

\subsection{Near-restriction estimates with transversality} We start by restating \eqref{ineq1-100123}. For $2\leq k<d+1$, to recover the whole range of the generalized $k$-linear extension conjecture, it is enough to prove Conjecture \ref{klinear} and
\begin{equation}\label{bklinear-jan2423}
\left\|\prod_{j=1}^{k}\mathcal{E}_{U_{j}}g_{j}\right\|_{L^{\frac{2(d+1)}{kd}+\varepsilon}(\mathbb{R}^{d+1})} \lesssim_{\varepsilon} \prod_{j=1}^{k} \|g_{j}\|_{L^{\frac{2(d+1)}{d}}(U_{j})}
\end{equation}
for all $\varepsilon>0$.

Let $\mathcal{Q}=\{Q_{1},\ldots,Q_{k}\}$ be our initially fixed set of cubes\footnote{See Section \ref{twt}.}. In what follows, we recast the statement of Theorem \ref{thm1-240123} in terms of this set:

\begin{theorem}\label{thm1-100123} If $\mathcal{Q}$ is a collection of transversal cubes and $g_{1}$ is a tensor, the operator $\mathcal{ME}_{k,d}(g_{1},\ldots,g_{k})$ satisfies
\begin{equation}\label{ineq0-jan1423}
\|\mathcal{ME}_{k,d}(g_{1},\ldots,g_{k})\|_{L^{\frac{2(d+1)}{kd}+\varepsilon}(\mathbb{R}^{d+1})}\lesssim_{\varepsilon}\prod_{j=1}^{k}\|g_{j}\|_{L^{p(k,d)}(Q_{j})}, 
\end{equation}
where
$$
p(k,d)=
\begin{cases}
\frac{4(d+1)}{d+k+1},\quad\textnormal{if }2\leq k<\frac{d}{2},\\
\frac{4(d+1)}{2d-k+1},\quad\textnormal{if }\frac{d}{2}\leq k<d+1.
\end{cases}
$$
\end{theorem}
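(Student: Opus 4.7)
The plan is to adapt the proof of the case $2\leq k\leq d+1$ of Theorem~\ref{mainthmpaper} carried out in Section~\ref{klineartheory}, replacing its two $L^{2}$-based building blocks with $L^{p}$-interpolations that reach the near-restriction target $L^{\frac{2(d+1)}{kd}+\varepsilon}$ at the price of $L^{p(k,d)}$ inputs. First, as in Section~\ref{twt}, I would reduce via Claim~\ref{claim2-081221} to the case of a weakly transversal collection with $Q_{1}$ as pivot and associated directions $\{e_{1},\ldots,e_{k-1}\}$, and then pass to the discrete model $ME_{k,d}$.

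The core of the argument reuses the same family of level sets $\mathbb{A}_{j}^{l_{j}}$, $\mathbb{B}_{i,j}^{r_{i,j}}$, $\mathbb{C}^{t}$ and $\mathbb{X}^{\overrightarrow{l},R,t}$ as in Section~\ref{klineartheory}, together with Lemma~\ref{lemma1-3122020} (which encodes the tensor structure of $g_{1}$) and Lemma~\ref{lemma1-231221} (a Bessel-type relation) to relate $2^{-l_{j}}$ to the mixed-norm quantities $2^{-r_{i,j}}$. I would then bound $\#\mathbb{X}^{\overrightarrow{l},R,t}$ in three ways, analogous to \eqref{bound1-dec12020}--\eqref{bound4-3122020}: (a)~an orthogonality-type bound along each of the $k-1$ weakly transversal directions $e_{i}$, obtained by interpolating the $L^{2}\times L^{2}\to L^{2}$ bilinear estimate of Proposition~\ref{prop3apr2922} against the trivial $L^{\infty}\times L^{\infty}\to L^{\infty}$ bound to produce an $L^{q_{0}}\times L^{q_{0}}\to L^{q_{0}}$ bound for $ME_{2,1}$ for any $q_{0}\in[2,\infty]$, still capturing weak transversality; (b)~a Strichartz-type bound in the remaining $d-k+1$ variables, obtained by interpolating the mixed-norm Strichartz estimate of Corollary~\ref{cor1apr2922} (in dimension $d-k+1$) with the trivial $\ell^{\infty}$ bound; and (c)~the trivial bound $\#\mathbb{X}^{\overrightarrow{l},R,t}\lesssim 2^{t}|F|$. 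Interpolating between (a), (b) and (c) with weights chosen so that the resulting power of $|F|$ matches the restricted weak-type exponent $1-\frac{kd}{2(d+1)}+\varepsilon$ associated to the target $L^{\frac{2(d+1)}{kd}+\varepsilon}$, and then invoking multilinear restricted weak-type interpolation as in Remark~\ref{rmk1-070123}, would yield the claimed inequality.

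The hard part will be the bookkeeping that produces the two distinct formulas for $p(k,d)$ in the regimes $2\leq k<\frac{d}{2}$ and $\frac{d}{2}\leq k<d+1$. This dichotomy should reflect a competition between the $k-1$ type-(a) contributions along the weakly transversal directions and the single type-(b) Strichartz contribution over the $d-k+1$ remaining variables: when $k<d/2$ the latter is the binding constraint and forces $p(k,d)=\frac{4(d+1)}{d+k+1}$, whereas when $k\geq d/2$ the numerous type-(a) factors instead dominate and force $p(k,d)=\frac{4(d+1)}{2d-k+1}$. Locating the optimal weights in each regime amounts to solving a linear system analogous to \eqref{chain1-1jan2022}--\eqref{chain2-1jan2022} that simultaneously matches the powers of every $|E_{j}|$, of $|F|$, and of the stopping-time parameters $2^{-l_{j}},2^{-r_{i,j}},2^{-t}$, and then verifying the convergence of the resulting geometric series in $l_{j},r_{i,j}$ and $t$.
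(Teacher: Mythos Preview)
Your proposal has a genuine gap in the choice of building blocks. Interpolating the $L^{2}\times L^{2}\to L^{2}$ bilinear estimate of Proposition~\ref{prop3apr2922} with the trivial $L^{\infty}$ bound, and likewise interpolating Corollary~\ref{cor1apr2922} with $\ell^{\infty}$, cannot reach the near-restriction target $L^{\frac{2(d+1)}{kd}+\varepsilon}$. Concretely, in one variable Strichartz is $L^{2}\to L^{6+}$, and interpolating with $L^{\infty}\to L^{\infty}$ forces $q=3p$ along the whole interpolation line; you never recover $L^{4}\to L^{4+\varepsilon}$, which is exactly the strength needed to land at the restriction exponent. The paper's proof makes a different substitution: the orthogonality-type bounds~(a) are kept \emph{unchanged} (still Proposition~\ref{prop3apr2922}, no interpolation), while the Strichartz-type bound~(b) is replaced outright by the one-dimensional extension estimate of Proposition~\ref{prop2apr2922}. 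This requires restructuring the level sets so that all $\mathbb{B}_{i,j}^{r_{i,j}}$ are one-dimensional (for every $1\leq i\leq d$), and producing $d-k+1$ separate ``extension-type'' bounds, one for each direction $e_{u}$, $k\leq u\leq d$, rather than a single block bound over $d-k+1$ variables.

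Your explanation of the $p(k,d)$ dichotomy is also off. In the paper the dichotomy does not arise from a competition between the number of type-(a) versus type-(b) bounds; it comes from an asymmetry between $g_{1}$ and the generic $g_{j}$, $j\geq 2$, inside the extension-type bound. Because $g_{1}$ is a tensor one can extract a genuine one-dimensional $L^{4}$ norm, whereas for generic $g_{j}$ one must pass through a lossy H\"older step (see the estimate on $\|\langle g_{l},\varphi^{u,l}_{n_{u},m}\rangle_{x_{u}}\|_{2}$ in the proof of Lemma~\ref{lemma1-jan1123}). The final restricted weak-type inequality then carries $|E_{1}|^{\frac{2d-k+1}{4k(d+1)}}$ but $|E_{l+1}|^{\frac{d+k+1}{4k(d+1)}}$ for $l\geq 1$, and $p(k,d)$ is the reciprocal of the smaller of these two exponents, which switches at $k=d/2$.
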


As anticipated in the introduction, we prove it by adapting the argument from Section \ref{klineartheory}.

\begin{remark} As in Section \ref{klineartheory}, the theorem above holds under the assumption that the given set of cubes is weakly transversal and any other $g_{j}$, $j\neq 1$, can be assumed to be the tensor.
\end{remark}

\begin{remark} Roughly speaking, the difference between the proof of Theorem \ref{thm1-100123} and the one done in Section \ref{klineartheory} is in the building blocks we use: instead of Strichartz/Tomas-Stein (in the form of Corollary \ref{cor1apr2922}), we will use the best extension bound for the parabola (in the form of Proposition \ref{prop2apr2922}). One can think of the argument in this section as a rigorous way of replacing the former piece by the latter in our machinery.

\end{remark}

\begin{proof}[Proof of Theorem \ref{thm1-100123}] We work in the same setting as in Section \ref{klineartheory}. Even though there are some slight differences between the level sets from that section and the ones that we will define here, the approach is very similar.

It is convenient to recall a few important points from Section \ref{klineartheory}:

\begin{itemize}
\item The form of interest here is (in its averaged form):
\begin{equation}\label{form1-100123}
\widetilde{\widetilde{\Lambda}}_{k,d}(g,h):=\sum_{(\overrightarrow{\textit{\textbf{n}}},m)\in\mathbb{Z}^{d+1}}\left(\prod_{i=1}^{k}|\langle g_{j},\varphi^{j}_{\overrightarrow{\textit{\textbf{n}}},m}\rangle|\right)^{\frac{1}{k}}\langle h,\chi_{\overrightarrow{\textit{\textbf{n}}}}\otimes\chi_{m}\rangle.
\end{equation}
\item The tensor $g_{1}$ has the structure $g_{1}=g_{1,1}\otimes\ldots\otimes g_{1,d}$.
\item $E_{1,1},\ldots,E_{1,d}\subset [0,1]$, $E_{j}\subset Q_{j}$ ($2\leq j\leq k$) and $F\subset\mathbb{R}^{d+1}$ are measurable sets such that $|g_{1,l}|\leq\chi_{E_{1,l}}$ for $1\leq l\leq d$, $|g_{j}|\leq\chi_{E_{j}}$ for $2\leq j\leq k$ and $|h|\leq\chi_{F}$. Furthermore, $E_{1}:= E_{1,1}\times\ldots\times E_{1,d}$.
\end{itemize}

We start by encoding the sizes of the scalar products appearing in \eqref{form1-100123}:

$$\mathbb{A}_{j}^{l_{j}}:=\{(\overrightarrow{\textit{\textbf{n}}},m)\in\mathbb{Z}^{d+1};\quad |\langle g_{j},\varphi_{\overrightarrow{\textit{\textbf{n}}},m}\rangle|\approx 2^{-l_{j}}\},\quad 1\leq j\leq k. $$

Now we see the first difference between the argument in this section and the one in Section \ref{klineartheory}: the mixed-norm quantities here are all of the same kind, in the sense that the inner products inside the $L^{2}$ norms are all one-dimensional:
$$\mathbb{B}_{i,j}^{r_{i,j}}:=\left\{(n_{i},m)\in\mathbb{Z}^{2};\quad \left\|\langle g_{j},\varphi_{n_{i},m}^{i,j}\rangle_{x_{i}}\right\|_{2}\approx 2^{-r_{i,j}}\right\},\quad 1\leq i\leq d, \quad 1\leq j\leq k, $$

The remaining sets are defined just as in Section \ref{klineartheory}, and with the exact same purpose:
$$\mathbb{C}^{t}:=\{(\overrightarrow{\textit{\textbf{n}}},m)\in\mathbb{Z}^{d+1};\quad |\langle h,\chi_{\overrightarrow{\textit{\textbf{n}}}}\otimes\chi_{m}\rangle|\approx 2^{-t}\}, $$
$$\mathbb{X}^{l_{j};r_{i,j}} = \mathbb{A}_{j}^{l_{j}}\cap \left\{(\overrightarrow{\textit{\textbf{n}}},m)\in\mathbb{Z}^{d+1};\quad (n_{i},m)\in \mathbb{B}_{i,j}^{r_{i,j}}\right\},$$
$$\mathbb{X}^{\overrightarrow{l},R,t}:=\bigcap_{1\leq j\leq k}\mathbb{A}_{j}^{l_{j}}\cap \left\{(\overrightarrow{\textit{\textbf{n}}},m)\in\mathbb{Z}^{d+1};\quad (n_{i},m)\in\bigcap_{1\leq j\leq k}\mathbb{B}_{i,j}^{r_{i,j}},\quad 1\leq i\leq d\right\}\cap\mathbb{C}^{t}, $$
where we are using the abbreviations $\overrightarrow{l}=(l_{1},\ldots,l_{k})$ and $R:=(r_{i,j})_{i,j}$. Hence,

$$|\tilde{\tilde{\Lambda}}_{k,d}(g,h)|\lesssim \sum_{\overrightarrow{l},R,t}2^{-t}\prod_{j=1}^{k}2^{-\frac{l_{j}}{k}}\#\mathbb{X}^{\overrightarrow{l},R,t}. $$

The analogue of Lemma \ref{lemma1-3122020} is the bound

\begin{equation}\label{analog1-jan1123}
2^{-l_{1}}\approx \frac{2^{-r_{1,1}}\cdot\ldots\cdots 2^{-r_{d,1}}}{\|g_{1}\|_{2}^{d-1}},
\end{equation}
which is proven in the same way. Also, by an argument entirely analogous to the one of Lemma \ref{lemma1-231221}, we can show that

\begin{equation}\label{relation1-100123}
2^{-l_{j}}\lesssim \frac{2^{-r_{i,j}}}{\left\|\mathbbm{1}_{\mathbb{X}^{l_{j};r_{i,j}}}\right\|_{\ell^{\infty}_{n_{i},m}\ell^{1}_{\widehat{n_{i}}}}^{\frac{1}{2}}},\quad\forall \quad 1\leq i\leq d,\quad 2\leq j\leq k.
\end{equation}

The following corollary of the estimates above will give us the appropriate convex combination of such relations\footnote{Notice that instead of using just two mixed quantities for each scalar one (as in Corollary \ref{cor1-3122020}), we are using $d-k+2$ many of them here.}:

\begin{corollary}\label{cor1-100123} For $1\leq i\leq k-1$ we have
$$2^{-l_{i+1}}\lesssim \frac{2^{-\frac{k}{d+1}\cdot r_{i,i+1}}}{\left\|\mathbbm{1}_{\mathbb{X}^{l_{i+1};r_{i,i+1}}}\right\|_{\ell^{\infty}_{n_{i},m}\ell^{1}_{\widehat{n_{i}}}}^{\frac{1}{2(d+1)}}}\cdot\prod_{u=k}^{d}\frac{2^{-\frac{1}{d+1}\cdot r_{u,i+1}}}{\left\|\mathbbm{1}_{\mathbb{X}^{l_{i+1};r_{u,i+1}}}\right\|_{\ell^{\infty}_{n_{u},m}\ell^{1}_{\widehat{n_{u}}}}^{\frac{1}{2k(d+1)}}} .$$
\end{corollary}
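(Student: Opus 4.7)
My plan is to derive the corollary by a single interpolation among the bounds already in hand from \eqref{relation1-100123}. Fixing $j = i+1$, relation \eqref{relation1-100123} yields one upper bound on $2^{-l_{i+1}}$ for every direction $i' \in \{1,\ldots,d\}$. I propose to select the $d-k+2$ bounds with $i'=i$ and $i' = u$ for each $u \in \{k,\ldots,d\}$, raise them to suitable powers, and take the product.

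The weights are forced by the shape of the target inequality: assigning weight $\tfrac{k}{d+1}$ to the bound with $i'=i$ and weight $\tfrac{1}{d+1}$ to each of the $d-k+1$ bounds with $i'=u$ makes the numerator exponents on $2^{-r_{i,i+1}}$ and $2^{-r_{u,i+1}}$ come out exactly as displayed. The essential check is that these weights sum to one:
$$\frac{k}{d+1}+(d-k+1)\cdot\frac{1}{d+1}=1,$$
so the left-hand sides of the raised inequalities combine to $2^{-l_{i+1}}$, which is what we want.

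The denominator exponents produced directly by this interpolation are $\tfrac{k}{2(d+1)}$ on the factor associated with direction $i$ and $\tfrac{1}{2(d+1)}$ on each factor associated with some $u$; these are somewhat stronger than the exponents $\tfrac{1}{2(d+1)}$ and $\tfrac{1}{2k(d+1)}$ appearing in the statement. To land on the form claimed in the corollary, I will relax the inequality using the trivial observation that $\|\mathbbm{1}_{\mathbb{X}}\|_{\ell^{\infty}\ell^{1}} \geq 1$ on any nonempty set (which is the only case of interest, since the corollary will be applied where $\mathbb{X}^{\overrightarrow{l},R,t}$ is nonempty and each $\mathbb{X}^{l_{i+1};r_{\cdot,i+1}}$ contains it): decreasing a positive exponent on such a factor only weakens the upper bound. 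I do not foresee a real obstacle here, as the corollary is a purely algebraic repackaging of \eqref{relation1-100123} calibrated to slot cleanly into the scalar/mixed-norm interpolation step that will drive the proof of Theorem \ref{thm1-100123}, exactly parallel to the role of Corollary \ref{cor1-3122020} in Section \ref{klineartheory}.
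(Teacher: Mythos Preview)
Your proposal is correct and follows the paper's own proof exactly: both interpolate the bounds \eqref{relation1-100123} with weight $\tfrac{k}{d+1}$ on the direction $i$ and weights $\tfrac{1}{d+1}$ on each direction $u\in\{k,\ldots,d\}$. You go slightly further than the paper's one-line proof by noting that the denominator exponents produced directly by this interpolation ($\tfrac{k}{2(d+1)}$ and $\tfrac{1}{2(d+1)}$) are stronger than those printed in the statement ($\tfrac{1}{2(d+1)}$ and $\tfrac{1}{2k(d+1)}$), and that the stated form follows from the trivial relaxation $\|\mathbbm{1}_{\mathbb{X}}\|_{\ell^\infty\ell^1}\geq 1$; the paper's proof does not comment on this, but your observation is correct and harmless (and indeed the subsequent application in \eqref{ineq1-apr3023} uses the $\tfrac{1}{k}$-th power, consistent with the stronger exponents).
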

\begin{proof} Interpolate between the bounds in \eqref{relation1-100123} with one weight equal to $\frac{k}{d+1}$ for $(i,j):=(i,i+1)$ and $(d-k+1)$ weights $\frac{1}{d+1}$ for $(i,j):=(u,i+1)$, $k\leq u\leq d$.
\end{proof}

We can estimate $\#\mathbb{X}^{\overrightarrow{l},R,t}$ using the function $h$:

\begin{equation}\label{ineq2-jan1023}
\#\mathbb{X}^{\overrightarrow{l},R,t}\lesssim\sum_{(\overrightarrow{\textit{\textbf{n}}},m)\in\mathbb{Z}^{d+1}}|\langle h,\chi_{\overrightarrow{\textit{\textbf{n}}}}\otimes\chi_{m}\rangle|\lesssim 2^{t}|F|.
\end{equation}

Alternatively,

\begin{equation}\label{ineq3-jan1023}
\eqalign{
\#\mathbb{X}^{\overrightarrow{l},R,t}&\displaystyle\leq\sum_{(\overrightarrow{\textit{\textbf{n}}},m)\in\mathbb{Z}^{d+1}}\prod_{j=1}^{k}\mathbbm{1}_{\mathbb{A}^{l_{j}}_{j}}(\overrightarrow{\textit{\textbf{n}}},m)\prod_{i=1}^{d}\prod_{j=1}^{k}\mathbbm{1}_{\mathbb{B}_{i,j}^{r_{i,j}}}(n_{i},m).\cr
}
\end{equation}

Similarly to what was done in Section \ref{klineartheory}, we will manipulate the inequality above in $d$ ways: $k-1$ of them will exploit orthogonality (from the combination of the sets $\mathbb{B}_{i,1}^{r_{i,1}}$ and $\mathbb{B}_{i,i+1}^{r_{i,i+1}}$, $1\leq i\leq k-1$), but now the other $d-k+1$ ones will reflect the linear extension problem in dimension $1$. The following lemma is the appropriate analogue of Lemma \ref{lemma1-21aug2021} in this section:

\begin{lemma}\label{lemma1-jan1123} The bounds above imply
\begin{enumerate}[(a)]
\item The \textit{orthogonality-type bounds}: for all $1\leq i\leq k-1$,
\begin{equation}\label{ineq4-jan1023}
\#\mathbb{X}^{\overrightarrow{l},R,t}\leq \left\|\mathbbm{1}_{\mathbb{X}^{l_{i+1};r_{i,i+1}}}\right\|_{\ell^{\infty}_{n_{i},m}\ell^{1}_{\widehat{n_{i}}}}\cdot 2^{2r_{i,1}+2r_{i,i+1}}\cdot \|g_{1}\|_{2}^{2}\cdot \|g_{i+1}\|_{2}^{2}.
\end{equation}
\item The \textit{extension-type bounds}: for all $k\leq u\leq d$,
\begin{equation}\label{ineq5-jan1023}
\eqalign{
\displaystyle\#\mathbb{X}^{\overrightarrow{l},R,t} &\displaystyle \leq \prod_{j=2}^{k}\left\|\mathbbm{1}_{\mathbb{X}^{l_{j};r_{u,j}}}\right\|_{\ell^{\infty}_{n_{u},m}\ell^{1}_{\widehat{n_{u}}}}^{\frac{1}{k}}\cdot 2^{\frac{2}{k}\sum_{i\neq u}r_{i,1}}\cdot \|g_{1}\|_{2}^{\frac{2(d-1)}{k}} \cr
&\displaystyle \times 2^{\alpha\cdot r_{u,1}+\sum_{l=2}^{k}\beta\cdot r_{u,l}}\cdot \left(\prod_{j\neq u}\|g_{1,j}\|_{2}\right)^{\alpha} \cdot\|g_{1,1}\|_{4}^{\alpha}\cdot \prod_{l=2}^{k}\|g_{l}\|_{4}^{\beta},
}
\end{equation}
where
$$\alpha := \frac{2(k+1)}{k}+\delta\cdot\frac{(k+1)}{2k}, \quad \beta := \frac{2}{k}+\tilde{\delta}\cdot\frac{1}{2k}, $$
with $\delta,\tilde{\delta}>0$ being arbitrarily small parameters to be chosen later.
\end{enumerate}

\end{lemma}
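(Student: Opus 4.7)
\textbf{Proof proposal for Lemma \ref{lemma1-jan1123}.} The plan is to mirror the proof of Lemma \ref{lemma1-21aug2021}, replacing the role of the mixed-norm Strichartz/Tomas--Stein bound (Corollary \ref{cor1apr2922}) by the one-dimensional $L^{4}\to L^{4+\varepsilon}$ linear extension bound (Proposition \ref{prop2apr2922}). Part (a) will be essentially unchanged, while part (b) is the genuine analogue where the single "extension direction'' $x_{u}$ plays the role previously taken by the block $\overrightarrow{x_{k}}$ of $d-k+1$ variables.

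For part (a), starting from \eqref{ineq3-jan1023} I drop every indicator except $\mathbbm{1}_{\mathbb{A}^{l_{i+1}}_{i+1}}$, $\mathbbm{1}_{\mathbb{B}^{r_{i,1}}_{i,1}}$ and $\mathbbm{1}_{\mathbb{B}^{r_{i,i+1}}_{i,i+1}}$, recombine the first with $\mathbbm{1}_{\mathbb{B}^{r_{i,i+1}}_{i,i+1}}$ into $\mathbbm{1}_{\mathbb{X}^{l_{i+1};r_{i,i+1}}}$, and split with H\"older in the variables $(n_{i},m)$ versus $\widehat{n_{i}}$ to obtain
\begin{equation*}
\#\mathbb{X}^{\overrightarrow{l},R,t} \leq \bigl\|\mathbbm{1}_{\mathbb{X}^{l_{i+1};r_{i,i+1}}}\bigr\|_{\ell^{\infty}_{n_{i},m}\ell^{1}_{\widehat{n_{i}}}}\cdot \bigl\|\mathbbm{1}_{\mathbb{B}^{r_{i,1}}_{i,1}\cap \mathbb{B}^{r_{i,i+1}}_{i,i+1}}\bigr\|_{\ell^{1}_{n_{i},m}}.
\end{equation*}
The second factor is bounded by $2^{2r_{i,1}+2r_{i,i+1}}\|g_{1}\|_{2}^{2}\|g_{i+1}\|_{2}^{2}$ exactly as in Section \ref{klineartheory}: Fubini in $\widehat{x_{i}}$ reduces matters to the one-dimensional bilinear extension bound of Proposition \ref{prop3apr2922}, whose transversality hypothesis holds since the weak transversality of $\mathcal{Q}$ along the direction $e_{i}$ is, in dimension one, the disjointness of the supports of $\varphi^{i,1}$ and $\varphi^{i,i+1}$.

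For part (b) I fix $k\leq u\leq d$ and, following the template of \eqref{bound2-dec22020}, write
\begin{equation*}
\mathbbm{1}_{\mathbb{X}^{\overrightarrow{l},R,t}} \leq \prod_{j=2}^{k}\mathbbm{1}_{\mathbb{X}^{l_{j};r_{u,j}}}\cdot \prod_{i\neq u}\mathbbm{1}_{\mathbb{B}^{r_{i,1}}_{i,1}}\cdot\prod_{l=1}^{k}\mathbbm{1}_{\mathbb{B}^{r_{u,l}}_{u,l}},
\end{equation*}
sum first over $\widehat{n_{u}}$, and apply H\"older with equal weights $1/k$ to split the $\mathbbm{1}_{\mathbb{X}^{l_{j};r_{u,j}}}$ factors from the $\prod_{i\neq u}\mathbbm{1}_{\mathbb{B}^{r_{i,1}}_{i,1}}$ factor. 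The Bessel bound \eqref{bound1-3122020} controls each $\ell^{\infty}_{m}\ell^{1}_{n_{i}}$ norm of $\mathbbm{1}_{\mathbb{B}^{r_{i,1}}_{i,1}}$ by $2^{2r_{i,1}}\|g_{1}\|_{2}^{2}$, producing the expected $2^{(2/k)\sum_{i\neq u}r_{i,1}}\|g_{1}\|_{2}^{2(d-1)/k}$. What remains is to control $\|\prod_{l=1}^{k}\mathbbm{1}_{\mathbb{B}^{r_{u,l}}_{u,l}}\|_{\ell^{1}_{n_{u},m}}$, which by the definition of the sets $\mathbb{B}^{r_{u,l}}_{u,l}$ and a further application of H\"older with the exponents $p_{k,1}=k(d-k+3)/(d+k+1)$ and $p_{k,l}=k(d-k+3)/(d-k+1)$ from \eqref{bound2-3122020} reduces (after also choosing $\alpha$ and $\beta$ so that $\alpha p_{k,1}$ and $\beta p_{k,l}$ are arbitrarily close to $4$ from above) to controlling individual sums
\begin{equation*}
\sum_{(n_{u},m)} \bigl\|\langle g_{l},\varphi^{u,l}_{n_{u},m}\rangle_{x_{u}}\bigr\|_{2}^{4+\delta'}, \qquad 1\leq l\leq k.
\end{equation*}

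The crux of the proof is the estimate of these sums via Proposition \ref{prop2apr2922} applied slice by slice. For each fixed $\widehat{x_{u}}$ the map $x_{u}\mapsto g_{l}(x_{u},\widehat{x_{u}})$ is a one-dimensional function, and $\langle g_{l},\varphi^{u,l}_{n_{u},m}\rangle_{x_{u}}(\widehat{x_{u}})$ is precisely its scalar product with the parabolic wave packet $\varphi^{u,l}_{n_{u},m}$; therefore Proposition \ref{prop2apr2922} yields $\sum_{n_{u},m}|\langle g_{l}(\cdot,\widehat{x_{u}}),\varphi^{u,l}_{n_{u},m}\rangle|^{4+\delta'}\lesssim_{\delta'}\|g_{l}(\cdot,\widehat{x_{u}})\|_{4}^{4+\delta'}$. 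Using Minkowski's inequality to exchange the outer $L^{2}$-norm in $\widehat{x_{u}}$ with the $\ell^{4+\delta'}$-sum, this becomes a bound of the required sum by $\|g_{l}\|_{4}^{4+\delta'}$. For $l=1$ the tensor structure $g_{1}=g_{1,1}\otimes\cdots\otimes g_{1,d}$ factors the slice $\|g_{1}(\cdot,\widehat{x_{u}})\|_{4}$ as $\|g_{1,u}\|_{4}\prod_{j\neq u}|g_{1,j}(x_{j})|$, and the final $\widehat{x_{u}}$-integration yields exactly $\|g_{1,u}\|_{4}\prod_{j\neq u}\|g_{1,j}\|_{2}$ (which is what the statement records, up to the labelling of tensor factors). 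The main obstacle I foresee is bookkeeping: verifying that the Minkowski/H\"older order of operations lines the exponents up with the precise $\alpha$ and $\beta$ of the statement, and that the harmless technical parameters $\delta,\widetilde{\delta}$ absorb the $\varepsilon$-losses from Proposition \ref{prop2apr2922} just as they absorbed the analogous losses from Corollary \ref{cor1apr2922} in Section \ref{klineartheory}.
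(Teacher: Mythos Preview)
Your architecture for both parts matches the paper's proof, but there are two concrete issues in part (b) that need correcting.

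First, the H\"older exponents you import from \eqref{bound2-3122020} are the ones calibrated to the Strichartz exponent $\frac{2(d-k+3)}{d-k+1}$, not to $4$. With those $p_{k,l}$ and the $\alpha,\beta$ of the present statement, the products $\alpha\cdot p_{k,1}$ and $\beta\cdot p_{k,l}$ are not $4+\delta$ and $4+\tilde{\delta}$, so Proposition \ref{prop2apr2922} does not apply; and if instead you adjust $\alpha,\beta$ to force those products near $4$, you no longer prove the inequality as stated. The paper uses the new exponents $p_{u,1}=\frac{2k}{k+1}$ and $p_{u,l}=2k$ for $2\leq l\leq k$, which satisfy $\sum_{l=1}^{k}\frac{1}{p_{u,l}}=1$ and give exactly $\alpha\cdot p_{u,1}=4+\delta$, $\beta\cdot p_{u,l}=4+\tilde{\delta}$ with the stated $\alpha,\beta$.

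Second, for the generic $g_{l}$ ($2\leq l\leq k$), your Minkowski step followed by a slice-wise application of Proposition \ref{prop2apr2922} is correct in direction (since $2\leq 4+\delta'$), but it leaves you with the mixed norm $\bigl\|\,\|g_{l}(\cdot,\widehat{x_{u}})\|_{L^{4}_{x_{u}}}\bigr\|_{L^{2}_{\widehat{x_{u}}}}$, not $\|g_{l}\|_{4}$. One more H\"older using the compact support in $\widehat{x_{u}}$ is needed to reach $\|g_{l}\|_{4}$. The paper handles this in the reverse order: it first replaces $\|\langle g_{l},\varphi^{u,l}_{n_{u},m}\rangle_{x_{u}}\|_{2}$ by $\|\langle g_{l},\varphi^{u,l}_{n_{u},m}\rangle_{x_{u}}\|_{4}$ via H\"older on the compact support, then applies Minkowski with the matching exponent $4\leq 4+\tilde{\delta}$, and finally Proposition \ref{prop2apr2922} pointwise, landing directly on $\|g_{l}\|_{4}$. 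Either order works once the missing H\"older is supplied. Your treatment of the tensor $g_{1}$ is correct, and you are right that the $\|g_{1,1}\|_{4}$ in the displayed statement should read $\|g_{1,u}\|_{4}$.
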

\begin{proof} Part \textit{(a)} is the same as in Lemma \ref{lemma1-21aug2021} \textit{(a)}. As for \textit{(b)}, fix $k\leq u\leq d$ and bound $\#\mathbb{X}^{\overrightarrow{l},R,t}$ as follows:
\begin{equation}\label{bound1-jan1123}
\eqalign{
\#\mathbb{X}^{\overrightarrow{l},R,t}&\displaystyle=\sum_{(\overrightarrow{\textit{\textbf{n}}},m)\in\mathbb{Z}^{d+1}}\mathbbm{1}_{\mathbb{X}^{\overrightarrow{l},R,t}}(\overrightarrow{\textit{\textbf{n}}},m)\cr
&\displaystyle\leq\sum_{(\overrightarrow{\textit{\textbf{n}}},m)\in\mathbb{Z}^{d+1}}\prod_{j=2}^{k}\mathbbm{1}_{\mathbb{X}^{l_{j};r_{u,j}}}(\overrightarrow{\textit{\textbf{n}}},m)\prod_{i\neq u}\mathbbm{1}_{\mathbb{B}_{i,1}^{r_{i,1}}}(n_{i},m)\cdot\prod_{l=1}^{k}\mathbbm{1}_{\mathbb{B}_{u,l}^{r_{u,l}}}(n_{u},m) \cr
&\displaystyle=\sum_{n_{u},m}\prod_{l=1}^{k}\mathbbm{1}_{\mathbb{B}_{u,l}^{r_{u,l}}}(n_{u},m)\sum_{\widehat{n_{u}}}\prod_{j=2}^{k}\mathbbm{1}_{\mathbb{X}^{l_{j};r_{u,j}}}(\overrightarrow{\textit{\textbf{n}}},m)\prod_{i\neq u}\mathbbm{1}_{\mathbb{B}_{i,1}^{r_{i,1}}}(n_{i},m) \cr
&\displaystyle\leq \sum_{n_{u},m}\prod_{l=1}^{k}\mathbbm{1}_{\mathbb{B}_{u,l}^{r_{u,l}}}(n_{u},m)\prod_{j=2}^{k}\left\|\mathbbm{1}_{\mathbb{X}^{l_{j};r_{u,j}}}(\overrightarrow{\textit{\textbf{n}}},m)\right\|_{\ell^{1}_{\widehat{n_{u}}}}^{\frac{1}{k}}\cdot \left\|\prod_{i\neq u}\mathbbm{1}_{\mathbb{B}_{i,1}^{r_{i,1}}}(n_{i},m)\right\|_{\ell^{1}_{\widehat{n_{u}}}}^{\frac{1}{k}} \cr
&\displaystyle\leq \prod_{j=2}^{k}\left\|\mathbbm{1}_{\mathbb{X}^{l_{j};r_{u,j}}}\right\|_{\ell^{\infty}_{n_{u},m}\ell^{1}_{\widehat{n_{u}}}}^{\frac{1}{k}}\cdot \prod_{i\neq u}\left\|\mathbbm{1}_{\mathbb{B}_{i,1}^{r_{i,1}}}\right\|_{\ell^{\infty}_{m}\ell^{1}_{n_{i}}}^{\frac{1}{k}} \cdot\left\|\prod_{l=1}^{k}\mathbbm{1}_{\mathbb{B}_{u,l}^{r_{u,l}}}\right\|_{\ell^{1}_{n_{u},m}},\cr
}
\end{equation}
where we used H\"older's inequality from the third to fourth line. Next, notice that

\begin{equation}\label{bound2-jan1123}
\eqalign{
\left\|\mathbbm{1}_{\mathbb{B}_{i,1}^{r_{i,1}}}\right\|_{\ell^{\infty}_{m}\ell^{1}_{n_{i}}} &\lesssim\displaystyle\sup_{m} 2^{2r_{i,1}}\sum_{n_{i}}\left\|\langle g_{1},\varphi_{n_{i},m}^{i,1}\rangle_{x_{i}}\right\|_{2}^{2}\cr
&\displaystyle=\sup_{m}2^{2r_{i,1}}\int\sum_{n_{i}}\left|\langle g_{1},\varphi_{n_{i},m}^{i,1}\rangle_{x_{i}}\right|^{2}\mathrm{d}\widehat{x_{i}} \cr
&\displaystyle\lesssim 2^{2r_{i,1}}\cdot \|g_{1}\|_{2}^{2} \cr
}
\end{equation}
by orthogonality. Now let

$$p_{u,1}:=\frac{2k}{(k+1)}, $$
$$p_{u,l}:=2k,\quad\forall\quad 2\leq l\leq k $$
and notice that
$$\sum_{l=1}^{k}\frac{1}{p_{u,l}}=1. $$

This way, by definition of $\mathbb{B}_{u,l}^{r_{u,l}}$ and by H\"older's inequality with these $p_{u,l}$ we have

\begin{equation}\label{bound3-jan1123}
\eqalign{
&\displaystyle\left\|\prod_{l=1}^{k}\mathbbm{1}_{\mathbb{B}_{u,l}^{r_{u,l}}}\right\|_{\ell^{1}_{n_{u},m}} \cr
&\lesssim\displaystyle 2^{\alpha\cdot r_{u,1}+\sum_{l=2}^{k}\beta\cdot r_{u,l}}\sum_{(n_{u},m)}\left\|\langle g_{1},\varphi_{n_{u},m}^{u,1}\rangle_{x_{u}}\right\|_{2}^{\alpha}\cdot\prod_{l=2}^{k}\left\|\langle g_{l},\varphi_{n_{u},m}^{u,l}\rangle_{x_{u}}\right\|_{2}^{\beta} \cr
&\leq\displaystyle 2^{\alpha\cdot r_{u,1}+\sum_{l=2}^{k}\beta\cdot r_{u,l}}\left(\sum_{(n_{u},m)}\left\|\langle g_{1},\varphi_{n_{u},m}^{u,1}\rangle_{x_{u}}\right\|_{2}^{\alpha\cdot p_{u,1}}\right)^{\frac{1}{p_{u,1}}}\cdot\prod_{l=2}^{k}\left(\sum_{(n_{u},m)}\left\|\langle g_{l},\varphi_{n_{u},m}^{u,l}\rangle_{x_{u}}\right\|_{2}^{\beta\cdot p_{u,l}} \right)^{\frac{1}{p_{u,l}}} \cr
&=\displaystyle 2^{\alpha\cdot r_{u,1}+\sum_{l=2}^{k}\beta\cdot r_{u,l}}\left(\sum_{(n_{u},m)}\left\|\langle g_{1},\varphi_{n_{u},m}^{u,1}\rangle_{x_{u}}\right\|_{2}^{4+\delta}\right)^{\frac{1}{p_{u,1}}}\cdot\prod_{l=2}^{k}\left(\sum_{(n_{u},m)}\left\|\langle g_{l},\varphi_{n_{u},m}^{u,l}\rangle_{x_{u}}\right\|_{2}^{4+\tilde{\delta}} \right)^{\frac{1}{p_{u,l}}} \cr
}
\end{equation}

At this point we see another difference between this proof and the argument in Section \ref{klineartheory}: we do not obtain a pure $L^{p}$ norm when using the near-$L^{4}$ extension analogue of Corollary \ref{cor1apr2922} for $l=d-1$. Alternatively, we use H\"older in the term involving $g_{1}$ once more:
\begin{equation*}
\eqalign{
\displaystyle\left\|\langle g_{1},\varphi_{n_{u},m}^{u,1}\rangle_{x_{u}}\right\|_{2}^{4+\delta}&\displaystyle =\left[\int \left(\prod_{j\neq u}|g_{1,j}|^{2}(x_{j})\right)\cdot |\langle g_{1,u},\varphi_{n_{u},m}^{u,1}\rangle_{x_{u}}|^{2}\widehat{\mathrm{d}x_{u}}\right]^{\frac{4+\delta}{2}} \cr
&\displaystyle\leq \left(\prod_{j\neq u}\|g_{1,j}\|_{2}\right)^{4+\delta}\cdot |\langle g_{1,u},\varphi_{n_{u},m}^{u,1}\rangle_{x_{u}}|^{4+\delta}. \cr
}
\end{equation*}

For the remaining $g_{l}$ we simply use H\"older and the fact that they are compactly supported\footnote{We use this crude estimate for the remaining $g_{l}$ because they do not have the same structure that allows ``pulling out" the one-dimensional functions $g_{1,j}$, like $g_{1}$ does. There is a clear loss here and it is reflected in the fact that $p(k,d)$ is not the best exponent for which \eqref{ineq0-jan1423} holds.}:
$$\left\|\langle g_{l},\varphi_{n_{u},m}^{u,l}\rangle_{x_{u}}\right\|_{2}^{4+\tilde{\delta}}\lesssim \left\|\langle g_{l},\varphi_{n_{u},m}^{u,l}\rangle_{x_{u}}\right\|_{4}^{4+\tilde{\delta}}. $$

These observations imply

\begin{equation}\label{bound4-jan1123}
\eqalign{
\displaystyle\left\|\prod_{l=1}^{k}\mathbbm{1}_{\mathbb{B}_{u,l}^{r_{u,l}}}\right\|_{\ell^{1}_{n_{u},m}} &\leq\displaystyle 2^{\alpha\cdot r_{u,1}+\sum_{l=2}^{k}\beta\cdot r_{u,l}}\cdot  \left(\prod_{j\neq u}\|g_{1,j}\|_{2}\right)^{\frac{4+\delta}{p_{u,1}}}\cdot\left(\sum_{(n_{u},m)}|\langle g_{1,u},\varphi_{n_{u},m}^{u,1}\rangle_{x_{u}}|^{4+\delta}\right)^{\frac{1}{p_{u,1}}} \cr
&\displaystyle\qquad\qquad\qquad\qquad\qquad\qquad\qquad\quad\qquad\cdot\prod_{l=2}^{k}\left(\sum_{(n_{u},m)}\left\|\langle g_{l},\varphi_{n_{u},m}^{u,l}\rangle_{x_{u}}\right\|_{4}^{4+\tilde{\delta}}\right)^{\frac{1}{p_{u,l}}} \cr
&\leq\displaystyle 2^{\alpha\cdot r_{u,1}+\sum_{l=2}^{k}\beta\cdot r_{u,l}}\cdot \left(\prod_{j\neq u}\|g_{1,j}\|_{2}\right)^{\alpha}\cdot \|g_{1,u}\|_{4}^{\alpha}\cdot \prod_{l=2}^{k}\|g_{l}\|_{4}^{\beta},
}
\end{equation}
where we used Minkowski for norms and the $L^{4}-L^{4+\tilde{\delta}}$ one-dimensional extension estimate from the second to third line above. Part \textit{(b)} follows from applying \eqref{bound2-jan1123} and \eqref{bound4-jan1123} to \eqref{bound1-jan1123}.
\end{proof}

Given $\varepsilon>0$, we bound the multilinear form $\tilde{\tilde{\Lambda}}_{k,d}$ using the estimates from \eqref{analog1-jan1123} and Corollary \ref{cor1-100123} (with the appropriate $\varepsilon$-losses for later convenience), and the ones from Lemma \ref{lemma1-jan1123} with the following weights:

$$
\begin{cases}
\theta_{l}=\frac{1}{2(d+1)}-\frac{\varepsilon}{d},\quad 1\leq l\leq d,\quad\textnormal{ for the $d$ estimates in \eqref{ineq4-jan1023} and \eqref{ineq5-jan1023}},\\
\theta_{d+1}= 1-\frac{d}{2(d+1)}+\varepsilon\quad\textnormal{ for \eqref{ineq2-jan1023}}.
\end{cases}
$$

Hence,

\begin{equation*}
\eqalign{
&\displaystyle|\tilde{\tilde{\Lambda}}_{k,d}(g,h)|\lesssim \cr
&\displaystyle \sum_{\overrightarrow{l},R,t\geq 0} 2^{-t}\displaystyle\times 2^{-\frac{(d+1)\varepsilon}{2kd}l_{1}}\times\left(\frac{1}{\|g_{1}\|_{2}^{d-1}}\prod_{j=1}^{d}2^{-r_{j,1}}\right)^{\frac{1}{k}-\frac{(d+1)\varepsilon}{2kd}} \cr
&\displaystyle\times\prod_{i=1}^{k-1}2^{-\frac{(d+1)\varepsilon}{2kd}l_{i+1}}\times\prod_{i=1}^{k-1}\left[\frac{2^{-\frac{1}{d+1}\cdot r_{i,i+1}}}{\left\|\mathbbm{1}_{\mathbb{X}^{l_{i+1};r_{i,i+1}}}\right\|_{\ell^{\infty}_{n_{i},m}\ell^{1}_{\widehat{n_{i}}}}^{\frac{1}{2(d+1)}}}\cdot\prod_{u=k}^{d}\frac{2^{-\frac{1}{k(d+1)}\cdot r_{u,i+1}}}{\left\|\mathbbm{1}_{\mathbb{X}^{l_{i+1};r_{u,i+1}}}\right\|_{\ell^{\infty}_{n_{u},m}\ell^{1}_{\widehat{n_{u}}}}^{\frac{1}{2k(d+1)}}}\right]^{1-\frac{(d+1)\varepsilon}{2d}} \cr
&\displaystyle\times\prod_{l=1}^{k-1}\left(\left\|\mathbbm{1}_{\mathbb{X}^{l_{l+1};r_{l,l+1}}}\right\|_{\ell^{\infty}_{n_{l},m}\ell^{1}_{\widehat{n_{l}}}}\cdot 2^{2r_{l,1}+2r_{l,l+1}}\cdot \|g_{1}\|_{2}^{2}\cdot \|g_{l+1}\|_{2}^{2}\right)^{\frac{1}{2(d+1)}-\frac{\varepsilon}{d}} \cr
&\displaystyle\times\prod_{k\leq u\leq d}\left(\prod_{j=2}^{k}\left\|\mathbbm{1}_{\mathbb{X}^{l_{j};r_{u,j}}}\right\|_{\ell^{\infty}_{n_{u},m}\ell^{1}_{\widehat{n_{u}}}}^{\frac{1}{k}}\cdot 2^{\frac{2}{k}\sum_{i\neq u}r_{i,1}}\cdot \|g_{1}\|_{2}^{\frac{2(d-1)}{k}}\right)^{\frac{1}{2(d+1)}-\frac{\varepsilon}{d}}\cr
&\displaystyle \times\prod_{k\leq u\leq d}\left(2^{\alpha\cdot r_{u,1}+\sum_{l=2}^{k}\beta\cdot r_{u,l}}\cdot \left(\prod_{j\neq u}\|g_{1,j}\|_{2}\right)^{\alpha} \cdot\|g_{1,u}\|_{4}^{\alpha}\cdot \prod_{l=2}^{k}\|g_{l}\|_{4}^{\beta}\right)^{\frac{1}{2(d+1)}-\frac{\varepsilon}{d}} \cr
&\displaystyle\times\left(2^{t}|F|\right)^{1-\frac{d}{2(d+1)}+\varepsilon}. \cr
}
\end{equation*}

Developing the expression above,

\begin{equation}\label{ineq1-apr3023}
\eqalign{
&\displaystyle|\tilde{\tilde{\Lambda}}_{k,d}(g,h)|\lesssim \cr
&\displaystyle \sum_{\overrightarrow{l},R,t\geq 0} 2^{-t}\times 2^{-\frac{(d+1)\varepsilon}{2kd}l_{1}}\times\left(\prod_{j=1}^{d}2^{-r_{j,1}}\right)^{\frac{1}{k}-\frac{(d+1)\varepsilon}{2kd}}\times \|g_{1}\|_{2}^{\frac{(d+1)(d-1)\varepsilon}{2kd}-\frac{(d-1)}{k}}   \cr
&\displaystyle\times\prod_{i=1}^{k-1}2^{-\frac{(d+1)\varepsilon}{2kd}l_{i+1}}\times\prod_{i=1}^{k-1}\left[2^{-\frac{1}{d+1}\cdot r_{i,i+1}}\cdot\prod_{u=k}^{d} 2^{-\frac{1}{k(d+1)}\cdot r_{u,i+1}}\right]^{1-\frac{(d+1)\varepsilon}{2d}} \cr
&\displaystyle\times\prod_{i=1}^{k-1}\left[\left\|\mathbbm{1}_{\mathbb{X}^{l_{i+1};r_{i,i+1}}}\right\|_{\ell^{\infty}_{n_{i},m}\ell^{1}_{\widehat{n_{i}}}}^{\frac{1}{2(d+1)}\cdot\left(\frac{(d+1)\varepsilon}{2d}-1\right)}\cdot\prod_{u=k}^{d}\left\|\mathbbm{1}_{\mathbb{X}^{l_{i+1};r_{u,i+1}}}\right\|_{\ell^{\infty}_{n_{u},m}\ell^{1}_{\widehat{n_{u}}}}^{\frac{1}{2k(d+1)}\left(\frac{(d+1)\varepsilon}{2d}-1\right)}\right]\cr
&\displaystyle\times\left[\prod_{l=1}^{k-1}\left\|\mathbbm{1}_{\mathbb{X}^{l_{l+1};r_{l,l+1}}}\right\|_{\ell^{\infty}_{n_{l},m}\ell^{1}_{\widehat{n_{l}}}}^{\frac{1}{2(d+1)}-\frac{\varepsilon}{d}}\right]\times\textcolor{red}{\left[\prod_{l=1}^{k-1}\left(2^{r_{l,1}+r_{l,l+1}}\right)^{\frac{1}{d+1}-\frac{2\varepsilon}{d}}\right]} \cr
&\displaystyle\times \textcolor{red}{\|g_{1}\|_{2}^{\frac{(k-1)}{d+1}-\frac{2(k-1)\varepsilon}{d}}\cdot\prod_{l=1}^{k-1}\|g_{l+1}\|_{2}^{\frac{1}{d+1}-\frac{2\varepsilon}{d}}}\cr
&\displaystyle\times \prod_{u=k}^{d}\left[\left(\prod_{j=2}^{k}\left\|\mathbbm{1}_{\mathbb{X}^{l_{j};r_{u,j}}}\right\|_{\ell^{\infty}_{n_{u},m}\ell^{1}_{\widehat{n_{u}}}}^{\frac{1}{k}\cdot\left(\frac{1}{2(d+1)}-\frac{\varepsilon}{d}\right)}\right)\cdot \left(2^{\frac{2}{k}\sum_{i\neq u}r_{i,1}}\cdot 2^{\alpha\cdot r_{u,1}+\sum_{l=2}^{k}\beta\cdot r_{u,l}}\right)^{\frac{1}{2(d+1)}-\frac{\varepsilon}{d}}\right] \cr
&\displaystyle\times \|g_{1}\|_{2}^{\frac{2(d-k+1)(d-1)}{k}\left(\frac{1}{2(d+1)}-\frac{\varepsilon}{d}\right)}\cdot \prod_{k\leq u\leq d}\left[ \textcolor{blue}{\left(\|g_{1,u}\|_{4}\cdot\prod_{j\neq u}\|g_{1,j}\|_{2}\right)}^{\alpha\left(\frac{1}{2(d+1)}-\frac{\varepsilon}{d}\right)}\right] \cdot \prod_{l=2}^{k}\|g_{l}\|_{4}^{\beta(d-k+1)\left(\frac{1}{2(d+1)}-\frac{\varepsilon}{d}\right)} \cr
&\displaystyle\times\left(2^{t}|F|\right)^{\left[1-\frac{d}{2(d+1)}\right]+\varepsilon}. \cr
}
\end{equation}

Observe that the product of the blue factors above (for $k\leq u\leq d$) is\footnote{Recall that $|g_{1}|=|g_{1,1}\otimes\ldots\otimes g_{1,d}|\leq \mathbbm{1}_{E_{1,1}}\otimes\ldots\otimes\mathbbm{1}_{E_{1,d}}\leq\mathbbm{1}_{E_{1}}$.}

\begin{equation*}
\eqalign{
\displaystyle\prod_{k\leq u\leq d}\left(\|g_{1,u}\|_{4}\cdot\prod_{j\neq u}\|g_{1,j}\|_{2}\right)&\displaystyle = \left[\prod_{l=1}^{k-1}\|g_{1,l}\|_{2}^{d-k+1}\right]\cdot\prod_{u=k}^{d}\left[\|g_{1,u}\|_{2}^{d-k}\cdot \|g_{1,u}\|_{4}\right] \cr
&\displaystyle = \left[\prod_{j=1}^{d}\|g_{1,j}\|_{2}^{d-k}\right]\cdot\left[\prod_{l=1}^{k-1}\|g_{1,l}\|_{2}\right]\cdot\left[\prod_{u=k}^{d}\|g_{1,u}\|_{4}\right] \cr
&\displaystyle\leq \|g_{1}\|_{2}^{d-k}\cdot |E_{1}|^{\frac{1}{4}}.\cr
}
\end{equation*}

Notice that the previous step was lossy, which also reflects in the suboptimal final exponent $p(k,d)$. Now we set the values of $\delta$ and $\tilde{\delta}$ (as functions of $\varepsilon$) to be such that

\begin{equation*}
\eqalign{
\displaystyle\delta\cdot\frac{(k+1)}{2k}\left(\frac{1}{2(d+1)}-\frac{\varepsilon}{d}\right)&\displaystyle =\frac{(d+1)\varepsilon}{kd}, \cr
\displaystyle\tilde{\delta}\cdot\frac{1}{2k}\left(\frac{1}{2(d+1)}-\frac{\varepsilon}{d}\right)&=\displaystyle\frac{\varepsilon}{kd}. \cr
}
\end{equation*}

Simplifying the expression above with this choice of $\delta$ and $\tilde{\delta}$,

\begin{equation}\label{finalbound1-apr3023}
\eqalign{
\displaystyle|\tilde{\tilde{\Lambda}}_{k,d}(g,h)|&\displaystyle\lesssim \left[\sum_{l_{1}\geq 0}2^{-\frac{(d+1)\varepsilon}{2kd}\cdot l_{1}}\right]\times \left[\prod_{j=1}^{k-1}\left(\sum_{r_{j,1}\geq 0}2^{-\frac{3(d+1)\varepsilon}{2kd}\cdot r_{j,1}}\right)\right] \times  \left[\prod_{u=k}^{d}\left(\sum_{r_{u,1}\geq 0}2^{-\frac{(d+1)\varepsilon}{2kd}\cdot r_{u,1}}\right)\right] \cr
&\displaystyle\times \left[\prod_{i=1}^{k-1}\left(\sum_{l_{i+1}\geq 0}2^{-\frac{(d+1)\varepsilon}{2kd}\cdot l_{i+1}}\right)\right]\times \left[\sum_{t\geq 0}2^{-t\left(\frac{d}{2(d+1)}-\varepsilon\right)}\right] \cr
&\displaystyle\times \prod_{i=1}^{k-1}\left[\left(\sum_{r_{i,i+1}\geq 0}2^{-\frac{3\varepsilon}{2d}\cdot r_{i,i+1}}\right)\cdot\prod_{u=k}^{d}\left(\sum_{r_{u,i+1}\geq 0}2^{-\frac{\varepsilon}{2kd}\cdot r_{u,i+1}}\right)\right]\cr
&\displaystyle\times\prod_{i=1}^{k-1}\left[\sup_{l_{i+1},r_{i,i+1}}\left\|\mathbbm{1}_{\mathbb{X}^{l_{i+1};r_{i,i+1}}}\right\|_{\ell^{\infty}_{n_{i},m}\ell^{1}_{\widehat{n_{i}}}}^{-\frac{3\varepsilon}{4d}}\cdot\prod_{u=k}^{d}\sup_{l_{i+1},r_{u,i+1}}\left\|\mathbbm{1}_{\mathbb{X}^{l_{i+1};r_{u,i+1}}}\right\|_{\ell^{\infty}_{n_{u},m}\ell^{1}_{\widehat{n_{u}}}}^{-\frac{3\varepsilon}{4kd}}\right]\cr
&\displaystyle\times \|g_{1}\|_{2}^{\frac{(d-k)}{k(d+1)} - \frac{2(d-k)(k+1)\varepsilon}{kd}+\frac{(d-k)(d+1)\varepsilon}{kd}+\frac{(d+1)(d-1)\varepsilon}{2kd}-\frac{2(k-1)\varepsilon}{d}-\frac{2(d-k+1)(d-1)\varepsilon}{kd}}\cdot |E_{1}|^{\frac{(k+1)}{4k(d+1)}+\frac{(d+1)\varepsilon}{4kd}}\cr
&\displaystyle\times\prod_{l=1}^{k-1}|E_{l+1}|^{\frac{(d+k+1)}{4k(d+1)}-\frac{\varepsilon}{d}-\frac{(d-k+1)\varepsilon}{2kd}+\frac{(d-k+1)\varepsilon}{4kd}} \cr
&\displaystyle\times |F|^{\left[1-\frac{d}{2(d+1)}\right]+\varepsilon}.\cr
}
\end{equation}

By considerations identical to the ones in the end of Section \ref{klineartheory}, this implies

\begin{equation}\label{finalbound1-jan1423}
|\tilde{\tilde{\Lambda}}_{k,d}(g,h)|\lesssim_{\varepsilon} |F|^{1-\frac{d}{2(d+1)}+\varepsilon}\cdot |E_{1}|^{\frac{2d-k+1}{4k(d+1)}}\prod_{l=1}^{k-1}|E_{l+1}|^{\frac{d+k+1}{4k(d+1)}}.
\end{equation}

To make all exponents of $|E_{j}|$ ($1\leq j\leq k$) the same, we have to take
$$\frac{1}{\widetilde{p}(k,d)}=\min\left\{\frac{2d-k+1}{4k(d+1)},\frac{d+k+1}{4k(d+1)}\right\}. $$

Again by the same considerations from Section \ref{klineartheory}, \eqref{finalbound1-jan1423} implies\footnote{Notice that we obtain something slightly better than Theorem \ref{thm1-100123} if one is looking for \textit{asymmetric estimates}: \eqref{finalbound1-jan1423} implies a bound of type $L^{p_{1}}\times L^{p_{2}}\times L^{p_{2}}\times\ldots\times L^{p_{2}}\rightarrow L^{\frac{2(d+1)}{kd}+\varepsilon}$, $p_{1}\neq p_{2}$ and $p_{1},p_{2}\leq p(k,d)$, if $g_{1}$ is a tensor.} Theorem \ref{thm1-100123}.
\end{proof}

\subsection{Near-restriction estimates without transversality}\label{nontransv-apr2823} To make the notation ligther, let us omit the index $Q$ and set $\mathcal{E}_{d}$ be the extension operator associated to a fixed cube $Q\subset\mathbb{R}^{d}$. Recall the $k$-product operator obtained from $\mathcal{E}_{d}$ defined in \eqref{kprod-apr2923}

$$\mathcal{E}_{d,(k)}(g_{1},\ldots,g_{k})= \prod_{j=1}^{k}\mathcal{E}_{d,(k)}g_{j}.$$

In this subsection we prove Theorem \ref{thm1-apr2823}, which we restate here for the convenience of the reader.

\begin{theorem}\label{thm1-apr2923} Let $2\leq k\leq d+1$. If $g_{1}$ is a tensor, the inequality
\begin{equation}
\left\|\prod_{j=1}^{k}\mathcal{E}_{d,(k)}g_{j}\right\|_{L^{\frac{2(d+1)}{kd}+\varepsilon}(\mathbb{R}^{d+1})} \lesssim_{Q,\varepsilon} \prod_{j=1}^{k} \|g_{j}\|_{L^{4}(Q)}
\end{equation}
holds for all $\varepsilon>0$.
\end{theorem}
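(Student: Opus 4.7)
The plan is to mirror the level-set/stopping-time machinery developed in Sections~\ref{klineartheory} and \ref{beyondL2}, with one decisive modification: since no transversality is available, we drop the orthogonality-type bounds (which, through Proposition~\ref{prop3apr2922}, were the only place where weak transversality entered) and replace them uniformly by extension-type bounds along \emph{every} coordinate direction. As before, we reduce to the averaged discrete form
$$\widetilde{\widetilde{\Lambda}}_{k,d}(g,h)=\sum_{(\overrightarrow{\textit{\textbf{n}}},m)\in\mathbb{Z}^{d+1}}\Bigl(\prod_{j=1}^{k}|\langle g_{j},\varphi^{j}_{\overrightarrow{\textit{\textbf{n}}},m}\rangle|\Bigr)^{\frac{1}{k}}\langle h,\chi_{\overrightarrow{\textit{\textbf{n}}}}\otimes\chi_{m}\rangle,$$
introduce the level sets $\mathbb{A}_{j}^{l_j}$, $\mathbb{B}_{i,j}^{r_{i,j}}$, $\mathbb{C}^t$, $\mathbb{X}^{l_j;r_{i,j}}$, and $\mathbb{X}^{\vec l,R,t}$ exactly as before, and decompose $|\widetilde{\widetilde{\Lambda}}_{k,d}(g,h)|\lesssim \sum_{\vec l,R,t\geq 0}2^{-t}\prod_{j}2^{-l_j/k}\,\#\mathbb{X}^{\vec l,R,t}$.

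The tensor identity for $g_{1}$ still gives
$$2^{-l_{1}}\approx \frac{2^{-r_{1,1}}\cdots 2^{-r_{d,1}}}{\|g_{1}\|_{2}^{d-1}},$$
and for $j\geq 2$ Bessel's inequality (as in Lemma~\ref{lemma1-231221}) yields $2^{-l_j}\lesssim 2^{-r_{i,j}}\|\mathbbm{1}_{\mathbb{X}^{l_j;r_{i,j}}}\|_{\ell^{\infty}_{n_i,m}\ell^{1}_{\widehat{n_i}}}^{-1/2}$ for each $1\leq i\leq d$; averaging these with equal weights over $i$ produces the substitute of Corollary~\ref{cor1-100123} in which \emph{every} direction contributes symmetrically. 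The elementary bound $\#\mathbb{X}^{\vec l,R,t}\lesssim 2^{t}|F|$ remains available.

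The key new step is to produce, for each fixed $1\leq u\leq d$, an extension-type estimate analogous to \eqref{ineq5-jan1023} but valid without transversality. Following the proof of Lemma~\ref{lemma1-jan1123}(b), I peel off the sum over $(n_u,m)$ last, use H\"older across the $k$ indicator functions $\mathbbm{1}_{\mathbb{B}_{u,l}^{r_{u,l}}}$ with exponents $p_{u,1}=\frac{2k}{k+1}$ and $p_{u,l}=2k$ for $l\geq 2$, and then invoke the one-dimensional $L^{4}\to L^{4+\delta}$ extension estimate of Proposition~\ref{prop2apr2922} in the $x_u$-variable. For the factor involving $g_{1}$ the tensor structure lets me pull the $d-1$ transverse one-variable components out as $L^{2}$ norms and keep only the $u$-th component in $L^{4}$; for each $g_{l}$ with $l\geq 2$ I use H\"older and compact support of $Q$ to bound its $L^{2}$ norms on slices by $L^{4}$ norms, which is the step responsible for the $\|g_{l}\|_{L^{4}(Q)}$ on the right-hand side. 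This yields $d$ symmetric bounds on $\#\mathbb{X}^{\vec l,R,t}$, one per direction.

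I then interpolate the $d$ extension-type bounds (each with weight $\tfrac{1}{2(d+1)}-\tfrac{\varepsilon}{d}$) against the $|F|$-bound (with weight $1-\tfrac{d}{2(d+1)}+\varepsilon$), plug in the relations for $2^{-l_j}$, choose the free small parameters $\delta,\tilde\delta$ as functions of $\varepsilon$ so that all the geometric series in $l_j, r_{i,j}, t$ converge, and collect the leftover exponents. The tensor structure of $g_{1}$ will again force the blue-type product $\prod_{u}(\|g_{1,u}\|_{4}\prod_{j\neq u}\|g_{1,j}\|_{2})$ to appear, which after estimating $\|g_{1,j}\|_{2}\leq |E_{1,j}|^{1/2}$ and $\|g_{1,u}\|_{4}\leq |E_{1,u}|^{1/4}$ gives the correct $|E_{1}|$-power. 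A restricted weak-type bound of the form
$$|\widetilde{\widetilde{\Lambda}}_{k,d}(g,h)|\lesssim_{\varepsilon} |F|^{1-\tfrac{d}{2(d+1)}+\varepsilon}\,|E_{1}|^{1/4}\prod_{l=2}^{k}|E_{l}|^{1/4}$$
then follows, and multilinear interpolation upgrades it to the stated $L^{4}\times\cdots\times L^{4}\to L^{\frac{2(d+1)}{kd}+\varepsilon}$ bound; since $Q$ is bounded, this implies the claim for $\mathcal{E}_{d,(k)}$ via the usual reduction to the discrete model. I expect the main obstacle to be bookkeeping: tracking the powers of $\|g_{1}\|_{2}$ arising from the tensor identity, the $\|g_{1,j}\|_{2}$ coming from pulling out transverse factors in the extension step, and the corresponding $|E_{1,j}|$ exponents so that they assemble into exactly $|E_{1}|^{1/4}$ rather than a strictly worse power, and ensuring that the symmetric use of the $L^{4}\to L^{4+\delta}$ estimate in \emph{all} $d$ directions does not create an obstruction absent from the $d-k+1$-directional version in the proof of Theorem~\ref{thm1-100123}.
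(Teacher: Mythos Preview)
Your plan has a genuine gap in the exponent balance for the $r_{i,1}$ parameters. The extension-type bound \eqref{ineq5-jan1023} in direction $u$ carries the Bessel factor $2^{\frac{2}{k}\sum_{i\neq u}r_{i,1}}$, i.e.\ it involves \emph{every} $r_{i,1}$ with $i\neq u$, together with $2^{\alpha r_{u,1}}$ where $\alpha\approx\tfrac{2(k+1)}{k}$. If you use this bound in all $d$ directions with the weights $\theta_{u}=\tfrac{1}{2(d+1)}-\tfrac{\varepsilon}{d}$ you propose, then the total positive contribution to the exponent of $r_{i,1}$ is
\[
\alpha\theta_{i}+\tfrac{2}{k}\sum_{u\neq i}\theta_{u}\approx \tfrac{k+1}{k(d+1)}+\tfrac{d-1}{k(d+1)}=\tfrac{d+k}{k(d+1)},
\]
while the tensor identity for $2^{-l_{1}/k}$ only supplies $-\tfrac{1}{k}$. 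The net exponent is $+\tfrac{k-1}{k(d+1)}>0$, so the series $\sum_{r_{i,1}\geq 0}$ diverges. If instead you adjust the weights to force this balance, you are driven to $\theta_{u}=\tfrac{1}{2(d+k)}$, which pushes the $|F|$-exponent to $\tfrac{d+2k}{2(d+k)}$ and hence the target Lebesgue exponent to $\tfrac{2(d+k)}{kd}$, strictly worse than $\tfrac{2(d+1)}{kd}$.

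The paper does \emph{not} treat all $d$ directions symmetrically. It retains the two-tier structure of the transversal proof: for the $k-1$ ``pairing'' directions $1\leq i\leq k-1$ it proves a new bound (Lemma~\ref{analog1-apr2823}(a)) that couples only $g_{1}$ and $g_{i+1}$ and has the same $r$-profile $2^{(2+\lambda)(r_{i,1}+r_{i,i+1})}$ as the old orthogonality bound. The point is that this bound does not need transversality: in the estimate for $\|\mathbbm{1}_{\mathbb{B}_{i,1}^{r_{i,1}}\cap\mathbb{B}_{i,i+1}^{r_{i,i+1}}}\|_{\ell^{1}_{n_{i},m}}$ one replaces Proposition~\ref{prop3apr2922} by Cauchy--Schwarz in $(n_{i},m)$ followed by the one-dimensional $L^{4}\to L^{4+2\lambda}$ extension estimate (Proposition~\ref{prop2apr2922}) applied separately to each factor. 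The bound \eqref{ineq5-jan1023} is kept only for the remaining directions $k\leq u\leq d$. With this split, Corollary~\ref{cor1-100123} (with its asymmetric weights $\tfrac{k}{d+1}$ and $\tfrac{1}{d+1}$) is used unchanged, and the entire exponent bookkeeping from the proof of Theorem~\ref{thm1-100123} carries over; the only differences are the extra $\lambda$'s and the replacement of the red factors in \eqref{ineq1-apr3023} by their $L^{4}$-based counterparts, which is what produces the final $|E_{j}|^{1/(4k)}$ powers.
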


\begin{remark} As in the previous subsection, the difference between the proof of Theorem \ref{thm1-apr2923} and the one done in Section \ref{klineartheory} is in the building blocks used: since there is no transversality to be exploited, we only use the best extension bound for the parabola (in the form of Proposition \ref{prop2apr2922}).
\end{remark}

\begin{proof}[Proof of Theorem \ref{thm1-apr2923}] The framework is the exact same as in the proof of Theorem \ref{thm1-100123}. We have to bound $\#\mathbb{X}^{\overrightarrow{l},R,t}$ to effectively estimate\footnote{Rigorously, we are dealing with a different operator here, but we will keep the notation unchanged for simplicity.}

$$|\tilde{\tilde{\Lambda}}_{k,d}(g,h)|\lesssim \sum_{\overrightarrow{l},R,t}2^{-t}\prod_{j=1}^{k}2^{-\frac{l_{j}}{k}}\#\mathbb{X}^{\overrightarrow{l},R,t}$$
in terms of the measures of the sets $E_{1,\ell}$, $1\leq\ell\leq d$, $E_{j}$, $2\leq j\leq k$, and $F$. This will be done by the following analogue of Lemma \ref{lemma1-jan1123}:

\begin{lemma}\label{analog1-apr2823} The two following extension-type bounds for the cardinality $\#\mathbb{X}^{\overrightarrow{l},R,t}$ hold.
\begin{enumerate}[(a)]
\item For all $1\leq i\leq k-1$ and all\footnote{The parameter $\lambda$ will be chosen later. It should be regarded as morally zero, and we only introduce it to be able to use Proposition \ref{prop2apr2922} since it does not hold at the endpoint.} $\lambda>0$,
\begin{equation}\label{ineq4-apr2923}
\#\mathbb{X}^{\overrightarrow{l},R,t}\leq \left\|\mathbbm{1}_{\mathbb{X}^{l_{i+1};r_{i,i+1}}}\right\|_{\ell^{\infty}_{n_{i},m}\ell^{1}_{\widehat{n_{i}}}}\cdot 2^{(2+\lambda)(r_{i,1}+r_{i,i+1})}\cdot \|g_{1,i}\|_{4}^{2+\lambda}\cdot \left(\prod_{\ell\neq i}\|g_{1,\ell}\|_{2+\lambda}^{2+\lambda}\right)\cdot \|g_{i+1}\|_{4}^{2+\lambda}.
\end{equation}
\item If $k<d+1$, for all $k\leq u\leq d$,
\begin{equation}\label{ineq5-apr2923}
\eqalign{
\displaystyle\#\mathbb{X}^{\overrightarrow{l},R,t} &\displaystyle \leq \prod_{j=2}^{k}\left\|\mathbbm{1}_{\mathbb{X}^{l_{j};r_{u,j}}}\right\|_{\ell^{\infty}_{n_{u},m}\ell^{1}_{\widehat{n_{u}}}}^{\frac{1}{k}}\cdot 2^{\frac{2}{k}\sum_{i\neq u}r_{i,1}}\cdot \|g_{1}\|_{2}^{\frac{2(d-1)}{k}} \cr
&\displaystyle \times 2^{\alpha\cdot r_{u,1}+\sum_{l=2}^{k}\beta\cdot r_{u,l}}\cdot \left(\prod_{j\neq u}\|g_{1,j}\|_{2}\right)^{\alpha} \cdot\|g_{1,1}\|_{4}^{\alpha}\cdot \prod_{l=2}^{k}\|g_{l}\|_{4}^{\beta},
}
\end{equation}
where
$$\alpha := \frac{2(k+1)}{k}+\delta\cdot\frac{(k+1)}{2k}, \quad \beta := \frac{2}{k}+\tilde{\delta}\cdot\frac{1}{2k}, $$
with $\delta,\tilde{\delta}>0$ being arbitrarily small parameters to be chosen later.
\end{enumerate}
\end{lemma}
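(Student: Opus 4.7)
The structure of the proof mirrors that of Lemma \ref{lemma1-jan1123}, the key change being that without transversality we can no longer invoke the bilinear extension Proposition \ref{prop3apr2922}; its role will be taken by the linear extension Proposition \ref{prop2apr2922} combined with H\"older's inequality. For part (b), transversality was never actually used in the proof of Lemma \ref{lemma1-jan1123}(b)---only Proposition \ref{prop2apr2922} and H\"older---so that argument carries over verbatim and yields \eqref{ineq5-apr2923}; we will omit its details.

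For part (a), we begin with the same reduction as in the first three lines of \eqref{bound1-dec22020}, obtaining
$$\#\mathbb{X}^{\overrightarrow{l},R,t}\leq \left\|\mathbbm{1}_{\mathbb{X}^{l_{i+1};r_{i,i+1}}}\right\|_{\ell^{\infty}_{n_{i},m}\ell^{1}_{\widehat{n_{i}}}}\cdot \left\|\mathbbm{1}_{\mathbb{B}_{i,1}^{r_{i,1}}\cap \mathbb{B}_{i,i+1}^{r_{i,i+1}}}\right\|_{\ell^{1}_{n_{i},m}}.$$
Applying $1\lesssim 2^{(2+\lambda)r_{i,j}}\|\langle g_{j},\varphi^{i,j}_{n_{i},m}\rangle_{x_{i}}\|_{2}^{2+\lambda}$ on each of the two indicators and then Cauchy-Schwarz in $(n_{i},m)$ would reduce matters to bounding the two sums
$$I_{1}:=\sum_{n_{i},m}\|\langle g_{1},\varphi^{i,1}_{n_{i},m}\rangle_{x_{i}}\|_{2}^{4+2\lambda},\qquad I_{2}:=\sum_{n_{i},m}\|\langle g_{i+1},\varphi^{i,i+1}_{n_{i},m}\rangle_{x_{i}}\|_{2}^{4+2\lambda}.$$

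The sum $I_{1}$ is easy thanks to the tensor structure of $g_{1}$: one factors $\|\langle g_{1},\varphi^{i,1}_{n_{i},m}\rangle_{x_{i}}\|_{2}=|\langle g_{1,i},\varphi^{i,1}_{n_{i},m}\rangle|\cdot\prod_{\ell\neq i}\|g_{1,\ell}\|_{2}$, pulls out the product of $L^{2}$ norms, and recognizes what remains as a one-dimensional extension $\ell^{4+2\lambda}$ norm on which Proposition \ref{prop2apr2922} applies to give $I_{1}^{1/2}\lesssim \prod_{\ell\neq i}\|g_{1,\ell}\|_{2}^{2+\lambda}\cdot\|g_{1,i}\|_{4}^{2+\lambda}$. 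The sum $I_{2}$ is more delicate because $g_{i+1}$ has no tensor structure, and this is the main technical step: we will use Minkowski's integral inequality (valid since $4+2\lambda\geq 2$) to interchange the $\ell^{4+2\lambda}_{n_{i},m}$ and $L^{2}_{\widehat{x_{i}}}$ norms, after which for each fixed $\widehat{x_{i}}$ the inner sum is a one-dimensional extension norm applied to the section $g_{i+1}(\cdot,\widehat{x_{i}})$; bounding this sectionwise by Proposition \ref{prop2apr2922} and then integrating in $\widehat{x_{i}}$ with Cauchy-Schwarz on the compact support produces $I_{2}^{1/2}\lesssim \|g_{i+1}\|_{4}^{2+\lambda}$. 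Combining the bounds for $I_{1}$ and $I_{2}$ and then converting $\|g_{1,\ell}\|_{2}^{2+\lambda}$ into $\|g_{1,\ell}\|_{2+\lambda}^{2+\lambda}$ via H\"older on the compact support of $g_{1,\ell}$ yields \eqref{ineq4-apr2923}. This fiberwise Minkowski--Proposition~\ref{prop2apr2922}--Cauchy-Schwarz chain applied to $g_{i+1}$ is precisely what forces the $L^{4}$ (rather than $L^{2}$) exponent on the right-hand side of Theorem \ref{thm1-apr2923}.
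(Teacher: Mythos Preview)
Your proposal is correct and follows essentially the same route as the paper. For part (b) you correctly observe that transversality plays no role in Lemma~\ref{lemma1-jan1123}(b), which is exactly what the paper says. For part (a), the paper's argument differs from yours only in the order of operations: rather than applying Cauchy--Schwarz in $(n_i,m)$ first and then using Minkowski to interchange the $\ell^{4+2\lambda}_{n_i,m}$ and $L^2_{\widehat{x_i}}$ norms, the paper first passes from the $L^2_{\widehat{x_i}}$ norms to $L^{2+\lambda}_{\widehat{x_i}}$ norms via H\"older on the compact support, at which point Fubini (rather than Minkowski) lets one pull the $\widehat{x_i}$ and $\widehat{y_i}$ integrals outside the $(n_i,m)$ sum, and Cauchy--Schwarz is applied inside the integral. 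Both orderings lead to the same fiberwise application of Proposition~\ref{prop2apr2922} and the same bound; your Minkowski step and the paper's early H\"older-to-$L^{2+\lambda}$ step are interchangeable devices for the same norm swap.
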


\begin{remark} We highlight that \eqref{ineq5-apr2923} is only going to be used if $k<d+1$. The argument that follows will make it clear what changes in the case $k=d+1$ if we only use \eqref{ineq4-apr2923}.
\end{remark}

\begin{proof} We only prove \eqref{ineq4-apr2923}, since \eqref{ineq5-apr2923} is identical to \eqref{ineq5-jan1023}. From \eqref{bound1-dec22020},

\begin{equation*}
\#\mathbb{X}^{\overrightarrow{l},R,t}\leq \left\|\mathbbm{1}_{\mathbb{X}^{l_{i+1};r_{i,i+1}}}\right\|_{\ell^{\infty}_{n_{i},m}\ell^{1}_{\widehat{n_{i}}}}\cdot \left\|\mathbbm{1}_{\mathbb{B}_{i,1}^{r_{i,1}}\cap \mathbb{B}_{i,i+1}^{r_{i,i+1}}}\right\|_{\ell^{1}_{n_{i},m}}.
\end{equation*}

We bound the second factor in the right-hand side above as follows:

\begin{equation*}
\eqalign{
&\displaystyle\left\|\mathbbm{1}_{\mathbb{B}_{i,1}^{r_{i,1}}\cap \mathbb{B}_{i,i+1}^{r_{i,i+1}}}\right\|_{\ell^{1}_{n_{i},m}} \cr
&\quad\displaystyle\lesssim 2^{(2+\lambda)(r_{i,1}+r_{i,i+1})}\sum_{(n_{i},m)\in\mathbb{B}_{i,1}^{r_{i,1}}\cap \mathbb{B}_{i,i+1}^{r_{i,i+1}}} \left\|\langle g_{1},\varphi_{n_{i},m}^{i,1}\rangle_{x_{i}}\right\|_{2}^{2+\lambda} \cdot \left\|\langle g_{i+1},\varphi_{n_{i},m}^{i,i+1}\rangle_{y_{i}}\right\|_{2}^{2+\lambda} \cr
&\quad\displaystyle\leq 2^{(2+\lambda)(r_{i,1}+r_{i,i+1})}\sum_{(n_{i},m)\in\mathbb{B}_{i,1}^{r_{i,1}}\cap \mathbb{B}_{i,i+1}^{r_{i,i+1}}} \left\|\langle g_{1},\varphi_{n_{i},m}^{i,1}\rangle_{x_{i}}\right\|_{2+\lambda}^{2+\lambda} \cdot \left\|\langle g_{i+1},\varphi_{n_{i},m}^{i,i+1}\rangle_{y_{i}}\right\|_{2+\lambda}^{2+\lambda} \cr
&\quad\displaystyle\leq 2^{(2+\lambda)(r_{i,1}+r_{i,i+1})}\iint \left(\sum_{(n_{i},m)\in\mathbb{Z}^{2}}\left|\langle g_{1},\varphi_{n_{i},m}^{i,1}\rangle_{x_{i}}\right|^{2+\lambda}\cdot \left|\langle g_{i+1},\varphi_{n_{i},m}^{i,i+1}\rangle_{y_{i}}\right|^{2+\lambda}\right)\mathrm{d}\widehat{x_{i}}\mathrm{d}\widehat{y_{i}} \cr
&\quad\displaystyle\leq 2^{(2+\lambda)(r_{i,1}+r_{i,i+1})}\cr
&\qquad\quad\times\displaystyle\iint \left(\sum_{(n_{i},m)\in\mathbb{Z}^{2}}\left|\langle g_{1},\varphi_{n_{i},m}^{i,1}\rangle_{x_{i}}\right|^{4+2\lambda}\right)^{\frac{1}{2}}\cdot \left(\sum_{(n_{i},m)\in\mathbb{Z}^{2}}\left|\langle g_{i+1},\varphi_{n_{i},m}^{i,i+1}\rangle_{y_{i}}\right|^{4+2\lambda}\right)^{\frac{1}{2}}\mathrm{d}\widehat{x_{i}}\mathrm{d}\widehat{y_{i}} \cr
&\quad\displaystyle \lesssim_{\lambda} 2^{(2+\lambda)(r_{i,1}+r_{i,i+1})}\iint \|g_{1}\|_{L^{4}_{x_{i}}}^{2+\lambda}\cdot \|g_{i+1}\|_{L^{4}_{y_{i}}}^{2+\lambda}\mathrm{d}\widehat{x_{i}}\mathrm{d}\widehat{y_{i}} \cr
&\quad\displaystyle\lesssim 2^{(2+\lambda)(r_{i,1}+r_{i,i+1})}\cdot \|g_{1,i}\|_{4}^{2+\lambda}\cdot \left(\prod_{\ell\neq i}\|g_{1,\ell}\|_{2+\lambda}^{2+\lambda}\right)\cdot \|g_{i+1}\|_{4}^{2+\lambda},
}
\end{equation*}
where we used H\"older's inequality from the second to third lines, Fubini from the third to fourth, H\"older again twice, Proposition \ref{prop2apr2922} and the fact that $g_{1}$ is a tensor. This finishes the proof of the lemma.
\end{proof}

As in the previous subsection, given $\varepsilon>0$, we bound $\tilde{\tilde{\Lambda}}_{k,d}$ using the estimates from \eqref{analog1-jan1123} and Corollary \ref{cor1-100123}, and the ones from Lemma \ref{analog1-apr2823} with the exact same weights\footnote{If $k=d+1$, we give weight $\frac{1}{2(d+1)}-\frac{\varepsilon}{d}$ to each one of the $d$ estimates in \eqref{ineq4-apr2923} only.} we used in the proof of Theorem \ref{thm1-100123}:

$$
\begin{cases}
\theta_{l}=\frac{1}{2(d+1)}-\frac{\varepsilon}{d},\quad 1\leq l\leq d,\quad\textnormal{ for the $d$ estimates in \eqref{ineq4-apr2923} and \eqref{ineq5-apr2923}},\\
\theta_{d+1}= 1-\frac{d}{2(d+1)}+\varepsilon\quad\textnormal{ for \eqref{ineq2-jan1023}}.
\end{cases}
$$

Hence,

\begin{equation*}
\eqalign{
|\tilde{\tilde{\Lambda}}_{k,d}(g,h)|&\displaystyle\lesssim \sum_{\overrightarrow{l},R,t\geq 0} 2^{-t}\displaystyle\times 2^{-\frac{(d+1)\varepsilon}{2kd}l_{1}}\times\left(\frac{1}{\|g_{1}\|_{2}^{d-1}}\prod_{j=1}^{d}2^{-r_{j,1}}\right)^{\frac{1}{k}-\frac{(d+1)\varepsilon}{2kd}} \cr
&\displaystyle\times\prod_{i=1}^{k-1}2^{-\frac{(d+1)\varepsilon}{2kd}l_{i+1}}\times\prod_{i=1}^{k-1}\left[\frac{2^{-\frac{1}{d+1}\cdot r_{i,i+1}}}{\left\|\mathbbm{1}_{\mathbb{X}^{l_{i+1};r_{i,i+1}}}\right\|_{\ell^{\infty}_{n_{i},m}\ell^{1}_{\widehat{n_{i}}}}^{\frac{1}{2(d+1)}}}\cdot\prod_{u=k}^{d}\frac{2^{-\frac{1}{k(d+1)}\cdot r_{u,i+1}}}{\left\|\mathbbm{1}_{\mathbb{X}^{l_{i+1};r_{u,i+1}}}\right\|_{\ell^{\infty}_{n_{u},m}\ell^{1}_{\widehat{n_{u}}}}^{\frac{1}{2k(d+1)}}}\right]^{1-\frac{(d+1)\varepsilon}{2d}} \cr
&\displaystyle\times\prod_{l=1}^{k-1}\left(2^{(2+\lambda)(r_{l,1}+r_{l,l+1})}\cdot \|g_{1,l}\|_{4}^{2+\lambda}\cdot \left(\prod_{\ell\neq l}\|g_{1,\ell}\|_{2+\lambda}^{2+\lambda}\right)\cdot \|g_{l+1}\|_{4}^{2+\lambda}\right)^{\frac{1}{2(d+1)}-\frac{\varepsilon}{d}} \cr
&\displaystyle\times\prod_{k\leq u\leq d}\left(\prod_{j=2}^{k}\left\|\mathbbm{1}_{\mathbb{X}^{l_{j};r_{u,j}}}\right\|_{\ell^{\infty}_{n_{u},m}\ell^{1}_{\widehat{n_{u}}}}^{\frac{1}{k}}\cdot 2^{\frac{2}{k}\sum_{i\neq u}r_{i,1}}\cdot \|g_{1}\|_{2}^{\frac{2(d-1)}{k}}\right)^{\frac{1}{2(d+1)}-\frac{\varepsilon}{d}}\cr
&\displaystyle \times\prod_{k\leq u\leq d}\left(2^{\alpha\cdot r_{u,1}+\sum_{l=2}^{k}\beta\cdot r_{u,l}}\cdot \left(\prod_{j\neq u}\|g_{1,j}\|_{2}\right)^{\alpha} \cdot\|g_{1,u}\|_{4}^{\alpha}\cdot \prod_{l=2}^{k}\|g_{l}\|_{4}^{\beta}\right)^{\frac{1}{2(d+1)}-\frac{\varepsilon}{d}} \cr
&\displaystyle\times\left(2^{t}|F|\right)^{1-\frac{d}{2(d+1)}+\varepsilon}. \cr
}
\end{equation*}

Developing the expression above\footnote{The products in the fourth and fifth lines above are void if $k=d+1$. We can think of them as being $1$.},

\begin{equation}\label{ineq2-apr3023}
\eqalign{
&\displaystyle|\tilde{\tilde{\Lambda}}_{k,d}(g,h)|\lesssim \cr
&\displaystyle \sum_{\overrightarrow{l},R,t\geq 0} 2^{-t}\times 2^{-\frac{(d+1)\varepsilon}{2kd}l_{1}}\times\left(\prod_{j=1}^{d}2^{-r_{j,1}}\right)^{\frac{1}{k}-\frac{(d+1)\varepsilon}{2kd}}\times \|g_{1}\|_{2}^{\frac{(d+1)(d-1)\varepsilon}{2kd}-\frac{(d-1)}{k}}   \cr
&\displaystyle\times\prod_{i=1}^{k-1}2^{-\frac{(d+1)\varepsilon}{2kd}l_{i+1}}\times\prod_{i=1}^{k-1}\left[2^{-\frac{1}{d+1}\cdot r_{i,i+1}}\cdot\prod_{u=k}^{d} 2^{-\frac{1}{k(d+1)}\cdot r_{u,i+1}}\right]^{1-\frac{(d+1)\varepsilon}{2d}} \cr
&\displaystyle\times\prod_{i=1}^{k-1}\left[\left\|\mathbbm{1}_{\mathbb{X}^{l_{i+1};r_{i,i+1}}}\right\|_{\ell^{\infty}_{n_{i},m}\ell^{1}_{\widehat{n_{i}}}}^{\frac{1}{2(d+1)}\cdot\left(\frac{(d+1)\varepsilon}{2d}-1\right)}\cdot\prod_{u=k}^{d}\left\|\mathbbm{1}_{\mathbb{X}^{l_{i+1};r_{u,i+1}}}\right\|_{\ell^{\infty}_{n_{u},m}\ell^{1}_{\widehat{n_{u}}}}^{\frac{1}{2k(d+1)}\left(\frac{(d+1)\varepsilon}{2d}-1\right)}\right]\cr
&\displaystyle\times\left[\prod_{l=1}^{k-1}\left\|\mathbbm{1}_{\mathbb{X}^{l_{l+1};r_{l,l+1}}}\right\|_{\ell^{\infty}_{n_{l},m}\ell^{1}_{\widehat{n_{l}}}}^{\frac{1}{2(d+1)}-\frac{\varepsilon}{d}}\right]\times\textcolor{red}{\left[\prod_{l=1}^{k-1}\left(2^{r_{l,1}+r_{l,l+1}}\right)^{\left(2+\lambda\right)\cdot\left(\frac{1}{2(d+1)}-\frac{\varepsilon}{d}\right)}\right]} \cr
&\displaystyle\times \textcolor{red}{\left[\prod_{l=1}^{k-1}|E_{1,l}|^{\left(\frac{2+\lambda}{4}+(k-2)\right)\cdot\left(\frac{1}{2(d+1)}-\frac{\varepsilon}{d}\right)}\right]\cdot\left[\prod_{u=k}^{d}|E_{1,u}|^{\left(\frac{1}{2(d+1)}-\frac{\varepsilon}{d}\right)\cdot(k-1)}\right]\cdot\left[\prod_{l=1}^{k-1}|E_{l+1}|^{\frac{(2+\lambda)}{4}\cdot\left(\frac{1}{2(d+1)}-\frac{\varepsilon}{d}\right)}\right]}  \cr
&\displaystyle\times \prod_{u=k}^{d}\left[\left(\prod_{j=2}^{k}\left\|\mathbbm{1}_{\mathbb{X}^{l_{j};r_{u,j}}}\right\|_{\ell^{\infty}_{n_{u},m}\ell^{1}_{\widehat{n_{u}}}}^{\frac{1}{k}\cdot\left(\frac{1}{2(d+1)}-\frac{\varepsilon}{d}\right)}\right)\cdot \left(2^{\frac{2}{k}\sum_{i\neq u}r_{i,1}}\cdot 2^{\alpha\cdot r_{u,1}+\sum_{l=2}^{k}\beta\cdot r_{u,l}}\right)^{\frac{1}{2(d+1)}-\frac{\varepsilon}{d}}\right] \cr
&\displaystyle\times \|g_{1}\|_{2}^{\frac{2(d-k+1)(d-1)}{k}\left(\frac{1}{2(d+1)}-\frac{\varepsilon}{d}\right)}\cdot \prod_{k\leq u\leq d}\left[ \textcolor{blue}{\left(\|g_{1,u}\|_{4}\cdot\prod_{j\neq u}\|g_{1,j}\|_{2}\right)}^{\alpha\left(\frac{1}{2(d+1)}-\frac{\varepsilon}{d}\right)}\right] \cdot \prod_{l=2}^{k}\|g_{l}\|_{4}^{\beta(d-k+1)\left(\frac{1}{2(d+1)}-\frac{\varepsilon}{d}\right)} \cr
&\displaystyle\times\left(2^{t}|F|\right)^{\left[1-\frac{d}{2(d+1)}\right]+\varepsilon}. \cr
}
\end{equation}

Observe that we highlighted a few factors in red in \eqref{ineq2-apr3023}; this is just to compare them to the red terms in \eqref{ineq1-apr3023}: the red terms are the only ones that differ in the right-hand sides of \eqref{ineq1-apr3023} and \eqref{ineq2-apr3023}. On the other hand, we will bound the product of the blue factors\footnote{The seventh and eighth lines are void if $k=d+1$, hence the blue factors do not contribute at all in this case.} in \eqref{ineq2-apr3023} in a slightly better way than we did in the proof of Theorem \ref{thm1-100123}:

\begin{equation}\label{extrablue-apr3023}
\eqalign{
\displaystyle\prod_{k\leq u\leq d}\left(\|g_{1,u}\|_{4}\cdot\prod_{j\neq u}\|g_{1,j}\|_{2}\right)
&\displaystyle = \left[\prod_{j=1}^{d}\|g_{1,j}\|_{2}^{d-k}\right]\cdot\left[\prod_{l=1}^{k-1}\|g_{1,l}\|_{2}\right]\cdot\left[\prod_{u=k}^{d}\|g_{1,u}\|_{4}\right] \cr
&\displaystyle\leq\left[\prod_{l=1}^{k-1}|E_{1,l}|^{\frac{d-k+1}{2}}\right]\cdot\left[\prod_{u=k}^{d}|E_{1,u}|^{\frac{d-k}{2}+\frac{1}{4}}\right].\cr
}
\end{equation}

Setting $\delta$ and $\widetilde{\delta}$ exactly as in the previous subsection and using the observations we just made, we conclude that the final bound for $|\tilde{\tilde{\Lambda}}_{k,d}(g,h)|$ compares to \eqref{finalbound1-jan1423} exactly as follows:

\begin{itemize}
\item The coefficients of the ``$r_{j,1}$ power" is now
$$2^{\left[-\frac{3(d+1)\varepsilon}{2kd}+\lambda\left(\frac{1}{2(d+1)}-\frac{\varepsilon}{d}\right)\right]r_{j,1}}, $$
whereas in \eqref{finalbound1-jan1423} it was
$$2^{\left(-\frac{3(d+1)\varepsilon}{2kd}\right)r_{j,1}}.$$
\item For $1\leq l\leq k-1$, \eqref{extrablue-apr3023} gives $|E_{1,l}|$ an extra power of\footnote{Here we are using the explicit choice of $\delta$.}
$$\left(\frac{1}{2}+\frac{1}{2k}\right)\cdot\left(\frac{1}{2(d+1)}-\frac{\varepsilon}{d}\right)+\frac{(d+1)\varepsilon}{4kd}.$$
On the other hand, still for $1\leq l\leq k-1$, the red factors in \eqref{ineq2-apr3023} produce a power of $|E_{1,l}|$ that is exactly
\begin{equation}\label{loss1-apr3023}
\frac{(2-\lambda)}{4}\cdot\left(\frac{1}{2(d+1)}-\frac{\varepsilon}{d}\right)
\end{equation}
less than the one produced by the corresponding red factors in \eqref{ineq1-apr3023}. If $k<d+1$, these provide a \textit{net gain} of
$$\left(\frac{1}{2k}-\frac{\lambda}{4}\right)\cdot\left(\frac{1}{2(d+1)}-\frac{\varepsilon}{d}\right)+\frac{(d+1)\varepsilon}{4kd} $$
in the final power of $|E_{1,l}|$. If $k=d+1$, we just lose (compared to \eqref{finalbound1-jan1423}) \eqref{loss1-apr3023} in the final power of $|E_{1,l}|$.
\item For $k\leq u\leq d$, the powers of the measures $|E_{1,u}|$ are exactly the same in both \eqref{ineq1-apr3023} and in \eqref{ineq2-apr3023}.
\item For $2\leq l\leq k$, the red factors in \eqref{ineq2-apr3023} produce a power of $|E_{l}|$ that is exactly
$$\frac{(2-\lambda)}{4}\left(\frac{1}{2(d+1)}-\frac{\varepsilon}{d}\right) $$
less than the one produced by the corresponding red factors in \eqref{ineq1-apr3023}.
\item All other factors are precisely the same.
\end{itemize}

By choosing $\lambda$ small enough compared to $\varepsilon$ and by the same considerations made in the end of Section \ref{klineartheory}, this implies

\begin{equation*}
|\tilde{\tilde{\Lambda}}_{k,d}(g,h)|\lesssim_{\varepsilon} |F|^{1-\frac{d}{2(d+1)}+\varepsilon}\cdot |E_{1}|^{\frac{2d-k+2}{4k(d+1)}}\prod_{l=1}^{k-1}|E_{l+1}|^{\frac{1}{4k}}
\end{equation*}
for $k<d+1$ and
\begin{equation*}
|\tilde{\tilde{\Lambda}}_{k,d}(g,h)|\lesssim_{\varepsilon} |F|^{1-\frac{d}{2(d+1)}+\varepsilon}\cdot \prod_{l=1}^{k}|E_{l}|^{\frac{1}{4k}}
\end{equation*}
for $k=d+1$. Again by the same considerations from Section \ref{klineartheory}, these imply Theorem \ref{thm1-apr2923}.

\end{proof}

\section{Weak transversality, Brascamp-Lieb and an application}\label{WTBLA}

We were recently asked by Jonathan Bennett if there was a link between our results and the theory of Brascamp-Lieb inequalities. The motivation for that comes from the fact that, assuming $g_{1}=g_{1,1}\otimes\ldots\otimes g_{1,d}$, one can see the operator $\mathcal{ME}_{d+1,d}$ as the $2d$-linear object

\begin{equation*}
T(g_{1,1},\ldots,g_{1,d},g_{2},\ldots,g_{d+1}):=\mathcal{ME}_{d+1,d}(g_{1,1}\otimes\ldots\otimes g_{1,d},g_{2},\ldots,g_{d+1}),
\end{equation*}
and given that such a link exists in the theory of $\mathcal{ME}_{d+1,d}$ (see \cite{Benn1}), it is natural to wonder if boundedness for $T$ is related somehow to the finiteness condition of certain Brascamp-Lieb constants $\textnormal{BL}(\textnormal{\textbf{L}},\textnormal{\textbf{p}})$.

The purposes of this section are to make this connection clear and to give a modest application of our results to the theory of \textit{Restriction-Brascamp-Lieb inequalities}.

\subsection{A link between weak transversality and Brascamp-Lieb inequalities}

We start with some classical background. Let $L_{j}:\mathbb{R}^{n}\rightarrow\mathbb{R}^{n_{j}}$ be linear maps and $p_{j}\geq 0$, $1\leq j\leq m$. Inequalities of the form

\begin{equation}\label{BL-010123}
\int_{\mathbb{R}^{n}}\prod_{j=1}^{m}(f_{j}\circ L_{j})^{p_{j}}(v)\mathrm{d}v \leq C\prod_{j=1}^{m}\left(\int_{\mathbb{R}^{n_{j}}}f_{j}(y_{j})\mathrm{d}y_{j}\right)^{p_{j}}
\end{equation}
are called \textit{Brascamp-Lieb inequalities}. In \cite{BCCT1}, Bennett, Carbery, Christ and Tao established for which \textit{Brascamp-Lieb data} $(\textnormal{\textbf{L}},\textnormal{\textbf{p}})$ the inequality above holds, where $\textnormal{\textbf{L}}=(L_{1},\ldots,L_{m})$ and $\textnormal{\textbf{p}}=(p_{1},\ldots,p_{m})$. The best constant for which \eqref{BL-010123} holds for all nonnegative input functions $f_{j}\in L^{1}(\mathbb{R}^{n_{j}})$ is denoted by $\textnormal{BL}(\textnormal{\textbf{L}},\textnormal{\textbf{p}})$.

\begin{theorem}[\cite{BCCT1}]\label{BCCT-030123} The constant $\textnormal{BL}(\textnormal{\textbf{L}},\textnormal{\textbf{p}})$ in \eqref{BL-010123} is finite if and only if for all subspaces $V\subset\mathbb{R}^{n}$
\begin{equation}\label{cond1BL-010123}
\textnormal{dim}(V)\leq\sum_{j=1}^{m}p_{j}\textnormal{dim}(L_{j}V)
\end{equation}
and
\begin{equation}\label{cond2BL-010123}
\sum_{j=1}^{m}p_{j}n_{j}=n.
\end{equation}
\end{theorem}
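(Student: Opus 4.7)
The plan is to prove the two directions of the equivalence separately, following the classical framework. The necessity portion is elementary scaling and concentration analysis; the sufficiency portion is genuinely deep and requires a heat-flow monotonicity argument.

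For necessity, I would proceed in two steps. First, a dilation argument: replacing each $f_j$ by $f_j(\lambda\,\cdot)$ rescales $\int_{\mathbb{R}^{n_j}}f_j$ by $\lambda^{-n_j}$, hence the right-hand side of \eqref{BL-010123} by $\lambda^{-\sum p_j n_j}$; on the left-hand side, the integrand at $v$ becomes the original integrand evaluated at $\lambda v$, so the integral scales by $\lambda^{-n}$. For the inequality to hold uniformly in $\lambda>0$, we must have $\sum_j p_j n_j = n$, which is \eqref{cond2BL-010123}. Second, for a given subspace $V\subset\mathbb{R}^n$, I would test on inputs $f_j$ which are (smoothed) indicators of a tube of width $\varepsilon$ around $L_j V$ inside $\mathbb{R}^{n_j}$. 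Then $\int f_j \approx \varepsilon^{n_j - \dim L_j V}$, while on the left, the factor $f_j\circ L_j$ is supported near $V$ up to width $\varepsilon$, so the integrand is bounded below on a tube of width $\varepsilon$ around $V$, yielding $\mathrm{LHS}\gtrsim \varepsilon^{n-\dim V}$. Letting $\varepsilon\to 0$ in \eqref{BL-010123} and using \eqref{cond2BL-010123} forces \eqref{cond1BL-010123}.

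For sufficiency, the strategy has two layers. The first and decisive layer is a Gaussian extremizability reduction: introduce the heat flow $f_j^{(t)} := f_j * G_t$ (heat extensions of the inputs), and define the running quantity
\begin{equation*}
\mathcal{F}(t) := \frac{\int_{\mathbb{R}^n}\prod_j (f_j^{(t)}\circ L_j)^{p_j}}{\prod_j \left(\int f_j^{(t)}\right)^{p_j}}.
\end{equation*}
Using the fact that $f_j^{(t)}$ solves the heat equation, a computation involving the Fisher information and integration by parts shows $\frac{d}{dt}\mathcal{F}(t)\ge 0$ (monotonicity), provided the scaling condition holds; as $t\to\infty$, $f_j^{(t)}$ becomes asymptotically Gaussian, so the ratio converges to the Gaussian Brascamp-Lieb constant $\mathrm{BL}_g(\mathbf{L},\mathbf{p})$. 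This yields $\mathrm{BL}(\mathbf{L},\mathbf{p})=\mathrm{BL}_g(\mathbf{L},\mathbf{p})$. The second layer reduces boundedness to a finite-dimensional optimization over positive-definite matrices $A_j$ on $\mathbb{R}^{n_j}$:
\begin{equation*}
\mathrm{BL}_g(\mathbf{L},\mathbf{p}) = \sup_{A_j>0}\frac{\prod_j (\det A_j)^{p_j/2}}{\det\left(\sum_j p_j L_j^{*} A_j L_j\right)^{1/2}}.
\end{equation*}
Standard Lagrange/concavity arguments on this expression show that its finiteness is equivalent to the requirement that no direction in $\mathbb{R}^n$ is ``overcounted,'' which is exactly the subspace inequality \eqref{cond1BL-010123}; combined with the trace/determinant identity that forces \eqref{cond2BL-010123}, this completes the proof.

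The main obstacle will be the monotonicity step: verifying $\mathcal{F}'(t)\ge 0$ requires the correct identification of the ``right'' quantity to differentiate and a careful application of the Cauchy-Schwarz-type inequality for Fisher information along the linear maps $L_j$. The finite-dimensional Gaussian optimization, by contrast, is classical and reduces to spectral/linear algebra once the Gaussian reduction is in hand. I expect the proof to be long enough that it is better quoted from \cite{BCCT1} than reproduced in full; for the purposes of this paper, only the \emph{statement} of Theorem \ref{BCCT-030123} will be needed in the application to the Bennett-Bez-Flock-Lee conjecture that follows.
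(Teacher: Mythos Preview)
The paper does not prove this theorem at all; it is stated with attribution to \cite{BCCT1} and used as a black box in Section~\ref{WTBLA}. Your outline is a faithful sketch of the original Bennett--Carbery--Christ--Tao argument (necessity by scaling/concentration, sufficiency via heat-flow monotonicity reducing to the Gaussian constant, then linear-algebraic analysis of $\mathrm{BL}_g$), and you correctly observe in your final paragraph that only the statement is needed here --- which is exactly what the paper does.
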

\begin{remark} By taking $V=\mathbb{R}^{n}$ in \eqref{cond1BL-010123} it follows that each $L_{j}$ must be surjective for \eqref{cond2BL-010123} to hold as well.
\end{remark}

We will work with explicit maps $L_{j}$ and use Theorem \ref{BCCT-030123} to establish a link between the concept of weak transversality and inequalities such as \eqref{BL-010123}\footnote{From now on, we will replace $n$ by $d+1$ when referring to the dimension of the euclidean space.}. These maps will be associated to the submanifolds relevant to the problem at hand: the $d$-dimensional paraboloid $\mathbb{P}^{d}$ in $\mathbb{R}^{d+1}$ and some ``canonical" two-dimensional parabolas.

In order to define $L_{j}$, we fix standard parametrizations for the submanifolds mentioned above. Let

\begin{align}\label{param1-jan1623}
\Gamma\colon \mathbb{R}^{d} & \longrightarrow\mathbb{R}^{d+1}\\
(x_{1},\ldots,x_{d})&\longmapsto \left(x_{1},\ldots,x_{d},\sum_{i=1}^{d}x_{i}^{2}\right),
\end{align}
parametrize $\mathbb{P}^{d}$ and
\begin{align}\label{param2-jan1623}
\gamma_{j}\colon \mathbb{R} & \longrightarrow\mathbb{R}^{d+1}\\
x&\longmapsto (x\cdot\delta_{1j},\ldots,x\cdot\delta_{dj},x^{2})
\end{align}
parametrize a parabola in the two-dimensional canonical subspace generated by $e_{j}$ and $e_{d+1}$ ($\delta_{ij}$ is the Kronecker delta). Their differentials are given by

\begin{align*}
\mathrm{d}\Gamma\colon \mathbb{R}^{d} & \longrightarrow M_{(d+1)\times d}\\
(x_{1},\ldots,x_{d})&\longmapsto\begin{bmatrix}
    1 & 0 &  \dots  & 0 \\
    0 & 1  & \dots  & 0 \\
    \vdots  & \vdots & \ddots & \vdots \\
    0  & 0 & \dots & 1 \\
    2x_{1} & 2x_{2}  & \dots  & 2x_{d}
\end{bmatrix},
\end{align*}
and
\begin{align*}
\mathrm{d}\gamma_{j}\colon \mathbb{R} & \longrightarrow M_{(d+1)\times 1}\\
x&\longmapsto \begin{bmatrix}
    \delta_{1j}  \\
    \delta_{2j}  \\
    \vdots \\
    \delta_{dj} \\
    2x
\end{bmatrix}.
\end{align*}

For $d+1$ points $x^{j}=(x_{1}^{j},\ldots,x_{d}^{j})\in \mathbb{R}^{d}$, $1\leq j\leq d+1$, define the linear maps\footnote{We highlight that the \textit{superscript} $j$ in $x^{j}_{i}$ denotes the \textit{point}, whereas the \textit{subscript} $i$ denotes the \textit{$i$-coordinate} of the corresponding point. Notice also that we are identifying the adjoint operator $T^{\ast}$ with the transpose of the matrix that represents $T$ in the canonical basis.}
\begin{equation}\label{maps1-jan1323}
\eqalign{
\displaystyle L_{\ell}^{x_{\ell}^{1}}&\displaystyle :=\left(\mathrm{d}\gamma_{\ell}(x_{\ell}^{1})\right)^{\ast},\quad\forall 1\leq\ell\leq d, \cr
\displaystyle L_{d+\ell}^{x^{\ell+1}}&\displaystyle :=\left(\mathrm{d}\Gamma(x_{1}^{\ell+1},\ldots,x_{d}^{\ell+1})\right)^{\ast},\quad\forall 1\leq\ell\leq d. \cr
}
\end{equation}

It is important to emphasize that $L_{d+\ell}$ \textit{depends on $x^{\ell+1}$} (and similarly, $L_{\ell}$ depends on $x^{1}_{\ell}$). The main result of this subsection is:

\begin{theorem}\label{thm1-010123} Let $\mathcal{Q}=\{Q_{1},\ldots,Q_{d+1}\}$ be a collection of closed cubes in $\mathbb{R}^{d}$. If $\mathcal{Q}$ is weakly transversal with pivot $Q_{1}$, then for any choice of points $x^{j}=(x_{1}^{j},\ldots,x_{d}^{j})\in Q_{j}$, the linear maps in \eqref{maps1-jan1323} satisfy 
\begin{equation}\label{cond1-020123}
\textnormal{BL(\textbf{L}$(x)$,\textbf{p})}<\infty \textnormal{ for } \textnormal{
\textbf{L}$(x)$}=(L_{1}^{x_{1}^{1}},\ldots,L_{2d}^{x^{d+1}})\textnormal{ and } \textnormal{\textbf{p}}=\left(\frac{1}{d},\ldots,\frac{1}{d}\right).
\end{equation} 
Conversely, if \eqref{cond1-020123} is satisfied by the linear maps in \eqref{maps1-jan1323} for any choice of points $x^{j}=(x_{1}^{j},\ldots,x_{d}^{j})\in Q_{j}$, then $\mathcal{Q}$ can be decomposed into $O(1)$ weakly transversal collections $\mathcal{Q}^{\prime}$ of $d+1$ cubes, each one having a cube $Q_{1}^{\prime}\subset Q_{1}$ as pivot.
\end{theorem}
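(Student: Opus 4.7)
The plan is to exploit the explicit form of the Brascamp--Lieb data \eqref{maps1-jan1323} and, via Theorem~\ref{BCCT-030123}, reduce finiteness of $\textnormal{BL}(\textbf{L}(x),\textbf{p})$ to a clean combinatorial inequality. First I would compute the kernels directly from the formulas for $\mathrm{d}\gamma_\ell$ and $\mathrm{d}\Gamma$: one finds $\ker L_\ell^{x_\ell^1} = h_\ell := \{v \in \mathbb{R}^{d+1} : v_\ell + 2x_\ell^1 v_{d+1} = 0\}$ and $\ker L_{d+\ell}^{x^{\ell+1}} = \mathbb{R} w_{\ell+1}$, where $w_{\ell+1} := (-2x_1^{\ell+1},\ldots,-2x_d^{\ell+1},1)$ is the upward normal to $\mathbb{P}^d$ at $\Gamma(x^{\ell+1})$. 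The scaling identity \eqref{cond2BL-010123} is automatic since $d \cdot \tfrac{1}{d} \cdot 1 + d \cdot \tfrac{1}{d} \cdot d = d+1$. Using $\dim(LV) = \dim V - \dim(V \cap \ker L)$, condition \eqref{cond1BL-010123} collapses after cancellation to the single requirement
\begin{equation*}
\#\{\ell : w_{\ell+1} \in V\} + \#\{\ell : V \subseteq h_\ell\} \leq d \qquad \text{for every subspace } V \subseteq \mathbb{R}^{d+1}, \tag{$\ast$}
\end{equation*}
and the elementary identity $\langle w_{\ell'+1}, e_\ell + 2x_\ell^1 e_{d+1}\rangle = 2(x_\ell^1 - x_\ell^{\ell'+1})$ is the bridge that translates membership relations in $(\ast)$ into coordinate equalities.

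For the direct implication, I would assume $\mathcal{Q}$ is weakly transversal with pivot $Q_1$; after relabeling we may take the associated $d$ directions to be $\{e_1,\ldots,e_d\}$ in the identity pairing, so that $\pi_\ell(Q_1) \cap \pi_\ell(Q_{\ell+1}) = \emptyset$ for each $\ell$. If $(\ast)$ failed for some $V$, setting $A := \{\ell : w_{\ell+1} \in V\}$ and $B := \{\ell : V \subseteq h_\ell\}$ one would have $|A| + |B| \geq d+1$ with $A, B \subseteq \{1,\ldots,d\}$, forcing $A \cap B \neq \emptyset$ by pigeonhole; any $\ell \in A \cap B$ yields $w_{\ell+1} \in h_\ell$, i.e.\ $x_\ell^{\ell+1} = x_\ell^1$, contradicting weak transversality.

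For the converse, I would assume $(\ast)$ for every choice of $x^j \in Q_j$ and extract a Hall-type marriage condition. For each $T \subseteq \{1,\ldots,d\}$ and $x^1 \in Q_1$, set $U(T,x^1) := \{\ell : x_\ell^1 \in \bigcap_{\ell' \in T} \pi_\ell(Q_{\ell'+1})\}$. Since each $Q_{\ell'+1}$ is a product of intervals, for every $\ell' \in T$ one can choose $x^{\ell'+1} \in Q_{\ell'+1}$ with $x_\ell^{\ell'+1} = x_\ell^1$ for \emph{all} $\ell \in U(T,x^1)$ simultaneously, the remaining coordinates being set arbitrarily in their intervals. Applying $(\ast)$ to $V = \mathrm{span}\{w_{\ell'+1}\}_{\ell' \in T}$ with this choice then forces $|T| + |U(T,x^1)| \leq d$, which is exactly Hall's condition for the bipartite graph $H(x^1)$ on $\{1,\ldots,d\} \sqcup \{1,\ldots,d\}$ with edge $(\ell',\ell)$ iff $x_\ell^1 \notin \pi_\ell(Q_{\ell'+1})$. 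Hall's theorem then provides, for every $x^1 \in Q_1$, a permutation $\sigma(x^1) \in S_d$ with $x_{\sigma(\ell')}^1 \notin \pi_{\sigma(\ell')}(Q_{\ell'+1})$ for all $\ell'$.

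To complete the decomposition, for each $\sigma \in S_d$ I would define
\begin{equation*}
Q_1^\sigma := \bigcap_{\ell'=1}^d \bigl\{x^1 \in Q_1 : x_{\sigma(\ell')}^1 \notin \pi_{\sigma(\ell')}(Q_{\ell'+1})\bigr\}.
\end{equation*}
Each constraint restricts a single coordinate of $x^1$ to the complement (within $\pi_{\sigma(\ell')}(Q_1)$) of an interval, which is a union of at most two intervals; hence $Q_1^\sigma$ is a union of at most $2^d$ axis-aligned sub-cubes, and the family $\{Q_1^\sigma\}_{\sigma \in S_d}$ covers $Q_1$. Thus $Q_1$ decomposes into at most $d! \cdot 2^d = O(1)$ sub-cubes $Q_1'$, and each subcollection $\mathcal{Q}' = \{Q_1', Q_2, \ldots, Q_{d+1}\}$ is weakly transversal with pivot $Q_1'$ via the corresponding $\sigma$. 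The main obstacle I expect lies in the converse: one must realize the full set $U(T,x^1)$ of coordinate coincidences by a \emph{single} choice of $x^{\ell'+1} \in Q_{\ell'+1}$ for each $\ell' \in T$, and it is precisely the product-of-intervals structure of the cubes that permits this; more general shapes would break the argument, and only Hall/K\"onig-type combinatorics, combined with the explicit subdivision of $Q_1$, turn the pointwise BL condition into a genuine weak-transversality decomposition.
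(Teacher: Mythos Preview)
Your proof is correct, and the converse direction takes a genuinely different route from the paper.

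For the forward implication, your reduction of the BCCT condition \eqref{cond1BL-010123} to the combinatorial inequality $(\ast)$ is cleaner than the paper's argument, which proceeds by a case analysis on the number $m$ of indices with $\dim(L_jV)=0$ and then argues directly that at least $m$ of the rank-$d$ maps $L_{d+\ell}$ must satisfy $\dim(L_{d+\ell}V)=k$. Both arguments rest on the same computation $w_{\ell+1}\in h_\ell \Leftrightarrow x_\ell^1=x_\ell^{\ell+1}$, but your pigeonhole on $A\cap B$ isolates the mechanism more transparently.

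For the converse, the approaches diverge substantially. The paper first subdivides \emph{all} the cubes (using the grid of projection endpoints, as in Claim~\ref{claima2-081221}) so that any two projections are either equal, disjoint, or touch at a single common endpoint; it then argues by contradiction, isolating a \emph{minimal} subcollection $\{Q_1,K_2,\ldots,K_n\}$ that fails weak transversality with pivot $Q_1$, proving via a separate combinatorial claim that such a minimal set admits $d-n+2$ ``bad'' directions along which all $\pi_v(K_j)$ meet $\pi_v(Q_1)$, and finally building an explicit subspace $V=\bigcap_r\ker L_r^{\gamma_r}$ that violates \eqref{cond1BL-010123}. Your argument instead extracts Hall's condition directly: for every $T$ you manufacture a single choice of points realising all coincidences in $U(T,x^1)$ (this is exactly where the product structure of cubes is used, as you note), apply $(\ast)$ to $V=\mathrm{span}\{w_{\ell'+1}:\ell'\in T\}$, and conclude $|N(T)|\geq|T|$. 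Hall's theorem then hands you the matching $\sigma(x^1)$, and the sets $Q_1^\sigma$ give an explicit decomposition of $Q_1$ alone into at most $d!\cdot 2^d$ boxes, leaving $Q_2,\ldots,Q_{d+1}$ intact. This is more constructive and avoids the auxiliary minimal-set claim; the paper's route, on the other hand, makes the failure mode geometrically explicit by exhibiting the bad subspace. Both share the same harmless boundary technicality (sub-boxes may touch the neighbouring projection at an endpoint), which is handled identically in either approach.
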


\begin{remark} If $\mathcal{Q}$ can be decomposed into $O(1)$ weakly transversal collections $\mathcal{Q}^{\prime}$ of $d+1$ cubes (in the sense of Claim \ref{claim2-081221}), each one having a cube $Q_{1}^{\prime}\subset Q_{1}$ as pivot, then the conclusion of the first part of the theorem above also holds for $\mathcal{Q}$. Some important examples to keep in mind are the ones of transversal configurations that are \textit{not} weakly transversal by themselves, but that are decomposable into such: for instance, $\{Q_{1},Q_{2},Q_{3}\}$ where $Q_{1}=[1,4]\times [2,3]$, $Q_{2}=[0,2]\times [0,1]$ and $Q_{3}=[3,5]\times [0,1]$ is a transversal collection of cubes in $\mathbb{R}^{2}$, but not weakly transversal with pivot $Q_{1}$ since $\pi_{1}(Q_{1})$ intersects both $\pi_{1}(Q_{2})$ and $\pi_{1}(Q_{3})$.
\end{remark}

\begin{remark} We can of course obtain a similar statement if $\mathcal{Q}$ is weakly transversal with any other pivot $Q_{j}$, $j\neq 1$. The linear maps $L_{\ell}$ and $L_{d+\ell}$ would have to be changed accordingly.
\end{remark}

\begin{proof}[Proof of Theorem \ref{thm1-010123}] Suppose that $\mathcal{Q}$ is weakly transversal with pivot $Q_{1}$. We can then assume without loss of generality that 
\begin{equation}\label{wlog1-feb423}
    \begin{dcases}
        \pi_{1}(Q_{1}) \cap \pi_{1}(Q_{2}) =\emptyset, \\
       \qquad \vdots \\
        \pi_{d}(Q_{1}) \cap \pi_{d}(Q_{d+1}) =\emptyset. \\
    \end{dcases}
\end{equation}

The strategy is to apply Theorem \ref{BCCT-030123}. Condition \eqref{cond2BL-010123} is trivially satisfied, so we just have to check \eqref{cond1BL-010123}. Fix the points $x^{j}=(x_{1}^{j},\ldots,x_{d}^{j})\in Q_{j}$, $1\leq j\leq d$. To avoid heavy notation, we will omit the superscripts $x^{1}_{\ell}$ and $x^{\ell+1}$ when referring to $L_{\ell}^{x^{1}_{\ell}}$ and $L_{d+\ell}^{x^{\ell+1}}$, respectively, but these points will be referenced whenever they play an important role. We emphasize that the maps $L_{\ell}$, $1\leq \ell\leq d$, are being identified with the row vector

$$\begin{bmatrix}
    \delta_{1\ell} & \delta_{2\ell} &  \dots  &\delta_{d\ell} & 2x_{\ell}^{1} \\
\end{bmatrix}, $$
whereas the maps $L_{d+\ell}$, $1\leq \ell\leq d$, are identified with the $d\times (d+1)$ matrix
$$\begin{bmatrix}
    1 & 0 &  \dots  &0 & 2x_{1}^{\ell+1} \\
    0 & 1  & \dots  & 0& 2x_{2}^{\ell+1} \\
    \vdots  & \vdots & \ddots & \vdots & \vdots \\
    0 & 0  & \dots  & 1 & 2x_{d}^{\ell+1}
\end{bmatrix}.$$

If $V\subset\mathbb{R}^{d+1}$ is a subspace of dimension $k$, we have to verify that
\begin{equation}\label{ineq1-010123}
dk\leq \sum_{j=1}^{d}\textnormal{dim}(L_{j}V)+\sum_{\ell=1}^{d}\textnormal{dim}(L_{d+\ell}V).
\end{equation}

Suppose that there are exactly $m\geq 0$ indices $j\in\{1,\ldots,d\}$ such that $\textnormal{dim}(L_{j}V)=0$. If $m=0$, we must have $L_{j}V=\mathbb{R}$ for all $1\leq j\leq d$, hence
\begin{equation}\label{eq1-feb423}
\sum_{j=1}^{d}\textnormal{dim}(L_{j}V)=d.
\end{equation}

Surjectivity of $L_{d+\ell}$, $1\leq\ell\leq d$, implies $\textnormal{dim}(\textnormal{ker}(L_{d+\ell}))=1$, which gives the lower bound $\textnormal{dim}(L_{d+\ell}V)\geq k-1$. We then obtain
\begin{equation}\label{eq2-feb423}
\sum_{\ell=1}^{d}\textnormal{dim}(L_{d+\ell}V)\geq d(k-1).
\end{equation}

It is clear that \eqref{eq1-feb423} and \eqref{eq2-feb423} together verify \eqref{ineq1-010123} in the $m=0$ case. If $m\geq 1$, assume without loss of generality that
\begin{equation}\label{hyp1-feb423}
L_{1}V = \ldots L_{m}V=0,
\end{equation}
\begin{equation}\label{hyp2-feb423}
L_{m+1}V=\ldots = L_{d}V=\mathbb{R}.
\end{equation}

This gives us
\begin{equation}\label{ineq2-010123}
\sum_{j=1}^{d}\textnormal{dim}(L_{j}V)=d-m.
\end{equation}

We will show that
\begin{equation}\label{ineq3-010123}
\sum_{\ell=1}^{d}\textnormal{dim}(L_{d+\ell}V)\geq (d-m)(k-1)+mk.
\end{equation}

Observe that \eqref{ineq2-010123} and \eqref{ineq3-010123} together verify \eqref{ineq1-010123} in the $m\geq 1$ case.

We claim that there are at least $m$ maps $L_{\ell_{j}}$ among $L_{\ell+1},\ldots,L_{2d}$ such that $\textnormal{dim}(L_{\ell_{j}}V)=k$. If not, there are $d-m+1$ maps $L_{\ell_{1}},\ldots, L_{\ell_{d-m+1}}$ with $\textnormal{dim}(L_{\ell_{j}}V)\leq k-1$. Since $\textnormal{dim}V=k$, the rank-nullity theorem implies the existence of
\begin{equation}\label{hyp3-feb423}
 0\neq v^{\ell_{j}}\in\textnormal{ker}(L_{\ell_{j}})\cap V,\quad 1\leq j\leq d-m+1.
 \end{equation}

By \eqref{hyp1-feb423},
\begin{equation}\label{condition1-feb423}
L_{r}v^{\ell_{j}}=v_{r}^{\ell_{j}}+2x_{r}^{1}v_{d+1}^{\ell_{j}}=0,\quad 1\leq r\leq m,
\end{equation}
and by \eqref{hyp3-feb423} we have
\begin{equation}\label{condition2-feb423}
L_{\ell_{j}}v^{\ell_{j}}= \begin{bmatrix}
    1 & 0 &  \dots  &0 & 2x_{1}^{\ell_{j}-d+1} \\
    0 & 1  & \dots  & 0& 2x_{2}^{\ell_{j}-d+1} \\
    \vdots  & \vdots & \ddots & \vdots & \vdots \\
    0 & 0  & \dots  & 1 & 2x_{d}^{\ell_{j}-d+1}
\end{bmatrix}\cdot \begin{bmatrix}
    v_{1}^{\ell_{j}}  \\
    v_{2}^{\ell_{j}}  \\
    \vdots  \\
    v_{d+1}^{\ell_{j}}
\end{bmatrix}=\begin{bmatrix}
    v_{1}^{\ell_{j}}+2x_{1}^{\ell_{j}-d+1}v_{d+1}^{\ell_{j}}  \\
     v_{2}^{\ell_{j}}+2x_{2}^{\ell_{j}-d+1}v_{d+1}^{\ell_{j}}  \\
    \vdots \\
     v_{d}^{\ell_{j}}+2x_{d}^{\ell_{j}-d+1}v_{d+1}^{\ell_{j}}
\end{bmatrix}=0
\end{equation}
for $1\leq j\leq d-m+1$. For each $1\leq r\leq m$, combining the information from \eqref{condition1-feb423} and \eqref{condition2-feb423} gives us
$$v_{d+1}^{\ell_{j}}\cdot (x_{r}^{1}-x_{r}^{\ell_{j}-d+1})=0. $$

If $v_{d+1}^{\ell_{j}}=0$, then \eqref{condition2-feb423} also implies $v_{n}^{\ell_{j}}=0$ for all $n\in\{1,\ldots,d\}$, thus $v^{\ell_{j}}=0$, which contradicts \eqref{hyp3-feb423}. Then we must have
$$x_{r}^{1}=x_{r}^{\ell_{j}-d+1},\quad 1\leq r \leq m.$$

Let us now see why this can not happen. We have just shown that there are $d-m+1$ values of $\alpha$ for which

\begin{equation}\label{cont1-010123}
    \begin{dcases}
        \pi_{1}(Q_{1}) \cap \pi_{1}(Q_{\alpha}) \neq\emptyset, \\
       \qquad \vdots \\
        \pi_{m}(Q_{1}) \cap \pi_{m}(Q_{\alpha}) \neq\emptyset. \\
    \end{dcases}
\end{equation}

On the other hand, \eqref{wlog1-feb423} tells us that $\alpha\notin\{2,3,\ldots,m+1\}$, hence there are at most $d-m$ possible values for $\alpha$ (we can not have $\alpha=1$ either), which is a contradiction.

Hence there are at least $m$ maps $L_{\ell_{j}}$ among $L_{\ell+1},\ldots,L_{2d}$ such that $\textnormal{dim}(L_{\ell_{j}}V)=k$. The remaining $d-m$ maps have kernels of dimension $1$, so the image of $V$ through them has dimension at least $k-1$ (again by surjectivity of $L_{\ell_{j}}$ and the rank-nullity theorem). This verifies \eqref{ineq3-010123}.

For the converse implication, suppose that \eqref{cond1-020123} is satisfied by the linear maps in \eqref{maps1-jan1323} for any choice of points $(x_{1}^{j},\ldots,x_{d}^{j})\in Q_{j}$. As a consequence of the proof of Claim \ref{claima2-081221}, each $Q_{l}\in\mathcal{Q}$ can be partitioned into $O(1)$ sub-cubes
$$Q_{l}=\bigcup_{i}Q_{l,i} $$
so that all collections $\widetilde{\mathcal{Q}}$ made of picking one sub-cube $Q_{l,i}$ per $Q_{l}$
$$\widetilde{\mathcal{Q}}=\{\widetilde{Q}_{1},\ldots,\widetilde{Q}_{d+1}\},\quad \widetilde{Q}_{l}\in\{Q_{l,i}\}_{i},$$
satisfy the following:

\begin{enumerate}[(a)]

\item For any two $\widetilde{Q}_{r},\widetilde{Q}_{s}\in \widetilde{\mathcal{Q}}$, either $\pi_{j}(\widetilde{Q}_{r})\cap \pi_{j}(\widetilde{Q}_{s})=\emptyset$, or $\pi_{j}(\widetilde{Q}_{r})= \pi_{j}(\widetilde{Q}_{s})$, or $\pi_{j}(\widetilde{Q}_{r})\cap\pi_{j}(\widetilde{Q}_{s})=\{p_{r,s}\}$, where $p_{r,s}$ is an endpoint of both $\pi_{j}(\widetilde{Q}_{r})$ and $\pi_{j}(\widetilde{Q}_{s})$.

\item All $\pi_{j}(\widetilde{Q}_{s})$ that intersect a given $\pi_{j}(\widetilde{Q}_{r})$ (but distinct from it) do so at the same endpoint.\footnote{In other words, all $\pi_{j}(\widetilde{Q}_{s})$ that intersect a given $\pi_{j}(\widetilde{Q}_{r})$ (but distinct from it) do so on the same side. In short notation, let $\mathcal{S}_{j,r}$ be the set of $s$ for which $\pi_{j}(\widetilde{Q}_{r})\cap\pi_{j}(\widetilde{Q}_{s})\neq\emptyset$. The conclusion is that there is some real number $\gamma_{j}$ such that
$$\gamma_{j}\in\pi_{j}(Q_{r})\cap\bigcap_{s\in\mathcal{S}_{j,r}}\pi_{j}(Q_{s}). $$}
\end{enumerate}

By a slight abuse of notation, let $\mathcal{Q}$ denote one such sub-collection that has the two properties above. Suppose, by contradiction, that $\mathcal{Q}$ is not weakly transversal with pivot $Q_{1}$ (recall that this is a cube obtained from the original $Q_{1}$). The strategy now is to construct a subspace $V\subset\mathbb{R}^{d+1}$ that contradicts \eqref{cond1BL-010123} for a certain choice of one point per cube in $\mathcal{Q}$. This construction will exploit a certain feature of a special subset of $\mathcal{Q}$, which is the content of Claim \ref{claim1-020123}.

For simplicity of future references, let us say that a subset $\mathcal{A}\subset\mathcal{Q}$ has the \textit{property $(P)$} if
\begin{enumerate}
\item $Q_{1}\in\mathcal{A}.$
\item $\mathcal{A}$ is not weakly transversal with pivot $Q_{1}$.
\end{enumerate}

We say that a subset $\mathcal{A}\subset\mathcal{Q}$ is \textit{minimal} if $\mathcal{A}^{\prime}\subset\mathcal{A}$ has the property $(P)$ if and only if $\mathcal{A}^{\prime}=\mathcal{A}$. It is clear that, since $\mathcal{Q}$ has the property $(P)$ itself, it must contain a minimal subset of cardinality at least $2$.

\begin{claim}\label{claim1-020123} Let $\mathcal{A}=\{Q_{1},K_{2},\ldots,K_{n}\}$ be a minimal set of $n$ cubes\footnote{Observe that $Q_{1}$ is the only ``$Q$" cube in this collection. The others are labeled by $K_{j}$.}. There is a set $D$ of $(d-n+2)$ canonical directions $v$ for which
\begin{equation}\label{condv-020123}
 \pi_{v}(Q_{1}) \cap \pi_{v}(K_{j}) \neq\emptyset,\quad\forall \quad 2\leq j\leq n.
\end{equation}
\end{claim}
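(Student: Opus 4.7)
The plan is to reformulate the claim in the language of Hall's marriage theorem. For each $2\leq j\leq n$, let
\[
S_{j}:=\{i\in\{1,\ldots,d\}: \pi_{i}(Q_{1})\cap\pi_{i}(K_{j})=\emptyset\}
\]
denote the set of canonical directions along which $Q_{1}$ and $K_{j}$ have disjoint projections, and let $T_{j}:=\{1,\ldots,d\}\setminus S_{j}$. The desired conclusion $\#D\geq d-n+2$ is equivalent to $|T_{2}\cap\cdots\cap T_{n}|\geq d-n+2$, which by De Morgan's law is the same as $|S_{2}\cup\cdots\cup S_{n}|\leq n-2$. So the task is to show that the union of the $S_{j}$ is small.

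The next observation is that $\mathcal{A}$ being weakly transversal with pivot $Q_{1}$ is, by definition, exactly the statement that the set system $\{S_{2},\ldots,S_{n}\}$ admits a system of distinct representatives: one chooses distinct directions $i_{j}\in S_{j}$ for $j=2,\ldots,n$. By Hall's marriage theorem, the failure of this SDR is equivalent to the existence of a subset $T\subseteq\{2,\ldots,n\}$ with $\bigl|\bigcup_{j\in T}S_{j}\bigr|<|T|$. Since $\mathcal{A}$ has property $(P)$, such a $T$ exists.

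The key step is to use minimality to force $T=\{2,\ldots,n\}$. Suppose, toward a contradiction, that $T$ is a proper subset of $\{2,\ldots,n\}$, and pick any $j_{0}\in\{2,\ldots,n\}\setminus T$. Then the very same $T$ witnesses a Hall defect for the reduced collection $\mathcal{A}\setminus\{K_{j_{0}}\}$, so (by Hall again) this reduced collection is not weakly transversal with pivot $Q_{1}$. But $\mathcal{A}\setminus\{K_{j_{0}}\}$ is a proper subset of $\mathcal{A}$ still containing $Q_{1}$, contradicting the minimality of $\mathcal{A}$. Thus $T=\{2,\ldots,n\}$, and the Hall defect gives $|S_{2}\cup\cdots\cup S_{n}|<n-1$, i.e.\ $|S_{2}\cup\cdots\cup S_{n}|\leq n-2$, as required.

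The main (essentially only) subtlety to be careful about is the precise sense of ``disjoint projections'' (open vs.\ closed intersection) in the definition of weak transversality. This is handled by the pre-processing done just before the claim: after the $O(1)$ subdivision, any two projections $\pi_{i}(\widetilde{Q}_{r})$ and $\pi_{i}(\widetilde{Q}_{s})$ are either equal, disjoint, or meet only at a common endpoint, and the ``coherent endpoint'' property ensures that $\pi_{i}(Q_{1})\cap\pi_{i}(K_{j})\neq\emptyset$ as used in \eqref{condv-020123} is the correct complementary notion to the closed-projection disjointness appearing in Definition~\ref{weaktransv}. Once this bookkeeping is in place, the Hall-theoretic argument above goes through verbatim and yields the claim.
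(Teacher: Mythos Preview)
Your proof is correct and takes a genuinely different route from the paper's. You recast weak transversality with pivot $Q_{1}$ as the existence of a system of distinct representatives for the sets $S_{j}=\{i:\pi_{i}(Q_{1})\cap\pi_{i}(K_{j})=\emptyset\}$, invoke Hall's marriage theorem to produce a defective subfamily $T$, and then use minimality to force $T=\{2,\ldots,n\}$, yielding $|S_{2}\cup\cdots\cup S_{n}|\leq n-2$ in one stroke. The paper instead argues constructively: it fixes a set of $n-2$ directions witnessing the weak transversality of $\{Q_{1},K_{2},\ldots,K_{n-1}\}$, declares $D$ to be the complementary $d-n+2$ directions, and then runs an inductive ``switching'' algorithm, repeatedly trading directions between cubes until every $K_{j}$ is shown to meet $Q_{1}$ along all of $D$. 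Your approach is shorter and more conceptual---Hall's theorem is precisely the right abstraction here, and it makes the role of minimality transparent (it pins the defect to the full index set). The paper's argument has the minor advantage of being self-contained and of exhibiting an explicit $D$ tied to a particular SDR of a subfamily, but this is not used downstream. Your remark about the open/closed projection issue is apt: in the context where the claim is invoked the cubes are closed, so $\pi_{i}(Q)=\overline{\pi_{i}(Q)}$ and the two notions coincide.
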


\begin{proof}[Proof of Claim \ref{claim1-020123}] See Claim \ref{claim1-appendix150123} in the appendix.
\end{proof}

We know that $\mathcal{Q}$ has a minimal subset of cardinality $2\leq n\leq d+1$. By the previous claim and by conditions (a) and (b) of our initial reductions, if $\mathcal{A}^{\prime}=\{Q_{1},K_{2},\ldots,K_{n}\}$ is a minimal subset of $\mathcal{Q}$, for every $v\in D$ there is a number $\gamma_{v}$ such that
 \begin{equation*}
 \gamma_{v}\in \pi_{v}(Q_{1})\cap\bigcap_{j=2}^{n}\pi_{v}(K_{j}).
 \end{equation*}
 
Indeed, $\pi_{v}(Q_{1})$ intersects each $\pi_{v}(Q_{j})$ ``on the same side", so the intersection above must be nonempty (the existence of these $\gamma_{v}$ is the only reason why we may need to decompose the initial collection $\mathcal{Q}$ into sub-collections that satisfy (a) and (b)). 

For simplicity and without loss of generality, assume that $\mathcal{A}=\{Q_{1},Q_{2},\ldots,Q_{n}\}$ is minimal\footnote{Here we are assuming $K_{j}=Q_{j}$, $2\leq j\leq n$.} and $D=\{e_{1},\ldots,e_{d-n+2}\}$. Consider the points
 \begin{equation*}
 \eqalign{
 (\gamma_{1},\ldots,\gamma_{d-n+2},x_{d-n+3}^{j},\ldots,x_{d}^{j})&\displaystyle\in Q_{j},\quad 1\leq j\leq n,\cr
  (x_{1}^{l},\ldots,x_{d}^{l})&\displaystyle\in Q_{l},\quad n+1\leq l\leq d+1,\cr
 }
 \end{equation*}
 By hypothesis, $\textnormal{BL(\textbf{L}$(x)$,\textbf{p})}<\infty$ for the following collection of linear maps and exponents:
 \begin{equation*}
 \eqalign{
 L_{r}^{\gamma_{r}}(v_{1},\ldots,v_{d+1})&\displaystyle = v_{r}+2\gamma_{r}v_{d+1},\qquad 1\leq r\leq d-n+2, \cr
 L_{s}^{x_{s}^{1}}(v_{1},\ldots,v_{d+1})&\displaystyle = v_{s}+2x_{s}^{1}v_{d+1},\qquad d-n+3\leq s\leq d,\cr
 }
 \end{equation*}
 \begin{equation*}
 L_{d+r}^{(\gamma_{1},\ldots,\gamma_{d-n+2},x_{d-n+3}^{r+1},\ldots,x_{d}^{r+1})}(v_{1},\ldots,v_{d+1})= \begin{bmatrix}
    v_{1}+2\gamma_{1}v_{d+1}  \\
       \vdots \\
     v_{d-n+2}+2\gamma_{d-n+2}v_{d+1}  \\
     v_{d-n+3}+2x^{r+1}_{d-n+3}v_{d+1}  \\
    \vdots \\
     v_{d}+2x_{d}^{r+1}v_{d+1}
\end{bmatrix},\quad 1\leq r\leq n-1,
 \end{equation*}
\begin{equation*}
 L_{d+l}^{x^{l+1}}= \begin{bmatrix}
    v_{1}+2x_{1}^{l+1}v_{d+1}  \\
     \vdots \\
     v_{d}+2x_{d}^{l+1}v_{d+1}
\end{bmatrix},\quad n\leq l\leq d,
 \end{equation*}
 \begin{equation*}
 \textnormal{\textbf{p}}=\left(\frac{1}{d},\ldots,\frac{1}{d}\right).
 \end{equation*}
 
Define 

$$V:=\bigcap_{r=1}^{d-n+2}\textnormal{ker}(L_{r}^{\gamma_{r}}).$$ 

Observe that $\textnormal{dim}(V)=n-1$. Indeed, if we start with a vector $v=(v_{1},\ldots,v_{d+1})$ of $d+1$ ``free coordinates", we lose one degree of freedom for each kernel in the intersection above, since $L_{r}^{\gamma_{r}}(v)=0$ gives a relation between $v_{r}$ and $v_{d+1}$. We have $d-n+2$ many of them, hence the total degree of freedom is $(d+1)-(d-n+2)=n-1$, which is the dimension of $V$. On the other hand, for every $v\in V$ we have by definition
$$L_{r}^{\gamma_{r}}(v)=0,\qquad 1\leq r\leq d-n+2,$$
hence
 $$\sum_{j=1}^{d}\textnormal{dim}(L_{j}V)\leq n-2. $$
 
Also,
 \begin{equation*}
 L_{d+r}^{(\gamma_{1},\ldots,\gamma_{d-n+2},x_{d-n+3}^{r+1},\ldots,x_{d}^{r+1})}(v)= \begin{bmatrix}
    0  \\
       \vdots \\
    0  \\
     v_{d-n+3}+2x^{r+1}_{d-n+3}v_{d+1}  \\
    \vdots \\
     v_{d}+2x_{d}^{r+1}v_{d+1}
\end{bmatrix},\quad 1\leq r\leq n-1,
 \end{equation*}
 thus
 $$\textnormal{dim}(L_{d+r}V)\leq n-2, \quad 1\leq r\leq n-1.$$
 
 Since $\textnormal{dim}(V)=n-1$, we have the trivial bound
 $$\textnormal{dim}(L_{d+l}V)\leq n-1, \quad n\leq l\leq d.$$
 
 Altogether, these bounds imply
 \begin{equation*}
 \eqalign{
 \displaystyle\frac{1}{d}\left(\sum_{j=1}^{d}\textnormal{dim}(L_{j}V)+\sum_{\ell=1}^{d}\textnormal{dim}(L_{d+\ell}V)\right)&\displaystyle \leq \frac{1}{d}\left[(n-2)+(n-1)(n-2)+(d-n+1)(n-1)\right] \cr
 &\displaystyle= \frac{1}{d}[(n-1)d-1] \cr
 &\displaystyle< n-1 \cr
 &\displaystyle = \textnormal{dim}(V).\cr
 }
 \end{equation*}
 
Our initial hypothesis, however, is that $\textnormal{BL(\textbf{L}$(x)$,\textbf{p})}<\infty$, therefore by Theorem \ref{BCCT-030123} we must have
 $$\textnormal{dim}(V)\leq \displaystyle\frac{1}{d}\left(\sum_{j=1}^{d}\textnormal{dim}(L_{j}V)+\sum_{\ell=1}^{d}\textnormal{dim}(L_{d+\ell}V)\right),$$
 which gives a contradiction. We conclude that $\mathcal{Q}$ is weakly transversal with pivot $Q_{1}$.
\end{proof}

\subsection{An application to Restriction-Brascamp-Lieb inequalities}

The following conjecture was proposed in \cite{BBFL1} by Bennett, Bez, Flock and Lee:

\begin{conjecture}\label{conj1-010123} Suppose that for each $1\leq j\leq m$, $\Sigma_{j}:U_{j}\mapsto\mathbb{R}^{n}$ is a smooth parametrization of a $n_{j}$-dimensional submanifold $S_{j}$ of $\mathbb{R}^{n}$ by a neighborhood $U_{j}$ of the origin in $\mathbb{R}^{n_{j}}$. Let
$$\mathcal{E}_{j}g_{j}(\xi) := \int_{U_{j}}e^{-2\pi i\xi\cdot\Sigma_{j}(x)}g_{j}(x)\mathrm{d}x $$
be the associated (parametrized) extension operator. If the Brascamp-Lieb constant $\textnormal{BL}(\textnormal{\textbf{L}},\textnormal{\textbf{p}})$ is finite for the linear maps $L_{j}:=(\mathrm{d}\Sigma_{j}(0))^{\ast}:\mathbb{R}^{n}\mapsto\mathbb{R}^{n_{j}}$, then provided the neighborhoods $U_{j}$ of $0$ are chosen to be small enough, the inequality
\begin{equation}\label{ineq1-040123}
\int_{\mathbb{R}^{n}}\prod_{j=1}^{m}\left|\mathcal{E}_{j}g_{j}\right|^{2p_{j}} \lesssim \prod_{j=1}^{m}\|g_{j}\|^{2p_{j}}_{L^{2}(U_{j})}
\end{equation}
holds for all $g_{j}\in L^{2}(U_{j})$, $1\leq j\leq m$.
\end{conjecture}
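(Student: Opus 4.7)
My approach is to verify the instance of Conjecture~\ref{conj1-010123} that falls within the framework of this paper, namely the one in which the Brascamp--Lieb data $(\mathbf{L},\mathbf{p})$ at the origin coincides with the data of Theorem~\ref{thm1-010123}: take $n=d+1$, $m=2d$, let $\Sigma_1,\ldots,\Sigma_d$ be smooth curves in $\mathbb{R}^{d+1}$ with $(d\Sigma_\ell(0))^*=L_\ell^{x_\ell^1}$, let $\Sigma_{d+1},\ldots,\Sigma_{2d}$ be smooth $d$-dimensional hypersurfaces with $(d\Sigma_{d+\ell}(0))^*=L_{d+\ell}^{x^{\ell+1}}$, and let $p_j=1/d$ for all $j$. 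The plan is to turn the continuous BL-finiteness hypothesis into a weak transversality condition via Theorem~\ref{thm1-010123}, and then invoke the endpoint $(d+1)$-linear estimate from Section~\ref{dlineartheory}.

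First, I would normalize the geometry. After composing each $\Sigma_j$ with an affine change of coordinates on $\mathbb{R}^{n_j}$ (which preserves $L^2$ norms of the inputs up to constants) and rotating $\mathbb{R}^{d+1}$, I may arrange that the second-order Taylor polynomial of $\Sigma_\ell$ at $0$ matches the model parabola $\gamma_\ell$ of \eqref{param2-jan1623}, and that of $\Sigma_{d+\ell}$ at $0$ matches the paraboloid $\Gamma$ of \eqref{param1-jan1623}. Shrinking the neighborhoods $U_j$ so that the third-order Taylor remainders contribute phase errors of size $o(1)$ on balls of radius $R$ in physical space, the extensions $\mathcal{E}_j g_j$ can be replaced in the wave-packet analysis by the corresponding model extensions $\mathcal{E}_{\gamma_\ell}$ and $\mathcal{E}_\Gamma$ up to a negligible error.

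Second, I would apply Theorem~\ref{thm1-010123}: the normalization above guarantees that the linearizations $(d\Sigma_j(0))^*$ are exactly the maps of \eqref{maps1-jan1323}, so the assumed finiteness of $\textnormal{BL}(\mathbf{L},\mathbf{p})$ implies that the $U_j$ (after the harmless subdivision into $O(1)$ sub-cubes furnished by Claim~\ref{claim2-081221}) form a weakly transversal collection with pivot $U_1$. Third, I would recast the conjectured inequality in the discrete multilinear model of Definition~\ref{defklinearmodel}, group the $d$ one-dimensional extensions $\mathcal{E}_{\gamma_\ell} g_\ell$ into the tensor input $g_1=g_{1,1}\otimes\cdots\otimes g_{1,d}$ (this is automatic since each $\gamma_\ell$ is $1$-dimensional and the orthogonal coordinate directions make the product a genuine tensor), and apply the endpoint $k=d+1$ case of Theorem~\ref{mainthmpaper} (established in Section~\ref{dlineartheory}) to conclude
\begin{equation*}
\int_{\mathbb{R}^{d+1}} \prod_{j=1}^{2d} |\mathcal{E}_j g_j|^{2/d} \lesssim \prod_{j=1}^{2d} \|g_j\|_{L^2(U_j)}^{2/d}.
\end{equation*}

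The main obstacle, and the reason the plan only reaches this instance rather than the conjecture in its full generality, is that while the BL condition depends only on the linear data $L_j=(d\Sigma_j(0))^*$, the wave-packet model of Section~\ref{discret} is built around one specific quadratic phase. Extending the argument to submanifolds with different second fundamental forms, different dimensions $n_j$, or exponents $p_j\neq 1/d$ would require rebuilding the discrete model, and in particular redoing the orthogonality analysis underlying Proposition~\ref{prop1apr2922} and the one-dimensional bilinear estimate of Proposition~\ref{prop3apr2922}, around the new quadratic structure; matching several such structures simultaneously in the level-set/stopping-time argument of Sections~\ref{klineartheory}--\ref{dlineartheory} appears to demand genuinely new ideas that are beyond the scope of this paper.
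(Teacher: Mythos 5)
Your overall architecture---turn the Brascamp--Lieb hypothesis into weak transversality via Theorem~\ref{thm1-010123}, then apply the endpoint $(d+1)$-linear estimate of Section~\ref{dlineartheory} with the $d$ one-dimensional extensions supplying the tensor input---is the same as the paper's (and your final identification of $\prod_{\ell=1}^{d}\mathcal{E}_{\gamma_\ell}g_\ell$ with the paraboloid extension of the full tensor $g_{1}\otimes\cdots\otimes g_{d}$ is exactly how the paper applies the $k=d+1$ endpoint bound). However, two steps in your reduction do not hold as stated. The first is the normalization of general smooth $\Sigma_j$: the hypothesis of the conjecture constrains only $(\mathrm{d}\Sigma_j(0))^{\ast}$, so a curve $\Sigma_\ell$ with the prescribed tangent may, for instance, be a straight line, and no affine reparametrization of $U_\ell$ together with a rotation of $\mathbb{R}^{d+1}$ can force its second-order Taylor polynomial to agree with $\gamma_\ell$---these operations do not create curvature. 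Even granting matching $2$-jets, the claim that cubic remainders are negligible is unjustified: \eqref{ineq1-040123} is a global estimate over all of $\mathbb{R}^{d+1}$, and the phase error $\xi\cdot O(|x|^{3})$ is not $o(1)$ uniformly in $\xi$ however small $U_j$ is; absorbing such perturbations without an $\varepsilon$-loss is precisely the difficulty of the nonlinear Brascamp--Lieb problem. The paper sidesteps this entirely: Theorem~\ref{thm1-jan1623} is stated only for $\Sigma_j$ equal to translates of the exact model parametrizations $\gamma_j$ and $\Gamma$, so no perturbation step occurs. As written, your proposal claims more generality than the paper establishes, resting on a normalization that is false.

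The second gap is that you pass from finiteness of $\textnormal{BL}(\textnormal{\textbf{L}},\textnormal{\textbf{p}})$ at the single datum given by the base points directly to weak transversality of the $U_j$. The converse direction of Theorem~\ref{thm1-010123} requires finiteness of the Brascamp--Lieb constant for \emph{every} choice of points in the cubes, not just one. The missing bridge is Theorem~\ref{stability-jan1623} (local boundedness of the map $\textnormal{\textbf{L}}\mapsto\textnormal{BL}(\textnormal{\textbf{L}},\textnormal{\textbf{p}})$): it is this stability result that justifies shrinking the neighborhoods so that \eqref{cond1-160123} holds for all choices of points in $Q_1,\ldots,Q_{d+1}$---indeed it is the only place where ``small enough neighborhoods'' enters. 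Only then does Theorem~\ref{thm1-010123} yield a decomposition into $O(1)$ weakly transversal collections with pivots contained in $Q_1$ (a decomposition, not weak transversality of the original collection, which is also why your appeal to Claim~\ref{claim2-081221}---a statement about transversal collections---is misplaced here), and the Section~\ref{dlineartheory} estimate is applied to each sub-collection separately.
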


\begin{remark} The weaker inequality
\begin{equation}
\int_{B(0,R)}\prod_{j=1}^{m}\left|\mathcal{E}_{j}g_{j}\right|^{2p_{j}} \lesssim_{\varepsilon} R^{\varepsilon} \prod_{j=1}^{m}\|g_{j}\|^{2p_{j}}_{L^{2}(U_{j})}
\end{equation}
involving an arbitrary $\varepsilon>0$ loss was established in \cite{BBFL1}.
\end{remark}

\begin{remark} Very few cases of Conjecture \ref{conj1-010123} are fully understood\footnote{Most of them being very elementary situations, as mentioned in \cite{BBFL1}.}. Recently, Bennett, Nakamura and Shiraki settled the \textit{rank-1 case} $n_{1}=\ldots=n_{m}=1$ as an application of their results on \textit{Tomographic Fourier Analysis}\footnote{See \cite{BN} for a more detailed exposition of this approach.}.
\end{remark}

Given their hybrid nature, estimates such as \eqref{ineq1-040123} are called \textit{Restriction-Brascamp-Lieb inequalities}. 

Our goal here is to verify Conjecture \ref{conj1-010123} in a special case. We chose to state the main result of this subsection in a way that does not emphasize the origin in the domains of $\Sigma_{j}$. The reason for this choice is that it brings to light key geometric features of the problem.

We will need a result from \cite{BBFL1} on the stability of Brascamp-Lieb constants\footnote{Theorem \ref{stability-jan1623} says that the map $\textnormal{\textbf{L}}\mapsto \textnormal{BL(\textbf{L},\textbf{p})}$ is \textit{locally bounded} for a fixed $\textnormal{\textbf{p}}$, and this is enough for our purposes. On the other hand, it was shown in \cite{BBCF} that the Brascamp-Lieb constant is \textit{continuous} in $\textnormal{\textbf{L}}$. It was later shown in \cite{BBBCF} that $\textnormal{BL(\textbf{L},\textbf{p})}$ is in fact \textit{locally H\"older continuous} in $\textnormal{\textbf{L}}$.}:

\begin{theorem}[\cite{BBFL1}]\label{stability-jan1623} Suppose that $(\textnormal{\textbf{L}}^{0},\textnormal{\textbf{p}})$ is a Brascamp-Lieb datum for which $\textnormal{BL}(\textnormal{\textbf{L}}^{0},\textnormal{\textbf{p}})<\infty$. Then there exists $\delta>0$ and a constant $C<\infty$ such that
$$\textnormal{BL(\textbf{L},\textbf{p})}\leq C $$
whenever $\|\textnormal{\textbf{L}}-\textnormal{\textbf{L}}^{0}\|<\delta$.
\end{theorem}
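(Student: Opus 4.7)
The plan is to exploit the sharp characterization of finiteness of Brascamp--Lieb constants from Theorem \ref{BCCT-030123} together with a compactness/Gaussian reduction argument. Let $(\textnormal{\textbf{L}}^{0},\textnormal{\textbf{p}})$ be a Brascamp--Lieb datum with $\textnormal{BL}(\textnormal{\textbf{L}}^{0},\textnormal{\textbf{p}})<\infty$. By Theorem \ref{BCCT-030123}, the scaling identity $\sum_{j}p_{j}n_{j}=n$ holds (this condition only involves $\textnormal{\textbf{p}}$ and the target dimensions $n_{j}$, hence is automatically preserved as $\textnormal{\textbf{L}}$ varies), and for every subspace $V\subset\mathbb{R}^{n}$ one has the dimension inequality $\dim V\leq\sum_{j}p_{j}\dim(L_{j}^{0}V)$.

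First I would verify that the set $\{\textnormal{\textbf{L}}:\textnormal{BL}(\textnormal{\textbf{L}},\textnormal{\textbf{p}})<\infty\}$ is open. The key observation is the lower semicontinuity of the map $\textnormal{\textbf{L}}\mapsto\dim(L_{j}V)$ for each fixed $V$: if $\{L_{j}v_{1},\ldots,L_{j}v_{k}\}\subset L_{j}V$ is linearly independent, then for all $\textnormal{\textbf{L}}$ sufficiently close to $\textnormal{\textbf{L}}^{0}$ the vectors $\{L_{j}v_{1},\ldots,L_{j}v_{k}\}$ remain linearly independent, so $\dim(L_{j}V)\geq\dim(L_{j}^{0}V)$. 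Consequently the BCCT dimension inequality is preserved under small perturbations, and $\textnormal{BL}(\textnormal{\textbf{L}},\textnormal{\textbf{p}})$ stays finite in a neighborhood of $\textnormal{\textbf{L}}^{0}$.

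The main difficulty is upgrading finiteness to a \emph{uniform} bound in a neighborhood. For this I would invoke Lieb's theorem, which identifies $\textnormal{BL}(\textnormal{\textbf{L}},\textnormal{\textbf{p}})$ with its Gaussian analogue
\[
\textnormal{BL}(\textnormal{\textbf{L}},\textnormal{\textbf{p}})=\sup_{A_{1},\ldots,A_{m}}\frac{\prod_{j=1}^{m}(\det A_{j})^{p_{j}/2}}{\det\bigl(\sum_{j=1}^{m}p_{j}L_{j}^{\ast}A_{j}L_{j}\bigr)^{1/2}},
\]
where each $A_{j}$ is a positive-definite symmetric operator on $\mathbb{R}^{n_{j}}$. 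The plan is to argue by contradiction: suppose there were a sequence $\textnormal{\textbf{L}}^{(k)}\to\textnormal{\textbf{L}}^{0}$ with $\textnormal{BL}(\textnormal{\textbf{L}}^{(k)},\textnormal{\textbf{p}})\to\infty$. Passing to near-extremizing Gaussians and exploiting the scaling homogeneity, one can normalize so that the denominator is of order one. Then compactness (on a quotient that eliminates the obvious scaling degeneracies) would yield a limit configuration of Gaussian data along $\textnormal{\textbf{L}}^{0}$ for which the Gaussian Brascamp--Lieb ratio blows up, contradicting the finiteness of $\textnormal{BL}(\textnormal{\textbf{L}}^{0},\textnormal{\textbf{p}})$.

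The hardest part will be controlling the degeneracies of the maximizing Gaussians: the quadratic forms $A_{j}$ can escape to infinity along the directions corresponding to \emph{critical subspaces} of $(\textnormal{\textbf{L}}^{0},\textnormal{\textbf{p}})$, i.e. subspaces saturating the BCCT inequality. To handle this I would decompose $\mathbb{R}^{n}$ according to the critical-subspace structure of $\textnormal{\textbf{L}}^{0}$ (as in the factorization/splitting analysis of \cite{BCCT1}), reduce the problem by induction on dimension to the \emph{simple} case where no proper subspace is critical, and in the simple case show directly that the Gaussian ratio is uniformly bounded on a neighborhood by a continuity/compactness argument on the unit sphere of normalized $A_{j}$'s. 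Alternatively, one may bypass the critical-subspace analysis entirely by quoting the continuity result for $\textnormal{BL}(\cdot,\textnormal{\textbf{p}})$ from \cite{BBCF}, but the inductive route above is self-contained and yields the quantitative statement needed.
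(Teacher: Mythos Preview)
The paper does not prove this statement: Theorem~\ref{stability-jan1623} is quoted from \cite{BBFL1} and used as a black box (see the sentence preceding the theorem and the footnote about local boundedness versus the later continuity results \cite{BBCF,BBBCF}). So there is no ``paper's own proof'' to compare against.

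That said, your outline is broadly aligned with how the result is actually established in \cite{BBFL1}: Lieb's Gaussian reduction followed by a careful analysis of near-extremizing Gaussians, with the structural theory of critical subspaces from \cite{BCCT1} playing the decisive role. Two comments on your sketch. First, your openness argument has a small gap: lower semicontinuity of $\dim(L_jV)$ in $L_j$ is only stated for a \emph{fixed} $V$, whereas the BCCT condition must hold for \emph{all} subspaces simultaneously, and the dangerous $V$ may depend on the perturbation. This is repairable (e.g.\ via compactness of Grassmannians and constancy of rank on Zariski-open sets), but it is not automatic from what you wrote. Second, the heart of the matter---ruling out blow-up of the Gaussian ratio along a sequence $\textnormal{\textbf{L}}^{(k)}\to\textnormal{\textbf{L}}^{0}$---is precisely where the work lies, and your paragraph on critical subspaces is a description of the strategy rather than an argument. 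In \cite{BBFL1} this is handled by a quantitative induction on the critical-subspace structure; simply ``passing to a limit'' after normalizing the denominator does not succeed without that structural input, because the extremizing $A_j$ can degenerate in ways that are invisible to a naive compactness argument. Your alternative of citing \cite{BBCF} is of course valid but is strictly stronger than what is needed here.
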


Now we are ready to state and prove our result:

\begin{theorem}\label{thm1-jan1623} Let $\Gamma$ and $\gamma_{j}$ be the parametrizations from \eqref{param1-jan1623} and \eqref{param2-jan1623}, respectively. If, for $x^{j}=(x_{1}^{j},\ldots,x_{d}^{j})\in\mathbb{R}^{d}$, the linear maps in \eqref{maps1-jan1323} satisfy 
\begin{equation}\label{cond1-160123}
\textnormal{BL(\textbf{L}$(x)$,\textbf{p})}<\infty \textnormal{ for } \textnormal{
\textbf{L}$(x)$}=(L_{1}^{x_{1}^{1}},\ldots,L_{2d}^{x^{d+1}})\textnormal{ and } \textnormal{\textbf{p}}=\left(\frac{1}{d},\ldots,\frac{1}{d}\right),
\end{equation} 
then there are small enough cube-neighborhoods $U_{i}\subset\mathbb{R}$ ($1\leq i\leq d$) of $x_{i}^{1}$ and $V_{\ell}\subset\mathbb{R}^{d}$ of $x^{\ell}$ ($2\leq\ell\leq d+1$) for which \eqref{ineq1-040123} holds.
\end{theorem}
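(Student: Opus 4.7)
The plan is to combine the local stability of Brascamp--Lieb constants (Theorem \ref{stability-jan1623}) with the converse direction of Theorem \ref{thm1-010123} and the endpoint of the $k=d+1$ case of Theorem \ref{mainthmpaper}. First I would apply Theorem \ref{stability-jan1623} at the distinguished datum $(\textbf{L}(x),\textbf{p})$: since each $L_i^{x_i^1}$ and $L_{d+\ell}^{x^{\ell+1}}$ depends continuously on its base point, there exist cube-neighborhoods $U_i\subset\mathbb{R}$ of $x_i^1$ (for $1\leq i\leq d$) and $V_\ell\subset\mathbb{R}^d$ of $x^\ell$ (for $2\leq\ell\leq d+1$) such that $\textnormal{BL}(\textbf{L}(y),\textbf{p})$ is uniformly finite for every choice of points $y_i^1\in U_i$ and $y^\ell\in V_\ell$. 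Setting $Q_1:=U_1\times\cdots\times U_d$ and $Q_\ell:=V_\ell$ for $2\leq\ell\leq d+1$, the converse part of Theorem \ref{thm1-010123} then produces a partition of each $Q_\ell$ into $O(1)$ sub-cubes such that every choice of one sub-cube per $Q_\ell$ yields a weakly transversal collection $\widetilde{\mathcal{Q}}=\{\widetilde{Q}_1,\ldots,\widetilde{Q}_{d+1}\}$ with pivot $\widetilde{Q}_1\subset Q_1$. Because the decomposition of Claim \ref{claim2-081221} slices along canonical hyperplanes, each sub-cube of $Q_1$ inherits the product form $\widetilde{Q}_1=\widetilde{U}_1\times\cdots\times\widetilde{U}_d$ with $\widetilde{U}_i\subset U_i$.

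The crucial identification is that on these product cubes, the $2d$-linear integrand of \eqref{ineq1-040123} coincides with a $(d+1)$-linear extension operator acting on a full-tensor input. Indeed, a direct computation from the parametrizations \eqref{param1-jan1623} and \eqref{param2-jan1623} shows that, for $g_1:=g_{1,1}\otimes\cdots\otimes g_{1,d}$ supported in $\widetilde{Q}_1$,
\begin{equation*}
\mathcal{E}_{\widetilde{Q}_1}g_1(\xi,t)=\prod_{i=1}^{d}\mathcal{E}_{\gamma_i,\widetilde{U}_i}g_{1,i}(\xi,t),
\end{equation*}
where $\mathcal{E}_{\gamma_i,\widetilde{U}_i}$ denotes the extension along $\gamma_i$ of a function supported in $\widetilde{U}_i$. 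Consequently, the portion of \eqref{ineq1-040123} associated with $\widetilde{\mathcal{Q}}$ becomes the $(2/d)$-th power of the $L^{2/d}$-norm of $\prod_{j=1}^{d+1}\mathcal{E}_{\widetilde{Q}_j}g_j$, with $g_j$ the generic input for $j\geq 2$ and $g_1$ the tensor built above.

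At this point the endpoint of the $k=d+1$ case of Theorem \ref{mainthmpaper}, proved in Section \ref{dlineartheory}, applies: $\widetilde{\mathcal{Q}}$ is weakly transversal and one of the inputs is a full tensor, so
\begin{equation*}
\Bigl\|\prod_{j=1}^{d+1}\mathcal{E}_{\widetilde{Q}_j}g_j\Bigr\|_{L^{2/d}(\mathbb{R}^{d+1})}\lesssim\prod_{i=1}^{d}\|g_{1,i}\|_{L^2(\widetilde{U}_i)}\prod_{\ell=2}^{d+1}\|g_\ell\|_{L^2(\widetilde{Q}_\ell)}.
\end{equation*}
Raising to the power $2/d$, summing over the $O(1)$ weakly transversal sub-collections produced by the partition, and bounding the $L^2$-norms on sub-cubes by those on the full $U_i$ and $V_\ell$ yields \eqref{ineq1-040123}.

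The main obstacle I anticipate is purely bookkeeping: one must verify that the sub-cube decomposition of $Q_1=U_1\times\cdots\times U_d$ from Claim \ref{claim2-081221} preserves the tensor structure, which is essential to invoke the endpoint $k=d+1$ estimate. Since the cuts there are all of the form $\{x_i=c\}$, this is automatic, and no genuine difficulty remains beyond checking uniformity of the implicit constants over the $O(1)$ sub-collections.
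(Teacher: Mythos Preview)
Your proposal is correct and follows essentially the same route as the paper: apply the stability theorem to propagate finiteness of the Brascamp--Lieb constant to neighborhoods, invoke the converse direction of Theorem \ref{thm1-010123} to obtain $O(1)$ weakly transversal sub-collections with pivots inside $Q_1$, and then appeal to the endpoint $k=d+1$ estimate from Section \ref{dlineartheory}. You have in fact been more explicit than the paper about two points it leaves implicit---the identification $\prod_i\mathcal{E}_{\gamma_i,\widetilde{U}_i}g_{1,i}=\mathcal{E}_{\widetilde{Q}_1}(g_{1,1}\otimes\cdots\otimes g_{1,d})$ and the preservation of the tensor structure of $Q_1$ under the canonical-hyperplane cuts of Claim \ref{claim2-081221}---both of which are needed and correct.
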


\begin{remark} Rephrasing Theorem \ref{thm1-jan1623} in terms of the original statement, it says that Conjecture \ref{conj1-010123} holds for\footnote{Observe that we are just translating the domain of the $\Sigma$'s back to the origin.}
\begin{equation*}
\eqalign{
\displaystyle \Sigma_{i} &=  \gamma_{i}-(\delta_{1i}\cdot x_{i}^{1},\ldots,\delta_{di}\cdot x_{i}^{1},0),\qquad 1\leq i\leq d. \cr
\displaystyle \Sigma_{\ell} &=  \Gamma -(x^{\ell-d+1},0),\qquad d+1\leq \ell\leq 2d. \cr
\displaystyle m&=2d, \cr
\displaystyle \textnormal{\textbf{p}}&\displaystyle =\left(\frac{1}{d},\ldots,\frac{1}{d}\right). \cr
}
\end{equation*}
\end{remark}

\begin{proof}[Proof of Theorem \ref{thm1-jan1623}] The argument is just a matter of putting the pieces together. By \eqref{cond1-160123} and Theorem \ref{stability-jan1623}, there are small enough cube-neighborhoods $U_{i}\subset\mathbb{R}$ ($1\leq i\leq d$) of $x_{i}^{1}$ and $V_{\ell}\subset\mathbb{R}^{d}$ of $x^{\ell}$ ($2\leq\ell\leq d+1$) for which \eqref{cond1-160123} still holds\footnote{Our maps $L_{j}$ are sufficiently smooth for the stability theorem to be applied. The entries of the matrices that represent them are polynomials.}. Define
\begin{equation*}
\eqalign{
\displaystyle Q_{1} &\displaystyle := \overline{U_{1}\times\ldots\times U_{d}}, \cr
\displaystyle Q_{\ell} &\displaystyle := \overline{V_{\ell}},\quad 2\leq \ell\leq d+1. \cr
}
\end{equation*}

Now we apply Theorem \ref{thm1-010123} to conclude that the collection $\mathcal{Q}=\{Q_{1},\ldots,Q_{d+1}\}$ can be decomposed into $O(1)$ weakly transversal collections $\mathcal{Q}^{\prime}$ of $d+1$ cubes, each one having a cube $Q_{1}^{\prime}\subset Q_{1}$ as pivot.

\begin{figure}[H]
  \centering
\captionsetup{font=normalsize,skip=1pt,singlelinecheck=on}
  \includegraphics[scale=.65]{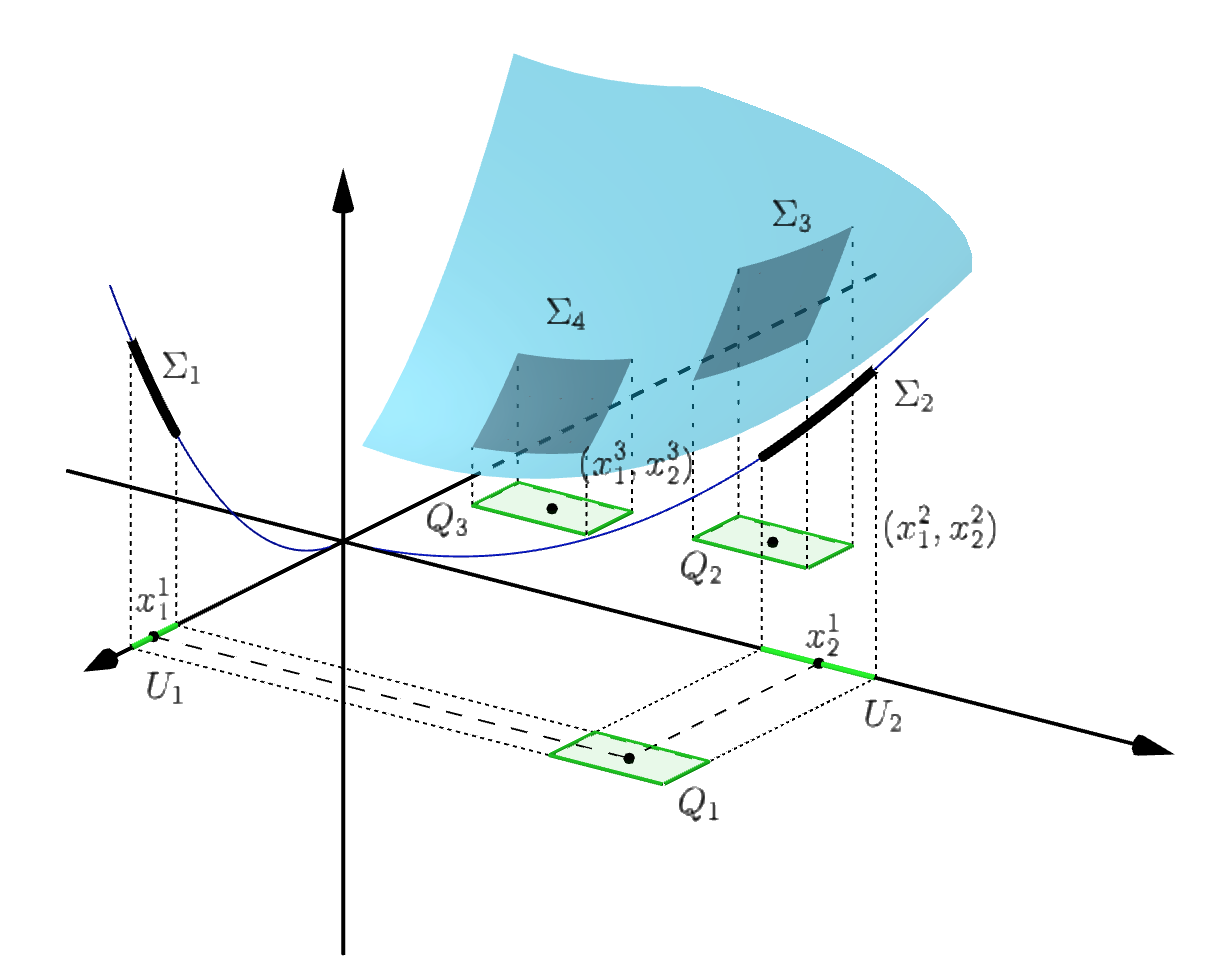}
 \caption{Unveiling the geometric features of the problem when $d=2$. The cubes we find from Theorem \ref{stability-jan1623} are weakly transversal, which gives us access to our earlier results.}
\end{figure}

To each such sub-collection we apply the endpoint estimate from Section \ref{dlineartheory} (all we need to apply it is weak transversality), which finishes the proof.

\end{proof}

\section{Further remarks}\label{finalremarks}

\begin{remark} It was pointed out to us by Jonathan Bennett that the $d$-dimensional estimates \eqref{rest1} for tensors are equivalent to certain $1$-dimensional mixed norm bounds. We present this remark in the following proposition:

\begin{proposition}[Bennett] For all $p,q\geq 1$, the estimate
\begin{equation}\label{ineq1apr3022}
\|\mathcal{E}_{d}g\|_{L^{q}_{\xi_{1},\ldots,\xi_{d+1}}}\lesssim \|g\|_{p}
\end{equation}
holds for tensors $g(x)=g_{1}(x_{1})\cdot\ldots\cdot g_{d}(x_{d})$ if and only if
\begin{equation}\label{ineq2apr3022}
\|\mathcal{E}_{1}f\|_{L^{dq}_{\xi_{2}}L^{q}_{\xi_{1}}} \lesssim \|f\|_{p}.
\end{equation}
holds.
\end{proposition}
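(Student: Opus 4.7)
The proposal rests on one fundamental identity for tensors: if $g = g_{1}\otimes\ldots\otimes g_{d}$, then the separation of variables in both the linear modulation $e^{-2\pi i x\cdot\xi}$ and the quadratic modulation $e^{-2\pi i t|\xi|^{2}} = \prod_{j}e^{-2\pi it\xi_{j}^{2}}$ allows us to factor
\begin{equation*}
\mathcal{E}_{d}g(x_{1},\ldots,x_{d},t) = \prod_{j=1}^{d}\mathcal{E}_{1}g_{j}(x_{j},t).
\end{equation*}
This is the only identity I will use, and both directions of the equivalence follow from straightforward manipulation of it.

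For the direction \eqref{ineq2apr3022}$\Rightarrow$\eqref{ineq1apr3022}, I would raise the factored identity to the $q$-th power and apply Fubini in the spatial variables to obtain
\begin{equation*}
\|\mathcal{E}_{d}g\|_{L^{q}_{\xi_{1},\ldots,\xi_{d+1}}}^{q} = \int_{\mathbb{R}}\prod_{j=1}^{d}\|\mathcal{E}_{1}g_{j}(\cdot,t)\|_{L^{q}_{x}}^{q}\,\mathrm{d}t.
\end{equation*}
Then a single application of H\"older's inequality with $d$ equal exponents in the variable $t$ bounds this by $\prod_{j=1}^{d}\|\mathcal{E}_{1}g_{j}\|_{L^{dq}_{\xi_{2}}L^{q}_{\xi_{1}}}^{q}$, to which \eqref{ineq2apr3022} is applied factor by factor, producing $\prod_{j}\|g_{j}\|_{p}^{q} = \|g\|_{p}^{q}$.

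For the converse \eqref{ineq1apr3022}$\Rightarrow$\eqref{ineq2apr3022}, I would exploit the freedom to pick the tensor and specialize to $g = f^{\otimes d}$. The factored identity then gives $|\mathcal{E}_{d}g(x,t)|^{q} = \prod_{j}|\mathcal{E}_{1}f(x_{j},t)|^{q}$; integrating in the spatial variables turns the integrand into $\|\mathcal{E}_{1}f(\cdot,t)\|_{L^{q}_{x}}^{dq}$, yielding the clean identity
\begin{equation*}
\|\mathcal{E}_{d}g\|_{L^{q}_{\xi_{1},\ldots,\xi_{d+1}}} = \|\mathcal{E}_{1}f\|_{L^{dq}_{\xi_{2}}L^{q}_{\xi_{1}}}^{d}.
\end{equation*}
Since $\|g\|_{p} = \|f\|_{p}^{d}$ by the tensor structure, applying \eqref{ineq1apr3022} and taking $d$-th roots yields \eqref{ineq2apr3022}.

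There is no real obstacle here; the substance of the argument is entirely contained in the factorization $\mathcal{E}_{d}(g_{1}\otimes\ldots\otimes g_{d})(x,t) = \prod_{j}\mathcal{E}_{1}g_{j}(x_{j},t)$. The mild asymmetry between the space $L^{q}$ and the time $L^{dq}$ in \eqref{ineq2apr3022} is precisely what is forced by the specialization $g = f^{\otimes d}$: the $d$ independent spatial integrations each contribute one factor of $\|\mathcal{E}_{1}f(\cdot,t)\|_{L^{q}_{x}}^{q}$, so collectively they produce the exponent $dq$ in $t$. Thus the power $dq$ in the mixed norm is not a coincidence but a direct reflection of the dimension $d$ of the paraboloid.
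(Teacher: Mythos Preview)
Your proof is correct and essentially identical to the paper's: both directions rest on the factorization $\mathcal{E}_{d}(g_{1}\otimes\cdots\otimes g_{d})(x,t)=\prod_{j}\mathcal{E}_{1}g_{j}(x_{j},t)$, with the implication \eqref{ineq1apr3022}$\Rightarrow$\eqref{ineq2apr3022} obtained by specializing to $g=f^{\otimes d}$ and the reverse implication by H\"older in the $t$-variable with $d$ equal exponents. The paper carries out exactly these two steps in the same order.
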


\begin{proof} Assume first that \eqref{ineq1apr3022} holds for tensors. Then

\begin{equation*}
\eqalign{
\displaystyle \|\mathcal{E}_{1}f\|_{L^{dq}_{\xi_{2}}L^{q}_{\xi_{1}}} &=\displaystyle \left[\int\left[\int |\mathcal{E}_{1}f(\xi_{1},\xi_{2})|^{q}\mathrm{d}\xi_{1}\right]^{d}\mathrm{d}\xi_{2}\right]^{\frac{1}{dq}} \cr
&=\displaystyle \left[\int \prod_{j=1}^{d}\left[\int |\mathcal{E}_{1}f(\eta_{j},\xi_{2})|^{q}\mathrm{d}\eta_{j}\right]\mathrm{d}\xi_{2}\right]^{\frac{1}{dq}} \cr
&=\displaystyle \left[\int \prod_{j=1}^{d}\int |\mathcal{E}_{d}(f\otimes\ldots\otimes f)(\eta_{1},\ldots,\eta_{d})|^{q}\mathrm{d}\overrightarrow{\eta}\mathrm{d}\xi_{2}\right]^{\frac{1}{dq}} \cr
&=\displaystyle \|\mathcal{E}_{d}(f\otimes\ldots\otimes f)\|_{q}^{\frac{1}{d}} \cr
&\lesssim\displaystyle \|f\otimes\ldots\otimes f\|_{p}^{\frac{1}{d}} \cr \cr
&\lesssim\displaystyle \|f\|_{p},
}
\end{equation*}
which proves \eqref{ineq2apr3022}. Conversely, assuming that \eqref{ineq2apr3022} holds for all $f\in L^{p}([0,1])$ yields

\begin{equation*}
\eqalign{
\|\mathcal{E}_{d}(g_{1}\otimes\ldots\otimes g_{d})\|_{q}^{q} &=\displaystyle \int |\mathcal{E}_{1}g_{1}(\xi_{1},\xi_{d+1})|^{q}\cdot\ldots\cdot |\mathcal{E}_{1}g_{d}(\xi_{d},\xi_{d+1})|^{q}\mathrm{d}\xi_{1}\cdot\ldots\cdot \mathrm{d}\xi_{d+1} \cr
&=\displaystyle \int\prod_{j=1}^{d}\left[\int |\mathcal{E}_{1}g_{j}(\xi_{j},\xi_{d+1})|^{q}\mathrm{d}\xi_{j}\right]\mathrm{d}\xi_{d+1} \cr
&\leq\displaystyle \prod_{j=1}^{d}\left[\int\left[\int |\mathcal{E}_{1}g_{j}(\xi_{j},\xi_{d+1})|^{q}\mathrm{d}\xi_{j}\right]^{d}\mathrm{d}\xi_{d+1}\right]^{\frac{1}{d}} \cr
&=\displaystyle \prod_{j}\|\mathcal{E}_{1}g_{j}\|^{q}_{L^{dq}_{\xi_{d+1}}L^{q}_{\xi_{j}}} \cr
&\lesssim\displaystyle \prod_{j=1}^{d}\|g_{j}\|_{p}^{q} \cr
&=\displaystyle \|g\|_{p}^{q}.
}
\end{equation*}

\end{proof}

Estimates such as \eqref{ineq2apr3022} can be verified directly by interpolation. Taking sup in $\xi_{2}$ gives
\begin{equation}\label{Bennetttrick1}
\|\mathcal{E}_{1}f\|_{L^{\infty}_{\xi_{2}}L^{2}_{\xi_{1}}}\lesssim_{\varepsilon} \|f\|_{L^{2}([0,1])}.
\end{equation}
Conjecture \ref{restriction} for $d=1$ follows from
\begin{equation}\label{Bennetttrick2}
\|\mathcal{E}_{1}f\|_{L^{4+\varepsilon}_{\xi_{2},\xi_{1}}}\lesssim_{\varepsilon} \|f\|_{L^{4}{([0,1])}}
\end{equation}
for all $\varepsilon>0$. Using mixed-norm Riesz-Thorin interpolation with weights $\approx\frac{d-1}{d+1}$ for  \eqref{Bennetttrick1} and $\approx\frac{2}{d+1}$ for  \eqref{Bennetttrick2}, one obtains \eqref{ineq2apr3022} for $p=\frac{2(d+1)}{d}$ and $q=\frac{2(d+1)}{d}+\varepsilon^{\prime}$, which shows \eqref{ineq1apr3022} by the previous claim.

The reader will notice that our proof for the case $k=1$ of Theorem \ref{mainthmpaper} has a similar idea in its core: we interpolate (at the level of the sets $\mathbb{X}^{l_{1},\ldots,l_{d}}$) between two estimates similar to \eqref{Bennetttrick1} and \eqref{Bennetttrick2}. On the other hand, we have not found an extension of Bennett's remark to the case $2\leq k\leq d+1$, in which we still need to interpolate locally instead of globally and assume that only one function has a tensor structure.
\end{remark}

\begin{remark}
In \cite{TVV1} the authors obtain the following off-diagonal type bounds:

\begin{theorem*}[\cite{TVV1}] $\mathcal{ME}_{2,d}$ satisfies
$$\|\mathcal{ME}_{2,d}(g_{1},g_{2})\|_{2}\lesssim \|g_{1}\|_{2}\cdot\|g_{2}\|_{\frac{d+1}{d}}, $$
$$\|\mathcal{ME}_{2,d}(g_{1},g_{2})\|_{2}\lesssim \|f\|_{\frac{d+1}{d}}\cdot\|g\|_{2}. $$
\end{theorem*}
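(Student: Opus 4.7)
The plan is to reduce this to a convolution bound for measures on the caps and then use a $TT^*$ argument. First I would apply Plancherel to rewrite
\[
\|\mathcal{E}g_1\cdot\mathcal{E}g_2\|_2^2=\|(g_1\,d\sigma_1)*(g_2\,d\sigma_2)\|_2^2,
\]
where $d\sigma_j$ is the pushforward of Lebesgue measure on $Q_j$ under $\Phi_j(x)=(x,|x|^2)$. The right-hand side is the $L^2$-norm on $\mathbb{R}^{d+1}$ of the density
\[
\psi(\eta)=\int_{M_\eta}\frac{g_1(x)g_2(y)}{|DJ(x,y)|}\,dH^{d-1}(x,y),\qquad M_\eta=J^{-1}(\eta),\ J(x,y)=\Phi_1(x)+\Phi_2(y),
\]
and transversality ensures $|DJ|\sim |x-y|\gtrsim 1$ and that $M_\eta$ is a $(d-1)$-dimensional submanifold; after parametrization it is a piece of a sphere of radius $|x-y|/2$ centered at $\eta'/2$.

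Next I would run $TT^*$: with $Tg_2:=\mathcal{E}g_1\cdot\mathcal{E}g_2$ for fixed $g_1$, the target estimate $\|T\|_{L^{(d+1)/d}(Q_2)\to L^2(\mathbb{R}^{d+1})}\lesssim\|g_1\|_2$ is equivalent by duality to $\|T^*T\|_{L^{(d+1)/d}(Q_2)\to L^{d+1}(Q_2)}\lesssim\|g_1\|_2^2$. The operator $T^*T$ has kernel
\[
K(y,y')=\widehat{|\mathcal{E}g_1|^2}(\Phi_2(y')-\Phi_2(y)),
\]
where $\widehat{|\mathcal{E}g_1|^2}=(g_1\,d\sigma_1)*(\overline{g_1\,d\sigma_1})^{-}$ is the autocorrelation of the measure $g_1\,d\sigma_1$, a distribution on $\mathbb{R}^{d+1}$ whose total mass is $\|g_1\|_2^2$. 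The needed estimate for $T^*T$ is then a Young-type convolution bound along the paraboloid $\Sigma_2$ whose kernel is controlled by this autocorrelation, and the transversality between $\Sigma_1$ and $\Sigma_2$ guarantees that $\Phi_2(y')-\Phi_2(y)$ covers the support of the autocorrelation quantitatively transversely, producing the desired operator-norm bound up to standard multilinear interpolation.

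The hard part will be the sharp handling of the autocorrelation near the diagonal of $\Sigma_2\times\Sigma_2$: a naive Cauchy-Schwarz on the slice $M_\eta$ yields only the symmetric $L^2\times L^2\to L^2$ bound, which is strictly weaker. To extract the improved Lebesgue exponent $(d+1)/d$ on the $g_2$-side one must exploit the full $(d+1)$-dimensional regularity of the autocorrelation measure of $g_1\,d\sigma_1$, which in turn uses the curvature of $\Sigma_1$ (so that the autocorrelation does not concentrate on a lower-dimensional set and is effectively a density satisfying a Stein-Tomas-type restriction estimate). Equivalently, one may bypass $TT^*$ and perform an asymmetric H\"older on the coarea integral, but one must then carefully balance the exponents on the $(d-1)$-spherical slices $M_\eta$ so as to recover $|E_2|^{d/(d+1)}$ at the restricted weak-type level; this matching between the lower-dimensional geometry of $M_\eta$ and the Lebesgue exponent $(d+1)/d$ is the delicate technical step of \cite{TVV1}, and it is saturated by Knapp caps, confirming the sharpness of the exponent.
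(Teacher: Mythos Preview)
The paper does not supply its own proof of this statement: it is quoted verbatim from \cite{TVV1} in the ``Further remarks'' section, and the authors only comment that analogous off-diagonal $L^{p_1}\times\cdots\times L^{p_k}\to L^2$ bounds (under the extra hypothesis that one input is a full tensor) would follow by a straightforward adaptation of the level-set argument of Section~\ref{klineartheory}, explicitly declining to include the details. So there is no ``paper's proof'' to compare against; the relevant benchmark is the original argument in \cite{TVV1}.

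Relative to that benchmark, your outline is in the right family: Tao--Vargas--Vega do pass through Plancherel to the convolution $g_1\,d\sigma_1 * g_2\,d\sigma_2$, parametrize the fibres of the sum map $\Phi_1(x)+\Phi_2(y)$ (which, as you note, are pieces of $(d-1)$-spheres with Jacobian $\sim|x-y|\gtrsim 1$ by transversality), and then perform an asymmetric Cauchy--Schwarz/H\"older on those fibres to produce the exponent $(d+1)/d$. Your $TT^*$ reformulation is equivalent in principle, but the step you flag as ``the hard part'' is exactly where your sketch stops being a proof: you assert that the autocorrelation of $g_1\,d\sigma_1$ has enough $(d+1)$-dimensional regularity (via curvature of $\Sigma_1$) to drive an $L^{(d+1)/d}\to L^{d+1}$ bound for $T^*T$, but you never actually establish or invoke a concrete estimate doing this. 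In \cite{TVV1} the work is done directly on the coarea slices by Cauchy--Schwarz in the $g_1$ variable followed by integration in the remaining parameters, and the exponent $(d+1)/d$ falls out of the slice dimension $d-1$ versus the ambient $d$ via H\"older; no $TT^*$ detour or Stein--Tomas input is needed. Your proposal would work if completed, but as written the crucial kernel/operator bound is asserted rather than proved.
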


In general, under the extra hypothesis that either $g_{1}$ or $g_{2}$ is a full tensor, one can obtain all $k$-linear off-diagonal type bounds like $L^{p_{1}}\times\ldots\times L^{p_{k}}\mapsto L^{2}$ by a straightforward adaptation of the argument presented in Section \ref{klineartheory}. We chose not to include them in this manuscript.
\end{remark}

\begin{remark} Under the assumption that $g_{j}$ are \textit{full tensors}
$$g_{j}(x_{1},\ldots,x_{d})=g_{j,1}(x_{1})\cdot\ldots\cdot g_{j,d}(x_{d}), \quad 1\leq j\leq k,$$
the methods of this work allow to prove Conjecture \ref{generalklinearapr422}. We will not cover the details of this result here, but the idea is simply to interpolate between the $p=2$ result and the case $k=1$ for tensors.
\end{remark}

\appendix
\addcontentsline{toc}{section}{Appendices}
%\section*{Appendices}

\section{Sharp examples}\label{appendixa}

The goal of this first appendix is to discuss the sharpness of Theorems \ref{mainthmpaper} and \ref{improvedklinearthm2}. We remark that sharp examples already exist in the literature, notably in the context of the bilinear problem for the sphere in Foschi and Klainerman's work \cite{FK1}, and in the multilinear case for surfaces of any signature in Hickman and Iliopoulou's paper \cite{Hick-Ili}. Our examples, however, exploit different ideas than those present in \cite{FK1} and \cite{Hick-Ili} in the sense that they are robust enough to address weakly transversal configurations of caps and give sharp results in such cases as well.

The first part of this appendix is about Theorem \ref{improvedklinearthm2}, whereas in the second one we prove that, to attain the sharp range of Conjecture \ref{klinear} in general, transversality can not be replaced by the concept of weak transversality that we introduce.

\subsection{Range optimality} The main result of this subsection is the following:

\begin{proposition}\label{prop1-feb1123} The condition $p\geq\frac{2(d+|\tau|+2)}{k(d+|\tau|)}$ is necessary for
Theorem \ref{improvedklinearthm2} to hold.
\end{proposition}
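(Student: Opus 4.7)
The plan is to construct a Knapp-type counterexample with anisotropic scaling adapted to the transversality vector $\tau$. After relabeling coordinates, I can assume $\tau_l=1$ for $1\le l\le|\tau|$ and $\tau_l=0$ otherwise, and set $\alpha_l=1$ when $\tau_l=1$, $\alpha_l=1/2$ when $\tau_l=0$. For each pair $(j,l)$, let $a_{j,l}$ denote the left endpoint of $\pi_l(Q_j)$; by definition of $\tau_l=1$, in every transversal direction at least two of the $a_{j,l}$ are separated by $\gtrsim 1$. For $R\gg 1$, the test functions are
\[
g_j^l := \chi_{[a_{j,l},\, a_{j,l}+R^{-\alpha_l}]}, \qquad 1\le j\le k,\ 1\le l\le d,
\]
so that each $g_j=g_j^1\otimes\cdots\otimes g_j^d$ is a full tensor, as required by Theorem~\ref{improvedklinearthm2}.

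The next step is to identify the ``resonance set'' $\mathbb{X}_R$ of lattice points where the integrand of $ME_{k,d}^{\tau}(g)$ is not small. A short stationary-phase computation (change of variables $x=a_{j,l}+y$ on $[0,R^{-\alpha_l}]$) shows that $|\langle g_j^l,\varphi_{n_l,m}^{l,j}\rangle|\gtrsim R^{-\alpha_l}$ precisely when $|n_l+2m\, a_{j,l}|\lesssim R^{\alpha_l}$ and $|m|\lesssim R^{2\alpha_l}$. Intersecting these constraints over $j=1,\ldots,k$ direction by direction: in each transversal direction, subtracting the constraints for two separated $a_{j,l}$ forces $|m|\lesssim R^{\alpha_l}=R$ and confines $n_l$ to an interval of length $\approx R$; in each non-transversal direction, only one constraint survives and $n_l$ lies in an interval of length $R^{1/2}$. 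Combined with the universal bound $|m|\lesssim R$, this gives
\[
\#\mathbb{X}_R \approx R\cdot \prod_{l=1}^d R^{\alpha_l} = R^{1+|\tau|+(d-|\tau|)/2} = R^{(d+|\tau|+2)/2}.
\]

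Plugging in, on each unit cell indexed by $\mathbb{X}_R$ one has $|ME_{k,d}^{\tau}(g)|\gtrsim \prod_{j,l}R^{-\alpha_l}=R^{-k(d+|\tau|)/2}$, so
\[
\|ME_{k,d}^{\tau}(g)\|_{L^p} \gtrsim (\#\mathbb{X}_R)^{1/p}\cdot R^{-k(d+|\tau|)/2} = R^{\frac{d+|\tau|+2}{2p}-\frac{k(d+|\tau|)}{2}},
\]
while $\prod_{j,l}\|g_j^l\|_2 = R^{-k(d+|\tau|)/4}$. Letting $R\to\infty$, the inequality of Theorem~\ref{improvedklinearthm2} forces $\frac{d+|\tau|+2}{2p} \le \frac{k(d+|\tau|)}{4}$, i.e., $p\ge\frac{2(d+|\tau|+2)}{k(d+|\tau|)}=p_\tau$. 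The delicate point is the count $\#\mathbb{X}_R$: one must check that in each transversal direction the intersection of the $k$ parallelogram-shaped constraints on $(n_l,m)$ is nonempty and occupies a two-dimensional region of area $\approx R\times R$, which follows by choosing $R$ large enough so that the uniform separation of the $a_{j,l}$'s dominates the (fixed) constants hidden in the stationary-phase bounds.
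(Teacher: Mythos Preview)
Your argument is correct and follows essentially the same Knapp-type strategy as the paper: test on characteristic functions of thin boxes whose sidelengths are anisotropically scaled according to $\tau$ (short in transversal directions, longer in non-transversal ones), then read off the necessary exponent from the size of the dual region. The implementations differ in two minor respects worth noting. First, you work directly with the discrete model $ME_{k,d}^{\tau}$ and count lattice points in the resonance set, whereas the paper carries out the analysis for the continuous operators $\mathcal{E}_{U_j}$ using the phase-space portrait of $e^{2\pi i t x^2}$; since Theorem~\ref{improvedklinearthm2} is stated for the discrete model, your route is arguably more direct. Second, the paper writes out only the case $|\tau|=k-1$ in detail (pointing out along the way how to modify for general $|\tau|$), while you handle arbitrary $|\tau|$ from the start. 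Substantively the two arguments coincide: your scaling $R^{-\alpha_l}$ with $\alpha_l\in\{1,1/2\}$ matches the paper's $\delta^2$/$\delta$ dichotomy under $\delta=R^{-1/2}$, and the key mechanism---subtracting constraints in transversal directions to force $|m|\lesssim R$---is the same in both.
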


Our examples are constructed based on one-dimensional considerations. For the benefit of simplifying the notation, smoothing the exposition to the reader and to establish a clear link with Conjecture \ref{klinear}, we present them in the $|\tau|=k-1$ case, which is the smallest possible value for the corresponding $|\tau|$ of a given collection of transversal cubes (up to decomposing it into weakly transversal collections, see Claim \ref{claima2-081221}). It will be clear, however, how to work out the general case of arbitrary $|\tau|$, and we will point that out along the proof of Claim \ref{claim2apr1022}.

Consider the caps that project onto the following transversal domains via $x\mapsto |x|^{2}$:

\begin{equation*}
\eqalign{
U_{1}&=[0,1]^{d}, \cr
U_{j}&= [2,3]^{j-2}\times [4,5]\times [0,1]^{d-j+1},\quad 2\leq j\leq k. \cr
}
\end{equation*}

Observe that these caps are transversal as well\footnote{For general $|\tau|$ we would have to start with a different collection of cubes with the appropriate total degree of transversality.}, therefore the following argument for the case $|\tau|=k-1$ of Proposition \ref{prop1-feb1123} also shows that the range of Conjecture \ref{klinear} is necessary.

We present the examples separately to distinguish their features. For $k=d+1$ we will take appropriately placed cubes, whereas for $2\leq k\leq d$ we will take slabs (boxes with edges of two different scales).

\begin{claim}\label{claim1apr1022}Let $k=d+1$, $\delta>0$ small and let $A^{\delta}_{j}$ be given by
 \begin{equation*}
\eqalign{
A^{\delta}_{1}&=[0,\delta]^{d}, \cr
A^{\delta}_{j}&= [2,2+\delta]^{j-2}\times [4,4+\delta]\times [0,\delta]^{d-j+1},\quad 2\leq j\leq d+1. \cr
}
\end{equation*}

Define $f^{\delta}_{j}:=\mathbbm{1}_{A^{\delta}_{j}}$. Then
$$\frac{\left\|\prod_{j=1}^{d+1}\mathcal{E}_{U_{j}}f^{\delta}_{j}\right\|_{p}}{\prod_{j=1}^{d+1} \|f^{\delta}_{j}\|_{2}}\quad \gtrsim\quad\delta^{\frac{d(d+1)}{2}-\frac{1}{p}(d+1)}.$$

Therefore, letting $\delta\rightarrow 0$ implies $p\geq \frac{2}{d}$ is a necessary condition for the $(d+1)$-linear extension conjecture to hold for this choice of $U_{j}$'s and for all $f_{j}$ that are full tensors.
\end{claim}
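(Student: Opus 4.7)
The plan is to test the inequality against the specific functions $f_j^{\delta}=\mathbbm{1}_{A_j^{\delta}}$ and let $\delta\to 0$. The denominator is trivial: $\|f_j^{\delta}\|_2 = |A_j^{\delta}|^{1/2} = \delta^{d/2}$, so $\prod_{j=1}^{d+1}\|f_j^{\delta}\|_2 = \delta^{d(d+1)/2}$. The work lies in producing a lower bound for the numerator.

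For each $j$, let $\xi_0^{(j)}$ be the ``lower-left'' corner of $A_j^{\delta}$, so that $A_j^{\delta} = \xi_0^{(j)} + [0,\delta]^d$. Substituting $\xi = \xi_0^{(j)} + \eta$ and factoring out the boundary phase yields
$$\mathcal{E}_{U_j}f_j^{\delta}(x,t) = e^{-2\pi i(x\cdot\xi_0^{(j)} + t|\xi_0^{(j)}|^2)}\int_{[0,\delta]^d} e^{-2\pi i(x+2t\xi_0^{(j)})\cdot\eta}\,e^{-2\pi i t|\eta|^2}\,d\eta.$$
If $|x+2t\xi_0^{(j)}|\cdot\delta \le c$ and $|t|\cdot\delta^2 \le c$ for a sufficiently small absolute constant $c>0$, then both phases in the integrand stay uniformly $O(1)$ on $[0,\delta]^d$, and so $|\mathcal{E}_{U_j}f_j^{\delta}(x,t)| \gtrsim \delta^d$.

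Next I would identify the joint region
$$R = \bigl\{(x,t)\in\mathbb{R}^{d+1}:|x+2t\xi_0^{(j)}|\delta\le c,\ |t|\delta^2\le c,\ \forall\, 1\le j\le d+1\bigr\}$$
and estimate its measure from below. Subtracting the $j=1$ constraint from the $j$-th gives $|t(\xi_0^{(j)}-\xi_0^{(1)})|\lesssim 1/\delta$ for all $j\ge 2$. The vectors $\{\xi_0^{(j)}-\xi_0^{(1)}\}_{j=2}^{d+1}$ form the rows of an upper triangular $d\times d$ matrix with $4$'s on the diagonal (using the explicit placement of the $A_j^{\delta}$), hence are linearly independent and of unit size; this forces $|t|\lesssim 1/\delta$, which is stronger than $|t|\le c\delta^{-2}$. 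For each such $t$, the $x$-slice is the intersection of $d+1$ balls of radius $\sim\delta^{-1}$ whose centers $-2t\xi_0^{(j)}$ lie within $O(\delta^{-1})$ of one another (precisely by the $t$-bound), so the slice has $d$-dimensional measure $\gtrsim \delta^{-d}$. Integrating in $t$ gives $|R|\gtrsim \delta^{-(d+1)}$.

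Combining everything, on $R$ one has $\prod_j|\mathcal{E}_{U_j}f_j^{\delta}|\gtrsim \delta^{d(d+1)}$, so
$$\Bigl\|\prod_{j=1}^{d+1}\mathcal{E}_{U_j}f_j^{\delta}\Bigr\|_p \;\gtrsim\; \delta^{d(d+1)}\cdot|R|^{1/p}\;\gtrsim\;\delta^{d(d+1)-(d+1)/p},$$
and dividing by the denominator produces exactly $\delta^{d(d+1)/2-(d+1)/p}$. Letting $\delta\to 0$ forces the exponent to be non-negative, i.e.\ $p\ge 2/d$, which is the sharp endpoint. The main (and really only) point requiring care is the geometric measure estimate on $R$: this is where the transversality of the caps $U_j$ enters quantitatively, via the linear independence of the corner differences $\xi_0^{(j)}-\xi_0^{(1)}$. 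Once that is verified, the rest of the proof is bookkeeping of exponents.
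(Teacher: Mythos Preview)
Your argument is correct and reaches the same region and the same exponent count as the paper, but by a more direct route. The paper expands $e^{-2\pi itx^{2}}$ via its scale-$1$ phase-space portrait (the wave packets $\Phi_{n,t}$), identifies for each factor which packets contribute on $[0,\delta]$ or $[4,4+\delta]$, and reads off pointwise lower bounds of the form $|\mathcal{E}_{U_{j}}f_{j}^{\delta}(\xi,t)|\gtrsim\delta^{d}\,\phi_{\delta}(\xi_{1}-c_{1}t)\cdots\phi_{\delta}(\xi_{d}-c_{d}t)\phi_{\delta^{2}}(t)$; the intersection of the supports then forces $|t|\lesssim 1/\delta$ and $|\xi_{\ell}|\lesssim 1/\delta$, giving the box $R^{\ast}_{\delta}$ of volume $\sim\delta^{-(d+1)}$. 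You instead perform the change of variables $\xi=\xi_{0}^{(j)}+\eta$ and bound the residual phase directly, which is the classical Knapp computation and avoids the wave-packet machinery entirely. Both approaches identify essentially the same region; yours is lighter, while the paper's is consistent with the phase-space viewpoint used throughout the manuscript.

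Two small remarks. First, the matrix of differences $\xi_{0}^{(j)}-\xi_{0}^{(1)}$ is \emph{lower} triangular (row $j-1$ is $(2,\ldots,2,4,0,\ldots,0)$), not upper; this does not matter since you only use invertibility. Second, since $t$ is a scalar here, even a single nonzero $\xi_{0}^{(j)}$ already gives $|t|\lesssim 1/\delta$ from $|2t\xi_{0}^{(j)}|\lesssim 1/\delta$; the linear independence is not actually needed for this claim (though it would be in settings with a vector-valued time parameter). Finally, to ensure the $x$-slice of $R$ has volume $\gtrsim\delta^{-d}$ you should restrict $t$ to a slightly smaller interval $|t|\le c'/\delta$ with $c'$ small enough that all centers $-2t\xi_{0}^{(j)}$ lie well inside the ball $|x|\le c/\delta$; this is implicit in your sketch and costs only a constant.
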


\begin{claim}\label{claim2apr1022}Let $2\leq k<d+1$, $\delta>0$ small and let $B^{\delta}_{j}$ be given by
 \begin{equation*}
\eqalign{
B^{\delta}_{1}&=[0,\delta^{2}]^{k-1}\times [0,\delta]^{d-k+1}, \cr
B^{\delta}_{j}&= [2,2+\delta^{2}]^{j-2}\times [4,4+\delta^{2}]\times [0,\delta^{2}]^{k-j}\times [0,\delta]^{d-k+1},\quad 2\leq j\leq k. \cr
}
\end{equation*}

Define $g^{\delta}_{j}:=\mathbbm{1}_{B^{\delta}_{j}}$. Then
$$\frac{\left\|\prod_{j=1}^{k}\mathcal{E}_{U_{j}}g^{\delta}_{j}\right\|_{p}}{\prod_{j=1}^{k} \|g^{\delta}_{j}\|_{2}}\quad \gtrsim\quad\delta^{\frac{k}{2}(d+k-1)-\frac{1}{p}(d+k+1)}.$$

Therefore, letting $\delta\rightarrow 0$ implies $p\geq \frac{2(d+k+1)}{k(d+k-1)}$ is a necessary condition for the $k$-linear extension conjecture to hold for this choice of $U_{j}$'s and for all $g_{j}$ that are full tensors.
\end{claim}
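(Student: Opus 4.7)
My approach is a Knapp-type construction in four stages. First, since $|B_j^\delta|=\delta^{2(k-1)}\cdot\delta^{d-k+1}=\delta^{d+k-1}$, the denominator is $\prod_j\|g_j^\delta\|_2=\delta^{k(d+k-1)/2}$. Second, for each $j$ I would introduce a dual region
\[ R_j^\delta=\bigl\{\,|t|\lesssim\delta^{-2},\ |x_i+2t(\xi_0^{(j)})_i|\lesssim\delta^{-2}\text{ for }i\leq k-1,\ |x_i+2t(\xi_0^{(j)})_i|\lesssim\delta^{-1}\text{ for }i\geq k\,\bigr\}, \]
where $\xi_0^{(j)}$ is the center of $B_j^\delta$. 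Writing $\xi=\xi_0^{(j)}+\eta$ and extracting the linear and quadratic modulations in $\xi_0^{(j)}$, the residual phase $\sum_{i}(x_i+2t(\xi_0^{(j)})_i)\eta_i+t|\eta|^2$ is uniformly $O(1)$ on the support of $\eta$ throughout $R_j^\delta$, giving $|\mathcal{E}_{U_j}g_j^\delta|\gtrsim\delta^{d+k-1}$ there.

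Third, and this is the key step, I would estimate $|\bigcap_j R_j^\delta|$ from below. The centers satisfy $(\xi_0^{(j)})_i\in\{0,2,4\}+O(\delta^2)$ for $i\leq k-1$ (bounded but potentially nonzero) and $(\xi_0^{(j)})_i=O(\delta)$ for $i\geq k$, because every slab has the soft factor $[0,\delta]^{d-k+1}$ in the last $d-k+1$ directions. Choosing a sufficiently small absolute constant $c>0$, the box
\[ \Omega=\{|t|\leq c\delta^{-2},\ |x_i|\leq c\delta^{-2}\text{ for }i\leq k-1,\ |x_i|\leq c\delta^{-1}\text{ for }i\geq k\} \]
is contained in every $R_j^\delta$: in the tight directions the triangle inequality controls $|x_i+2t(\xi_0^{(j)})_i|$ by $c\delta^{-2}+8c\delta^{-2}\lesssim\delta^{-2}$, and in the soft directions the shift $|2t(\xi_0^{(j)})_i|\lesssim\delta^{-1}$ is dominated by the already-imposed bound $|x_i|\leq c\delta^{-1}$. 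Hence $|\bigcap_j R_j^\delta|\geq|\Omega|\gtrsim\delta^{-2(k-1)}\cdot\delta^{-(d-k+1)}\cdot\delta^{-2}=\delta^{-(d+k+1)}$.

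Putting the pieces together,
\[ \Bigl\|\prod_{j=1}^k\mathcal{E}_{U_j}g_j^\delta\Bigr\|_p^p\gtrsim(\delta^{d+k-1})^{pk}\cdot\delta^{-(d+k+1)}, \]
which after extracting $p$-th roots and dividing by $\delta^{k(d+k-1)/2}$ yields the claimed ratio $\delta^{k(d+k-1)/2-(d+k+1)/p}$; forcing this quantity to stay bounded as $\delta\to 0$ gives $p\geq\frac{2(d+k+1)}{k(d+k-1)}$. The only genuine subtlety is the intersection estimate of stage three: the regions $R_j^\delta$ are parallelepipeds tilted along distinct directions (because of the different carrier frequencies $\xi_0^{(j)}$), and one must check that they admit a common core box of the full expected volume. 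This works precisely because the transversal directions live on the coarser $\delta^2$-scale, where $\delta^{-2}$-sized shifts are absorbed, while the soft $\delta$-scale directions carry essentially no tilt at all.
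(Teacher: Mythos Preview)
Your argument is correct and follows the same Knapp-type strategy as the paper: find, for each $j$, a region on which $|\mathcal{E}_{U_j}g_j^\delta|\gtrsim\delta^{d+k-1}$, then show that the common intersection has volume $\gtrsim\delta^{-(d+k+1)}$. The key geometric point---that the tilts induced by the carrier frequencies $\xi_0^{(j)}$ are absorbed by the $\delta^{-2}$ scale in the first $k-1$ directions and are negligible ($O(\delta^{-1})$) in the last $d-k+1$ directions because all slabs share the soft factor $[0,\delta]^{d-k+1}$---is exactly the observation the paper makes when it notes that the conditions $|\xi_j|\lesssim\delta^{-2}$ and $|\xi_j-8t|\lesssim\delta^{-2}$ together only force $|t|\lesssim\delta^{-2}$, adding no new restriction.

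The one methodological difference is in how the pointwise lower bound is established. You do it by direct phase linearization around the center $\xi_0^{(j)}$, verifying that the residual phase $(x+2t\xi_0^{(j)})\cdot\eta+t|\eta|^2$ stays $O(1)$ on the dual slab; this is the standard textbook Knapp computation. The paper instead invokes its scale-$1$ phase-space decomposition $e^{2\pi itx^2}=\sum_n\Phi_{n,t}(x)$ (the lemma preceding the proof of Claim~A.1) and tracks which Heisenberg boxes contribute on each interval. Your route is shorter and self-contained; the paper's route keeps the argument within the wave packet language used throughout the article. Both yield the same region $S_\delta^\ast$ (your $\Omega$) and the same final exponent.
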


\begin{figure}[h]
  \centering
\captionsetup{font=normalsize,skip=1pt,singlelinecheck=on}
  \includegraphics[scale=.6]{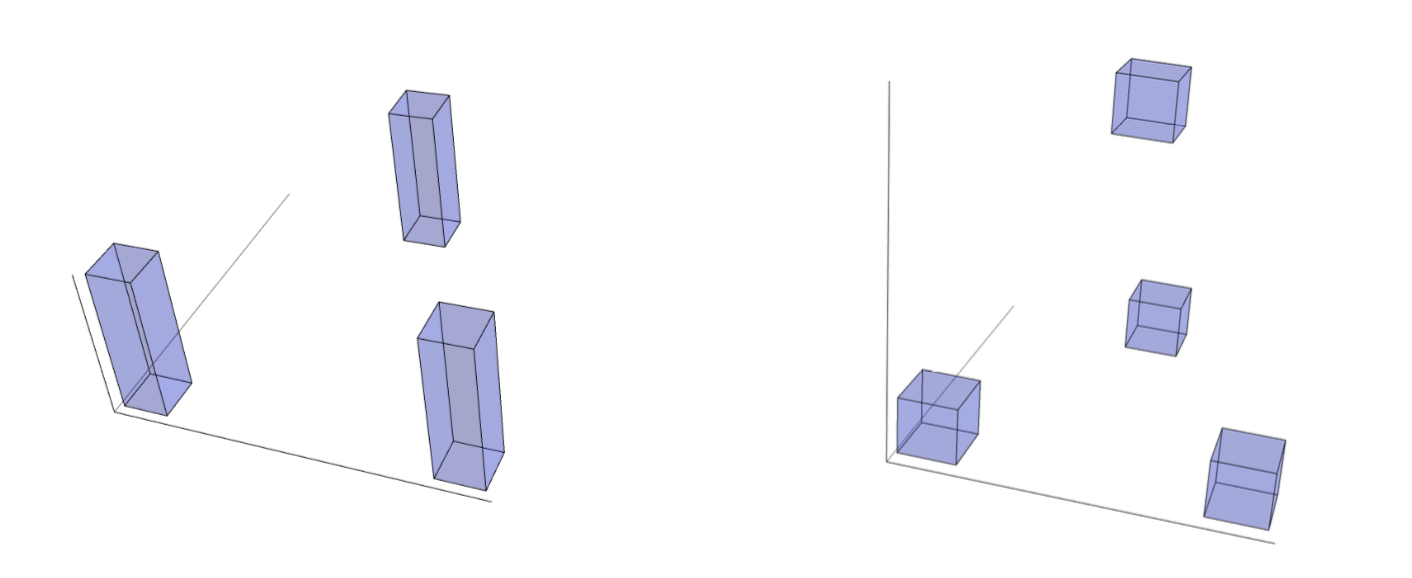}
 \caption{Cases $k=3$ and $k=4$ when $d=3$}
\end{figure}

Before proving the claims, we need the following lemma:

\begin{lemma}[Scale-$1$ phase-space portrait of $e^{2\pi ix^{2}}$] There exists a sequence of smooth bumps $(\varphi_{n})_{n\in\mathbb{Z}}$ such that:
\begin{enumerate}[(i)]
\item $\textnormal{supp}(\varphi_{n})\subset [n-1,n+1]$, $n\in\mathbb{Z}$,
\item $|\varphi^{(\ell)}_{n}(x)|\leq C_{\ell}$ uniformly in $n\in\mathbb{Z}$
and such that
$$e^{2\pi ix^{2}}=\sum_{n\in\mathbb{Z}}e^{4\pi inx}\varphi_{n}(x). $$
\end{enumerate}
\end{lemma}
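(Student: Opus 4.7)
The plan is to exploit the algebraic identity $x^{2}=n^{2}+2n(x-n)+(x-n)^{2}$, which at the level of exponentials reads
\begin{equation*}
e^{2\pi ix^{2}}=e^{-2\pi in^{2}}\cdot e^{4\pi inx}\cdot e^{2\pi i(x-n)^{2}}.
\end{equation*}
This is the phase-space analogue of the fact that, on a scale-$1$ interval centered at $n$, the quadratic phase $e^{2\pi ix^{2}}$ decouples into a pure linear modulation $e^{4\pi inx}$ (the tangent direction to the parabola at $x=n$) times a unimodular factor whose phase is quadratic in $(x-n)$, hence uniformly smooth on any bounded set.

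First I would fix a smooth non-negative bump $\psi\in C^{\infty}_{c}(\mathbb{R})$ with $\operatorname{supp}\psi\subset[-1,1]$ and such that the integer translates form a partition of unity, i.e.\ $\sum_{n\in\mathbb{Z}}\psi(x-n)\equiv 1$ (any standard construction, e.g.\ convolving $\mathbbm{1}_{[-1/2,1/2]}$ with a suitable mollifier, yields such a $\psi$). Then I would simply define
\begin{equation*}
\varphi_{n}(x):=e^{-2\pi in^{2}}\,\psi(x-n)\,e^{2\pi i(x-n)^{2}},\qquad n\in\mathbb{Z}.
\end{equation*}
Plugging this into $\sum_{n}e^{4\pi inx}\varphi_{n}(x)$ and applying the identity above factor by factor gives
\begin{equation*}
\sum_{n\in\mathbb{Z}}e^{4\pi inx}\varphi_{n}(x)=\sum_{n\in\mathbb{Z}}\psi(x-n)\cdot e^{2\pi ix^{2}}=e^{2\pi ix^{2}},
\end{equation*}
which is the required decomposition.

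Property (i) is immediate from $\operatorname{supp}\psi\subset[-1,1]$. For property (ii), the key observation is that, setting $y=x-n$, one has $|\varphi_{n}(x)|=|\psi(y)e^{2\pi iy^{2}}|$ and more generally $|\varphi_{n}^{(\ell)}(x)|=|(\psi(y)e^{2\pi iy^{2}})^{(\ell)}|$ because the unimodular prefactor $e^{-2\pi in^{2}}$ and the translation $x\mapsto x-n$ do not affect absolute values of derivatives. Since $y\mapsto \psi(y)e^{2\pi iy^{2}}$ is a single fixed Schwartz function supported in $[-1,1]$, its $\ell$-th derivative is bounded by a constant $C_{\ell}$ independent of $n$, giving the uniform bound. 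There is no genuine obstacle here; the only thing to watch is the careful bookkeeping of the constant phase $e^{-2\pi in^{2}}$, which is needed precisely to cancel the $e^{-4\pi in^{2}}$ that arises when rewriting $e^{4\pi in(x-n)}=e^{4\pi inx}e^{-4\pi in^{2}}$.
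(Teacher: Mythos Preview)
Your proof is correct: the completion-of-the-square identity together with a smooth partition of unity by integer translates is exactly the right construction, and your verification of (i), (ii) and the decomposition identity is clean. The paper does not actually prove this lemma but simply cites Proposition~1.10 in \cite{MS2}; your argument is the standard one and is essentially what that reference contains, so there is nothing to compare.
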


\begin{proof} See \cite{MS2}, Proposition $1.10$ on page 23.
\end{proof}

Rescaling with $t>0$, the corresponding phase space portrait of $e^{2\pi itx^{2}}$ is

$$e^{2\pi itx^{2}}=e^{2\pi i(\sqrt{t}x)^{2}}=\sum_{n\in\mathbb{Z}}e^{4\pi in\sqrt{t}x}\varphi_{n}(\sqrt{t}x). $$

Observe that $\widetilde{\varphi}_{t}(x)=\varphi_{n}(\sqrt{t}x)$ is adapted to the Heisenberg box $[\frac{n}{\sqrt{t}},\frac{n+1}{\sqrt{t}}]\times [0,\sqrt{t}]$, but strictly supported on $[\frac{n-1}{\sqrt{t}},\frac{n+1}{\sqrt{t}}]$. This way, we can write

\begin{equation}\label{decompositionapr1022}
e^{2\pi itx^{2}}=\sum_{n\in\mathbb{Z}}\Phi_{n,t}(x),
\end{equation}
where $\Phi_{n,t}$ is adapted to the Heisenberg box $[\frac{n}{\sqrt{t}},\frac{n+1}{\sqrt{t}}]\times [2n\sqrt{t},(2n+1)\sqrt{t}]$.

\begin{proof}[Proof of Claim \ref{claim1apr1022}] Motivated by the uncertainty principle, the first step is to analyze the behavior of the extension operator $\mathcal{E}_{U_{j}}$ applied to $f_{j}^{\delta}$ on a box whose sizes are reciprocal to the ones of $\textnormal{supp}(f_{j}^{\delta})$. More precisely, we will show that $|E_{U_{j}}(f_{j}^{\delta})|\gtrsim\delta^{d}$ on such boxes.

If $\delta<\frac{1}{\sqrt{t}}$,
\begin{equation*}
\eqalign{
\mathcal{E}_{U_{1}}(f_{1}^{\delta})(\xi_{1},\ldots,\xi_{d},t)&\displaystyle=\prod_{j=1}^{d}\left[\int_{0}^{\delta}e^{-2\pi i\xi_{j}x_{j}}e^{-2\pi itx_{j}^{2}}\mathrm{d}x_{j}\right] \cr
&\displaystyle= \prod_{j=1}^{d}\left[\int_{0}^{\delta}e^{-2\pi i\xi_{j}x_{j}}\cdot [\Phi_{0,t}(x_{j})+\Phi_{1,t}(x_{j})]\mathrm{d}x_{j}\right], \cr
}
\end{equation*}
since $\textnormal{supp}(\Phi_{n,t})\cap [0,\delta]=\emptyset$ if $n\in \mathbb{Z}\backslash \{0,1\}$. If $|\xi_{j}x_{j}|<\frac{1}{N}$ ($N$ is a big number to be chosen later), we then have:
\begin{equation}\label{eq1may722}
\eqalign{
|\mathcal{E}_{U_{1}}(f_{1}^{\delta})(\xi_{1},\ldots,\xi_{d},t)|&\displaystyle = \prod_{j=1}^{d}\left|\int_{0}^{\delta}e^{-2\pi \xi_{j}x_{j}}\cdot [\Phi_{0,t}(x_{j})+\Phi_{1,t}(x_{j})]\mathrm{d}x_{j}\right|, \cr
&\displaystyle\geq \prod_{j=1}^{d}\left(\left|\int_{0}^{\delta}[\Phi_{0,t}(x_{j})+\Phi_{1,t}(x_{j})]\mathrm{d}x_{j}\right|-\left|\int_{0}^{\delta} [e^{-2\pi \xi_{j}x_{j}}-1]\cdot [\Phi_{0,t}(x_{j})+\Phi_{1,t}(x_{j})]\right|\right) \cr,
}
\end{equation}
where $N$ is picked so that $[e^{-2\pi \xi_{j}x_{j}}-1]$ is close enough to zero to make 
$$A_{j}:=\left|\int_{0}^{\delta}[\Phi_{0,t}(x_{j})+\Phi_{1,t}(x_{j})]\mathrm{d}x_{j}\right|$$
dominate each factor above. Since $A_{j}\gtrsim\delta$ (recall that $\Phi_{0,t}$ and $\Phi_{1,t}$ are adapted to Heisenberg boxes of size $\frac{1}{\sqrt{t}}\times\sqrt{t}$ and $\delta<\frac{1}{\sqrt{t}}$),
we conclude that if $|\xi_{j}|\lesssim \frac{1}{\delta}$ for $1\leq j\leq d$ and $|t|<\frac{1}{\delta^{2}}$, then
$$|\mathcal{E}_{U_{1}}(f_{1}^{\delta})(\xi_{1},\ldots,\xi_{d},t)| \geq \delta^{d}.$$

If $\phi$ is a bump supported on $[-1,1]$, we have just proved that
\begin{equation}\label{eq1apr1122}
|\mathcal{E}_{U_{1}}(f_{1}^{\delta})(\xi_{1},\ldots,\xi_{d},t)|\gtrsim \delta^{d}\phi_{\delta}(\xi_{1})\cdot\ldots\cdot\phi_{\delta}(\xi_{d})\phi_{\delta^{2}}(t),
\end{equation}
where $\phi_{\delta}(\xi):=\phi(\delta x)$. Analogously, if $\delta<\frac{1}{\sqrt{t}}$,
\begin{equation*}
\eqalign{
\mathcal{E}_{U_{2}}(f_{2}^{\delta})&\displaystyle (\xi_{1},\ldots,\xi_{d},t) \cr
&\displaystyle=\left[\int_{4}^{4+\delta}e^{-2\pi i\xi_{1}x_{1}}e^{-2\pi itx_{1}^{2}}\mathrm{d}x_{1}\right]\cdot\prod_{j=2}^{d}\left[\int_{0}^{\delta}e^{-2\pi i\xi_{j}x_{j}}e^{-2\pi itx_{j}^{2}}\mathrm{d}x_{j}\right] \cr
&\displaystyle=\underbrace{\left[\int_{4}^{4+\delta}e^{-2\pi i\xi_{1}x_{1}}\left(\sum_{n\in\mathbb{Z}}\Phi_{n,t}(x_{1})\right)\mathrm{d}x_{1}\right]}_{I_{1}}\cdot \prod_{j=2}^{d}\underbrace{\left[\int_{0}^{\delta}e^{-2\pi i\xi_{j}x_{j}}\cdot [\Phi_{0,t}(x_{j})+\Phi_{1,t}(x_{j})]\mathrm{d}x_{j}\right]}_{I_{j}}, \cr
}
\end{equation*}

There are at most $O(1)$ integers $n$ such that $\textnormal{supp}(\Phi_{n,t})\cap [4,4+\delta]\neq\emptyset$, and they cluster around $\lfloor 4\sqrt{t} \rfloor$. Without loss of generality, one can assume that $n=4\sqrt{t}$ so that the main contribution for $I_{1}$ comes from $\Phi_{4\sqrt{t},t}$ whose Heisenberg box is $[4,4+\frac{1}{\sqrt{t}}]\times[8t,8t+\sqrt{t}]$. The modulation $e^{-2\pi i\xi_{i}x_{i}}$ shifts this box vertically by $-\xi_{1}$, and $I_{1}$ is negligible if the boxes $[4,4+\frac{1}{\sqrt{t}}]\times[8t-\xi_{1},8t+\sqrt{t}-\xi_{1}]$ and $[0,\delta]\times [0,\frac{1}{\delta}]$ are disjoint in frequency, so we need $|\xi_{1}-8t|\lesssim \frac{1}{\delta}$ to have a significant contribution to $I_{1}$. In that case, 

$$|I_{1}|\gtrsim \left|\int_{4}^{4+\delta}e^{-2\pi i\xi_{1}x_{1}}\Phi_{4\sqrt{t},t}(x_{1})\mathrm{d}x_{1}\right|\gtrsim \delta. $$

The analysis of $I_{j}$ for $j\geq 2$ is the same as the one for the factors of $E_{U_{1}}(f_{1}^{\delta})$. We conclude that if $|\xi_{1}-8t|\lesssim \frac{1}{\delta}$, $|\xi_{j}|\lesssim\frac{1}{\delta}$ for $2\leq j\leq d$ and $|t|\leq\frac{1}{\delta^{2}}$, then
$$|\mathcal{E}_{U_{2}}(f_{2}^{\delta})(\xi_{1},\ldots,\xi_{d},t)| \geq \delta^{d}.$$

As before,
\begin{equation*}
|\mathcal{E}_{U_{2}}(f_{2}^{\delta})(\xi_{1},\ldots,\xi_{d},t)|\gtrsim \delta^{d}\phi_{\delta}(\xi_{1}-8t)\cdot \phi_{\delta}(\xi_{2})\ldots\cdot\phi_{\delta}(\xi_{d})\phi_{\delta^{2}}(t).
\end{equation*}

The extensions $\mathcal{E}_{U_{j}}(f^{\delta}_{j})$ for $3\leq j\leq d+1$ are treated in the same way we treated $\mathcal{E}_{U_{2}}(f^{\delta}_{2})$. The conclusion is that
\begin{equation}\label{eq2apr1122}
|\mathcal{E}_{U_{j}}(f_{j}^{\delta})(\xi_{1},\ldots,\xi_{d},t)|\gtrsim \delta^{d}\phi_{\delta}(\xi_{1}-4t)\cdot\ldots\cdot \phi_{\delta}(\xi_{j-2}-4t)\cdot   \phi_{\delta}(\xi_{j-1}-8t)\cdot   \phi_{\delta}(\xi_{j})\cdot\ldots\cdot\phi_{\delta}(\xi_{d})\phi_{\delta^{2}}(t),
\end{equation}
for all $2\leq j\leq d+1$.

Let $\xi=(\xi_{1},\ldots,\xi_{d})$. From \eqref{eq1apr1122} and \eqref{eq2apr1122} we obtain

\begin{equation}\label{eq3apr1122}
\eqalign{
\displaystyle\prod_{j=1}^{d+1}|\mathcal{E}_{U_{j}}(f_{j}^{\delta})(\xi,t)|&\displaystyle\gtrsim \delta^{d(d+1)}\left[\phi_{\delta^{2}}(t)\prod_{l=1}^{d}\phi_{\delta}(\xi_{l})\right] \cr
&\displaystyle\times \left[\prod_{j=2}^{d}\phi_{\delta}(\xi_{1}-4t)\cdot\ldots\cdot \phi_{\delta}(\xi_{j-2}-4t)\cdot   \phi_{\delta}(\xi_{j-1}-8t)\cdot   \phi_{\delta}(\xi_{j})\cdot\ldots\cdot\phi_{\delta}(\xi_{d})\phi_{\delta^{2}}(t) \right] \cr
}
\end{equation}

Now we analyze the support of the product of the right-hand of \eqref{eq3apr1122}. Notice that we have at least one bump like $\phi_{\delta}(\xi_{j})$ for every $1\leq j\leq d+1$, so $|\xi_{j}|\lesssim\frac{1}{\delta}$ is a necessary condition for the product not to be zero. On the other hand, the conditions
\begin{equation*}
\eqalign{
|\xi_{j}|&\displaystyle\lesssim \frac{1}{\delta} \cr
|\xi_{j}-8t|&\displaystyle\lesssim \frac{1}{\delta} \cr
}
\end{equation*}
together imply $|t|\lesssim\frac{1}{\delta}$, which is much more restrictive than the $|t|\lesssim\frac{1}{\delta^{2}}$ that comes from the support of the bump $\phi_{\delta^{2}}(t)$. We conclude that the right-hand side of \eqref{eq3apr1122} is supported on the box
$$R^{\ast}_{\delta}=\left\{(\xi_{1},\ldots,\xi_{d},t)\in\mathbb{R}^{d+1};\quad |t|\lesssim\frac{1}{\delta}, \quad |\xi_{j}|\lesssim\frac{1}{\delta},\quad 1\leq j\leq d\right\}. $$

Finally,
\begin{equation}
\eqalign{
\displaystyle\frac{\left\|\prod_{j=1}^{d+1}\mathcal{E}_{U_{j}}f^{\delta}_{j}\right\|_{p}}{\prod_{j=1}^{d+1} \|f^{\delta}_{j}\|_{2}}\quad  &\displaystyle\gtrsim \quad \frac{\delta^{d(d+1)}\cdot |R^{\ast}_{\delta}|^{\frac{1}{p}}}{\delta^{\frac{d(d+1)}{2}}} \cr
&\displaystyle\gtrsim \quad \delta^{\frac{d(d+1)}{2}-\frac{1}{p}(d+1)} \cr
}
\end{equation}
and the claim follows.
\end{proof}

\begin{proof}[Proof of Claim \ref{claim2apr1022}] The outline of the following argument is the same as the one used in previous proof. Let $\xi = (\xi_{1},\ldots,\xi_{d})$. If $\delta^{2}<\frac{1}{\sqrt{t}}$,
\begin{equation*}
\eqalign{
\mathcal{E}_{U_{1}}(g_{1}^{\delta})(\xi,t)&\displaystyle=\prod_{j=1}^{k-1}\left[\int_{0}^{\delta^{2}}e^{-2\pi i\xi_{j}x_{j}}e^{-2\pi itx_{j}^{2}}\mathrm{d}x_{j}\right]\cdot\prod_{l=k}^{d}\left[\int_{0}^{\delta}e^{-2\pi i\xi_{l}x_{l}}e^{-2\pi itx_{l}^{2}}\mathrm{d}x_{l}\right] \cr
&\displaystyle= \prod_{j=1}^{k-1}\left[\int_{0}^{\delta^{2}}e^{-2\pi i\xi_{j}x_{j}}[\Phi_{0,t}(x_{j})+\Phi_{1,t}(x_{j})]\mathrm{d}x_{j}\right]\cdot\prod_{l=k}^{d}\underbrace{\left[\int_{0}^{\delta}e^{-2\pi i\xi_{l}x_{l}}\left(\sum_{n\in\mathbb{Z}}\Phi_{n,t}(x_{l})\right)\mathrm{d}x_{l}\right]}_{(\ast)}, \cr
}
\end{equation*}
since $\textnormal{supp}(\Phi_{n,t})\cap [0,\delta^{2}]=\emptyset$ if $n\in \mathbb{Z}\backslash \{0,1\}$. If $\delta<\frac{1}{\sqrt{t}}$ (which is stronger than the previous condition $\delta^{2}<\frac{1}{\sqrt{t}}$), we can eliminate most $\Phi_{n,t}$ in $(\ast)$ as well: 

\begin{equation*}
\mathcal{E}_{U_{1}}(g_{1}^{\delta})(\xi,t)\displaystyle= \prod_{j=1}^{k-1}\left[\int_{0}^{\delta^{2}}e^{-2\pi i\xi_{j}x_{j}}[\Phi_{0,t}(x_{j})+\Phi_{1,t}(x_{j})]\mathrm{d}x_{j}\right]\cdot\prod_{l=k}^{d}\left[\int_{0}^{\delta}e^{-2\pi i\xi_{l}x_{l}}\cdot [\Phi_{0,t}(x_{l})+\Phi_{1,t}(x_{l})]\mathrm{d}x_{l}\right],
\end{equation*}

If $|\xi_{j}x_{j}|<\frac{1}{N}$ (for $N$ big enough), we then have:
\begin{equation*}
\eqalign{
|\mathcal{E}_{U_{1}}&\displaystyle (g_{1}^{\delta})(\xi,t)| \cr
&\displaystyle = \prod_{j=1}^{k-1}\left|\int_{0}^{\delta^{2}}e^{-2\pi\xi_{j}x_{j}}[\Phi_{0,t}(x_{j})+\Phi_{1,t}(x_{j})]\mathrm{d}x_{j}\right|\cdot\prod_{l=k}^{d}\left|\int_{0}^{\delta}e^{-2\pi\xi_{l}x_{l}}\cdot [\Phi_{0,t}(x_{l})+\Phi_{1,t}(x_{l})]\mathrm{d}x_{l}\right|, \cr
&\displaystyle\gtrsim \delta^{2(k-1)+(d-k+1)} \cr
&\displaystyle=\delta^{d+k-1},
}
\end{equation*}
by the same argument presented when we analyzed \eqref{eq1may722}. We conclude that if $|\xi_{j}|\lesssim \frac{1}{\delta^{2}}$ for $1\leq j\leq k-1$, $|\xi_{l}|\lesssim\frac{1}{\delta}$ for $k\leq l\leq d$ and $|t|<\frac{1}{\delta^{2}}$, then\footnote{For general $|\tau|$, we would have $|\tau|$ conditions of type $|\xi_{j}|\lesssim\frac{1}{\delta^{2}}$ and $(d-|\tau|)$ like $|\xi_{l}|\lesssim\frac{1}{\delta}$.}
$$|\mathcal{E}_{U_{1}}(g_{1}^{\delta})(\xi,t)| \gtrsim \delta^{d+k-1}.$$

Using the same notation from the proof of Claim \ref{claim1apr1022}, we have just proved that
\begin{equation}\label{eq4apr1122}
|\mathcal{E}_{U_{1}}(g_{1}^{\delta})(\xi,t)|\gtrsim \delta^{d}\phi_{\delta^{2}}(\xi_{1})\cdot\ldots\cdot\phi_{\delta^{2}}(\xi_{k-1})\phi_{\delta}(x_{k})\cdot\ldots\cdot\phi_{\delta}(x_{d})\cdot\phi_{\delta^{2}}(t),
\end{equation}
where $\phi_{\delta}(\xi):=\phi(\delta x)$ and $\phi$ is a bump supported on $[-1,1]$. Analogously, if $\delta<\frac{1}{\sqrt{t}}$,
\begin{equation*}
\eqalign{
\displaystyle\mathcal{E}_{U_{2}}&\displaystyle (g_{2}^{\delta})(\xi,t) \cr
&\displaystyle=\left[\int_{4}^{4+\delta^{2}}e^{-2\pi i\xi_{1}x_{1}}e^{-2\pi itx_{1}^{2}}\mathrm{d}x_{1}\right]\cdot\prod_{j=2}^{k-1}\left[\int_{0}^{\delta^{2}}e^{-2\pi i\xi_{j}x_{j}}e^{-2\pi itx_{j}^{2}}\mathrm{d}x_{j}\right]\cdot \prod_{l=k}^{d}\left[\int_{0}^{\delta}e^{-2\pi i\xi_{l}x_{l}}e^{-2\pi itx_{l}^{2}}\mathrm{d}x_{l}\right]  \cr
&\displaystyle=\underbrace{\left[\int_{4}^{4+\delta^{2}}e^{-2\pi i\xi_{1}x_{1}}\left(\sum_{n\in\mathbb{Z}}\Phi_{n,t}(x_{1})\right)\mathrm{d}x_{1}\right]}_{M_{1}}\cdot \prod_{j=2}^{k-1}\underbrace{\left[\int_{0}^{\delta^{2}}e^{-2\pi i\xi_{j}x_{j}}\cdot [\Phi_{0,t}(x_{j})+\Phi_{1,t}(x_{j})]\mathrm{d}x_{j}\right]}_{M_{j}} \cr
&\displaystyle\times \prod_{l=k}^{d}\underbrace{\left[\int_{0}^{\delta}e^{-2\pi i\xi_{l}x_{l}}\cdot[\Phi_{0,t}(x_{l})+\Phi_{1,t}(x_{l})]\mathrm{d}x_{l}\right]}_{M_{l}}.
}
\end{equation*}

As in the proof of Claim \ref{claim1apr1022}, the main contribution for $M_{1}$ comes from $\Phi_{4\sqrt{t},t}$, whose Heisenberg box is $[4,4+\frac{1}{\sqrt{t}}]\times[8t,8t+\sqrt{t}]$. The modulation $e^{-2\pi i\xi_{i}x_{i}}$ shifts this box vertically by $-\xi_{1}$, and $M_{1}$ is negligible if the boxes $[4,4+\frac{1}{\sqrt{t}}]\times[8t-\xi_{1},8t+\sqrt{t}-\xi_{1}]$ and $[0,\delta^{2}]\times [0,\frac{1}{\delta^{2}}]$ are disjoint in frequency, so we need $|\xi_{1}-8t|\lesssim \frac{1}{\delta^{2}}$ to have a significant contribution to $M_{1}$. In that case, 

$$|M_{1}|\gtrsim \left|\int_{4}^{4+\delta^{2}}e^{-2\pi i\xi_{1}x_{1}}\Phi_{2\sqrt{t},t}(x_{1})\mathrm{d}x_{1}\right|\gtrsim \delta^{2}. $$

The analysis of $M_{j}$ for $2\leq j\leq k-1$ and of $M_{l}$ for $k\leq l\leq d-k+1$ are the same as the one for the factors of $E_{U_{1}}(g_{1}^{\delta})$. We conclude that if $|\xi_{1}-8t|\lesssim \frac{1}{\delta^{2}}$, $|\xi_{j}|\lesssim\frac{1}{\delta^{2}}$ for $2\leq j\leq k-1$, $|\xi_{l}|\lesssim\frac{1}{\delta}$ for $k\leq l\leq d$ and $|t|\leq\frac{1}{\delta^{2}}$, then
$$|\mathcal{E}_{U_{2}}(g_{2}^{\delta})(\xi,t)| \geq \delta^{d+k-1}.$$

As before,
\begin{equation*}
|\mathcal{E}_{U_{2}}(g_{2}^{\delta})(\xi,t)|\gtrsim \delta^{d}\phi_{\delta}(\xi_{1}-8t)\cdot \phi_{\delta^{2}}(\xi_{2})\ldots\cdot\phi_{\delta^{2}}(\xi_{k-1})\cdot\phi_{\delta}(\xi_{k})\cdot\ldots\cdot\phi_{\delta}(\xi_{d})\phi_{\delta^{2}}(t).
\end{equation*}

The extensions $\mathcal{E}_{U_{j}}(g^{\delta}_{j})$ for $3\leq j\leq k$ are treated in the same way. The conclusion is that
\begin{equation}\label{eq5apr1122}
|\mathcal{E}_{U_{j}}(g_{j}^{\delta})(\xi,t)|\gtrsim \delta^{d}\phi_{\delta}(\xi_{1}-4t)\cdot\ldots\cdot \phi_{\delta}(\xi_{j-2}-4t)\cdot   \phi_{\delta}(\xi_{j-1}-8t)\cdot   \phi_{\delta}(\xi_{j})\cdot\ldots\cdot\phi_{\delta}(\xi_{d})\phi_{\delta^{2}}(t)
\end{equation}
for all $2\leq j\leq k$. From \eqref{eq4apr1122} and \eqref{eq5apr1122} we obtain
\begin{equation}\label{eq6apr1122}
\eqalign{
\displaystyle\prod_{j=1}^{k}&\displaystyle|\mathcal{E}_{U_{j}}(g_{j}^{\delta})(\xi,t)| \cr
&\displaystyle\gtrsim \delta^{k(d+k-1)}\left[\phi_{\delta^{2}}(t)\prod_{l=1}^{k-1}\phi_{\delta^{2}}(\xi_{l})\cdot\prod_{n=k}^{d}\phi_{\delta}(\xi_{n})\right] \cr
&\displaystyle\times \left[\prod_{j=2}^{d}\left(\prod_{n=1}^{j-2}\phi_{\delta^{2}}(\xi_{n}-4t)\right)\cdot   \phi_{\delta^{2}}(\xi_{j-1}-8t)\cdot  \left( \prod_{m=j}^{k-1}\phi_{\delta^{2}}(\xi_{m})\right)\cdot \left(\prod_{r=k}^{d}\phi_{\delta}(\xi_{r})\right)\cdot\phi_{\delta^{2}}(t) \right]. \cr
}
\end{equation}

Notice that we have at least one bump like $\phi_{\delta^{2}}(\xi_{j})$ for every $1\leq j\leq k-1$ and at least one $\phi_{\delta}(\xi_{l})$ for $k\leq l\leq d$, so $|\xi_{j}|\lesssim\frac{1}{\delta^{2}}$ and $|\xi_{l}|\lesssim\frac{1}{\delta}$ are necessary conditions for the product not to be zero. On the other hand, the conditions
\begin{equation*}
\eqalign{
|\xi_{j}|&\displaystyle\lesssim \frac{1}{\delta^{2}} \cr
|\xi_{j}-8t|&\displaystyle\lesssim \frac{1}{\delta^{2}} \cr
}
\end{equation*}
together imply $|t|\lesssim\frac{1}{\delta^{2}}$, which does not add any new information compared to the one coming from the bump $\phi_{\delta^{2}}(t)$ (this is the main difference between the analysis in Claims \ref{claim1apr1022} and \ref{claim2apr1022}). We conclude that the right-hand side of \eqref{eq6apr1122} is supported on the box
$$S^{\ast}_{\delta}=\left\{(\xi_{1},\ldots,\xi_{d},t)\in\mathbb{R}^{d+1};\quad |t|\lesssim\frac{1}{\delta^{2}}; \quad |\xi_{j}|\lesssim\frac{1}{\delta^{2}},\quad 1\leq j\leq k-1; \quad |\xi_{l}|\lesssim\frac{1}{\delta},\quad k\leq l\leq d \right\}. $$

Finally,
\begin{equation}
\eqalign{
\displaystyle\frac{\left\|\prod_{j=1}^{k}\mathcal{E}_{U_{j}}g^{\delta}_{j}\right\|_{p}}{\prod_{j=1}^{d+1} \|g^{\delta}_{j}\|_{2}}\quad  &\displaystyle\gtrsim \quad \frac{\delta^{(d+k-1)k}\cdot |S^{\ast}_{\delta}|^{\frac{1}{p}}}{\delta^{\frac{(d+k-1)k}{2}}} \cr
&\displaystyle\gtrsim \quad \delta^{\frac{(d+k-1)k}{2}-\frac{(d+k+1)}{p}} \cr
}
\end{equation}
and the claim follows.

\end{proof}

\subsection{Transversality as a necessary condition in general}\label{appendixa2}

A natural question is: given $k$ cubes $U_{j}$, $1\leq j\leq k$, is it possible to prove

$$\left\| \prod_{j=1}^{k}\mathcal{E}_{U_{j}}g_{j}\right\|_{p}\lesssim \prod_{j=1}^{k}\|g_{j}\|_{2}$$
for $p\geq \frac{2(d+k+1)}{k(d+k-1)}$ and all $g_{j}\in L^{2}(U_{j})$ if the $U_{j}$'s are assumed to be weakly transversal?

The answer is no and we will address it in this second part of the first appendix. As a consequence, we conclude that Theorem \ref{mainthmpaper} is sharp under weak transversality, as observed in Remark \ref{rem1may722}.

We will treat the case $k=3$ and $d=2$ for simplicity, but a similar construction holds in general. If three boxes $U_{1},U_{2},U_{3}\subset\mathbb{R}^{2}$ are not transversal, there is a line that crosses them. Assume without loss of generality that $U_{1}=[0,1]^{2}$, $U_{2}=[2,3]^{2}$ and $U_{3}=[4,5]^{2}$. We will show that

$$\|E_{U_{1}}(h_{1})\cdot E_{U_{2}}(h_{2})\cdot E_{U_{3}}(h_{3})\|_{p} \lesssim \|h_{1}\|_{2}\cdot \|h_{2}\|_{2}\cdot \|h_{3}\|_{2} $$
only if $p\geq\frac{10}{9}$. The trilinear extension conjecture for $d=2$ states that $p\geq 1$ is the sharp range under the transversality hypothesis. 

\begin{claim}\label{claim1apr1222} Define the sets $D_{j}^{\delta}$ by
\begin{align*}
D_{1}^{\delta} &= \left[\frac{\sqrt{2}-\delta^{2}}{2},\frac{\sqrt{2}+\delta^{2}}{2}\right]\times \left[-\frac{\delta}{2},\frac{\delta}{2}\right],\\
D_{2}^{\delta} &= \left[\frac{5\sqrt{2}-\delta^{2}}{2},\frac{5\sqrt{2}+\delta^{2}}{2}\right]\times \left[-\frac{\delta}{2},\frac{\delta}{2}\right],\\
D_{3}^{\delta} &= \left[\frac{9\sqrt{2}-\delta^{2}}{2},\frac{9\sqrt{2}+\delta^{2}}{2}\right]\times \left[-\frac{\delta}{2},\frac{\delta}{2}\right].
\end{align*}

Define $h_{j}^{\delta}:=\mathbbm{1}_{D_{j}^{\delta}}$. Then
$$\frac{\left\|\prod_{j=1}^{3}\mathcal{E}_{D_{j}}h^{\delta}_{j}\right\|_{p}}{\prod_{j=1}^{3} \|h^{\delta}_{j}\|_{2}}\quad \gtrsim\quad\delta^{\frac{9}{2}-\frac{5}{p}}.$$
\end{claim}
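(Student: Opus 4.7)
The plan is to run a Knapp-type construction tailored to this specific non-transversal configuration, re-using the scale-$1$ phase-space decomposition of $e^{2\pi i t x^2}$ already employed in Claims \ref{claim1apr1022} and \ref{claim2apr1022}. The slabs $D_j^\delta$ share the same projection $[-\delta/2,\delta/2]$ onto the $x_2$-axis, and their $x_1$-centers $c_j=(4j-3)\sqrt{2}/2$ are in arithmetic progression with common gap $2\sqrt{2}$. The normals to the paraboloid over these centers all lie in a common vertical plane, which is the geometric obstruction: it forces the three Knapp tubes in physical space to overlap on a set bigger than what the transversal conjecture would allow.

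First I would analyze each $\mathcal{E}_{D_j^\delta}(h_j^\delta)$ individually. Factoring out the constant phase at the center $(c_j,0)$, the integrand becomes
\begin{equation*}
e^{-2\pi i (\xi_1+2t c_j)(x_1-c_j)}\cdot e^{-2\pi i \xi_2 x_2}\cdot e^{-2\pi i t[(x_1-c_j)^2+x_2^2]}
\end{equation*}
times a unimodular factor. For $|t|\lesssim \delta^{-2}$ the scale-$1$ phase-space decomposition produces bumps $\Phi_{n,t}$ of width $|t|^{-1/2}\gtrsim \delta$, so only the central bumps $\Phi_{0,t},\Phi_{1,t}$ meet the support of $h_j^\delta$ in each variable. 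The argument that led to \eqref{eq1may722} then shows that whenever
\begin{equation*}
|\xi_1+2t c_j|\lesssim \delta^{-2},\quad |\xi_2|\lesssim \delta^{-1},\quad |t|\lesssim \delta^{-2},
\end{equation*}
the integral has absolute value at least $\gtrsim |D_j^\delta|=\delta^3$. This identifies a Knapp region $R_j^{\ast}$ in frequency-time space.

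Next I would compute the intersection $R^{\ast}:=R_1^{\ast}\cap R_2^{\ast}\cap R_3^{\ast}$. Taking pairwise differences of the constraints $|\xi_1+2t c_j|\lesssim \delta^{-2}$ and using the gap $2\sqrt{2}$ gives $|t|\lesssim \delta^{-2}$ (no improvement) and then $|\xi_1|\lesssim \delta^{-2}$; the conditions $|\xi_2|\lesssim \delta^{-1}$ and $|t|\lesssim \delta^{-2}$ are preserved. Consequently $R^{\ast}$ is essentially a box of volume $\sim \delta^{-5}$. Lower-bounding the $L^p$ norm of the product on $R^{\ast}$ by $(\delta^3)^3\cdot |R^{\ast}|^{1/p}\sim \delta^{9-5/p}$ and dividing by the product of $L^2$ norms $(\delta^{3/2})^3=\delta^{9/2}$ yields the claimed $\delta^{9/2-5/p}$, and letting $\delta\to 0$ forces $p\geq 10/9$, which is strictly worse than the conjectured transversal exponent $2(d+k+1)/[k(d+k-1)]=1$ for $(d,k)=(2,3)$.

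The main obstacle is verifying that the two distinct scales of each slab (width $\delta^2$ in $x_1$ versus $\delta$ in $x_2$) interact correctly with the quadratic phase: one must confirm that the binding cap on $|t|$ coming from $|t|\cdot(\mathrm{diameter})^2\lesssim 1$ is the weaker $|t|\lesssim \delta^{-2}$ produced by the $\delta$-side, not the stronger $|t|\lesssim \delta^{-4}$ that the $\delta^2$-side would suggest; otherwise the intersection $R^{\ast}$ would have a far larger $t$-extent and would spoil the counter-example. This mismatch of scales is precisely what forces the aspect ratio $\delta^2\times \delta$ on us and distinguishes the present \emph{non-transversal} Knapp example, with sharp exponent $10/9$, from the genuinely transversal Knapp example of Claim \ref{claim2apr1022}. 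The same recipe generalizes: for arbitrary $d$ and $2\le k\le d+1$, placing slabs of dimensions $\delta^2\times \delta\times \cdots\times \delta$ along a line inside $\mathbb{R}^d$ produces an intersection box of volume $\sim \delta^{-(d+k+1)}$ and a lower bound of the form $\delta^{k(d+k-1)/2-(d+k+1)/p}$, which fails for $p<2(d+k+1)/[k(d+k-1)-1]$ and hence beats the conjectured transversal threshold.
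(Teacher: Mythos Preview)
Your core argument for the stated claim is correct and follows exactly the approach the paper intends (the paper's proof simply says ``analogous to Claims \ref{claim1apr1022} and \ref{claim2apr1022}''). The factorization of the extension into one-dimensional integrals, the identification of each Knapp region $R_j^{\ast}$, the intersection analysis giving $|R^{\ast}|\sim\delta^{-5}$, and the final ratio $\delta^{9/2-5/p}$ are all right.

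Two inaccuracies in your surrounding commentary, however. First, the explanation in the ``main obstacle'' paragraph is off. The $t$-extent of $R^{\ast}$ is \emph{always} $\sim\delta^{-2}$, because the pairwise differences of the conditions $|\xi_1+2tc_j|\lesssim\delta^{-2}$ already force $|t|\lesssim\delta^{-2}$ regardless of what the $x_2$-side contributes. Replacing the $\delta$-side by a $\delta^2$-side would not enlarge the $t$-extent of $R^{\ast}$; it would instead change the amplitude (to $\delta^4$ per factor) and the $\xi_2$-extent (to $\delta^{-2}$), and the resulting ratio $\delta^{6-6/p}$ only recovers the transversal threshold $p\ge 1$. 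So the mixed aspect ratio is indeed essential, but for a different reason than the one you give.

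Second, your generalization formulas are wrong. For $k$ slabs of dimensions $\delta^2\times\delta\times\cdots\times\delta$ in $\mathbb{R}^d$ placed along a line, the intersection box has volume $\sim\delta^{-(d+3)}$ (not $\delta^{-(d+k+1)}$), the $k$-fold amplitude is $\delta^{k(d+1)}$, the product of $L^2$ norms is $\delta^{k(d+1)/2}$, and the ratio is $\delta^{k(d+1)/2-(d+3)/p}$, forcing $p\ge 2(d+3)/[k(d+1)]$. For $d=2$, $k=3$ this gives $10/9$, consistent with what you actually proved; your displayed exponents $k(d+k-1)/2$ and $d+k+1$ do not specialize correctly (they would give $\delta^{6-6/p}$ and threshold $p\ge 1$, which is no counterexample at all).
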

\begin{proof} The proof is analogous to the ones of Claims \ref{claim1apr1022} and \ref{claim2apr1022}.
\end{proof}

Let the rhombuses $\widetilde{D}_{j}$ be given as follows:

\begin{align*}
\widetilde{D}_{1} &= \textnormal{Conv}\left((0,0);(\frac{\sqrt{2}}{2},\frac{\sqrt{2}}{2});(\frac{\sqrt{2}}{2},-\frac{\sqrt{2}}{2});(\sqrt{2},0)\right),\\
\widetilde{D}_{2} &= \textnormal{Conv}\left((2\sqrt{2},0);(\frac{5\sqrt{2}}{2},\frac{\sqrt{2}}{2});(\frac{5\sqrt{2}}{2},-\frac{\sqrt{2}}{2});(3\sqrt{2},0)\right),\\
\widetilde{D}_{3} &= \textnormal{Conv}\left((4\sqrt{2},0);(\frac{9\sqrt{2}}{2},\frac{\sqrt{2}}{2});(\frac{9\sqrt{2}}{2},-\frac{\sqrt{2}}{2});(5\sqrt{2},0)\right).
\end{align*}

Observe that $D_{j}^{\delta}\subset \widetilde{D}_{j}$ for $\delta>0$ small enough. Extend the domain of $h_{j}^{\delta}$ to $\widetilde{D}_{j}$ so that it is 0 on $\widetilde{D}_{j}\backslash D^{\delta}_{j}$.

\begin{figure}[h]
  \centering
\captionsetup{font=normalsize,skip=1pt,singlelinecheck=on}
  \includegraphics[scale=.4]{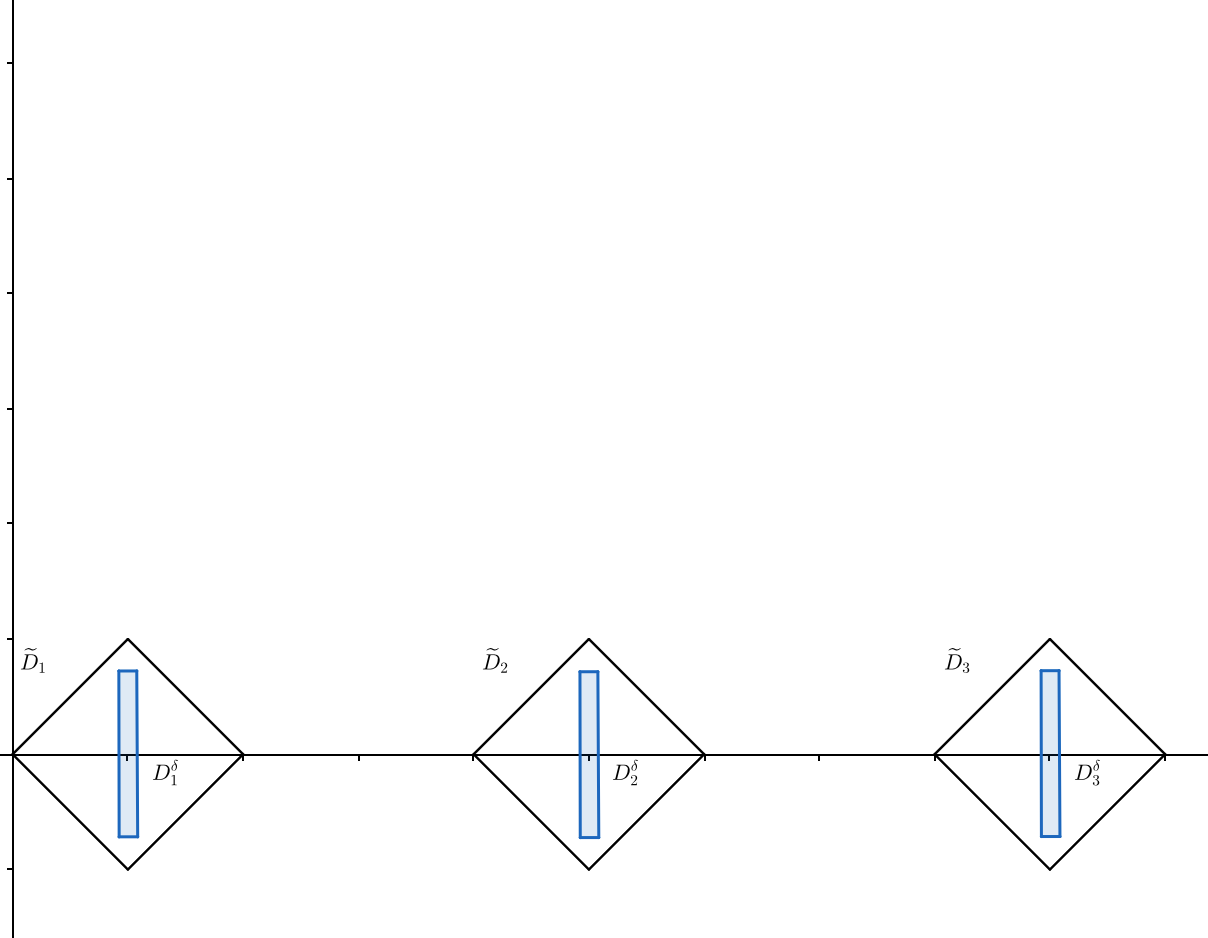}
 \caption{The function $h^{\delta}_{j}$.}
\end{figure}

Let $T$ be a $\frac{\pi}{4}$ counterclockwise rotation and let
$$H^{\delta}_{j}(x):= h^{\delta}_{j}\circ T^{-1}(x).$$
Notice that $T$ takes $\widetilde{D}_{j}$ to $U_{j}$, as shown in the picture below.

\begin{figure}[h]
  \centering
\captionsetup{font=normalsize,skip=1pt,singlelinecheck=on}
  \includegraphics[scale=.4]{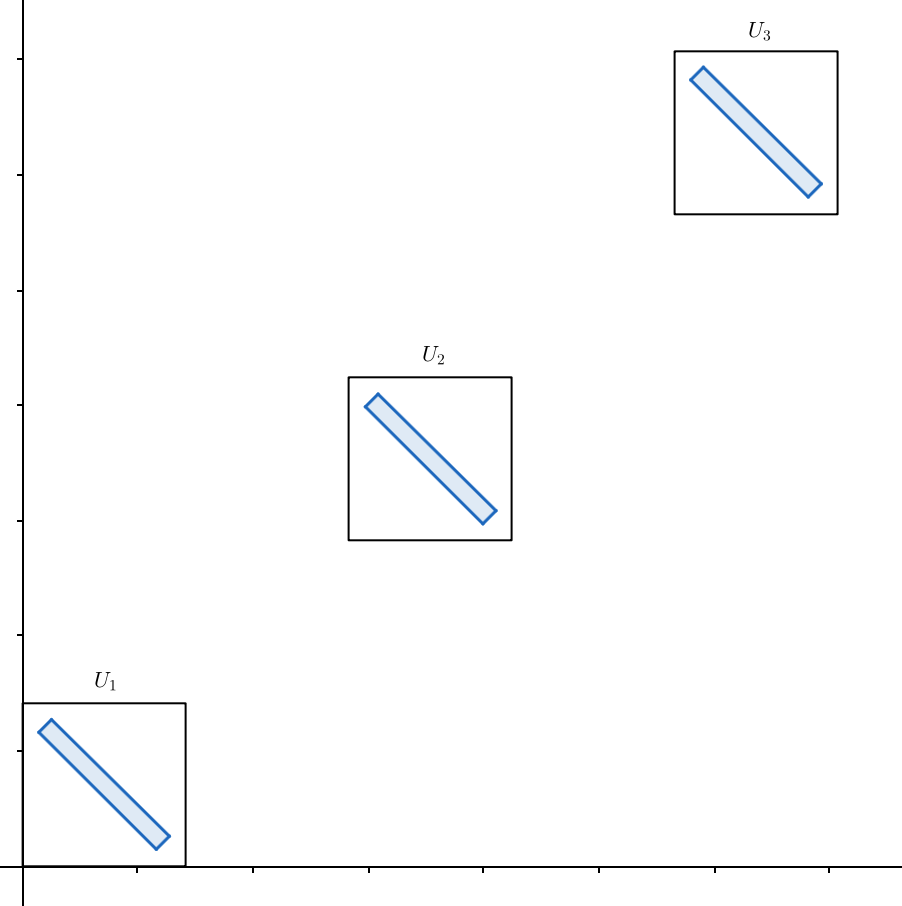}
 \caption{$H^{\delta}_{j}$ is supported on $U_{j}$.}
\end{figure}

Since $L^{p}$ norms are invariant under rotations, we have

$$\frac{\left\|\prod_{j=1}^{3}\mathcal{E}_{U_{j}}H^{\delta}_{j}\right\|_{p}}{\prod_{j=1}^{3} \|H^{\delta}_{j}\|_{2}}\quad \gtrsim\quad\delta^{\frac{9}{2}-\frac{5}{p}}$$
from Claim \ref{claim1apr1222}. Letting $\delta\rightarrow 0$ shows that we need $p\geq\frac{10}{9}$, so the sharp range $p\geq 1$ can not be obtained if the boxes $U_{1}, U_{2}, U_{3}$ are not transversal.

\begin{remark} As expected, the functions $H^{\delta}_{j}$ do not have a tensor structure with respect to the canonical basis. If this was the case, our methods would have allowed us to prove that the corresponding trilinear extension operator maps $L^{2}\times L^{2}\times L^{2}$ to $L^{1}$.
\end{remark}

\section{Technical results}\label{appendixb}
Here we collect a few technical results used throughout the paper.

\begin{theorem}\label{fracint} For $0<\gamma<d$, $1<p<q<\infty$, and $\frac{1}{q}=\frac{1}{p}-\frac{d-\gamma}{d}$, we have
\begin{equation}
\|f\ast(|y|^{-\gamma})\|_{L^{q}(\mathbb{R}^{d})}\leq A_{p,q}\cdot\|f\|_{L^{p}(\mathbb{R}^{d})}.
\end{equation}
\end{theorem}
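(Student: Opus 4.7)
The plan is to follow Hedberg's classical approach, which derives the strong $(p,q)$ bound directly from a pointwise inequality involving the Hardy--Littlewood maximal function $Mf$. The point is that the bound for the fractional integral can be split into a ``local'' part, controlled by $Mf$, and a ``global'' part, controlled by $\|f\|_{p}$ via H\"older's inequality, and then one optimizes the splitting parameter.

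Concretely, for a parameter $R>0$ to be chosen, I would write
$$
(f\ast |y|^{-\gamma})(x)=\int_{|y|<R}f(x-y)|y|^{-\gamma}\mathrm{d}y+\int_{|y|\geq R}f(x-y)|y|^{-\gamma}\mathrm{d}y=:I_{R}(x)+II_{R}(x).
$$
For $I_{R}(x)$, the standard trick is to decompose the ball $\{|y|<R\}$ into dyadic annuli $\{2^{-k-1}R\leq |y|<2^{-k}R\}$, $k\geq 0$, and bound the mass of $|f|$ on each by $|B(0,2^{-k}R)|\cdot Mf(x)$; summing the geometric series in $k$ gives $|I_{R}(x)|\lesssim R^{d-\gamma}Mf(x)$. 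For $II_{R}(x)$, H\"older's inequality with exponents $p$ and $p'$ and the fact that $|y|^{-\gamma p'}$ is integrable at infinity (which is exactly $\gamma p'>d$, an easy consequence of $1/q>0$ and the scaling relation) yields $|II_{R}(x)|\lesssim R^{d/p'-\gamma}\|f\|_{p}$.

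Optimizing by choosing $R$ so the two bounds are comparable, namely $R=(\|f\|_{p}/Mf(x))^{p/d}$, gives the pointwise inequality
$$
|(f\ast |y|^{-\gamma})(x)|\lesssim \|f\|_{p}^{1-p/q}\,(Mf(x))^{p/q},
$$
where the scaling relation $\tfrac{1}{q}=\tfrac{1}{p}-\tfrac{d-\gamma}{d}$ is used precisely to identify the exponent of $Mf(x)$ as $p/q$. Taking the $L^{q}$ norm on both sides and invoking the Hardy--Littlewood maximal theorem $\|Mf\|_{p}\lesssim\|f\|_{p}$ (valid for $p>1$, which is where the restriction $p>1$ enters) converts this into $\|f\ast|y|^{-\gamma}\|_{q}\lesssim \|f\|_{p}$, as desired.

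The argument is completely standard and the only real ``obstacle'' to address cleanly is verifying that the scaling arithmetic lines up: that $\gamma p'>d$ holds automatically under the hypothesis (to make the tail integral in $II_{R}$ converge) and that the power of $Mf$ after optimization is exactly $p/q\in(0,1)$ (so that raising to the $q$-th power and using maximal inequality in $L^{p}$ gives the right answer). Once these small checks are done, the pointwise bound plus the Hardy--Littlewood maximal inequality closes the proof.
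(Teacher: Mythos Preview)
Your Hedberg-type argument is correct and complete: the dyadic-shell bound for the local piece, the H\"older bound for the tail (with the integrability check $\gamma p'>d$ following from $1/q>0$ and the scaling identity $\gamma/d=1/q+1/p'$), the optimization in $R$, and the application of the Hardy--Littlewood maximal theorem all go through as you describe.

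As for comparison: the paper does not actually supply a proof of this statement. It is recorded in the appendix as a standard technical input and the ``proof'' consists solely of a citation to Proposition~7.8 in \cite{MS}. So your proposal is not merely a different route---it is a self-contained argument where the paper only points to the literature. The Hedberg approach you outline is in fact one of the standard proofs (and is essentially the one found in many references, including the cited text), so there is no tension here; you have simply filled in what the paper outsources.
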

\begin{proof} Proposition $7.8$ in \cite{MS}
\end{proof}

\begin{theorem}[Nonstationary phase]\label{nonstat-22aug2021} Let $a\in C^{\infty}_{0}$ and
$$I(\lambda)=\int_{\mathbb{R}^{d}}e^{2\pi i\lambda\phi(\xi)}a(\xi)\mathrm{d}\xi. $$
If $\nabla\phi\neq 0$ on supp($a$), then
$$|I(\lambda)|\leq C(N,a,\phi)\lambda^{-N} $$
as $\lambda\rightarrow\infty$, for arbitrary $N\geq 1$.
\end{theorem}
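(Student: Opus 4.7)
The plan is to use repeated integration by parts, exploiting the non-vanishing of $\nabla\phi$ on the support of $a$ to produce a factor of $\lambda^{-1}$ at each step. Since $a$ is compactly supported and $\nabla\phi\neq 0$ on $\mathrm{supp}(a)$, there is an open neighborhood $U$ of $\mathrm{supp}(a)$ on which $|\nabla\phi|$ is bounded below by some positive constant, and on which $\phi$ is smooth. Hence the vector field
$$V := \frac{\nabla\phi}{|\nabla\phi|^{2}}$$
is smooth on $U$, and one may introduce the first-order differential operator
$$L := \frac{1}{2\pi i\lambda}\, V\cdot\nabla = \frac{1}{2\pi i\lambda}\sum_{j=1}^{d}\frac{\partial_{j}\phi}{|\nabla\phi|^{2}}\,\partial_{j}.$$

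The key algebraic identity is $L\bigl(e^{2\pi i\lambda\phi}\bigr)=e^{2\pi i\lambda\phi}$, which is immediate from the chain rule. I would then apply this identity $N$ times inside the integral defining $I(\lambda)$ and integrate by parts; the absence of boundary terms is guaranteed by $a\in C^{\infty}_{0}$, and repeated integration by parts yields
$$I(\lambda) \;=\; \int_{\mathbb{R}^{d}} e^{2\pi i\lambda\phi(\xi)}\,(L^{\ast})^{N} a(\xi)\,\mathrm{d}\xi,$$
where $L^{\ast}$ is the formal adjoint of $L$. The operator $L^{\ast}$ is a first-order differential operator with smooth coefficients on $U$ and carries a factor of $\lambda^{-1}$; thus $(L^{\ast})^{N}a$ is a smooth, compactly supported function bounded pointwise by $C(N,a,\phi)\,\lambda^{-N}$.

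The resulting estimate
$$|I(\lambda)| \;\leq\; \|(L^{\ast})^{N}a\|_{L^{1}(\mathbb{R}^{d})} \;\leq\; C(N,a,\phi)\,\lambda^{-N}$$
follows immediately by the triangle inequality, completing the argument. The only step that requires some care is verifying that $(L^{\ast})^{N}a$ is indeed bounded uniformly (with a constant depending on $\phi$ and $a$ but independent of $\lambda$ up to the explicit $\lambda^{-N}$ prefactor). This is a routine product-rule computation: each application of $L^{\ast}$ introduces derivatives falling either on $a$ (which are bounded since $a\in C^{\infty}_{0}$) or on the smooth coefficients $\partial_{j}\phi/|\nabla\phi|^{2}$ (bounded on the compact set $\mathrm{supp}(a)$), and extracts one more factor of $(2\pi i\lambda)^{-1}$. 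No genuine obstacle arises; the result follows from a purely mechanical iteration whose only prerequisite is the lower bound $|\nabla\phi|\geq c>0$ on $\mathrm{supp}(a)$, which is precisely the hypothesis.
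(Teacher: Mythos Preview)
Your argument is correct; it is the standard nonstationary phase proof via iterated integration by parts against the operator $L = (2\pi i\lambda)^{-1}\,|\nabla\phi|^{-2}\,\nabla\phi\cdot\nabla$. The paper does not actually prove this statement but simply cites Lemma~4.14 in \cite{MS}, so there is no alternative approach to compare against --- your write-up is presumably what appears in the cited reference.
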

\begin{proof} Lemma $4.14$ in \cite{MS}.
\end{proof}

\begin{theorem}[Stationary phase]\label{stat-22aug2021} If $\nabla\phi(\xi_{0}) = 0$ for some $\xi_{0}\in \textnormal{supp($a$)}$, $\nabla\phi\neq 0$ away from $\xi_{0}$ and the Hessian of $\phi$ at the stationary point $\xi_{0}$ is nondegenerate, i.e., $\det{D^{2}\phi(\xi_{0})}\neq 0$, then for all $\lambda\geq 1$
$$|I(\lambda)|\leq C(N,a,\phi)\lambda^{-\frac{d}{2}}. $$
\end{theorem}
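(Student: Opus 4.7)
The plan is to combine a localization argument with a normal-form reduction of the phase near the critical point. First, choose a smooth partition of unity $1 = \chi(\xi) + (1-\chi(\xi))$ where $\chi$ is supported in a small ball $B(\xi_0, r)$ and is identically $1$ on $B(\xi_0, r/2)$. This splits $I(\lambda) = I_{\text{far}}(\lambda) + I_{\text{near}}(\lambda)$. On the support of $(1-\chi)a$, the hypothesis $\nabla \phi \neq 0$ holds, so Theorem \ref{nonstat-22aug2021} immediately gives $|I_{\text{far}}(\lambda)| \lesssim_N \lambda^{-N}$ for any $N$, which is much stronger than what we need. So the entire task is to bound $I_{\text{near}}(\lambda)$.

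For $I_{\text{near}}(\lambda)$, the idea is to reduce the phase to a quadratic form. By the Morse lemma, since $\det D^2\phi(\xi_0) \neq 0$, after shrinking $r$ we can find a smooth diffeomorphism $\Psi : B(0,r') \to B(\xi_0, r)$ with $\Psi(0) = \xi_0$ such that
\begin{equation*}
\phi(\Psi(\eta)) = \phi(\xi_0) + \tfrac{1}{2}\langle Q\eta, \eta\rangle,
\end{equation*}
where $Q = D^2\phi(\xi_0)$ is a nonsingular symmetric matrix. Changing variables $\xi = \Psi(\eta)$ and absorbing the Jacobian into a new amplitude $\tilde{a}(\eta) := \chi(\Psi(\eta)) a(\Psi(\eta)) |\det D\Psi(\eta)|$, which is smooth and compactly supported, yields
\begin{equation*}
I_{\text{near}}(\lambda) = e^{2\pi i \lambda \phi(\xi_0)} \int_{\mathbb{R}^d} e^{\pi i \lambda \langle Q\eta,\eta\rangle} \tilde{a}(\eta)\, d\eta.
\end{equation*}

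Next, rescale $\eta = \lambda^{-1/2}\zeta$, which produces a factor $\lambda^{-d/2}$ out front:
\begin{equation*}
I_{\text{near}}(\lambda) = \lambda^{-d/2} e^{2\pi i \lambda \phi(\xi_0)} \int_{\mathbb{R}^d} e^{\pi i \langle Q\zeta,\zeta\rangle} \tilde{a}(\lambda^{-1/2}\zeta)\, d\zeta.
\end{equation*}
It then suffices to show that the remaining integral is bounded uniformly in $\lambda \geq 1$. This is essentially a Fresnel-type integral: one can diagonalize $Q$ by an orthogonal transformation and use the well-known absolute convergence (in a principal value/oscillatory sense) of one-dimensional integrals $\int e^{\pm \pi i \zeta_j^2} b(\lambda^{-1/2}\zeta)\,d\zeta_j$, which can be made rigorous by integration by parts against $L = \frac{1}{2\pi i \zeta_j}\partial_{\zeta_j}$ on the region $|\zeta_j| \geq 1$ to harvest decay and direct estimation on $|\zeta_j|\leq 1$. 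Alternatively, inserting a Gaussian regularizer $e^{-\varepsilon|\zeta|^2}$ and sending $\varepsilon \to 0$ reduces the integral to a Gaussian computation giving the classical value $|\det Q|^{-1/2}$ times a bounded remainder from $\tilde{a}$.

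The main technical obstacle is obtaining uniform bounds on the rescaled oscillatory integral as $\lambda \to \infty$: while $\tilde{a}(\lambda^{-1/2}\zeta)$ is bounded pointwise, it is \emph{not} compactly supported uniformly in $\lambda$, so one cannot just integrate over a fixed compact set. The right tool is repeated integration by parts against the vector field dual to $\nabla \langle Q\zeta, \zeta\rangle = 2Q\zeta$, which is nonzero for $|\zeta|$ large. This produces extra decay in $|\zeta|$ that compensates for the lack of compact support and gives the sought uniform bound, completing the proof with the constant $C(N,a,\phi)$ depending on finitely many derivatives of $a$ and $\phi$ in a neighborhood of $\xi_0$.
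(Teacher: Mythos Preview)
Your sketch is correct and follows the standard stationary phase argument: localize near the critical point via a partition of unity, dispose of the far piece by Theorem \ref{nonstat-22aug2021}, apply the Morse lemma to reduce the phase to a nondegenerate quadratic, rescale by $\lambda^{-1/2}$ to extract the factor $\lambda^{-d/2}$, and then bound the remaining Fresnel-type integral uniformly in $\lambda$ via integration by parts against the radial vector field where $|\zeta|$ is large. One small inaccuracy: the Morse lemma in its usual form produces the signature normal form $\phi(\Psi(\eta)) = \phi(\xi_0) + \tfrac{1}{2}\sum_j \pm\eta_j^2$ rather than literally $\tfrac{1}{2}\langle D^2\phi(\xi_0)\eta,\eta\rangle$, but since any fixed nonsingular symmetric $Q$ works identically for the rescaling and Fresnel estimate, this is cosmetic.

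The paper itself does not give a proof of this theorem at all; it simply cites Lemma~4.15 in \cite{MS}. Your argument is essentially the proof one finds in that reference (and in any standard treatment of stationary phase), so you have in fact supplied more than the paper does.
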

\begin{proof} Lemma $4.15$ in \cite{MS}.
\end{proof}

We now restate and prove the main claim from Section \ref{twt}:

\begin{claim}\label{claima2-081221} Given a collection $\mathcal{Q}=\{Q_{1},\ldots,Q_{k}\}$ of transversal cubes, each $Q_{l}\in\mathcal{Q}$ can be partitioned into $O(1)$ many sub-cubes
$$Q_{l}=\bigcup_{i}Q_{l,i} $$
so that all collections $\widetilde{\mathcal{Q}}$ made of picking one sub-cube $Q_{l,i}$ per $Q_{l}$
$$\widetilde{\mathcal{Q}}=\{\widetilde{Q}_{1},\ldots,\widetilde{Q}_{k}\},\quad \widetilde{Q}_{l}\in\{Q_{l,i}\}_{i},$$
are weakly transversal.
\end{claim}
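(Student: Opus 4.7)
The plan is to encode transversality of the caps above $Q_1,\ldots,Q_k$ as a lower bound on a $(k-1)\times(k-1)$ minor determinant built from the base points, and then convert that determinantal bound into the coordinate separation demanded by Definition~\ref{weaktransv} via pigeonhole. The normal vector to the paraboloid $x\mapsto|x|^2$ at $x^j$ is parallel to $(2x^j,-1)$, so for any selection of $x^j\in Q_j$ one checks that
$$|v_1\wedge\cdots\wedge v_k|\;\gtrsim_{k}\;\mathrm{Vol}_{k-1}\bigl(x^2-x^1,\ldots,x^k-x^1\bigr).$$
Thus $c$-transversality is equivalent, up to a factor depending only on $k$, to the $(k-1)$-dimensional parallelepiped generated by the vectors $x^j-x^1$ having volume bounded below by some $c_1>0$ in $\mathbb{R}^d$. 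By the Cauchy--Binet formula, this yields a choice of distinct columns $i_1,\ldots,i_{k-1}\in\{1,\ldots,d\}$ (possibly depending on the $x^j$) for which the corresponding $(k-1)\times(k-1)$ minor of the matrix whose rows are $x^j-x^1$ has determinant of absolute value at least some $c_2>0$ depending only on $c$, $k$, and $d$.

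First I would expand this minor as $\sum_{\sigma}\mathrm{sgn}(\sigma)\prod_{l=2}^{k}(x^l-x^1)_{i_{\sigma(l)}}$ and apply pigeonhole to extract a permutation $\sigma$ satisfying
$$\prod_{l=2}^{k}\bigl|(x^l-x^1)_{i_{\sigma(l)}}\bigr|\;\geq\;\tfrac{c_2}{(k-1)!}.$$
Since every factor is bounded above by the diameter of the supporting cubes, this product bound forces a uniform lower bound $|(x^l-x^1)_{i_{\sigma(l)}}|\geq c_3$ on each factor, where $c_3>0$ depends only on $c,k,d$ and the sizes of the $Q_l$. At this point $c_3$ is determined by the original data alone, but the columns and the permutation still depend on the anchor points; the remaining task is to promote this pointwise separation to a separation of entire sub-cubes.

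Next I would subdivide each $Q_l$ into $O(1)$ sub-cubes of side length at most $c_3/10$. For any sub-collection $\widetilde{\mathcal{Q}}=\{\widetilde Q_1,\ldots,\widetilde Q_k\}$ with $\widetilde Q_l\subset Q_l$, I fix anchor points $x^j\in\widetilde Q_j$ and run the previous step to produce columns and a permutation with $|(x^l-x^1)_{i_{\sigma(l)}}|\geq c_3$. Any other points $y^j\in\widetilde Q_j$ differ from $x^j$ in every coordinate by at most $c_3/10$, hence
$$\bigl|(y^l-y^1)_{i_{\sigma(l)}}\bigr|\;\geq\;c_3-\tfrac{2c_3}{10}\;>\;0,$$
which is precisely the statement that $\overline{\pi_{i_{\sigma(l)}}(\widetilde Q_1)}\cap\overline{\pi_{i_{\sigma(l)}}(\widetilde Q_l)}=\emptyset$ for every $l=2,\ldots,k$. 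This establishes weak transversality of $\widetilde{\mathcal{Q}}$ with pivot $\widetilde Q_1$. Rerunning the argument with any other $\widetilde Q_j$ playing the role of pivot---using the same subdivision, since the threshold $c_3$ depends symmetrically on the $Q_l$---gives the full conclusion of Definition~\ref{weaktransv}.

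The main obstacle is the non-uniqueness of the columns $i_1,\ldots,i_{k-1}$ and the permutation $\sigma$: they can in principle jump from one sub-collection to another, or change with the pivot. Choosing the sub-cube diameter strictly below $c_3$ is what makes the argument genuinely uniform in the anchor points, since then the relevant coordinate-wise gaps cannot close over a single sub-cube. The bound on the number of sub-cubes is then automatic, because $c_3$ depends only on the fixed data $c,k,d$ and the sizes of the original $Q_l$.
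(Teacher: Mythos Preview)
Your argument is correct and proceeds along a genuinely different route from the paper's. The paper subdivides combinatorially, using the endpoints of all the projections $\pi_j(Q_l)$ so that any two sub-cube projections are either disjoint, identical, or meet in a single common endpoint (all on the same side); it then selects canonical anchor points compatible with this trichotomy and invokes a separate linear-algebra lemma (their Lemma~\ref{matrixlemma-081221}) about rank-$k$ matrices with a row of ones to extract, for each pivot column, $k-1$ rows witnessing entrywise inequalities, which the choice of anchors converts back into disjoint projections. You bypass that lemma entirely: Cauchy--Binet plus pigeonhole on the Leibniz expansion hands you a uniform coordinate-separation scale $c_3$ directly, and you then subdivide metrically at scale $c_3/10$. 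Your approach is self-contained and makes the dependence of the number of sub-cubes on the transversality constant explicit; the paper's combinatorial subdivision does not track $c$ quantitatively but produces sub-cubes whose projections satisfy a clean trichotomy, a structural feature they actually need later in the converse half of Theorem~\ref{thm1-010123}. One small slip: the displayed inequality $|v_1\wedge\cdots\wedge v_k|\gtrsim_k \mathrm{Vol}_{k-1}(\cdots)$ is the direction you do \emph{not} use---what the argument requires is the reverse comparison (so that a lower bound on the wedge forces a lower bound on the volume), and the constant there depends on the sizes of the $Q_l$ as well as on $k$; you acknowledge this dependence later when defining $c_3$, so the argument is unaffected.
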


\begin{proof} For each $1\leq j\leq d$, consider the set $A_{j}$ of endpoints of the intervals $\pi_{j}(Q_{1}),\ldots,\pi_{j}(Q_{k})$. Using these endpoints to partition this collection of intervals, one can assume that there are three cases for two cubes $Q_{r}$ and $Q_{s}$:

\begin{enumerate}
\item $\pi_{j}(Q_{r})\cap \pi_{j}(Q_{s})=\emptyset$.
\item $\pi_{j}(Q_{r})= \pi_{j}(Q_{s})$.
\item $\pi_{j}(Q_{r})\cap\pi_{j}(Q_{s})=\{p_{r,s}\}$, where $p_{r,s}$ is an endpoint of both $\pi_{j}(Q_{r})$ and $\pi_{j}(Q_{s})$.
\end{enumerate}

We can go one step further and assume that all $\pi_{j}(Q_{s})$ that intersect a given $\pi_{j}(Q_{r})$ (but distinct from it) do so at the same endpoint. Indeed, if $\pi_{j}(Q_{s_{1}})\cap\pi_{j}(Q_{r})=\{p\}$, $\pi_{j}(Q_{s_{2}})\cap\pi_{j}(Q_{r})=\{q\}$ and $\pi_{j}(Q_{r})=[p,q]$, we can simply split $\pi_{j}(Q_{r})$ in half and obtain intervals that satisfy this property.

Now we choose a point $x_{j,r}$ in every interval $\pi_{j}(Q_{r})$:

\begin{enumerate}
\item If $\pi_{j}(Q_{r})\cap\pi_{j}(Q_{s})=\emptyset$ for all $s\neq r$, let $x_{j,r}$ be $c_{j,r}$, the center of $\pi_{j}(Q_{r})$.
\item If $\pi_{j}(Q_{r})$ intersects some $\pi_{j}(Q_{s_{1}})$ at $p$, any other $\pi_{j}(Q_{s_{2}})$ that intersects $\pi_{j}(Q_{r})$ also does it at $p$. In this case choose $x_{j,r}=x_{j,s}=p$ for all $s$ such that $\pi_{j}(Q_{r})\cap \pi_{j}(Q_{s})\neq\emptyset$.
\end{enumerate}

Let us now show that, after the reductions above, the transversal set of cubes $\mathcal{Q}$ is weakly transversal. More precisely, for a fixed $1\leq l\leq k$, we will show that there is a set of $(k-1)$ canonical directions that together with $Q_{l}$ satisfy \eqref{eqwtmay722}. Let $\overrightarrow{x_{i}}\in Q_{i}$ for $1\leq i\leq k$ be given in coordinates by
$$\overrightarrow{x_{i}}=(x_{1,i},x_{2,i},\ldots,x_{d,i}). $$

The normal vector to $\mathbb{P}^{d}$ at $\overrightarrow{x_{i}}$ is

$$\overrightarrow{v_{i}} = (-2x_{1,i},-2x_{2,i},\ldots,-2x_{d,i},1). $$

Then the cubes in $\mathcal{Q}$ are transversal if and only if the matrix

\begin{equation*}
\begin{pmatrix}
-2x_{1,1} & -2x_{1,2} & \cdots & -2x_{1,k} \\
-2x_{2,1} & -2x_{2,2} & \cdots & -2x_{2,k} \\
\vdots  & \vdots  & \ddots & \vdots  \\
-2x_{d,1} & -2x_{d,2} & \cdots & -2x_{d,k} \\
1 & 1 & \cdots & 1
\end{pmatrix}
\end{equation*}
has rank $k$ for all $x_{j,i}\in \pi_{j}(Q_{i})$, $1\leq j\leq d$, $1\leq i\leq k$.

By Lemma \ref{matrixlemma-081221} (proven at the end of this appendix), there are $(k-1)$ rows $R_{i_{n}}=(-2x_{i_{n},1},\ldots,-2x_{i_{n},k})$ of the above matrix, $1\leq n\leq k-1$, such that

\[
    \begin{dcases}
        x_{i_{1},l} \neq x_{i_{1},1} \\
       \qquad \vdots \\
        x_{i_{l-1},l} \neq x_{i_{l-1},l-1} \\
         x_{i_{l},l} \neq x_{i_{l},l+1} \\
          \qquad \vdots \\
        x_{i_{k-1},l}\neq x_{i_{k-1},k}. \\
    \end{dcases}
\]

Because of the choices we made,  $x_{i_{n},l}\neq x_{i_{n},r}$ implies $\pi_{i_{n}}(Q_{l})\cap\pi_{i_{n}}(Q_{r})=\emptyset$, which finishes the proof.

\end{proof}

Finally, we state and prove the auxiliary linear algebra lemma used in the proof of Claim \ref{claima2-081221}.

\begin{lemma}\label{matrixlemma-081221} Let $M$ be the $(d+1)\times k$ matrix

\begin{equation*}
M = 
\begin{pmatrix}
a_{1,1} & a_{1,2} & \cdots & a_{1,k} \\
a_{2,1} & a_{2,2} & \cdots & a_{2,k} \\
\vdots  & \vdots  & \ddots & \vdots  \\
a_{d,1} & a_{d,2} & \cdots & a_{d,k} \\
1 & 1 & \cdots & 1
\end{pmatrix}
\end{equation*}
and assume that it has rank $k$. For each column $C_{j}=(a_{1,j},\ldots,a_{d,j},1)$ there are $(k-1)$ rows $R_{i_{l}}=(a_{i_{l},1},\ldots,a_{i_{l},k})$, $1\leq l\leq k-1$, such that

\[
    \begin{dcases}
        a_{i_{1},j} \neq a_{i_{1},l_{1}} \\
        a_{i_{2},j} \neq a_{i_{2},l_{2}} \\
       \qquad \vdots \\
        a_{i_{k-1},j}\neq a_{i_{k-1},l_{k-1}}, \\
    \end{dcases}
\]
where $(l_{1},l_{2},\ldots,l_{k-1})$ is some permutation of $(1,2,\ldots ,j-1,j+1,\ldots ,k)$.
\end{lemma}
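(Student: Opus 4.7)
\vspace{0.3cm}

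\noindent\textbf{Proof proposal.} The plan is to reinterpret the claim as a bipartite matching problem and then verify the hypothesis of Hall's Marriage Theorem using the rank hypothesis on $M$.

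Fix a column index $j$. Let $P_r := (a_{1,r}, a_{2,r}, \ldots, a_{d,r}) \in \mathbb{R}^{d}$ denote the ``top part'' of the column $C_r$ (note that the columns of $M$ are linearly independent, so in particular the $P_r$ are pairwise distinct, since two equal $P_r$'s would give two identical columns). Build a bipartite graph $G$ with left vertex set $\{P_r : r \neq j\}$ (of size $k-1$) and right vertex set $\{e_1, \ldots, e_d\}$, placing an edge between $P_r$ and $e_i$ whenever $a_{i,j} \neq a_{i,r}$. A system of indices $(i_1, \ldots, i_{k-1})$ and a permutation $(l_1, \ldots, l_{k-1})$ of $\{1,\ldots,k\}\setminus\{j\}$ satisfying the conclusion of the lemma is exactly a perfect matching of the left side of $G$ into the right side. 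So it suffices to verify Hall's condition: for every $S \subseteq \{P_r : r \neq j\}$ of cardinality $s$, the neighborhood $N(S)$ in $\{e_1,\ldots,e_d\}$ has size at least $s$.

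Equivalently, writing $S = \{P_{r_1}, \ldots, P_{r_s}\}$ and letting $I \subseteq \{1,\ldots,d\}$ be the set of coordinates $i$ in which $a_{i,j}$ agrees with all of $a_{i,r_1},\ldots,a_{i,r_s}$ (so $I$ is the complement of $N(S)$ inside $\{1,\ldots,d\}$), Hall's condition becomes $|I| \leq d-s$. The plan is to argue by contradiction: if $|I| \geq d-s+1$, then the $s+1$ columns $C_j, C_{r_1}, \ldots, C_{r_s}$ all coincide on the rows indexed by $I$, and also on row $d+1$ (since that row is identically $1$). Thus the $s$ difference vectors $C_{r_n} - C_j$ all live in the linear subspace of $\mathbb{R}^{d+1}$ consisting of vectors that vanish on $I \cup \{d+1\}$, a subspace of dimension $d+1 - |I| - 1 \leq s-1$. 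Consequently these $s$ difference vectors must be linearly dependent, which produces a nontrivial linear relation among $C_j, C_{r_1}, \ldots, C_{r_s}$, contradicting that the $k$ columns of $M$ are linearly independent.

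The argument therefore combines one clean combinatorial reduction (bipartite matching) with a short linear algebra computation to verify Hall's hypothesis; the only step that requires mild care is bookkeeping the dimension count in the complement of $I \cup \{d+1\}$ so that one genuinely produces a relation contradicting $\mathrm{rank}(M)=k$. No ingredient beyond Hall's theorem and the rank hypothesis is needed, and the resulting matching provides exactly the pairing $(i_n, l_n)$ required in the statement.
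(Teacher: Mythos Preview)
Your proof is correct and takes a genuinely different route from the paper's argument. The paper proceeds by direct determinant computation: in the square case $k=d+1$, it multiplies $M$ on the right by an elementary matrix to produce a matrix whose entries are the differences $a_{i,1}-a_{i,j}$, then observes that since the resulting determinant is nonzero, at least one term in its Leibniz permutation expansion must be nonzero, and that term furnishes the required permutation. The non-square case $k<d+1$ is then reduced to the square one by extracting a $k\times k$ nonsingular minor, with a further case split depending on whether the row of $1$'s participates in that minor.

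Your Hall's Marriage Theorem approach is cleaner: it handles all $k\leq d+1$ uniformly with no case analysis, and the verification of Hall's condition via the dimension count on the difference vectors $C_{r_n}-C_j$ is a direct and transparent translation of the rank hypothesis into the combinatorial language. The paper's argument is more elementary in that it avoids invoking Hall, but at the cost of the case split and the explicit matrix manipulation; your argument trades that for one standard combinatorial lemma and gains uniformity and brevity. Both ultimately encode the same phenomenon---a nonvanishing determinant forces a system of distinct representatives---just from opposite ends.
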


\begin{proof} Let us first consider the case $k=d+1$. We have to show that for all columns $C_{j}$ the first $k-1$ rows satisfy the property of the lemma. Observe that the product

\begin{equation*}
MA=
\begin{pmatrix}
a_{1,1} & a_{1,2} & \cdots & a_{1,k} \\
a_{2,1} & a_{2,2} & \cdots & a_{2,k} \\
\vdots  & \vdots  & \ddots & \vdots  \\
a_{k-1,1} & a_{k-1,2} & \cdots & a_{k-1,k} \\
1 & 1 & \cdots & 1
\end{pmatrix}
\cdot
\underbrace{\begin{pmatrix}
1 & 1 & \cdots & 1 &1 & 1 \\
-1 & 0 & \cdots &0 & 0 &0 \\
0 & -1 & \cdots & 0 & 0 &0 \\
\vdots  & \vdots  & \ddots & \vdots & \vdots& \vdots  \\
0 & 0 & \cdots & -1 &0 & 0 \\
0 & 0 & \cdots & 0 & -1 & 0
\end{pmatrix}}_{k\times k\textnormal{ matrix }A}
\end{equation*}

is a rank $k$ matrix equal to

\begin{equation*}
\begin{pmatrix}
(a_{1,1}-a_{1,2}) & (a_{1,1}-a_{1,3}) & \cdots & (a_{1,1}-a_{1,k-1}) & (a_{1,1}-a_{1,k}) & a_{1,1} \\
(a_{2,1}-a_{2,2}) & (a_{2,1}-a_{2,3}) & \cdots & (a_{2,1}-a_{2,k-1}) & (a_{2,1}-a_{2,k}) &a_{2,1} \\
(a_{3,1}-a_{3,2}) & (a_{3,1}-a_{3,3}) & \cdots & (a_{3,1}-a_{3,k-1}) & (a_{3,1}-a_{3,k}) & a_{3,1} \\
\vdots  & \vdots  & \ddots & \vdots & \vdots& \vdots  \\
(a_{k-1,1}-a_{k-1,2}) & (a_{k-1,1}-a_{k-1,3}) & \cdots & (a_{k-1,1}-a_{k-1,k-1}) & (a_{k-1,1}-a_{k-1,k}) & a_{k-1,1} \\
0 & 0 & \cdots & 0 & 0 & 1
\end{pmatrix}.
\end{equation*}

By computing the Laplace expansion with respect to the last row, we conclude that $\det{(MA)}$ is equal to

\begin{equation*}
\det
\begin{pmatrix}
(a_{1,1}-a_{1,2}) & (a_{1,1}-a_{1,3}) & \cdots & (a_{1,1}-a_{1,k-1}) & (a_{1,1}-a_{1,k}) \\
(a_{2,1}-a_{2,2}) & (a_{2,1}-a_{2,3}) & \cdots & (a_{2,1}-a_{2,k-1}) & (a_{2,1}-a_{2,k}) \\
(a_{3,1}-a_{3,2}) & (a_{3,1}-a_{3,3}) & \cdots & (a_{3,1}-a_{3,k-1}) & (a_{3,1}-a_{3,k}) \\
\vdots  & \vdots  & \ddots & \vdots & \vdots  \\
(a_{k-1,1}-a_{k-1,2}) & (a_{k-1,1}-a_{k-1,3}) & \cdots & (a_{k-1,1}-a_{k-1,k-1}) & (a_{k-1,1}-a_{k-1,k})
\end{pmatrix}.
\end{equation*}

The entries of this matrix are

$$x_{i,j}:=a_{i,1}-a_{i,j+1},\quad 1\leq i,j\leq k-1. $$

The column $C_{1}$ has the property of the lemma if and only if there is some permutation $\pi$ of $(1,2,\ldots,k-1)$ such that

\[
    \begin{dcases}
       x_{1,\pi(1)} = a_{1,1} - a_{1,\pi(1)+1} \neq 0 \\
       x_{2,\pi(2)} =  a_{2,1} - a_{2,\pi(2)+1} \neq 0\\
       \quad\qquad \vdots \\
       x_{k-1,\pi(k-1)} =  a_{k-1,1} -a_{k-1,\pi(k-1)+1}\neq 0. \\
    \end{dcases}
\]

If this was not the case, for all such permutations $\pi$ of $(1,2,\ldots,k-1)$ at least one among $x_{1,\pi(1)}$, $x_{2,\pi(2)}$, $\ldots$, $x_{k-1,\pi(k-1)}$ would be zero. Hence
$$det{(MA)}=\sum_{\pi\in S_{k-1}}\textnormal{sgn}(\pi)\cdot x_{1,\pi(1)}\cdot\ldots\cdot x_{k-1,\pi(k-1)}=0,$$
a contradiction. A similar argument shows that any other column also has this property.

The case $k<d+1$ can be reduced to the previous one. Indeed, the rank $k$ condition guarantees that there is a $k\times k$ minor of $M$ that has rank $k$. There are two possibilities:

\begin{enumerate}
\item \underline{There is a $k\times k$ minor of rank $k$ that has a row of $1$'s.}

This is identical to the case $k=d+1$ and we conclude that the rows that generate this minor are the ones that satisfy the property of the lemma.

\item  \underline{No $k\times k$ minor of rank $k$ has a row of $1$'s.}

Here the rows of all non-singular minors are among the first $d$ ones of $M$. Let $R_{i_{l}}$, $1\leq l\leq k$, be $k$ rows of $M$ that generate such a minor $\widetilde{M}$:

\begin{equation*}
\widetilde{M} = 
\begin{pmatrix}
a_{i_{1},1} & a_{i_{1},2} & \cdots & a_{i_{1},k} \\
a_{i_{2},1} & a_{i_{2},2} & \cdots & a_{i_{2},k} \\
\vdots  & \vdots  & \ddots & \vdots  \\
a_{i_{k-1},1} & a_{i_{k-1},2} & \cdots & a_{i_{k-1},k} \\
a_{i_{k},1} & a_{i_{k},2} & \cdots & a_{i_{k},k}
\end{pmatrix}.
\end{equation*}

Proceed as in the case $k=d+1$ and multiply $\widetilde{M}$ by the matrix $A$ to obtain

\begin{equation*}
\widetilde{M}A=
\begin{pmatrix}
(a_{i_{1},1}-a_{i_{1},2}) & (a_{i_{1},1}-a_{i_{1},3}) & \cdots &  (a_{i_{1},1}-a_{i_{1},k}) & a_{i_{1},1} \\
(a_{i_{2},1}-a_{i_{2},2}) & (a_{i_{2},1}-a_{i_{2},3}) & \cdots &  (a_{i_{2},1}-a_{i_{2},k}) & a_{i_{2},1} \\
(a_{i_{3},1}-a_{i_{3},2}) & (a_{i_{3},1}-a_{i_{3},3}) & \cdots &  (a_{i_{3},1}-a_{i_{3},k}) & a_{i_{3},1} \\
\vdots  & \vdots  & \ddots & \vdots & \vdots  \\
(a_{i_{k-1},1}-a_{i_{k-1},2}) & (a_{i_{k-1},1}-a_{i_{k-1},3}) & \cdots  & (a_{i_{k-1},1}-a_{i_{k-1},k}) & a_{i_{k-1},1} \\
(a_{i_{k},1}-a_{i_{k},2}) & (a_{i_{k},1}-a_{i_{k},3}) & \cdots  & (a_{i_{k},1}-a_{i_{k},k}) & a_{i_{k},1}
\end{pmatrix}.
\end{equation*}

By computing the Laplace expansion along the last column of $\widetilde{M}A$, we conclude that at least one $(k-1)\times (k-1)$ minor obtained from the first $(k-1)$ columns of $\widetilde{M}A$ is non-singular. We argue again as in the $k=d+1$ case to find the $k-1$ rows that satisfy the property of the lemma for the column $C_{1}$. An analogous argument works for any other column of $M$, but these $k-1$ special rows may vary from column to column.
\end{enumerate}
\end{proof}

Let us recall some of the terminology from the proof of Theorem \ref{thm1-010123} in Section \ref{WTBLA}. A subset $\mathcal{A}\subset\mathcal{Q}$ has the \textit{property $(P)$} if
\begin{enumerate}
\item $Q_{1}\in\mathcal{A}.$
\item $\mathcal{A}$ is not weakly transversal with pivot $Q_{1}$.
\end{enumerate}

We say that $\mathcal{A}\subset\mathcal{Q}$ is \textit{minimal} if $\mathcal{A}^{\prime}\subset\mathcal{A}$ has the property $(P)$ if and only if $\mathcal{A}^{\prime}=\mathcal{A}$. Since $\mathcal{Q}$ itself has the property $(P)$, it must contain a minimal subset of cardinality at least $2$.

\begin{claim}\label{claim1-appendix150123} Let $\mathcal{A}=\{Q_{1},K_{2},\ldots,K_{n}\}$ be a minimal set of $n$ cubes\footnote{Observe that $Q_{1}$ is the only ``$Q$" cube in this collection. The others are labeled by $K_{j}$.}. There is a set $D$ of $(d-n+2)$ canonical directions $v$ for which

\begin{equation}\label{condv-appendix-020123}
 \pi_{v}(Q_{1}) \cap \pi_{v}(K_{j}) \neq\emptyset,\quad\forall \quad 2\leq j\leq n.
\end{equation}
\end{claim}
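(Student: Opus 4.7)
The plan is to reformulate weak transversality with pivot $Q_1$ as the existence of a system of distinct representatives (SDR) and then invoke Hall's marriage theorem together with the minimality of $\mathcal{A}$.

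For each $j\in\{2,\ldots,n\}$ define
\[
S_j:=\{\ell\in\{1,\ldots,d\}\,:\,\pi_{\ell}(Q_1)\cap\pi_{\ell}(K_j)=\emptyset\}.
\]
Unwinding Definition \ref{weaktransv}, a collection $\{Q_1,K_2,\ldots,K_n\}$ is weakly transversal with pivot $Q_1$ if and only if the family $\{S_j\}_{j=2}^{n}$ admits an SDR, i.e.\ an injection $j\mapsto e_{i_j}$ with $i_j\in S_j$; this is exactly the data of the set of associated directions $\mathscr{E}_1$. So by hypothesis (2) in the definition of property $(P)$, the family $\{S_j\}_{j=2}^{n}$ has no SDR.

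First, I would apply Hall's theorem to extract a \emph{bad} subset: there exists $T\subseteq\{2,\ldots,n\}$ with
\[
\Bigl|\bigcup_{j\in T}S_j\Bigr|<|T|.
\]
Next, I use minimality to show that $T$ must be the entire index set $\{2,\ldots,n\}$. Indeed, if there were some $j_0\in\{2,\ldots,n\}\setminus T$, then by minimality of $\mathcal{A}$ the proper subset $\mathcal{A}\setminus\{K_{j_0}\}$ is weakly transversal with pivot $Q_1$, so the subfamily $\{S_j\}_{j\in\{2,\ldots,n\}\setminus\{j_0\}}$ admits an SDR. In particular, Hall's condition holds for that subfamily, so $|\bigcup_{j\in T}S_j|\geq|T|$, contradicting the bad inequality above. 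Hence $T=\{2,\ldots,n\}$ and
\[
\Bigl|\bigcup_{j=2}^{n}S_j\Bigr|\leq n-2.
\]

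Finally, I set
\[
D:=\{1,\ldots,d\}\setminus\bigcup_{j=2}^{n}S_j.
\]
Then $|D|\geq d-(n-2)=d-n+2$, and for every $v\in D$ one has $v\notin S_j$ for all $j\in\{2,\ldots,n\}$, which by definition of $S_j$ means $\pi_v(Q_1)\cap\pi_v(K_j)\neq\emptyset$ for every $2\leq j\leq n$, as required.

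The main (and essentially only) conceptual step is the clean identification of weak transversality with pivot $Q_1$ as an SDR problem for the family $\{S_j\}$; once this dictionary is in place, Hall's theorem plus the defining property of minimality forces the failing subfamily to be the whole family, and the promised $d-n+2$ common directions fall out by taking complements. I do not anticipate a real obstacle beyond carefully matching the indexing convention of Definition \ref{weaktransv} to the SDR formulation.
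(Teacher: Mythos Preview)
Your proof is correct and takes a genuinely different route from the paper's. You recast weak transversality with pivot $Q_1$ as an SDR problem for the family $\{S_j\}_{j=2}^n$ and invoke Hall's marriage theorem: the failure of an SDR produces a deficient set $T$, and minimality forces $T=\{2,\ldots,n\}$, which immediately bounds $|\bigcup_j S_j|\le n-2$ and hands you the complement $D$. The paper instead argues constructively: it removes one cube $K_n$ to obtain, by minimality, an explicit set of $n-2$ distinct directions $\{e_{j_1},\ldots,e_{j_{n-2}}\}$ witnessing weak transversality of $\mathcal{A}\setminus\{K_n\}$, declares $D$ to be the complement of this set, and then runs an iterative swapping algorithm to show every $K_j$ satisfies \eqref{condv-appendix-020123} for this particular $D$. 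Your approach is shorter and more conceptual; the paper's is self-contained (no external combinatorial lemma) and pins down a specific $D$ tied to a chosen SDR of a proper subfamily. One cosmetic point: your $D$ has \emph{at least} $d-n+2$ elements, but of course any $(d-n+2)$-element subset inherits the conclusion, so this matches the claim.
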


\begin{proof}[Proof of Claim \ref{claim1-appendix150123}] If $n=2$, then $Q_{1}\cap K_{2}\neq\emptyset$ and the claim follows directly. If $n>2$, observe that $\mathcal{A}^{\prime}=\{Q_{1},K_{2},\ldots,K_{n-1}\}$ is weakly transversal with pivot $Q_{1}$, otherwise $\mathcal{A}$ would not be minimal. Hence there are $1\leq j_{1},\ldots,j_{n-2}\leq d$ \textit{distinct} such that
\begin{equation}\label{condv2-020123}
    \begin{dcases}
        \pi_{j_{1}}(Q_{1}) \cap \pi_{j_{1}}(K_{2}) =\emptyset, \\
       \qquad \vdots \\
        \pi_{j_{n-2}}(Q_{1}) \cap \pi_{j_{n-2}}(K_{n-1}) =\emptyset. \\
    \end{dcases}
\end{equation}

Let $D:=\{e_{1},\ldots,e_{d}\}\backslash\{e_{j_{1}},\ldots,e_{j_{n-2}}\}$. In what follows, we will show that \eqref{condv-appendix-020123} holds for this set of directions. Notice that if
\begin{equation}\label{condicao1ap-150123}
 \pi_{l}(Q_{1}) \cap \pi_{l}(K_{n}) =\emptyset
\end{equation}
for some $l\in D$, then $\mathcal{A}$ would be weakly transversal with pivot $Q_{1}$ (because \eqref{condv2-020123} together with \eqref{condicao1ap-150123} verify the definition of weak transversality), which is false by hypothesis. Hence \eqref{condv-appendix-020123} holds for $j=n$.

Let us argue by induction that, if \eqref{condv-appendix-020123} holds for $1\leq m<n-1$ cubes $K_{n},K_{\alpha_{1}},\ldots,K_{\alpha_{m-1}}$, then it's possible to find a new one $K_{\alpha_{m}}$ for which \eqref{condv-appendix-020123} also holds\footnote{We are done if there are $m=n-1$ for which \eqref{condv-appendix-020123} holds, therefore we assume the strict inequality $m<n-1$.}. This will be achieved by the following algorithm: consider the set
$$\mathcal{A}^{\prime\prime}:=\{Q_{1},K_{n},K_{\alpha_{1}},\ldots,K_{\alpha_{m-1}}\}. $$

By the minimality of $\mathcal{A}$, $\mathcal{A}^{\prime\prime}$ is weakly transversal with pivot $Q_{1}$, hence there are $1\leq r_{1},\ldots,r_{m}\leq d$ distinct such that

\begin{equation}
    \begin{dcases}
        \pi_{r_{1}}(Q_{1}) \cap \pi_{r_{1}}(K_{n}) =\emptyset, \\
        \pi_{r_{2}}(Q_{1}) \cap \pi_{r_{2}}(K_{\alpha_{1}}) =\emptyset, \\
       \qquad \vdots \\
        \pi_{r_{m}}(Q_{1}) \cap \pi_{r_{m}}(K_{\alpha_{m-1}}) =\emptyset. \\
    \end{dcases}
\end{equation}

Property $(P)$ for $\mathcal{A}$ implies $r_{1}\in \{j_{1},\ldots,j_{n-2}\}$\footnote{Otherwise we face the same problem that appeared in \eqref{condicao1ap-150123}.}. Then there is $j_{\beta_{1}}$ such that $r_{1}=j_{\beta_{1}}$, therefore

\begin{equation}\label{condicao2ap-150123}
    \begin{dcases}
        \pi_{j_{\beta_{1}}}(Q_{1}) \cap \pi_{j_{\beta_{1}}}(K_{\beta_{1}+1}) =\emptyset, \\
        \pi_{j_{\beta_{1}}}(Q_{1}) \cap \pi_{j_{\beta_{1}}}(K_{n}) =\emptyset. \\
    \end{dcases}
\end{equation}

Since $K_{\beta_{1}+1}$ appears in \eqref{condv2-020123}, it is one among $K_{2},\ldots,K_{n-1}$, hence $K_{\beta_{1}+1}\neq K_{n}$. We are done if $K_{\beta_{1}+1}\notin\mathcal{A}^{\prime\prime}$: indeed, if
\begin{equation}
 \pi_{l}(Q_{1}) \cap \pi_{l}(K_{\beta_{1}+1}) =\emptyset
\end{equation}
for some $l\in D$, then
\begin{equation}
    \begin{dcases}
        \pi_{j_{1}}(Q_{1}) \cap \pi_{j_{1}}(K_{2}) =\emptyset, \\
       \qquad \vdots \\
       \pi_{j_{\beta_{1}-1}}(Q_{1}) \cap \pi_{j_{\beta_{1}-1}}(K_{\beta_{1}}) =\emptyset, \\
       \pi_{l}(Q_{1}) \cap  \pi_{l}(K_{\beta_{1}+1}) =\emptyset, \\
        \pi_{j_{\beta_{1}+1}}(Q_{1}) \cap \pi_{j_{\beta_{1}+1}}(K_{\beta_{1}+2}) =\emptyset, \\
        \qquad \vdots \\
        \pi_{j_{n-2}}(Q_{1}) \cap \pi_{j_{n-2}}(K_{n-1}) =\emptyset. \\
        \pi_{j_{\beta_{1}}}(Q_{1}) \cap  \pi_{j_{\beta_{1}}}(K_{n}) =\emptyset, \\
    \end{dcases}
\end{equation}
and $\mathcal{A}$ would be weakly transversal with pivot $Q_{1}$ (by definition again), which contradicts property $(P)$. This way, we would find a new (\textit{not} in $\mathcal{A}^{\prime\prime}$) cube $K_{\beta_{1}+1}$ for which \eqref{condv-appendix-020123} also holds.

On the other hand, if $K_{\beta_{1}+1}=K_{\alpha_{q_{1}}}$ for some $K_{\alpha_{q_{1}}}\in\mathcal{A}^{\prime\prime}\backslash\{K_{n}\}$, then we simply switch the projections $\pi_{j_{\beta_{1}}}$ and $\pi_{r_{q_{1}+1}}$ in \eqref{condv2-020123} (they are distinct because $j_{\beta_{1}}=r_{1}\neq r_{q_{1}+1}$) and consider the conditions
\begin{equation}\label{condswitch1-030223}
    \begin{dcases}
        \pi_{j_{1}}(Q_{1}) \cap \pi_{j_{1}}(K_{2}) =\emptyset, \\
       \qquad \vdots \\
       \pi_{j_{\beta_{1}-1}}(Q_{1}) \cap \pi_{j_{\beta_{1}-1}}(K_{\beta_{1}}) =\emptyset, \\
       \pi_{r_{q_{1}+1}}(Q_{1}) \cap \pi_{r_{q_{1}+1}}(K_{\alpha_{q_{1}}}) =\emptyset, \\
        \pi_{j_{\beta_{1}+1}}(Q_{1}) \cap \pi_{j_{\beta_{1}+1}}(K_{\beta_{1}+2}) =\emptyset, \\
        \qquad \vdots \\
        \pi_{j_{n-2}}(Q_{1}) \cap \pi_{j_{n-2}}(K_{n-1}) =\emptyset \\
        \pi_{j_{\beta_{1}}}(Q_{1}) \cap \pi_{j_{\beta_{1}}}(K_{n}) =\emptyset, \\
    \end{dcases}
\end{equation}
where the last condition is taken from \eqref{condicao2ap-150123}. Since $j_{\beta_{1}}\neq r_{q_{1}+1}$, property $(P)$ for $\mathcal{A}$ again implies that $r_{q_{1}+1}=j_{\beta_{2}}$. Notice that $\beta_{2}\neq\beta_{1}$ because $r_{1}=j_{\beta_{1}}$ and $r_{1}\neq r_{q_{1}+1}$. This way, from \eqref{condv2-020123},

\begin{equation}
    \begin{dcases}
        \pi_{j_{\beta_{2}}}(Q_{1}) \cap \pi_{j_{\beta_{2}}}(K_{\beta_{2}+1}) =\emptyset, \\
        \pi_{j_{\beta_{2}}}(Q_{1}) \cap \pi_{j_{\beta_{2}}}(K_{\alpha_{q_{1}}}) =\emptyset. \\
    \end{dcases}
\end{equation}

The index $j_{\beta_{2}}$ is one of the elements in the set $\{j_{1},\ldots,j_{\beta_{1}-1},j_{\beta_{1}+1},\ldots,j_{n-2}\}$, hence $K_{\beta_{2}+1}$ is in the set $\{K_{2},\ldots,K_{\beta_{1}},K_{\beta_{1}+2},\ldots,K_{n-1}\}$. As before, we are done if $K_{\beta_{2}+1}\notin\mathcal{A}^{\prime\prime}$. If not, $K_{\beta_{2}+1}=K_{\alpha_{q_{2}}}$ for some $K_{\alpha_{q_{2}}}\in\mathcal{A}^{\prime\prime}\backslash\{K_{n},K_{\alpha_{q_{1}}}\}$ and we switch the projections $\pi_{j_{\beta_{2}}}$ and $\pi_{r_{q_{2}+1}}$ in \eqref{condswitch1-030223} to find some $\beta_{3}\notin\{\beta_{1},\beta_{2}\}$ such that
\begin{equation}
    \begin{dcases}
        \pi_{j_{\beta_{3}}}(Q_{1}) \cap \pi_{j_{\beta_{3}}}(K_{\beta_{3}+1}) =\emptyset, \\
        \pi_{j_{\beta_{3}}}(Q_{1}) \cap \pi_{j_{\beta_{3}}}(K_{\alpha_{q_{2}}}) =\emptyset. \\
    \end{dcases}
\end{equation}

We keep doing that until we find some $K_{\beta_{\ell}+1}\notin\mathcal{A}^{\prime\prime}$. This is guaranteed to happen since there are $n-1$ cubes $K_{j}$, but only $m<n-1$ of them in $\mathcal{A}^{\prime\prime}$. The conclusion is that 
\begin{equation*}
m<n-1\textnormal{ cubes $K_{j}$ satisfy \eqref{condv-appendix-020123}}\quad\Longrightarrow \quad m+1\textnormal{ cubes $K_{j}$ satisfy \eqref{condv-appendix-020123}},
\end{equation*}
therefore \eqref{condv-appendix-020123} holds for $2\leq j\leq n$.
\end{proof}

\Addresses


\begin{thebibliography}{99}

\bibitem{Benn1} J. Bennett, \emph{Aspects of multilinear Harmonic Analysis related to transversality}, preprint, 2014.

\bibitem{BBBCF} J. Bennett, N. Bez, S. Buschenhenke, M. G. Cowling, T. C. Flock, \emph{On the nonlinear Brascamp-Lieb inequality}, Duke Math. J., 169 (2020) 3291-3338.

\bibitem{BBCF} J. Bennett, N. Bez, M. G. Cowling, T. C. Flock, \emph{Behaviour of the the Brascamp-Lieb constant}, Bull. Lond. Math. Soc. 49 (2017), 512-518.

\bibitem{BBFL1} J. Bennett, N. Bez, T. C. Flock, S. Lee, \emph{Stability of the Brascamp-Lieb constant and applications}, Amer. J. Math. 140 (2018), 543-569.

\bibitem{BCT} J. Bennett, A. Carbery, T. Tao, \emph{On the multilinear restriction and Kakeya conjectures}. Acta Math., 2006.

\bibitem{BCCT1} J. Bennett, A. Carbery, M. Christ and T. Tao, \emph{The Brascamp-Lieb inequalities: finiteness, structure and extremals}, Geom. Funct. Anal. 17 (5) (2008), 1343-1415.

\bibitem{BCCT2} J. Bennett, A. Carbery, M. Christ and T. Tao, \emph{Finite bounds for H\"older-Brascamp-Lieb multilinear inequalities}, Math. Res. Lett. 17 (4) (2010), 647-666.

\bibitem{Bej} I. Bejenaru, \emph{The almost optimal multilinear restriction estimate for hypersurfaces with curvature: the case of $n-1$ hypersurfaces in $\mathbb{R}^{n-1}$}, preprint, 2020.

\bibitem{Bourg1} J. Bourgain, \emph{Besicovitch type maximal operators and applications to Fourier analysis}. Geom. Funct. Anal., 1(2), 1991.

\bibitem{BN} J. Bennett, S. Nakamura, \emph{Tomography bounds for the Fourier extension operator and applications}, Math. Ann. 380 (2021), 119-159.

\bibitem{BNS} J. Bennett, S. Nakamura, S. Shiraki, \emph{Tomographic Fourier extension identities for submanifolds of $\mathbb{R}^{n}$}, preprint, 2022.

\bibitem{Fef1} C. Fefferman, \emph{Inequalities for strongly singular convolution operators}. Acta Math., 124, 1970.

\bibitem{Fef2} C. Fefferman, \emph{The multiplier problem for the ball}. Ann. of Math., 94, 1971, 330-336.

\bibitem{FK1} D. Foschi and S. Klainerman, \emph{Bilinear space-time estimates for homogeneous wave equations}, Ann. Sci. \'E cole Norm. Sup. (4) 33 (2000), no. 2, 211-274. MR 1755116 (2001g:35145).

\bibitem{Guth2} L. Guth, \emph{Restriction estimates using polynomial partitioning I}. J. Amer. Math. Soc. 29 (2016), no. 2, 371-413.

\bibitem{Guth1} L. Guth, \emph{Restriction estimates using polynomial partitioning II}. Acta Math., 221, 2018.

\bibitem{GHI1} L. Guth, J. Hickman, and M. Iliopoulou, \emph{Sharp estimates for oscillatory integral operators via polynomial partitioning}. Acta Math., to appear, Preprint: arXiv:1710.10349.

\bibitem{Hor} L. H\"ormander, \emph{Oscillatory integrals and multipliers on $FL^{p}$}, Ark. Mat. 11 (1973), 1-11.
 
\bibitem{Hick-Ili} J. Hickman and M. Iliopoulou, \textit{Sharp $L^{p}$ estimates for oscillatory integral operators of arbitrary signature}. Mathematische Zeitschrift, 301(1), 1143-1189, 2022.

\bibitem{Hick-Rog} J. Hickman and K. Rogers, \emph{Improved Fourier restriction in higher dimensions}. Camb. J. Math., 7(3), 2019.

\bibitem{KM1} S. Klainerman and M. Machedon, \emph{Space-time estimates for null forms and the local existence theorem}, Comm. Pure Appl. Math. 46 (1993), no. 9, 1221-1268.

\bibitem{KM2} S. Klainerman and M. Machedon, \emph{Finite energy solutions of the Yang-Mills equations in $\mathbb{R}^{3+1}$}, Annals of Mathematics 142 (1995), 39-119.

\bibitem{KM3} S. Klainerman and M. Machedon, \emph{Estimates for null forms and the spaces $H_{s,\delta}$}, Int. Math. Res. Not. (1996), no. 17, 853-865.

\bibitem{JL} J. Lee, \emph{An endpoint estimate of the bilinear paraboloid restriction operator}, preprint, 2021.

\bibitem{Mat1} P. Mattila, \emph{Fourier Analysis and Hausdorff Dimension}. Cambridge University Press, 2015.

\bibitem{AVV1} A. Moyua, A. Vargas, L. Vega, \emph{Schr\"odinger maximal function and restriction properties of the Fourier transform}. International Math. Research Notices, 16, 1996.

\bibitem{MS} C. Muscalu, W. Schlag, \emph{Classical and Multilinear Harmonic Analysis I}. Cambridge University Press, New York NY 2013.

\bibitem{MS2} C. Muscalu, W. Schlag, \emph{Classical and Multilinear Harmonic Analysis II}. Cambridge University Press, New York NY 2013.

\bibitem{COh} C. Oh, \emph{An improved bilinear restriction estimate for paraboloid in $\mathbb{R}^3$}, preprint, 2021.

\bibitem{OM} D. Oliveira e Silva, R. Mandel, \emph{The Tomas-Stein Inequality under the effect of symmetries}, preprint, 2021.

\bibitem{OW} Y. Ou, H. Wang, \emph{A cone restriction estimate using polynomial partitioning}, J. Eur. Math. Soc. 24 (2022), no. 10, pp. 3557-3595.

\bibitem{TV1} T. Tao, A. Vargas , \emph{A bilinear approach to cone multipliers I: Restriction estimates}. Geom. Funct. Anal., 10, 2000.

\bibitem{TVV1} T. Tao, A. Vargas, L. Vega, \emph{A bilinear approach to the Restriction and Kakeya conjectures}. Journal of the American Mathematical Society, 11(4), 1998.

\bibitem{Tao-notes} T. Tao, \emph{Recent progress on the Restriction conjecture}. Park City proceedings, 2003.

\bibitem{Tao1} T. Tao, \emph{A sharp bilinear restriction estimate on paraboloids}. Geom. Funct. Anal., (13), 2003.

\bibitem{Tao2} T. Tao, \emph{Bochner-Riesz conjecture implies the restriction conjecture}. Duke Math. J. 96, 363-375.

\bibitem{Tanaka} H. Tanaka, \emph{An estimate for the Bochner-Riesz operator on functions of product type in $\mathbb{R}^{2}$}. Tokyo J. Math., 24 (2001), no. 2, 567-578.

\bibitem{Thiele1} C. Thiele, \emph{Wave packet analysis}. CBMS Regional Conference Series in Mathematics, vol. 105, Published for the Conference Board of the Mathematical Sciences, Washington, DC; by the
American Mathematical Society, Providence, RI, 2006.

\bibitem{Tomas} P. Tomas, \emph{A Restriction theorem for the Fourier transform}. Bull. Amer. Math. Soc., (81), 1975.

\bibitem{Satoru} I. Satoru, \emph{Interpolation of operators in Lebesgue spaces with mixed norm and its applications to Fourier analysis}. Tohoku Math. J., (2) 38, 1986, no. 3, 469-490.

\bibitem{Shao} S. Shao, \emph{Sharp linear and bilinear restriction estimates for paraboloids in the cylindrically symmetric case}. Rev. Mat. Iberoam. 25 (2009), no. 3, 1127-1168.

\bibitem{Steinbook2} E. Stein, \emph{Harmonic analysis: real-variable methods, orthogonality, and oscillatory integrals}, Princeton Material Series 43. Princeton University Press, New Jersey, 1993.

\bibitem{SW1} E. Stein, S. Wainger, \emph{Problems in harmonic analysis related to curvature}. Bull. Amer. Math. Soc., 84 (6) (1978), 1239-1295.

\bibitem{Stri1} R. S. Strichartz, \emph{Restrictions of Fourier transforms to quadratic surfaces and decay of solutions of wave equations}. Duke Math. J. 44 (1977), no. 3, 705-714.

\bibitem{Wang} H. Wang, \emph{A Restriction estimate in $\mathbb{R}^{3}$ using brooms}. preprint, 2018.

\bibitem{Wolff} T. Wolff, \emph{A sharp bilinear cone restriction estimate}. Annals of Math, 153, 2001.

\bibitem{Zyg} A. Zygmund, \emph{On Fourier coefficients and transforms of functions of two variables.}. Studia Math., 50, 1974.


\end{thebibliography}
\end{document}